\newtheorem{theorem}{Theorem}
\newtheorem*{theoremannounce}{Theorem}
\numberwithin{theorem}{section}
\theoremstyle{plain}
\newtheorem*{acknowledgement}{Acknowledgement}
\newtheorem{conjecture}{Conjecture}
\newtheorem{corollary}[theorem]{Corollary}
\newtheorem{definition}[theorem]{Definition}
\newtheorem{lemma}[theorem]{Lemma}
\newtheorem{question}{Question}
\newtheorem{proposition}[theorem]{Proposition}
\theoremstyle{remark}
\newtheorem{example}[theorem]{Example}
\newtheorem{elaboration}{Elaboration}
\newtheorem{remark}[theorem]{Remark}
\numberwithin{equation}{section}
\begin{document}
\title[The relative $K$-group in the ETNC]{On the relative $K$-group in the ETNC}
\author{Oliver Braunling}
\address{Freiburg Institute for Advanced Studies (FRIAS), University of Freiburg,
D-79104 Freiburg im\ Breisgau, Germany}
\thanks{The author was supported by DFG GK1821 \textquotedblleft Cohomological Methods
in Geometry\textquotedblright\ and a Junior Fellowship at the Freiburg
Institute for Advanced Studies (FRIAS)}
\date{%
{\today}%
}
\subjclass[2000]{Primary 11R23 11G40, Secondary 11R65 28C10}
\keywords{Equivariant Tamagawa number conjecture, ETNC, locally compact modules}

\begin{abstract}
We consider the Burns--Flach formulation of the equivariant Tamagawa number
conjecture (ETNC). In their setup, a Tamagawa number is an element of a
relative $K$-group. We show that this relative group agrees with an ordinary
$K$-group, namely of the category of locally compact topological modules over
the order. Its virtual objects are an equivariant Haar measure in a precise
sense. We expect that all relative $K$-groups in the ETNC will have analoguous
interpretations. At present, we need to restrict to regular orders, e.g. hereditary.

\end{abstract}
\maketitle

\section{Introduction}

\subsection{What is a Tamagawa number?\label{subsect_WhatIsTamagawaNum}}

We assume basic familiarity with the Burns--Flach formulation of the ETNC
\cite{MR1884523}, at least with the introduction. We follow the same notation.

A Tamagawa number $-$ most classically $-$ is a ratio of volumes and as such a
positive real number. This concept has undergone wide generalizations, and in
the context of the Burns--Flach formulation, it is a value in the relative
$K$-group $K_{0}(\mathfrak{A},\mathbb{R})$. In the classical case,
$\mathfrak{A}=\mathbb{Z}$, one has a canonical isomorphism%
\[
K_{0}(\mathbb{Z},\mathbb{R})\cong\mathbb{R}_{>0}^{\times}\text{,}%
\]
and by this identification the interpretation as a volume is still visible. A
relative $K$-group, like the group $K_{0}(\mathfrak{A},\mathbb{R})$ here, is
defined as the relevant term to complete a long exact sequence; in the case at
hand this sequence is%
\begin{equation}
\cdots\longrightarrow K_{1}(\mathfrak{A})\longrightarrow K_{1}(A_{\mathbb{R}%
})\longrightarrow K_{0}(\mathfrak{A},\mathbb{R})\longrightarrow K_{0}%
(\mathfrak{A})\longrightarrow\cdots\label{l_Intro1}%
\end{equation}
Besides fitting in the right spot, no further interpretation comes with
relative $K$-groups. In this paper, we propose a different viewpoint, where
the volume interpretation of $K_{0}(\mathfrak{A},\mathbb{R})$ is strengthened.
We introduce the category of locally compact topologized right $\mathfrak{A}%
$-modules, call it $\mathsf{LCA}_{\mathfrak{A}}$. It turns out to be an exact
category, so it comes with its own $K$-theory groups, its own determinant
functors, etc. We prove:

\begin{theoremannounce}
Suppose $\mathfrak{A}$ is a regular order in a finite-dimensional semisimple
$\mathbb{Q}$-algebra $A$. Then there is a canonical long exact sequence%
\[
\cdots\longrightarrow K_{n}(\mathfrak{A})\longrightarrow K_{n}(A_{\mathbb{R}%
})\longrightarrow K_{n}(\mathsf{LCA}_{\mathfrak{A}})\longrightarrow
K_{n-1}(\mathfrak{A})\longrightarrow\cdots\text{,}%
\]
and for all $n$, there are canonical isomorphisms%
\[
K_{n}(\mathsf{LCA}_{\mathfrak{A}})\cong K_{n-1}(\mathfrak{A},\mathbb{R}%
)\text{.}%
\]

\end{theoremannounce}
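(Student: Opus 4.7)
The plan is to construct a fibre sequence of $K$-theory spectra
\[
K(\mathfrak{A}) \longrightarrow K(A_{\mathbb{R}}) \longrightarrow K(\mathsf{LCA}_{\mathfrak{A}}),
\]
from which both claims follow at once: the long exact sequence is the associated long exact sequence of homotopy groups, and the isomorphism $K_{n}(\mathsf{LCA}_{\mathfrak{A}}) \cong K_{n-1}(\mathfrak{A},\mathbb{R})$ is obtained by comparing with the fibre sequence defining the relative $K$-group. The middle map is induced by the inclusion of finite-dimensional $A_{\mathbb{R}}$-vector spaces (which are canonically locally compact) into $\mathsf{LCA}_{\mathfrak{A}}$; the left map is extension of scalars $-\otimes_{\mathfrak{A}}A_{\mathbb{R}}$.

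The first step is foundational: verify that $\mathsf{LCA}_{\mathfrak{A}}$ is quasi-abelian, hence exact, and identify three full exact subcategories whose $K$-theory one can compute. Discrete finitely generated $\mathfrak{A}$-modules contribute $K(\mathfrak{A})$ by Quillen's resolution theorem, which is where regularity of $\mathfrak{A}$ is needed. Pontryagin duality gives a contravariant exact equivalence between discrete and compact $\mathfrak{A}$-modules, so compact modules also contribute $K(\mathfrak{A})$. Finite-dimensional real $A_{\mathbb{R}}$-modules contribute $K(A_{\mathbb{R}})$.

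The second step is the main geometric input: an equivariant version of the classical structure theorem for LCA groups, asserting that every $M\in\mathsf{LCA}_{\mathfrak{A}}$ admits an open $\mathfrak{A}$-submodule of the form $V\oplus C$ with $V$ real and $C$ compact, and with discrete finitely generated quotient. Together with the basic short exact sequence $0\to L\to V\to V/L\to 0$ relating a real module $V$, an $\mathfrak{A}$-lattice $L$ and its compact torus quotient, this gives the $K_{0}$-level identities underlying the desired fibre sequence and dictates its homotopical construction.

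The hard part is promoting these $K_{0}$ identities to a genuine fibre sequence, since localization sequences in quasi-abelian $K$-theory are delicate: no general Serre quotient exists. The cleanest route is probably to invoke Schlichting's fibre sequence for exact categories after verifying that the compact-plus-discrete subcategory is left (or right) $s$-filtering in $\mathsf{LCA}_{\mathfrak{A}}$; one then identifies its $K$-theory with $K(\mathfrak{A})$ via Pontryagin duality combined with the exact sequences above, and the quotient with $K(A_{\mathbb{R}})$ using the structure theorem. A Waldhausen approximation argument might work as an alternative. Either way, showing that the quotient genuinely recovers the real modules rather than some formal idempotent completion is the crux of the argument.
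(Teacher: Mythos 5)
Your overall target is the right one (a fibre sequence $K(\mathfrak{A})\to K(A_{\mathbb{R}})\to K(\mathsf{LCA}_{\mathfrak{A}})$, from which both claims follow), and several ingredients you list do match the paper: the quasi-abelian structure, the structure theory producing a clopen submodule $V\oplus C$ with discrete quotient (note: discrete, but \emph{not} finitely generated in general), the lattice sequence $L\hookrightarrow V\twoheadrightarrow V/L$, Schlichting localization for $s$-filtering subcategories, and regularity entering through a resolution argument identifying $K(\mathsf{Mod}_{\mathfrak{A},fg})$ with $K(\mathfrak{A})$. But your final step, which is where the theorem would actually be proved, does not work. A single localization sequence for a subcategory $\mathsf{S}\subset\mathsf{LCA}_{\mathfrak{A}}$ necessarily places $K(\mathsf{LCA}_{\mathfrak{A}})$ in the \emph{middle}, $K(\mathsf{S})\to K(\mathsf{LCA}_{\mathfrak{A}})\to K(\mathsf{LCA}_{\mathfrak{A}}/\mathsf{S})$, whereas the theorem needs $K(\mathsf{LCA}_{\mathfrak{A}})$ as the cofibre of $K(\mathfrak{A})\to K(A_{\mathbb{R}})$. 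Worse, both of your proposed identifications are false for the ``compact-plus-discrete'' subcategory $\mathsf{S}$ (modules with a compact open submodule). Its $K$-theory is not $K(\mathfrak{A})$: the Eilenberg swindle acts \emph{inside} $\mathsf{S}$ (countable coproducts of discretes are discrete, countable products of compacts are compact), so every class coming from a finitely generated discrete module, and every class coming from a compact module, is killed; ``compacts contribute $K(\mathfrak{A})$ by Pontryagin duality'' is exactly backwards — in the paper the whole point is that the compact part contributes \emph{zero}. And the quotient $\mathsf{LCA}_{\mathfrak{A}}/\mathsf{S}$ does not recover the real modules: every vector module $V$ contains a full $\mathfrak{A}$-lattice $L\in\mathsf{S}$ with compact quotient $V/L\in\mathsf{S}$, so in the quotient $V\cong V/L\cong 0$; combined with your own structure theorem this forces every object to die, i.e.\ the quotient category is trivial and its $K$-theory is $0$, not $K(A_{\mathbb{R}})$. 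So the localization you propose degenerates and yields no information; the ``crux'' you flag at the end is precisely the point at which the construction collapses.

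The paper gets around this by a different architecture (Proposition \ref{prop_PrepareMain} and Theorem \ref{thm_Main_b}): it compares \emph{two} localization sequences, $\mathsf{Mod}_{\mathfrak{A},fg}\subset\mathsf{Mod}_{\mathfrak{A}}$ and $\mathsf{LCA}_{\mathfrak{A},cg}\subset\mathsf{LCA}_{\mathfrak{A}}$, shows the two quotient categories are equivalent, and uses $K(\mathsf{Mod}_{\mathfrak{A}})=0$ (swindle) to rotate the resulting bicartesian square into a fibre sequence $K(\mathsf{Mod}_{\mathfrak{A},fg})\to K(\mathsf{LCA}_{\mathfrak{A},cg})\to K(\mathsf{LCA}_{\mathfrak{A}})$, so that $K(\mathsf{LCA}_{\mathfrak{A}})$ lands in the cofibre position. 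It then identifies $K(\mathsf{LCA}_{\mathfrak{A},cg})\simeq K(\mathsf{LCA}_{\mathfrak{A},\mathbb{R}C})\simeq K(A_{\mathbb{R}})$ (Propositions \ref{prop_CGToRC} and \ref{Prop_KRCEqualsKR}, again killing compacts by the swindle), and identifies the first map with base change $M\mapsto M\otimes_{\mathfrak{A}}A_{\mathbb{R}}$ by an additivity argument applied to $M\hookrightarrow M\otimes_{\mathfrak{A}}A_{\mathbb{R}}\twoheadrightarrow(M\otimes_{\mathfrak{A}}A_{\mathbb{R}})/M$, using once more that the compact (torus) term contributes zero. If you want to salvage your outline, you need some version of this two-step manoeuvre (or another way of exhibiting $K(\mathsf{LCA}_{\mathfrak{A}})$ as a cofibre), together with the genuinely hard inputs you currently treat as routine: that $\mathsf{LCA}_{\mathfrak{A},cg}$ is left $s$-filtering in $\mathsf{LCA}_{\mathfrak{A}}$, and that compactly generated modules admit finite resolutions by objects of $\mathsf{LCA}_{\mathfrak{A},\mathbb{R}C}$ even when direct sum splittings fail for non-hereditary orders; finally, the passage from a fibre sequence of (non-connective) spectra to the stated long exact sequence of Quillen $K$-groups needs the regularity bookkeeping carried out in the proof of Theorem \ref{thm_main_asannounced}.
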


This will be Theorem \ref{thm_main_asannounced}. In particular, for $n=1$ we
obtain the new interpretation $K_{1}(\mathsf{LCA}_{\mathfrak{A}})\cong
K_{0}(\mathfrak{A},\mathbb{R})$. So, when rephrasing the ETNC using this
sequence instead, one can get rid of relative $K$-groups and every term is
interpreted as an `absolute' $K$-group of a suitable category.

The idea for the above theorem can be explained as follows: By Deligne's work,
every exact category has a universal determinant functor to its virtual
objects. We shall deduce from computations of Clausen \cite{clausen} that the
universal determinant functor of the category of locally compact abelian (LCA)
groups is precisely the Haar measure, see Theorem \ref{thm_UnivOfHaarTorsor}.
Since classical Tamagawa numbers stem from the Haar measure, one may now
speculate that the universal determinant functor of the \textit{equivariant}
counterpart, i.e. LCA groups with an action of $\mathfrak{A}$ (which is
exactly our category $\mathsf{LCA}_{\mathfrak{A}}$!), should be the right
receptacle for equivariant Tamagawa measures. However, since so far the
literature has used the relative $K$-group $K_{0}(\mathfrak{A},\mathbb{R})$
instead, there is no reasonable way to escape the hope that the groups must
actually agree. Our main theorem above shows that this speculation is spot on
for regular orders.

We should clarify that our methods are by no means a formal construction
transforming any sort of relative $K$-group into an absolute one. Indeed, our
methods work very specifically for the relative $K$-group \textit{with respect
to the reals}, by exploiting that $\mathbb{R}$ is a locally compact
topological field. Although we do not touch on this topic in this paper,
variations for $\mathbb{Q}_{p}$ or more generally other locally compact
topological fields should exist.

Our main theorem also yields an alternative description of $K_{0}%
(\mathfrak{A},\mathbb{R})$ by using Nenashev's generator and relator
presentation for $K_{1}$-groups.

\begin{theoremannounce}
Let $\mathfrak{A}$ be a regular order in a finite-dimensional semisimple
$\mathbb{Q}$-algebra. Then $K_{0}(\mathfrak{A},\mathbb{R})$ admits a
presentation where generators are double short exact sequences%
\[%
\xymatrix{
A \ar@<1ex>@{^{(}->}[r]^{p} \ar@<-1ex>@{^{(}->}[r]_{q} & B \ar@<1ex>@{->>}%
[r]^{r} \ar@<-1ex>@{->>}[r]_{s} & C,
}%
\]
for $A,B,C$ locally compact right $\mathfrak{A}$-modules, modulo suitable
relations coming from $(3\times3)$-diagrams (We explain this in detail in
\S \ref{subsect_Nenashev}).
\end{theoremannounce}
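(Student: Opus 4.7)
The plan is to reduce the statement directly to Nenashev's presentation theorem for the first $K$-group of an exact category. First, specializing the preceding (main) theorem to $n=1$ yields a canonical isomorphism
\[
K_0(\mathfrak{A},\mathbb{R}) \;\cong\; K_1(\mathsf{LCA}_{\mathfrak{A}}),
\]
so it suffices to produce the asserted presentation for $K_1(\mathsf{LCA}_{\mathfrak{A}})$. Second, Nenashev's theorem does exactly this for any exact category: $K_1$ is generated by pairs of short exact sequences on a common triple of objects (``double short exact sequences''), modulo the relation that a pair whose two sequences coincide is zero, together with relations extracted from commutative $(3\times 3)$-diagrams all of whose rows and columns are themselves double short exact sequences. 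Transporting these generators and relations across the isomorphism above gives the desired presentation of $K_0(\mathfrak{A},\mathbb{R})$ in the language of locally compact right $\mathfrak{A}$-modules.

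What has to be checked is that $\mathsf{LCA}_{\mathfrak{A}}$ is an exact category of the kind to which Nenashev's theorem applies. This is precisely the exact structure already used earlier in the paper to define $K_n(\mathsf{LCA}_{\mathfrak{A}})$, so nothing new is needed; any set-theoretic smallness required by Nenashev is dealt with by replacing $\mathsf{LCA}_{\mathfrak{A}}$ by a skeleton, which does not alter its $K$-theory. One should also confirm that the admissible monomorphisms (resp.\ epimorphisms) in $\mathsf{LCA}_{\mathfrak{A}}$ are the expected strict injections with closed image (resp.\ strict continuous surjections), so that the notion of ``double short exact sequence'' appearing in the statement literally matches the one on which Nenashev's proof runs.

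The main obstacle is expository rather than mathematical: the $(3\times 3)$-relations must be spelled out unambiguously in the topologized setting, which is the task deferred to \S\ref{subsect_Nenashev}. Beyond this bookkeeping, the theorem is a formal consequence of Nenashev's presentation combined with the main isomorphism $K_1(\mathsf{LCA}_{\mathfrak{A}}) \cong K_0(\mathfrak{A},\mathbb{R})$, and no additional computation is required.
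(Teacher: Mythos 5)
Your proposal is correct and matches the paper's own proof: both simply apply Nenashev's presentation of $K_1$ to the exact category $\mathsf{LCA}_{\mathfrak{A}}$ and transport it across the main theorem's isomorphism $K_0(\mathfrak{A},\mathbb{R})\cong K_1(\mathsf{LCA}_{\mathfrak{A}})$. The extra checks you mention (exact structure, characterization of admissible monics/epics) are already supplied earlier in the paper, so nothing further is needed.
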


See Theorem \ref{thm_NenashevStylePresentation}. Unfortunately, the
identification of the image of $K_{0}(\mathfrak{A},\mathbb{Q})$ inside
$K_{1}(\mathsf{LCA}_{\mathfrak{A}})$ remains open at the moment. This would be
important for the formulation of Rationality in the ETNC. Moreover, the
restriction to regular orders is a nuisance and we hope to remove this
assumption in the future. Maximal orders, and more generally hereditary orders
are regular. Group rings $\mathbb{Z}[G]$ are only Gorenstein, we cannot handle
them at the moment. It is clear however what the issue is, since we can obtain
the analogue of our main theorem for $G$-theory for all orders, with no extra conditions.

\begin{theoremannounce}
Suppose $\mathfrak{A}$ is an arbitrary order in a finite-dimensional
semisimple $\mathbb{Q}$-algebra $A$. Then there is a long exact sequence%
\[
\cdots\longrightarrow G_{n}(\mathfrak{A})\longrightarrow K_{n}(A_{\mathbb{R}%
})\longrightarrow K_{n}(\mathsf{LCA}_{\mathfrak{A}})\longrightarrow
G_{n-1}(\mathfrak{A})\longrightarrow\cdots\text{,}%
\]
where $G_{n}(\mathfrak{A}):=K_{n}(\mathsf{Mod}_{\mathfrak{A},fg})$ denotes the
$K$-theory of the category of finitely generated right $\mathfrak{A}$-modules
(this is often called \textquotedblleft$G$-theory\textquotedblright).
\end{theoremannounce}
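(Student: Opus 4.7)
My plan is to follow the proof of Theorem~\ref{thm_main_asannounced} essentially verbatim, with the single substitution of the category $\mathsf{Proj}_{\mathfrak{A},fg}$ of finitely generated projective right $\mathfrak{A}$-modules (whose $K$-theory is $K_{n}(\mathfrak{A})$) by the larger exact category $\mathsf{Mod}_{\mathfrak{A},fg}$ of all finitely generated right $\mathfrak{A}$-modules (whose $K$-theory is $G_{n}(\mathfrak{A})$ by definition). Regularity in the main theorem is invoked precisely to apply the resolution theorem identifying $K_{n}(\mathfrak{A})$ with $G_{n}(\mathfrak{A})$; setting up the $K$-theory fiber sequence with $G$-theory directly should bypass this and eliminate the regularity hypothesis entirely.

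Concretely, one should produce a cofiber sequence of $K$-theory spectra
\[
G(\mathfrak{A}) \longrightarrow K(A_{\mathbb{R}}) \longrightarrow K(\mathsf{LCA}_{\mathfrak{A}}),
\]
whose first map is induced by the exact functor $-\otimes_{\mathfrak{A}} A_{\mathbb{R}}$ (exact because $A_{\mathbb{R}}$ is flat over $\mathfrak{A}$, landing in finitely generated projective $A_{\mathbb{R}}$-modules since $A_{\mathbb{R}}$ is semisimple over $\mathbb{R}$). The analytic side of the argument, which identifies the middle term as $K(A_{\mathbb{R}})$, involves only the structure of $A_{\mathbb{R}}$ as a semisimple $\mathbb{R}$-algebra and is therefore insensitive to any regularity assumption on $\mathfrak{A}$. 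The required long exact sequence is then the associated long exact sequence of homotopy groups.

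The main technical input is an equivariant upgrade of the classical Pontryagin structure theorem for objects of $\mathsf{LCA}_{\mathfrak{A}}$: every such object should admit a functorial filtration into a vector-module piece (a finitely generated $A_{\mathbb{R}}$-module with its real topology) and a piece built by extensions from finitely generated discrete $\mathfrak{A}$-modules together with their Pontryagin duals. Combined with a localization theorem for $K$-theory of exact categories (e.g.\ Schlichting's), this structure theorem produces the cofiber sequence; what is ``localized away'' is the full subcategory $\mathsf{Mod}_{\mathfrak{A},fg}$ of discrete finitely generated modules, not merely its projective part.

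The main obstacle --- and the intuitive explanation for why $G$-theory rather than $K$-theory arises when $\mathfrak{A}$ is not regular --- is that a compact $\mathfrak{A}$-module is in general Pontryagin dual to an arbitrary finitely generated discrete $\mathfrak{A}$-module, and not to a projective one. In the regular case, a finite projective resolution lets one re-express the corresponding class in $K_{n}(\mathfrak{A})$; in the non-regular case this maneuver is unavailable, and the class genuinely lives in $G_{n}(\mathfrak{A})$. Once the entire construction is carried out using $\mathsf{Mod}_{\mathfrak{A},fg}$ from the start, this issue disappears and the proof transfers from the regular setting.
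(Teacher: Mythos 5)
Your plan is essentially the paper's own proof: Theorem \ref{thm_Gthy_Version} is obtained by running Proposition \ref{prop_PrepareMain} (stated for arbitrary orders, with $K(\mathsf{Mod}_{\mathfrak{A},fg})$ already appearing as the fiber term) together with Propositions \ref{prop_CGToRC} and \ref{Prop_KRCEqualsKR} to replace the middle term by $K(A_{\mathbb{R}})$, i.e.\ the argument of Theorem \ref{thm_Main_b} with the single regularity-dependent step (the resolution-theorem identification $K(\mathfrak{A})\simeq K(\mathsf{Mod}_{\mathfrak{A},fg})$) omitted, and with regularity of $\mathfrak{A}$ replaced by Noetherianness and idempotent completeness of $\mathsf{Mod}_{\mathfrak{A},fg}$ when descending from non-connective to connective $K$-theory. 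One caution: your claim that the first map is induced by $M\mapsto M\otimes_{\mathfrak{A}}A_{\mathbb{R}}$ does not transfer verbatim, since the additivity argument identifying $g$ with $p_{2}$ in Step 2 of the proof of Theorem \ref{thm_Main_b} rests on the sequence $M\hookrightarrow M\otimes_{\mathfrak{A}}A_{\mathbb{R}}\twoheadrightarrow(M\otimes_{\mathfrak{A}}A_{\mathbb{R}})/M$, which fails to be exact once $M$ has torsion (the first arrow is then not injective); the theorem as stated does not name this map and the paper makes no such identification, so either drop that assertion or justify it separately (e.g.\ by first resolving finitely generated modules by $\mathbb{Z}$-torsion-free ones).
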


See Theorem \ref{thm_Gthy_Version}. There is always a morphism $K_{n}%
(\mathfrak{A})\rightarrow G_{n}(\mathfrak{A})$, but it need not be an
isomorphism in general. This result gives a hint how one may remove the
regularity assumption. To complete the overview, let us state the following
result, Theorem \ref{thm_UnivOfHaarTorsor}, which is essentially due to
Clausen, even though he has not spelled it out in this fashion in
\cite{clausen}.

\begin{theoremannounce}
The Haar functor $Ha:\mathsf{LCA}_{\mathbb{Z}}^{\times}\rightarrow
\mathsf{Tors}(\mathbb{R}_{>0}^{\times})$ is the universal determinant functor
of the category $\mathsf{LCA}_{\mathbb{Z}}$. Here

\begin{enumerate}
\item for any LCA\ group $G$, $Ha(G)$ denotes the $\mathbb{R}_{>0}^{\times}%
$-torsor of all Haar measures on $G$, and

\item Deligne's Picard groupoid of virtual objects for $\mathsf{LCA}%
_{\mathbb{Z}}$ turns out to be isomorphic to the Picard groupoid of
$\mathbb{R}_{>0}^{\times}$-torsors.
\end{enumerate}
\end{theoremannounce}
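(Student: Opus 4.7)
The plan is to invoke Deligne's characterization of virtual objects: a determinant functor $F\colon \mathcal{E}^\times \to \mathcal{P}$ to a Picard groupoid $\mathcal{P}$ is universal precisely when the induced morphism $V(\mathcal{E}) \to \mathcal{P}$ is an equivalence, and since a morphism of Picard groupoids is an equivalence iff it is bijective on $\pi_0$ and $\pi_1$, and $\pi_i V(\mathcal{E}) = K_i(\mathcal{E})$ for $i=0,1$, the task reduces to two steps: (i) exhibit $Ha$ as a determinant functor, and (ii) check that it induces isomorphisms on $K_0$ and $K_1$ of $\mathsf{LCA}_{\mathbb{Z}}$ and $\mathsf{Tors}(\mathbb{R}_{>0}^\times)$ respectively.

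For (i), I would define $Ha(G)$ as the $\mathbb{R}_{>0}^\times$-torsor of Haar measures on $G$. Given an admissible short exact sequence $0 \to A \to B \to C \to 0$ in $\mathsf{LCA}_{\mathbb{Z}}$, the quotient-integral formula furnishes a canonical isomorphism $Ha(A) \otimes Ha(C) \cong Ha(B)$; iterated Fubini on a $(3\times 3)$-diagram supplies the associativity coherence required by Deligne's axioms and so promotes $Ha$ to a genuine determinant functor. Compatibility with direct sums reduces to the product of measures.

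For (ii), $\mathsf{Tors}(\mathbb{R}_{>0}^\times)$ tautologically has $\pi_0 = 0$ and $\pi_1 = \mathbb{R}_{>0}^\times$. On the other side, Clausen's localization theorem supplies a fiber sequence $K(\mathbb{Z}) \to K(\mathbb{R}) \to K(\mathsf{LCA}_{\mathbb{Z}})$, yielding in low degrees the exact sequence $\{\pm 1\} \hookrightarrow \mathbb{R}^\times \to K_1(\mathsf{LCA}_{\mathbb{Z}}) \to \mathbb{Z} \xrightarrow{\sim} \mathbb{Z} \to K_0(\mathsf{LCA}_{\mathbb{Z}}) \to 0$. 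The rightmost map on $K_0$ is the identity of $\mathbb{Z}$, so $K_0(\mathsf{LCA}_{\mathbb{Z}}) = 0$ and $K_1(\mathsf{LCA}_{\mathbb{Z}}) \cong \mathbb{R}^\times/\{\pm 1\} = \mathbb{R}_{>0}^\times$. The map on $\pi_1$ induced by $Ha$ sends the class of an automorphism $\cdot \lambda \in \mathrm{Aut}(\mathbb{R})$ to the action on Haar measure, namely the modulus $|\lambda|$; this is precisely the surjection $\mathbb{R}^\times \twoheadrightarrow \mathbb{R}_{>0}^\times$ with kernel $\{\pm 1\}$ appearing in the sequence above, so it descends to the desired isomorphism.

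The main obstacle is step (i): turning the classical Fubini identity into the coherence datum demanded by Deligne's definition, and in particular verifying that the $(3\times 3)$-associativity isomorphism is literally the identity of torsors (no hidden positive scalar). Closely related is the normalization issue in step (ii): one must check that the sign conventions in Clausen's connecting map agree with the convention used on $Ha$, so that the induced $\pi_1$-morphism is the absolute value rather than a twist of it by some element of $\mathbb{R}_{>0}^\times$. Once these bookkeeping matters are controlled, the universal property falls out of the homotopy-group comparison above.
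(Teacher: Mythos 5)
Your proposal is correct and follows essentially the same route as the paper: verify that $Ha$ is a determinant functor via the Fubini-type compatibility, factor it through Deligne's virtual objects, then use Clausen's fiber sequence $K(\mathbb{Z})\to K(\mathbb{R})\to K(\mathsf{LCA}_{\mathbb{Z}})$ to get $K_0(\mathsf{LCA}_{\mathbb{Z}})=0$ and $K_1(\mathsf{LCA}_{\mathbb{Z}})\cong\mathbb{R}_{>0}^{\times}$, and conclude by the $\pi_0/\pi_1$ criterion for equivalences of Picard groupoids. The normalization worry you flag is resolved in the paper exactly as you suggest, by observing that the induced $\pi_1$-map is the modulus, i.e.\ comparison of a trivialization of the Haar torsor with its pullback along an automorphism, which is the absolute value map $\mathbb{R}^{\times}/\{\pm1\}\overset{\sim}{\to}\mathbb{R}_{>0}^{\times}$.
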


Our central concept, the \textit{equivariant Haar measure}, will generalize
this theorem from $\mathsf{LCA}_{\mathbb{Z}}$ to $\mathsf{LCA}_{\mathfrak{A}}
$. However, this is in a way tautological since, guided by the above theorem,
we simply define it as the universal determinant functor of the category
$\mathsf{LCA}_{\mathfrak{A}}$. Thus, studying the categorical properties of
$\mathsf{LCA}_{\mathfrak{A}}$ will be the key tool in the paper.

Recall the notation $A_{p}:=A\otimes_{\mathbb{Q}}\mathbb{Q}_{p}$ and
$\widehat{A}:=A\otimes_{\mathbb{Z}}\widehat{\mathbb{Z}}$. We also prove the
following reciprocity law.

\begin{theoremannounce}
[Reciprocity Law]Let $\mathfrak{A}$ be an arbitrary order in a
finite-dimensional semisimple $\mathbb{Q}$-algebra $A$. Then the composition
of natural maps%
\[
K(A)\longrightarrow K(\widehat{A})\oplus K(A_{\mathbb{R}})\longrightarrow
K(\mathsf{LCA}_{\mathfrak{A}})
\]
is zero.
\end{theoremannounce}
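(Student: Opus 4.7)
My plan is to exhibit a short exact sequence of exact functors $\mathsf{Proj}(A) \to \mathsf{LCA}_{\mathfrak{A}}$ and apply Waldhausen's additivity theorem. The key input will be the adelic short exact sequence: for every $M \in \mathsf{Proj}(A)$, the diagonal embedding produces a short exact sequence
\[
0 \longrightarrow M \longrightarrow (M \otimes_{A} \widehat{A}) \oplus (M \otimes_{A} A_{\mathbb{R}}) \longrightarrow Q(M) \longrightarrow 0
\]
in $\mathsf{LCA}_{\mathfrak{A}}$, where $M$ carries the discrete topology and the cokernel $Q(M) := M \otimes_{A} ((\widehat{A} \oplus A_{\mathbb{R}})/A)$ is compact. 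The construction is functorial in $M$, so this really is a short exact sequence of exact functors.

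Applying Waldhausen additivity, one obtains in the mapping spectrum $\mathrm{Map}(K(A), K(\mathsf{LCA}_{\mathfrak{A}}))$ the identity
\[
[F_{\widehat{A}}] + [F_{\mathbb{R}}] = [F_{\mathrm{disc}}] + [F_{Q}],
\]
where $F_{\widehat{A}}(M) := M \otimes_{A} \widehat{A}$, $F_{\mathbb{R}}(M) := M \otimes_{A} A_{\mathbb{R}}$, $F_{\mathrm{disc}}(M) := M$ with the discrete topology, and $F_{Q}(M) := Q(M)$. The left-hand side is precisely the composition of the theorem, so the reciprocity reduces to proving $[F_{\mathrm{disc}}] + [F_{Q}] = 0$ in the mapping spectrum.

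The hard part will be this concluding cancellation. I plan to exploit Pontryagin duality on $\mathsf{LCA}_{\mathfrak{A}}$: by the self-duality of the adele sequence, obtained from the reduced trace pairing on $A$, the compact module $Q(M)$ is canonically isomorphic to the Pontryagin dual of the discrete module $\mathrm{Hom}_{A}(M, A)$. Pontryagin duality provides an exact self-equivalence of $\mathsf{LCA}_{\mathfrak{A}}$ up to opposite module structure and hence induces an involution on $K$-theory; the goal is to show that combining this involution with the dualization on $K(A)$ realizes $[F_{Q}]$ as $-[F_{\mathrm{disc}}]$. The principal obstacle I expect is making this sign-tracking precise at the spectrum level rather than just on $\pi_{0}$, where the statement is often trivial (e.g. $K_{0}(\mathsf{LCA}_{\mathbb{Z}}) = 0$). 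At the $K_{1}$-level over $\mathbb{Z}$, the cancellation encodes the classical product formula $\prod_{v}|q|_{v} = 1$ for $q \in \mathbb{Q}^{\times}$, so some genuine reciprocity input, rather than purely formal manipulation, should be expected.
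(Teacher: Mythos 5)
Your first half is exactly the right (and the paper's) strategy: the adelic short exact sequence of exact functors on $\operatorname*{PMod}(A)$, with the discrete copy of $M$ as kernel and a compact cokernel, plus additivity, reduces the theorem to showing $[F_{\mathrm{disc}}]+[F_{Q}]=0$. The gap is in the concluding step, which you leave as a plan rather than a proof, and the plan misdiagnoses where the content lies. No cancellation between the two terms is needed, and no Pontryagin-duality involution with spectrum-level sign tracking has to be constructed: each class vanishes separately. The functor $F_{\mathrm{disc}}$ factors through the full subcategory $\mathsf{LCA}_{\mathfrak{A},D}$ of discrete modules, and $F_{Q}$ factors through the full subcategory $\mathsf{LCA}_{\mathfrak{A},C}$ of compact modules; both of these have contractible $K$-theory by an Eilenberg swindle (countable coproducts of discrete modules are discrete, countable products of compact modules are compact by Tychonoff), so $[F_{\mathrm{disc}}]=0$ and $[F_{Q}]=0$ individually, and additivity alone finishes the proof. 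This is precisely how the paper argues (there with a single functor $f_{2}(M)=M\otimes_{\mathbb{Q}}\mathbb{A}$ into the full ad\`{e}les and $f_{2\ast}=f_{1\ast}+f_{3\ast}$).

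Your expectation that ``some genuine reciprocity input, rather than purely formal manipulation, should be expected'' has it backwards: in this framework the product formula is an \emph{output}, recovered from the purely topological swindle argument (this is exactly how Proposition \ref{prop_CC1} later exploits the theorem), not an input one must feed in. The duality route you sketch would also be technically awkward even if pursued: $(-)^{\vee}$ lands in left modules $\left._{\mathfrak{A}}\mathsf{LCA}\right.$, so it does not directly give an involution of $K(\mathsf{LCA}_{\mathfrak{A}})$, and comparing $Q(M)$ with the dual of $\operatorname*{Hom}_{A}(M,A)$ would force you through $K$-theory of the opposite algebra — all unnecessary once you notice the two swindles.
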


See Theorem \ref{thm_Recip}. This can be used to understand the functors%
\[
K(A_{p})\longrightarrow K(\mathsf{LCA}_{\mathfrak{A}})\text{,}%
\]
originating from the $p$-adic completion of $A$, sending a projective $A_{p}%
$-module to itself, equipped with the $p$-adic topology. We demonstrate how to
use this for $p$-adic computations in Proposition \ref{prop_CC1}.

\begin{acknowledgement}
I heartily thank P. Arndt, B. Chow, D. Clausen, A. Huber, B. Morin for
discussions and input. We heartily thank the Freiburg Institute for Advanced
Studies (FRIAS) for providing truly wonderful working conditions and a very
inspiring environment.
\end{acknowledgement}

\section{\label{subsect_HaarIsDetFunctor}Where does this picture come from?}

We will now give a detailed exposition how the category $\mathsf{LCA}%
_{\mathfrak{A}}$ shows up. Those who are in a hurry may consider jumping to
\S \ref{sect_CatFramework} right away. On the other hand, this section may
serve as a survey.

Before Bloch and Kato proposed to interpret Tamagawa numbers in terms of
realizations of motives, Tamagawa numbers would just stem from the evaluation
of integrals on adelic points of algebraic groups; see the famous map of the
`land of Tamagawa numbers' \cite[bottom of p 336]{MR1086888}, and generally
the introduction of their article.

Measures or volumes have since often been interpreted in terms of determinant
functors in the sense of Deligne \cite{MR902592}, \cite{MR1914072}. We refer
the reader to \S \ref{section_HaarMeasureDet} for a quick summary. The
determinant line is a functor defined for all \textit{commutative} rings. If,
say, $R$ happens to be an integral domain (or more generally if
$\operatorname*{Spec}R$ is connected), then it is%
\[
\operatorname*{PMod}(R)^{\times}\longrightarrow\mathsf{Pic}_{R}^{\mathbb{Z}%
}\text{,}\qquad\qquad M\longmapsto\left(  \bigwedge
\nolimits^{\operatorname*{top}}M,\operatorname*{rk}M\right)  \text{,}%
\]
where $\operatorname*{PMod}(R)$ is the exact category of finitely generated
projective $R$-modules. $\operatorname*{PMod}(R)^{\times}$ denotes the same
category but with all non-isomorphisms deleted from the collection of
morphisms, and $\mathsf{Pic}_{R}^{\mathbb{Z}}$ is the Picard groupoid of
$\mathbb{Z}$-graded lines: Its objects are pairs $(L,n)$, where $L$ is a rank
one free $R$-module and $n\in\mathbb{Z}$. For a full definition and
description of the Picard groupoid structure we refer to \cite[\S 2.5]%
{MR1884523}, \cite[Chapter 1]{MR0437541}. For $R=\mathbb{R}$ one might hope
that this would give the Haar measure, in the following sense:

The top exterior power $\bigwedge\nolimits^{\operatorname*{top}}M$ corresponds
to a top form against which we can integrate. Nonetheless, this picture is
slightly flawed. The issue is that in the category $\operatorname*{PMod}%
(\mathbb{R})$ we have, vaguely speaking, orientation-reversing automorphisms
like $\mathbb{R}^{n}\rightarrow\mathbb{R}^{n}$, $x\mapsto-x$, changing the
sign of the top form. This differs from the Haar measure, which always returns
non-negative volumes. So there is a difference:%
\[%
\begin{tabular}
[c]{ccc}%
$\underset{\mathbb{R}^{n}}{%
{\displaystyle\int}
}f\,\mathrm{d}\omega$ & $\qquad\qquad$ & $\underset{\mathbb{R}^{n}}{%
{\displaystyle\int}
}f\,\left\vert \mathrm{d}\omega\right\vert $.\\
top exterior power &  & Haar measure\\
(signed) & $\qquad\qquad$ & (unsigned)
\end{tabular}
\]
This might just seem like an innocent subtlety, but the homological
implications of this difference run deeper. For example, the Picard groupoid
$\mathsf{Pic}_{R}^{\mathbb{Z}}$ has the non-trivial symmetry constraint%
\[%
\xymatrix@C=0.8in{
(L,n)\otimes(L^{\prime},n^{\prime})\cong(L\otimes_{R}L^{\prime},n+n^{\prime
}) \ar[r]^{{(-1)^{nn^{\prime}}\cdot s}} & (L^{\prime}\otimes_{R}L,n+n^{\prime
})\cong(L^{\prime},n^{\prime})\otimes(L,n)\text{,}
}%
\]
where $s:L\otimes_{R}L^{\prime}\cong L^{\prime}\otimes_{R}L$ denotes the usual
plain symmetry constraint of the symmetric monoidal structure of
$\operatorname*{PMod}(R)$. This structure only stems from the signed nature
and in the Haar setting there is no interplay between a sign and the ranks
$n,n^{\prime}$.

We will now be heading towards the viewpoint that once the Haar measure has
been set up carefully as a determinant functor in the sense of Deligne, the
corresponding $\mathfrak{A}$-equivariant analogue naturally takes values in
$K_{0}(\mathfrak{A},\mathbb{R})$ instead of $\mathbb{R}_{>0}^{\times}$ (and
this aspect cannot be seen when using the top exterior power determinant line
instead, because already in the non-equivariant setting there is a difference
between the two when it comes to handling the signs).

This viewpoint supplies \textquotedblleft\textit{In our approach Tamagawa
numbers are then elements of a relative algebraic }$K$\textit{-group}
$K_{0}(\mathfrak{A},\mathbb{R})$\textquotedblright\ \cite[p. 502]{MR1884523}
with a very concrete philosophical interpretation.\medskip

If $G$ is a locally compact abelian (LCA) group, it carries a Haar measure
$\mu_{G}$, well-defined up to rescaling by a positive real constant, see
\cite[Ch. II, \S 5]{MR1622489} for a review. If%
\[
G^{\prime}\hookrightarrow G\twoheadrightarrow G^{\prime\prime}%
\]
denotes an exact sequence of LCA groups, and given choices of Haar measures
$\mu_{G^{\prime}}$, $\mu_{G^{\prime\prime}}$ on $G^{\prime}$ and
$G^{\prime\prime}$ respectively, then there is a unique choice of Haar measure
$\mu_{G}$ on $G$ such that%
\begin{equation}
\int_{G}f(g)\mathrm{d}\mu_{G}=\int_{G^{\prime}}\int_{G^{\prime\prime}%
}f(g^{\prime}+g^{\prime\prime})\mathrm{d}\mu_{G^{\prime\prime}}\mathrm{d}%
\mu_{G^{\prime}}\label{lccac1}%
\end{equation}
holds for all $f\in C_{c}(G)$ (compactly supported continuous functions). In
particular, $\mu_{G}$ induces $\mu_{G^{\prime}}$ on $G^{\prime}$, and
$\mu_{G^{\prime\prime}}$ is the corresponding quotient measure. This is the
fundamental compatibility construction for Haar measures in exact sequences.

(a) These constructions give rise to the \textit{modulus}: Suppose
$f:G\overset{\sim}{\rightarrow}G$ is an automorphism of the LCA group. Then
for any choice of the Haar measure $\mu_{G}$, the pullback $f^{\ast}\mu_{G}$
is again a Haar measure. Being unique up to a positive scalar, this gives some
unique $c\in\mathbb{R}_{>0}^{\times}$ such that $\mu_{G}=c\cdot f^{\ast}%
\mu_{G}$. This construction can be promoted to a group homomorphism%
\[
\left\vert -\right\vert :\operatorname*{Aut}(G)\longrightarrow\mathbb{R}%
_{>0}^{\times}\text{.}%
\]
This homomorphism is well-defined and not longer dependent on the initial
choice of $\mu_{G}$. See \cite[Ch. II, \S 5.6]{MR1622489}

(b) If $G$ is a discrete (resp. compact) LCA group, one still can choose a
Haar measure, but there is additionally a canonical choice, namely $\mu
_{G}(\{g\}):=1$ for each element, i.e.%
\[
\mu_{G}(U):=\#U\text{\qquad(the counting measure)}%
\]
if $G$ is discrete, or the normalization $\mu_{G}(G)=1$ in the case if $G$ is
compact. So the cases of discrete or compact groups are special in the sense
that there is a canonical normalization.

(c) The exact sequence compatibility of the Haar measure amounts to the
following property: If%
\[%
\xymatrix{
G^{\prime} \ar@{^{(}->}[r] \ar[d]^{\sim}_{f} & G \ar@{->>}[r] \ar[d]^{\sim
}_{g} & G^{\prime\prime} \ar[d]^{\sim}_{h} \\
G^{\prime} \ar@{^{(}->}[r] & G \ar@{->>}[r] & G^{\prime\prime} \\
}%
\]
is a commutative diagram, then the modulus satisfies%
\begin{equation}
\left\vert g\right\vert =\left\vert f\right\vert \cdot\left\vert h\right\vert
\text{.}\label{lcey3b}%
\end{equation}
One can extract further laws describing the behaviour of Haar measures in
complexes, or more complicated commutative diagrams, e.g., a $(3\times
3)$-Lemma etc. Such rules have been studied to some extent by Oesterl\'{e}
\cite[Appendice 4]{MR762353} and de Smit \cite{MR1383444}.

This concludes our quick summary of the Haar measure, from the viewpoint of
picking an (inevitably non-canonical) choice of a normalization on each group.
However, there is also a completely different viewpoint of the same issue:

As was pointed out by Hoffmann and Spitzweck, the category $\mathsf{LCA}$ of
locally compact groups is an exact category \cite{MR2329311}. As a result, it
comes with its own universal determinant functor in the sense of Deligne
\cite{MR902592}:%
\[
\det:\mathsf{LCA}^{\times}\longrightarrow V(\mathsf{LCA})\text{,}%
\]
where $V(\mathsf{LCA})$ denotes the universal Picard groupoid of the category,
also known as Deligne's category of `virtual objects'.

Now, the properties of the Haar measure which we had recalled above, can be
used to construct a concrete determinant functor on $\mathsf{LCA}$ by hand: If
$A$ is an abelian group, recall that an $A$\emph{-torsor }$T$ is a simply
transitive $A$-set. Hence, for every $a\in A$ one gets an isomorphism of
$A$-sets $A\overset{\sim}{\rightarrow}T$, but there is no canonical choice for
such an identification. There is a tensor product%
\[
T\otimes T^{\prime}:=\{(t,t^{\prime})\in T\times T^{\prime}\}/\left\langle
(t,t^{\prime})\sim(s,s^{\prime}):\Leftrightarrow\exists a\in A\text{ such that
}t=a+s,t^{\prime}=a+s^{\prime}\right\rangle \text{.}%
\]
The $A$-torsors form a category $\mathsf{Tors}(A)$, and indeed a Picard
groupoid. The group $A$ itself with its natural $A$-set structure acts as the
unit $1_{\mathsf{Tors}(A)}$, and $T^{-1}$ is defined by letting $A$ act negatively.

Now, define the \emph{Haar functor}%
\begin{equation}
Ha(-):\mathsf{LCA}^{\times}\longrightarrow\mathsf{Tors}(\mathbb{R}%
_{>0}^{\times})\label{lcey4a}%
\end{equation}
by sending each LCA group $G$ to its set of Haar measures. Since the rescaling
by a positive scalar acts simply transitively on them, this is an
$\mathbb{R}_{>0}^{\times}$-torsor.

Let us recast the functoriality properties of the Haar measure in this
language: (a) If $f:G\overset{\sim}{\rightarrow}G$ denotes an automorphism,
then any trivialization of the Haar torsor, i.e. an isomorphism of
$\mathbb{R}_{>0}^{\times}$-sets $t:\mathbb{R}_{>0}^{\times}\overset{\sim
}{\rightarrow}Ha(G)$ gives a concrete measure $\mathrm{d}\mu_{G}\in Ha(G)$
attached to some real number, as does $f^{\ast}\mathrm{d}\mu_{G}$. The modulus
$\left\vert f\right\vert $ is the fraction of these numbers. This corresponds
to the canonical morphism%
\begin{equation}
\operatorname*{Aut}(G)\longrightarrow\pi_{1}(\mathsf{Tors}(\mathbb{R}%
_{>0}^{\times}))\cong\mathbb{R}_{>0}^{\times}\label{lcey4b}%
\end{equation}
coming from comparing trivializations. (c) For every short exact sequence
$G^{\prime}\hookrightarrow G\twoheadrightarrow G^{\prime\prime}$ the
compatibility of Equation \ref{lccac1} amounts to a canonical isomorphism of
torsors,%
\begin{equation}
Ha(G)\overset{\sim}{\longrightarrow}Ha(G^{\prime})\underset{\mathsf{Tors}%
(\mathbb{R}_{>0}^{\times})}{\otimes}Ha(G^{\prime\prime})\text{.}\label{lcey3a}%
\end{equation}
Taken altogether, this gives the fundamental datum of a determinant functor,
see Equation \ref{lcey0}.

So far, we have skipped over the analogue of (b). Here the story differs a
little: Write $\mathsf{LCA}_{D}$ (resp. $\mathsf{LCA}_{C}$) for the full
subcategories of $\mathsf{LCA}$ of discrete (resp. compact)\ LCA\ groups.
Restricted to these categories, the Haar torsor trivializes.

This can be seen by a variant of the Eilenberg swindle: Suppose $C\in
\mathsf{LCA}_{C}$ is compact. Then there is an exact sequence%
\begin{equation}
C\hookrightarrow\prod_{\mathbb{Z}}C\twoheadrightarrow\prod_{\mathbb{Z}%
}C\text{,}\label{lcey3d}%
\end{equation}
using that the product of compact groups is again compact (Tychonoff's
Theorem). Equation \ref{lcey3a} provides us with a canonical isomorphism
$Ha(\prod_{\mathbb{Z}}C)\overset{\sim}{\longrightarrow}Ha(C)\otimes
Ha(\prod_{\mathbb{Z}}C)$, and $\otimes$-multiplying with the inverse of
$Ha(\prod_{\mathbb{Z}}C)$, we get an isomorphism%
\[
t_{C}:1_{\mathsf{Tors}(A)}\overset{\sim}{\longrightarrow}Ha(C)\text{,}%
\]
i.e. a canonical trivialization of the torsor. This is also compatible with
exact sequences, i.e. if $C^{\prime}\hookrightarrow C\twoheadrightarrow
C^{\prime\prime}$ is an exact sequence of compact groups, the trivializations
$t_{(-)}$ for $C^{\prime},C,C^{\prime\prime}$ sit in a commutative diagram.

This argument also shows that the modulus is trivial: Firstly, the property of
Equation \ref{lcey3b} applied to%
\begin{equation}%
\xymatrix{
0 \ar@{^{(}->}[r] \ar[d]^{\sim}_{1} & {\prod_{\mathbb{Z}}C} \ar@{->>}%
[r]^{\sim} \ar[d]^{\sim}_{g} & {\prod_{\mathbb{Z}}C} \ar[d]^{\sim}_{h} \\
0 \ar@{^{(}->}[r] & {\prod_{\mathbb{Z}}C} \ar@{->>}[r]_{\sim} & {\prod
_{\mathbb{Z}}C} \\
}%
\label{lcey3c}%
\end{equation}
shows that $\left\vert g\right\vert =\left\vert h\right\vert $, i.e. the
modulus agrees when the automorphisms commute with an isomorphism. Every
automorphism $f:C\overset{\sim}{\rightarrow}C$ induces an automorphism
factor-wise to $\prod_{\mathbb{Z}}C$, call it $g$. Hence, applied to the
diagram%
\[%
\xymatrix{
C \ar@{^{(}->}[r] \ar[d]^{\sim}_{f} & {\prod_{\mathbb{Z}}C} \ar@{->>}%
[r] \ar[d]^{\sim}_{g} & {\prod_{\mathbb{Z}}C} \ar[d]^{\sim}_{h} \\
C \ar@{^{(}->}[r] & {\prod_{\mathbb{Z}}C} \ar@{->>}[r] & {\prod_{\mathbb{Z}}C}
\\
}%
\]
whose rows are made from Equation \ref{lcey3d}, the same property shows that
$\left\vert g\right\vert =\left\vert f\right\vert \cdot\left\vert h\right\vert
$, where $h$ is the induced map on the quotient. However, by our previous
observation from Equation \ref{lcey3c}, $\left\vert g\right\vert =\left\vert
h\right\vert $, so we deduce $\left\vert f\right\vert =1$. The automorphism
$f$ was arbitrary, and it follows that the modulus is always $1$ for every
compact group.

For discrete groups $D\in\mathsf{LCA}_{D}$ instead take the short exact
sequence $D\hookrightarrow\coprod_{\mathbb{Z}}D\twoheadrightarrow
\coprod_{\mathbb{Z}}D$ based on coproducts. These are still discrete. The same
argument goes through.

While the argument here might look a little different, this is the torsor
reformulation of property (b) saying that compact and discrete groups have a
canonical normalization.

We summarize our observations as follows.

\begin{lemma}
\label{lemma_ConstraintDetFunctor}Let $G\in\mathsf{LCA}$ be a compact group or
a discrete group. Then for every determinant functor $\mathcal{P}%
:\mathsf{C}^{\times}\longrightarrow\mathsf{P}$, with $\mathsf{P}$ some Picard
groupoid, the value $\mathcal{P}(f)\in\pi_{1}(\mathsf{P})$ of any automorphism
$f:G\overset{\sim}{\rightarrow}G$ is trivial, i.e. the neutral element of the group.
\end{lemma}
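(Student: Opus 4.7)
The plan is to abstract the Eilenberg--swindle argument already given in the excerpt for the Haar modulus, replacing $Ha$ by an arbitrary determinant functor $\mathcal{P}\colon \mathsf{LCA}^{\times}\to\mathsf{P}$. Only one formal fact about determinant functors is needed, namely the analogue of Equation \ref{lcey3b}: for any admissible short exact sequence $A\hookrightarrow B\twoheadrightarrow C$ in $\mathsf{LCA}$ equipped with a commutative diagram of automorphisms $(\alpha,\beta,\gamma)$, the defining natural isomorphism $\mathcal{P}(B)\cong \mathcal{P}(A)\otimes\mathcal{P}(C)$ forces the identity $\mathcal{P}(\beta)=\mathcal{P}(\alpha)\cdot\mathcal{P}(\gamma)$ in $\pi_{1}(\mathsf{P})=\operatorname{Aut}_{\mathsf{P}}(\mathbf{1})$. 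This is a purely diagrammatic consequence of the axioms and should be established (or cited) as a short preliminary lemma.

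For $G=C$ compact, I would invoke the exact sequence $C\hookrightarrow\prod_{\mathbb{Z}}C\twoheadrightarrow\prod_{\mathbb{Z}}C$ from Equation \ref{lcey3d}; the middle term is in $\mathsf{LCA}$ by Tychonoff. Given any $f\in\operatorname{Aut}(C)$, factor-wise application of $f$ produces an automorphism $g$ of $\prod_{\mathbb{Z}}C$; it restricts to $f$ on the embedded copy of $C$, and the induced quotient automorphism $h$ coincides with $g$ once the quotient is identified with $\prod_{\mathbb{Z}}C$ via the natural shift isomorphism. The multiplicativity principle applied to this morphism of short exact sequences gives $\mathcal{P}(g)=\mathcal{P}(f)\cdot\mathcal{P}(h)$. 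Applying it instead to the trivial short exact sequence $0\hookrightarrow\prod_{\mathbb{Z}}C\stackrel{\sim}{\twoheadrightarrow}\prod_{\mathbb{Z}}C$ with vertical maps $(1,g,h)$ (diagram \ref{lcey3c}), or equivalently observing that $g$ and $h$ are conjugate via the shift isomorphism and that in a Picard groupoid conjugate automorphisms induce the same element of the abelian group $\pi_{1}(\mathsf{P})$, yields $\mathcal{P}(g)=\mathcal{P}(h)$. Combining the two equalities in $\pi_{1}(\mathsf{P})$ gives $\mathcal{P}(f)=1$.

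For $G=D$ discrete, the identical argument goes through with the product replaced by a countable coproduct $\coprod_{\mathbb{Z}}D$, which remains in $\mathsf{LCA}$ because coproducts of discrete groups are discrete. Everything else is formally the same.

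The only subtle point, and the place where a careful treatment is required, is the passage $\mathcal{P}(h)=\mathcal{P}(g)$: one has to be sure that (i) the multiplicativity axiom really holds in the abstract Deligne setting for every commutative triple of automorphisms of a short exact sequence, and (ii) the canonical identification $\operatorname{Aut}_{\mathsf{P}}(X)\cong\pi_{1}(\mathsf{P})$ is conjugation-invariant, so that automorphisms conjugate by an isomorphism in $\mathsf{LCA}$ really map to the same element of $\pi_{1}(\mathsf{P})$. Both facts are standard properties of Picard groupoids and of determinant functors, but they are exactly what makes the swindle work in the abstract setting; once they are in place the lemma follows immediately from the two computations above.
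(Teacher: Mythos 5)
Your proposal is correct and follows essentially the same route as the paper: the paper's proof simply observes that the swindle arguments of \S\ref{subsect_HaarIsDetFunctor} (the sequences $C\hookrightarrow\prod_{\mathbb{Z}}C\twoheadrightarrow\prod_{\mathbb{Z}}C$ and $D\hookrightarrow\coprod_{\mathbb{Z}}D\twoheadrightarrow\coprod_{\mathbb{Z}}D$ together with the multiplicativity of Equation \ref{lcey3b}) use only the axioms of Definition \ref{def_DetFunctor}, which is exactly the abstraction you carry out. Your two flagged prerequisites (functoriality of $\mathcal{P}(\Sigma)$ in morphisms of exact sequences giving $\mathcal{P}(\beta)=\mathcal{P}(\alpha)\cdot\mathcal{P}(\gamma)$, and the conjugation-invariant identification $\operatorname{Aut}_{\mathsf{P}}(X)\cong\pi_{1}(\mathsf{P})$) are precisely what the paper's one-line proof implicitly relies on.
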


\begin{proof}
A careful reading of the previous arguments shows that they only use axioms of
a determinant functor, as in Definition \ref{def_DetFunctor}, so they hold in
general. The construction of the modulus translates into mapping the
isomorphism in the Picard groupoid $\mathsf{P}$ to its class in $\pi_{1}$.
\end{proof}

Now let us compare the usual determinant line with the Haar measure:

\begin{example}
\label{ex_DetA1}Firstly, we consider the category $\mathsf{Vect}%
_{f}(\mathbb{R})$ of finite-dimensional real vector spaces. Deligne's
universal determinant functor is%
\[
\det:\mathsf{Vect}_{f}(\mathbb{R})^{\times}\longrightarrow\mathsf{Pic}%
_{\mathbb{R}}^{\mathbb{Z}}%
\]
with values in the Picard groupoid of graded lines. In particular, for any
automorphism $f:\mathbb{R}^{n}\overset{\sim}{\rightarrow}\mathbb{R}^{n}$, the
value lies in $\pi_{1}(\mathsf{Pic}_{\mathbb{R}}^{\mathbb{Z}})=\mathbb{R}%
^{\times}$. This can be interpreted as a signed/oriented measure. Every
finite-dimensional real vector space can also be regarded as an LCA group; the
functor $\psi_{\infty}:\mathsf{Vect}_{f}(\mathbb{R})\rightarrow\mathsf{LCA}$
is exact. However, no determinant functor on $\mathsf{LCA}$ can see the
orientation. To see this, consider the map of multiplication with $-1$, which
induces a map of exact sequences%
\[%
\xymatrix{
\mathbb{Z} \ar@{^{(}->}[r] \ar[d]^{\sim}_{-1} & {\mathbb{R}} \ar@{->>}%
[r] \ar[d]^{\sim}_{-1} & \mathbb{T} \ar[d]^{\sim}_{-1} \\
\mathbb{Z} \ar@{^{(}->}[r] & {\mathbb{R}} \ar@{->>}[r] & \mathbb{T}
}%
\]
and by Equation \ref{lcey3b} (or rather its analogue for general Picard
groupoids) we obtain $\mathcal{P}(-1_{\mathbb{R}})=\mathcal{P}(-1_{\mathbb{Z}%
})\cdot\mathcal{P}(-1_{\mathbb{T}})$. However, $\mathbb{Z}$ is
discrete\ (resp. $\mathbb{T}$ compact), so by Lemma
\ref{lemma_ConstraintDetFunctor}, we must have $\mathcal{P}(-1_{\mathbb{Z}%
})=1$, $\mathcal{P}(-1_{\mathbb{T}})=1$. Thus, $\mathcal{P}(-1_{\mathbb{R}%
})=1$. A more careful computation shows that the functor $\psi_{\infty}$
induces the map $\pi_{1}(\mathsf{Pic}_{R}^{\mathbb{Z}})\rightarrow\pi
_{1}(V(\mathsf{LCA}))$, $\alpha\mapsto\left\vert \alpha\right\vert $, i.e. the
Haar measure differs from the real determinant line just by forgetting the
sign. We give a rigorous proof for this claim in Proposition \ref{prop_CC1}
based on $K$-theory.
\end{example}

\begin{example}
\label{ex_DetA2}The analogous effect over the $p$-adics is more drastic. For
any $\alpha\in\mathbb{Z}_{p}^{\times}$ consider the map of exact sequences%
\[%
\xymatrix{
{\mathbb{Z}}_p \ar@{^{(}->}[r] \ar[d]^{\sim}_{\cdot\alpha} & {\mathbb{Q}_p}
\ar@{->>}[r] \ar[d]^{\sim}_{\cdot\alpha} & {{\mathbb{Q}}_p}/{{\mathbb{Z}}_p}
\ar[d]^{\sim}_{\cdot\alpha} \\
{\mathbb{Z}}_p \ar@{^{(}->}[r] & {\mathbb{Q}_p} \ar@{->>}[r] & {{\mathbb{Q}%
}_p}/{{\mathbb{Z}}_p} \text{.}
}%
\]
This time $\mathbb{Z}_{p}$ is compact and $\mathbb{Q}_{p}/\mathbb{Z}_{p}$
discrete, but again the same Lemma \ref{lemma_ConstraintDetFunctor} shows that
we must have $\mathcal{P}(\alpha_{\mathbb{Z}_{p}})=1$, $\mathcal{P}%
(\alpha_{\mathbb{Q}_{p}/\mathbb{Z}_{p}})=1$, and therefore $\mathcal{P}%
(\alpha_{\mathbb{Q}_{p}})=1$. Indeed, the difference between the $p$-adic
determinant line and the Haar measure is taking the $p$-adic absolute value%
\[
\pi_{1}(\mathsf{Pic}_{\mathbb{Q}_{p}}^{\mathbb{Z}})\longrightarrow\pi
_{1}(V(\mathsf{LCA}))\text{,}\qquad\alpha\longmapsto\left\vert \alpha
\right\vert _{p}\text{.}%
\]
Again, we see that the Haar measure is a lot coarser than the determinant
line.\ See Proposition \ref{prop_CC1} for a rigorous proof of this claim.
\end{example}

The previous examples show that the Haar torsor, although related, is really a
bit of a different invariant than the standard determinant line of a ring. The
following result is more or less implicit in the work of Clausen
\cite{clausen}:

\begin{theoremannounce}
The Haar functor $Ha:\mathsf{LCA}^{\times}\rightarrow\mathsf{Tors}%
(\mathbb{R}_{>0}^{\times})$ is the universal determinant functor of the
category $\mathsf{LCA}$. In particular, Deligne's Picard groupoid of virtual
objects for $\mathsf{LCA}$ is isomorphic to the Picard groupoid of
$\mathbb{R}_{>0}^{\times}$-torsors.
\end{theoremannounce}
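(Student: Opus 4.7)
The plan is to invoke Deligne's construction of the universal determinant functor, which identifies $V(\mathsf{LCA})$ as a Picard groupoid with the 1-truncation of the $K$-theory space $K(\mathsf{LCA})$. Under this identification $\pi_0 V(\mathsf{LCA}) = K_0(\mathsf{LCA})$ and $\pi_1 V(\mathsf{LCA}) = K_1(\mathsf{LCA})$, and the Picard-groupoid structure (including the symmetry constraint) is encoded by the $k$-invariant of this 1-type. Thus the claim reduces to three tasks: (i) compute the relevant $K$-groups, (ii) verify that the resulting 1-type matches $\mathsf{Tors}(\mathbb{R}_{>0}^{\times})$ as a Picard groupoid, and (iii) identify the Haar functor with the canonical map under this equivalence.

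For (i), I would invoke Clausen's fiber sequence
\[
K(\mathbb{Z})\longrightarrow K(\mathbb{R})\longrightarrow K(\mathsf{LCA})
\]
and read off the relevant piece of the long exact sequence. Using $K_{-1}(\mathbb{Z})=0$ together with the fact that $K_0(\mathbb{Z})\to K_0(\mathbb{R})$ is the rank isomorphism $\mathbb{Z}\to\mathbb{Z}$, one gets $K_0(\mathsf{LCA})=0$. The piece in degree $1$ reads $\{\pm 1\}=K_1(\mathbb{Z})\to K_1(\mathbb{R})=\mathbb{R}^\times\to K_1(\mathsf{LCA})\to 0$, so $K_1(\mathsf{LCA})\cong\mathbb{R}^\times/\{\pm 1\}=\mathbb{R}_{>0}^\times$. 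For (ii), since $\pi_0$ vanishes, the 1-type has no nontrivial $k$-invariant and the Picard groupoid is abstractly determined by the abelian group $\pi_1$; the symmetry constraint is automatically the trivial one, matching the evident structure on $\mathsf{Tors}(\mathbb{R}_{>0}^\times)$. (This is where the discussion around $\mathsf{Pic}_R^{\mathbb{Z}}$ and its sign contribution becomes relevant: because $K_0(\mathsf{LCA})=0$ there is simply no room for sign contributions, in contrast to the classical graded-line setting.)

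For (iii), the exact-sequence compatibility recalled in Equation~\ref{lcey3a}, together with the observation that $Ha$ kills automorphisms of compact or discrete groups (Lemma \ref{lemma_ConstraintDetFunctor}), exhibit $Ha$ as a determinant functor into $\mathsf{Tors}(\mathbb{R}_{>0}^\times)$. By the universal property of $V(\mathsf{LCA})$ this produces a morphism of Picard groupoids $V(\mathsf{LCA})\to\mathsf{Tors}(\mathbb{R}_{>0}^\times)$. To conclude it is an equivalence, I check it on homotopy groups. On $\pi_0$ both sides vanish. On $\pi_1$ one traces through the boundary map in Clausen's sequence: the class of multiplication by $\alpha\in\mathbb{R}^\times$ in $K_1(\mathbb{R})$ is sent to the class in $K_1(\mathsf{LCA})$ of the same scalar automorphism $\alpha:\mathbb{R}\xrightarrow{\sim}\mathbb{R}$ viewed as an LCA automorphism; the Haar functor evaluates on this to the modulus $|\alpha|\in\mathbb{R}_{>0}^\times$. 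Hence the induced map $K_1(\mathbb{R})\to\mathbb{R}_{>0}^\times$ is the absolute value, whose kernel $\{\pm 1\}$ is exactly the image of $K_1(\mathbb{Z})$, so the induced map $K_1(\mathsf{LCA})\to\mathbb{R}_{>0}^\times$ is an isomorphism.

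The main obstacle is really the last compatibility in step (iii): one must verify that the connecting map $K_1(\mathbb{R})\to K_1(\mathsf{LCA})$ in Clausen's fiber sequence sends scalar multiplication $\alpha$ to the evident LCA automorphism, so that it matches the definition of the modulus. This requires unwinding Clausen's construction of the sequence (localization of the embedding $\operatorname{PMod}(\mathbb{Z})\hookrightarrow\mathsf{LCA}$ along $-\otimes\mathbb{R}$, or equivalently the fact that $\mathsf{LCA}$ arises as a quotient in the sense of localization of exact categories), and tracing a single generator through the identifications. Everything else is a formal consequence of the universal property plus the vanishing of $\pi_0$.
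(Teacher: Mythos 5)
Your proposal is correct and follows essentially the same route as the paper: exhibit $Ha$ as a determinant functor, use Deligne's universal property to produce $F:V(\mathsf{LCA})\to\mathsf{Tors}(\mathbb{R}_{>0}^{\times})$, compute $K_{0}(\mathsf{LCA})=0$ and $K_{1}(\mathsf{LCA})\cong\mathbb{R}_{>0}^{\times}$ from the fiber sequence $K(\mathbb{Z})\to K(\mathbb{R})\to K(\mathsf{LCA})$, and conclude that $F$ is an equivalence by checking $\pi_{0}$ and $\pi_{1}$, identifying the map on $\pi_{1}$ with the modulus. The only cosmetic difference is your aside about the vanishing $k$-invariant, which the paper replaces by directly citing that a symmetric monoidal functor of Picard groupoids inducing isomorphisms on $\pi_{0}$ and $\pi_{1}$ is an equivalence.
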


This will be Theorem \ref{thm_UnivOfHaarTorsor} below. This result motivates
to define an \textquotedblleft equivariant Haar measure\textquotedblright%
\ simply by replacing $\mathsf{LCA}$ by the category $\mathsf{LCA}%
_{\mathfrak{A}}$ of locally compact $\mathfrak{A}$-modules and taking the
universal determinant functor of this category. By the above theorem, it
follows that for the non-equivariant setting $\mathfrak{A}:=\mathbb{Z}$ we get
the classical Haar measure.

\begin{definition}
\label{def_EquivHaarMeasure}Let $\mathfrak{A}$ be an order in a
finite-dimensional semisimple $\mathbb{Q}$-algebra $A$. Define the
\emph{equivariant Haar measure} functor to be Deligne's universal determinant
functor of the exact category $\mathsf{LCA}_{\mathfrak{A}}$, i.e.%
\[
Ha_{\mathfrak{A}}:\mathsf{LCA}_{\mathfrak{A}}^{\times}\longrightarrow
V(\mathsf{LCA}_{\mathfrak{A}})\text{,}%
\]
where $V(\mathsf{LCA}_{\mathfrak{A}})$ denotes Deligne's virtual objects in
this category. We may also call $V(\mathsf{LCA}_{\mathfrak{A}})$ the Picard
groupoid of \emph{equivariant volumes}.
\end{definition}

Now, let us connect this back to the ETNC.

\begin{theoremannounce}
Let $\mathfrak{A}$ be a regular order in a finite-dimensional semisimple
$\mathbb{Q}$-algebra $A$. Then%
\[
\pi_{1}V(\mathsf{LCA}_{\mathfrak{A}})\cong K_{0}(\mathfrak{A},\mathbb{R}%
)\text{,}%
\]
and, stronger, the Picard groupoid of equivariant volumes $V(\mathsf{LCA}%
_{\mathfrak{A}})$ is equivalent to the $1$-truncation of the $(-1)$-shift of
the fiber of the morphism $K(\mathfrak{A})\longrightarrow K(A_{\mathbb{R}})$.
\end{theoremannounce}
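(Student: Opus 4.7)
The plan is to deduce this statement directly from the paper's main theorem, combined with the standard identification of Deligne's Picard groupoid of virtual objects as a truncation of $K$-theory.

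First, I would invoke the (by now standard) equivalence asserting that for any exact category $\mathcal{C}$, Deligne's Picard groupoid $V(\mathcal{C})$ is canonically equivalent to the $1$-truncation $\tau_{\leq 1} K(\mathcal{C})$ of the $K$-theory spectrum, viewed as a strictly commutative Picard groupoid. In particular, $\pi_0 V(\mathcal{C}) \cong K_0(\mathcal{C})$ and $\pi_1 V(\mathcal{C}) \cong K_1(\mathcal{C})$. This identification should already be recorded in the paper's categorical framework section, as it is what turns questions about equivariant volumes into $K$-theoretic computations.

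Next, I would upgrade the long exact sequence from the main theorem to an actual fiber sequence of spectra
\[
K(\mathfrak{A}) \longrightarrow K(A_{\mathbb{R}}) \longrightarrow K(\mathsf{LCA}_{\mathfrak{A}}).
\]
This is implicit in the construction underlying the main theorem: one realises $\mathsf{LCA}_{\mathfrak{A}}$ (or a suitable subcategory of it) as the cofiber in a Quillen-type localisation sequence relating $\mathsf{PMod}(\mathfrak{A})$ and $\mathsf{PMod}(A_{\mathbb{R}})$, with regularity of $\mathfrak{A}$ ensuring that projective modules suffice. Consequently $K(\mathsf{LCA}_{\mathfrak{A}})$ identifies with the appropriate shift of the fiber of $K(\mathfrak{A}) \to K(A_{\mathbb{R}})$. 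Applying $\tau_{\leq 1}$ to both sides then yields the stronger spectrum-level form of the theorem: $V(\mathsf{LCA}_{\mathfrak{A}})$ is equivalent to the $1$-truncation of the $(-1)$-shift of that fiber. Reading off $\pi_1$ gives
\[
\pi_1 V(\mathsf{LCA}_{\mathfrak{A}}) \cong K_1(\mathsf{LCA}_{\mathfrak{A}}) \cong K_0(\mathfrak{A}, \mathbb{R}),
\]
where the last isomorphism is the $n = 1$ case of the main theorem.

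The principal obstacle is not in this theorem itself but upstream. One must have established the main theorem at the level of spectra, not merely as isomorphisms of homotopy groups, so that the fiber sequence above is genuinely available; and one must have the equivalence $V(\mathcal{C}) \simeq \tau_{\leq 1} K(\mathcal{C})$ in place. Once both ingredients are granted, the present statement is a formal consequence requiring no further computation, and the regularity hypothesis on $\mathfrak{A}$ enters only through its role in the main theorem.
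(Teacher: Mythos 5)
Your proposal is correct and follows essentially the same route as the paper: Theorem \ref{thm_Main_b} already gives the fiber sequence $K(\mathfrak{A})\to K(A_{\mathbb{R}})\to K(\mathsf{LCA}_{\mathfrak{A}})$ at the level of localizing invariants (hence of spectra), and the paper's proof of the statement likewise combines this with the definition of $V(\mathsf{LCA}_{\mathfrak{A}})$ as $\tau_{\leq 1}$ of the $K$-theory spectrum to identify $K(\mathsf{LCA}_{\mathfrak{A}})\simeq\Sigma K(\mathfrak{A},\mathbb{R})$ and read off $\pi_1$.
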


See Theorem \ref{marker_ComputeVLCAA}. The corresponding universal Picard
categories $V(\mathsf{LCA}_{D})$ resp. $V(\mathsf{LCA}_{C})$ are the trivial
Picard groupoid with one object and no non-trivial automorphisms.

\subsection{\label{subsect_Nenashev}Nenashev's presentation}

A \emph{double (short) exact sequence} in $\mathsf{LCA}_{\mathfrak{A}}$ is the
datum of two short exact sequences%
\[
\mathrm{Yin}:A\overset{p}{\hookrightarrow}B\overset{r}{\twoheadrightarrow
}C\qquad\text{and}\qquad\mathrm{Yang}:A\overset{q}{\hookrightarrow}%
B\overset{s}{\twoheadrightarrow}C\text{,}%
\]
where (as we can see) only the maps may differ, but the three objects agree.
We write%
\[%
\xymatrix{
A \ar@<1ex>@{^{(}->}[r]^{p} \ar@<-1ex>@{^{(}->}[r]_{q} & B \ar@<1ex>@{->>}%
[r]^{r} \ar@<-1ex>@{->>}[r]_{s} & C
}%
\]
as a convenient shorthand. The study of this concept was pioneered by Nenashev.

\begin{theorem}
\label{thm_NenashevStylePresentation}Let $\mathfrak{A}$ be a regular order in
a finite-dimensional semisimple $\mathbb{Q}$-algebra. Then $K_{0}%
(\mathfrak{A},\mathbb{R})$, or equivalently $K_{1}(\mathsf{LCA}_{\mathfrak{A}%
})$, has the following presentation as an abelian group:

\begin{enumerate}
\item Attach a generator to each double exact sequence%
\[%
\xymatrix{
A \ar@<1ex>@{^{(}->}[r]^{p} \ar@<-1ex>@{^{(}->}[r]_{q} & B \ar@<1ex>@{->>}%
[r]^{r} \ar@<-1ex>@{->>}[r]_{s} & C
}%
\]
with $A,B,C$ locally compact right $\mathfrak{A}$-modules.

\item Whenever the yin and yang exact sequence agree, i.e.,%
\[%
\xymatrix{
A \ar@<1ex>@{^{(}->}[r]^{p}_{=} \ar@<-1ex>@{^{(}->}[r]_{p} & B \ar@
<1ex>@{->>}[r]^{r}_{=} \ar@<-1ex>@{->>}[r]_{r} & C\text{,}
}%
\]
we declare the class to be zero.

\item Suppose there is a (not necessarily commutative) diagram%
\[%
\xymatrix@W=0.3in@H=0.3in{
A \ar@<1ex>@{^{(}->}[r] \ar@<1ex>@{^{(}->}[d] \ar@<-1ex>@{^{(}->}%
[d] \ar@<-1ex>@{^{(}->}[r] & B \ar@<1ex>@{->>}[r] \ar@<-1ex>@{->>}%
[r] \ar@<1ex>@{^{(}->}[d] \ar@<-1ex>@{^{(}->}[d] & C \ar@<1ex>@{^{(}%
->}[d] \ar@<-1ex>@{^{(}->}[d] \\
D \ar@<1ex>@{^{(}->}[r] \ar@<-1ex>@{^{(}->}[r] \ar@<1ex>@{->>}[d] \ar@
<-1ex>@{->>}[d] & E \ar@<1ex>@{->>}[r] \ar@<-1ex>@{->>}[r] \ar@<1ex>@{->>}%
[d] \ar@<-1ex>@{->>}[d] & F \ar@<1ex>@{->>}[d] \ar@<-1ex>@{->>}[d] \\
G \ar@<1ex>@{^{(}->}[r] \ar@<-1ex>@{^{(}->}[r] & H \ar@<1ex>@{->>}%
[r] \ar@<-1ex>@{->>}[r] & I \\
}%
\]
whose rows $Row_{i}$ and columns $Col_{j}$ are double exact sequences. Suppose
after deleting all yin (resp. all yang) exact sequences, the remaining diagram
commutes. Then impose the relation%
\begin{equation}
Row_{1}-Row_{2}+Row_{3}=Col_{1}-Col_{2}+Col_{3}\text{.}\label{l_C_Nenashev}%
\end{equation}

\end{enumerate}
\end{theorem}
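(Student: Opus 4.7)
The plan is to reduce the statement to Nenashev's general theorem, which gives a generator-and-relator presentation of $K_1(\mathcal{E})$ for any exact category $\mathcal{E}$ in exactly the language of double short exact sequences and $(3\times 3)$-diagrams that appears here. The reduction has essentially two ingredients.

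First, I would invoke the main theorem (Theorem \ref{thm_main_asannounced}) to replace $K_0(\mathfrak{A},\mathbb{R})$ by $K_1(\mathsf{LCA}_{\mathfrak{A}})$. This identification is the substantive step; it has been proved earlier in the paper under the regularity hypothesis on $\mathfrak{A}$, which is where that hypothesis enters the present theorem. Thus I may assume throughout that the group being presented is $K_1(\mathsf{LCA}_{\mathfrak{A}})$.

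Second, I would verify that $\mathsf{LCA}_{\mathfrak{A}}$ is an exact category in the sense Nenashev requires (this is shown in the categorical foundations section of the paper, as $\mathsf{LCA}_{\mathfrak{A}}$ inherits its exact structure from $\mathsf{LCA}$). Nenashev's theorem then applies verbatim and asserts that $K_1$ of an exact category is generated by symbols $[A \rightrightarrows B \rightrightarrows C]$ attached to double exact sequences, with the two families of relations: symbols with coinciding yin and yang exact sequence vanish, and every $(3\times 3)$-diagram of double exact sequences whose yin-only (respectively yang-only) restriction commutes imposes the Nenashev relation in the form of equation \eqref{l_C_Nenashev}.

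Combining these two statements gives the theorem. The only real point to check is bookkeeping, namely that the relations as phrased in items (2) and (3) of the statement match Nenashev's original formulation, and in particular that the commutativity convention (``after deleting all yin (resp. all yang) exact sequences'') is the correct translation of Nenashev's requirement that each of the two sub-diagrams commute separately. There is no genuine obstacle here beyond citation, since the regularity hypothesis was already fully paid for in proving Theorem \ref{thm_main_asannounced}; the step that would be hardest to reprove from scratch is precisely that main isomorphism, but we are permitted to assume it.
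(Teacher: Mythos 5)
Your proposal is correct and follows exactly the route of the paper's own proof: identify $K_{0}(\mathfrak{A},\mathbb{R})$ with $K_{1}(\mathsf{LCA}_{\mathfrak{A}})$ via Theorem \ref{thm_main_asannounced} (which is where regularity is used), and then apply Nenashev's presentation of $K_{1}$ to the exact category $\mathsf{LCA}_{\mathfrak{A}}$. Nothing further is needed.
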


\begin{proof}
[Proof of Theorem \ref{thm_NenashevStylePresentation}]This is precisely
Nenashev's presentation of $K_{1}$, see \cite[Theorem]{MR1637539}. Hence, the
validity of this presentation follows by our main theorem, which expresses
$K_{0}(\mathfrak{A},\mathbb{R})$ as the $K_{1}$-group $K_{1}(\mathsf{LCA}%
_{\mathfrak{A}})$.
\end{proof}

Let us demonstrate how to work with this presentation in two key cases.

\begin{example}
\label{example_A1}The map%
\[
K_{1}(A_{\mathbb{R}})\longrightarrow K_{0}(\mathfrak{A},\mathbb{R})
\]
in Sequence \ref{l_Intro1} is given as follows: Write any matrix
$M\in\operatorname*{GL}(A_{\mathbb{R}})$ as an automorphism of
$X:=A_{\mathbb{R}}^{m}$ with $m$ sufficiently large. Then send%
\[
\left[  \varphi:X\overset{\sim}{\rightarrow}X\right]  \longmapsto\left[
\xymatrix{
0 \ar@<1ex>@{^{(}->}[r]^{0} \ar@<-1ex>@{^{(}->}[r]_{0} & X \ar@<1ex>@{->>}%
[r]^{\varphi} \ar@<-1ex>@{->>}[r]_{1} & X\text{,}
}%
\right]  \text{,}%
\]
where on the right $X$ is now interpreted as a locally compact $\mathfrak{A}%
$-module, equipped with the topology coming from the real vector space
structure. This is the map transforming automorphisms in $\operatorname*{PMod}%
(A_{\mathbb{R}})$ to elements in the Nenashev presentation.\ This fact is
standard and discussed in \cite[Ch. IV, Example 9.6.2]{MR3076731},
\cite[Equation (2.2)]{MR1637539} (in these sources translate Weibel's left
sequence to the bottom sequence in Nenashev's paper, as opposed to the top
one, to get the signs to match).
\end{example}

\begin{example}
From the exactness of Sequence \ref{l_Intro1} we expect that the composition%
\[
K_{1}(\mathfrak{A})\longrightarrow K_{1}(A_{\mathbb{R}})\longrightarrow
K_{0}(\mathfrak{A},\mathbb{R})
\]
is zero. Let us confirm this by a direct computation in the Nenashev
presentation. We work in the category $\mathsf{LCA}_{\mathfrak{A}}$. Let
$\mathfrak{X}\in\operatorname*{PMod}(\mathfrak{A})$, which we tacitly equip
with the discrete topology, while we equip $X_{\mathbb{R}}:=\mathfrak{X}%
\otimes_{\mathbb{Z}}\mathbb{R}$ with the locally compact topology which comes
from its real vector space structure. Define%
\[
T:=X_{\mathbb{R}}/\mathfrak{X}\qquad\text{in}\qquad\mathsf{LCA}_{\mathfrak{A}%
}\text{.}%
\]
Note that $X_{\mathbb{R}}/\mathfrak{X}$, as a cokernel in the category
$\mathsf{LCA}_{\mathfrak{A}}$, carries the quotient topology. Since
$\mathfrak{X}$ is a lattice of full rank in $X_{\mathbb{R}}$ (see
\S \ref{subsect_Basics}), $T$ is topologically a compact torus of dimension
$\dim_{\mathbb{R}}(X_{\mathbb{R}})$. We begin with the $(3\times3)$-diagram%
\[%
\xymatrix@W=0.3in@H=0.3in{
0 \ar@<1ex>@{^{(}->}[r] \ar@<1ex>@{^{(}->}[d] \ar@<-1ex>@{^{(}->}%
[d] \ar@<-1ex>@{^{(}->}[r] & {\mathfrak{X}} \ar@<1ex>@{->>}[r]^{\varphi}
\ar@<-1ex>@{->>}[r]_{1} \ar@<1ex>@{^{(}->}[d] \ar@<-1ex>@{^{(}->}%
[d] & {\mathfrak{X}} \ar@<1ex>@{^{(}->}[d] \ar@<-1ex>@{^{(}->}[d] \\
0 \ar@<1ex>@{^{(}->}[r] \ar@<-1ex>@{^{(}->}[r] \ar@<1ex>@{->>}[d] \ar@
<-1ex>@{->>}[d] & {X_{\mathbb{R}}} \ar@<1ex>@{->>}[r]^{\varphi\otimes
\mathbb{R} } \ar@<-1ex>@{->>}[r]_{1} \ar@<1ex>@{->>}[d] \ar@<-1ex>@{->>}%
[d] & {X_{\mathbb{R}}} \ar@<1ex>@{->>}[d] \ar@<-1ex>@{->>}[d] \\
0 \ar@<1ex>@{^{(}->}[r] \ar@<-1ex>@{^{(}->}[r] & T \ar@<1ex>@{->>}%
[r]^{\overline{\varphi\otimes\mathbb{R}}} \ar@<-1ex>@{->>}[r]_{1} & T\text{,}
\\
}%
\]
where we wrote $\overline{\varphi\otimes\mathbb{R}}$ to refer to what the map
$\varphi\otimes\mathbb{R}$ induces on the torus quotient. As we see from
Equation \ref{l_C_Nenashev}, this diagram induces the following relation.%
\begin{equation}
\left[
\xymatrix{
0 \ar@<1ex>@{^{(}->}[r] \ar@<-1ex>@{^{(}->}[r] & {\mathfrak{X}} \ar@
<1ex>@{->>}[r]^{\varphi} \ar@<-1ex>@{->>}[r]_{1} & {\mathfrak{X}}
}%
\right]  +\left[
\xymatrix{
0 \ar@<1ex>@{^{(}->}[r] \ar@<-1ex>@{^{(}->}[r] & T \ar@<1ex>@{->>}%
[r]^{\overline{\varphi\otimes\mathbb{R}}} \ar@<-1ex>@{->>}[r]_{1} & T
}%
\right]  =\left[
\xymatrix{
0 \ar@<1ex>@{^{(}->}[r] \ar@<-1ex>@{^{(}->}[r] & {X_{\mathbb{R}}}
\ar@<1ex>@{->>}[r]^{\varphi\otimes\mathbb{R} } \ar@<-1ex>@{->>}[r]_{1}
& {X_{\mathbb{R}}}
}%
\right] \label{lrr1}%
\end{equation}
Next, consider the $(3\times3)$-diagram%
\begin{equation}%
\xymatrix@W=0.3in@H=0.3in{
0 \ar@<1ex>@{^{(}->}[r] \ar@<1ex>@{^{(}->}[d] \ar@<-1ex>@{^{(}->}%
[d] \ar@<-1ex>@{^{(}->}[r] & {\mathfrak{X}} \ar@<1ex>@{->>}[r]^{\varphi}
\ar@<-1ex>@{->>}[r]_{1} \ar@<1ex>@{^{(}->}[d] \ar@<-1ex>@{^{(}->}%
[d] & {\mathfrak{X}} \ar@<1ex>@{^{(}->}[d] \ar@<-1ex>@{^{(}->}[d] \\
0 \ar@<1ex>@{^{(}->}[r] \ar@<1ex>@{->>}[d] \ar@<-1ex>@{->>}[d] \ar@
<-1ex>@{^{(}->}[r] & {\coprod_{\mathbb{N}} \mathfrak{X}} \ar@<1ex>@{->>}%
[r]^{\varphi} \ar@<-1ex>@{->>}[r]_{1} \ar@<1ex>@{->>}[d] \ar@<-1ex>@{->>}%
[d] & {\coprod_{\mathbb{N}} \mathfrak{X}} \ar@<1ex>@{->>}[d] \ar@
<-1ex>@{->>}[d] \\
0 \ar@<1ex>@{^{(}->}[r] \ar@<-1ex>@{^{(}->}[r] & {\coprod_{\mathbb{N}}
\mathfrak{X}} \ar@<1ex>@{->>}[r]^{\varphi} \ar@<-1ex>@{->>}[r]_{1}
& {\coprod_{\mathbb{N}} \mathfrak{X}}\text{,} \\
}%
\label{lrr0}%
\end{equation}
where the middle and right downward exact sequence stem from the injection
sending $m$ to the sequence $(m,0,0,0,\ldots)$ and the epic sending
$(m_{1},m_{2},\ldots)$ to $(m_{2},m_{3},\ldots)$. Here Equation
\ref{l_C_Nenashev} yields the relation%
\begin{equation}
\left[
\xymatrix{
0 \ar@<1ex>@{^{(}->}[r] \ar@<-1ex>@{^{(}->}[r] & {\mathfrak{X}} \ar@
<1ex>@{->>}[r]^{\varphi} \ar@<-1ex>@{->>}[r]_{1} & {\mathfrak{X}}
}%
\right]  =0\text{.}\label{lrr2}%
\end{equation}
The analogous game can be played when replacing $\mathfrak{X}$ and
$\coprod_{\mathbb{N}}\mathfrak{X}$ by the pair $T$ and $\prod_{\mathbb{N}}T$.
We get%
\begin{equation}
\left[
\xymatrix{
0 \ar@<1ex>@{^{(}->}[r] \ar@<-1ex>@{^{(}->}[r] & T \ar@<1ex>@{->>}%
[r]^{\overline{\varphi\otimes\mathbb{R}}} \ar@<-1ex>@{->>}[r]_{1} & T
}%
\right]  =0\text{.}\label{lrr3}%
\end{equation}
All three relations of Equations \ref{lrr1}, \ref{lrr2}, \ref{lrr3} can be
combined to give%
\[
\left[
\xymatrix{
0 \ar@<1ex>@{^{(}->}[r] \ar@<-1ex>@{^{(}->}[r] & {X_{\mathbb{R}}}
\ar@<1ex>@{->>}[r]^{\varphi\otimes\mathbb{R} } \ar@<-1ex>@{->>}[r]_{1}
& {X_{\mathbb{R}}}
}%
\right]  =0\text{.}%
\]
In view of Example \ref{example_A1}, this shows that if $\varphi$ comes from
an automorphism $\varphi:\mathfrak{X}\overset{\sim}{\rightarrow}\mathfrak{X}$,
then its class in $K_{1}(\mathsf{LCA}_{\mathfrak{A}})$ and thus in
$K_{0}(\mathfrak{A},\mathbb{R})$ vanishes. Of course this argument is just a
variation of the Eilenberg swindle, performed in this explicit presentation,
and corresponds under the bridge to the Haar measure in
\S \ref{subsect_HaarIsDetFunctor}\ to the fact that the Haar measure on
compact resp. discrete LCA\ groups admits a canonical normalization.
\end{example}

\begin{example}
We note that there is nothing like Diagram \ref{lrr0} with $A_{\mathbb{R}}$
instead of $\mathfrak{X}$, since both $\prod_{\mathbb{N}}A_{\mathbb{R}}$ as
well as $\coprod_{\mathbb{N}}A_{\mathbb{R}}$ fail to be locally compact.
\end{example}

\subsection{Strategy of proof}

In the special case $\mathfrak{A}=\mathbb{Z}$ and $A=\mathbb{Q}$, our main
result was proven by Clausen \cite{clausen}, using $\infty$-categorical
methods, and not in the context of the ETNC. We had already developed a
generalization of Clausen's result for maximal orders $\mathcal{O}$ (i.e. the
ring of integers) when $A$ is a number field \cite{obloc}, and in this paper
we try to adapt the proof given \textit{op. cit.} as far as possible. Let us
list some of the obstacles where things become harder:

\begin{itemize}
\item The rings are now \textit{non}-commutative and duality swaps left and
right modules.

\item We introduce a new method to create certain projective covers based on
covering space theory, see Lemma \ref{Lemma_RZOrRTHulls}. While this could
certainly be replaced by an argument \textquotedblleft by
hand\textquotedblright, this is a very efficient new tool.

\item We run into two separate issues when handling the compactly generated
modules, see \S \ref{sect_CGPiece}. We explain how to solve them there, and
this fix even works for orders $\mathfrak{A}$ of infinite global dimension.
This might be important for the issue to remove the restriction to regular
orders in our main theorem in the future. The principal trick is to
\textquotedblleft move\textquotedblright\ infinitely long resolutions into a
piece of the category whose $K$-theory gets killed by an Eilenberg swindle.
This way, that the resolution would not have been finite, does not need to
bother us anymore.

\item Most delicate: Ibid. all rings were Dedekind. Thus, if $M\in
\mathsf{LCA}_{\mathcal{O}}$ was for example a torus $\mathbb{T}^{n}$
topologically, then it followed that $M$ was divisible and thus an injective
$\mathcal{O}$-module (which is useful to split off direct summands). But over
a general order $\mathfrak{A}$, more precisely over the non-hereditary ones,
divisible modules need \textit{not} be injective. In particular, some nice
decomposition results in \cite{obloc} simply would be false in the setup of
the present paper. This complicates all proofs which previously were relying
on these decompositions.
\end{itemize}

\section{\label{sect_CatFramework}Categorical framework}

\subsection{Basics\label{subsect_Basics}}

We recall that a module is called \emph{semisimple} if it splits as a
(possibly infinite) direct sum of simple modules. A unital associative (not
necessarily commutative) ring $A$ is called \emph{semisimple} if one (then
all) of the following properties hold: (a) $A$ has trivial Jacobson radical,
(b) the category of left $A$-modules is split exact, (c) all left $A$-modules
are semisimple, (d) all left $A$-modules are injective, (e) all left
$A$-modules are projective, (f) its opposite algebra $A^{op}$ is semisimple.

In particular, by (f) the conditions (b)-(e) could equivalently be demanded to
hold for right $A$-modules.

Suppose $A$ is a finite-dimensional semisimple $\mathbb{Q}$-algebra. By finite
dimension we mean that $A$ is finite-dimensional as a $\mathbb{Q}$-vector
space. An \emph{order} $\mathfrak{A}\subseteq A$ is a subring of $A$ which is
a finitely generated $\mathbb{Z}$-module such that $\mathbb{Q}\cdot
\mathfrak{A}=A$. More accurately, this should be called a $\mathbb{Z}$-order,
but we will not consider any other types of orders.

As an abelian group, we have $\mathfrak{A}\simeq\mathbb{Z}^{n}$, where
$n:=\dim_{\mathbb{Q}}A$. To see this, tensor $\mathfrak{A}\hookrightarrow A$
over $\mathbb{Z}$ with $\mathbb{Q}$, giving the injection $\mathfrak{A}%
\otimes_{\mathbb{Z}}\mathbb{Q}\hookrightarrow A$, and the condition
$\mathbb{Q}\cdot\mathfrak{A}=A$ implies that this map must be surjective, and
thus an isomorphism. However, as $\mathfrak{A}\subseteq A$ forces
$\mathfrak{A}$ to be torsion-free, it can only be of the shape $\mathbb{Z}^{a}
$ for some $a$ anyway, and then $\mathfrak{A}\otimes_{\mathbb{Z}}%
\mathbb{Q}\simeq\mathbb{Q}^{a}$, proving $a=n$.

Moreover, we see that $\mathbb{Q}\cdot\mathfrak{A}\cong\mathbb{Q}%
\otimes_{\mathbb{Z}}\mathfrak{A}$ and $\mathfrak{A}$ is a full rank lattice in
the finite-dimensional real vector space $\mathbb{R}\cdot\mathfrak{A}%
\cong\mathbb{R}\otimes_{\mathbb{Q}}A$. That is, the quotient $(\mathbb{R}%
\cdot\mathfrak{A})/\mathfrak{A}$ is an $n$-dimensional real torus, topologically.

If $R$ denotes a ring, we speak of an \emph{algebraic }$R$\emph{-module} $M$
whenever we want to stress that we ignore any topological structures which are
present on both $R$ and $M$. Alternatively, regard both $R$ and $M$ as
equipped with the discrete topology.

\begin{example}
For every finite group $G$, the group ring $\mathbb{Z}[G]$ is an order in the
rational group algebra $\mathbb{Q}[G]$.
\end{example}

\subsection{Quasi-abelian categories}

We will work a lot with exact categories. We shall use the conventions of
B\"{u}hler \cite{MR2606234}. We write `$\hookrightarrow$' (resp.
`$\twoheadrightarrow$') to denote an admissible monic (resp. admissible epic).
The concept of a quasi-abelian category might be less well-known. It is also
explained \textit{loc. cit.}, but all we really need to know are the following
facts: (a)\ Quasi-abelian categories $\mathsf{C}$ are a particular type of
exact category, (b) In a quasi-abelian category every morphism has a kernel
and cokernel.

Unfortunately, the latter does not yet suffice to make $\mathsf{C}$ an abelian
category. By an insight of Hoffmann and Spitzweck \cite{MR2329311} the
category $\mathsf{LCA}$ of locally compact abelian (from now on: LCA) groups
is quasi-abelian. We shall recall this below in detail, but for the moment
just note that if $\mathbb{R}_{\delta}$ denotes the real numbers equipped with
the discrete topology, then the identity $f:\mathbb{R}_{\delta}\rightarrow
\mathbb{R}$ is a continuous group homomorphism with zero kernel and zero
cokernel, but $f$ still fails to be an isomorphism. Such a behaviour would be
impossible in an abelian category.

\subsection{Locally compact modules}

Suppose $\mathfrak{A}$ is an order inside a finite-dimensional semisimple
$\mathbb{Q}$-algebra $A$. Note that it is sufficient to specify $\mathfrak{A}$
since $A=\mathbb{Q}\otimes_{\mathbb{Z}}\mathfrak{A}$ is uniquely determined by it.

\begin{definition}
We define a category $\mathsf{LCA}_{\mathfrak{A}}$ such that

\begin{enumerate}
\item an object $M$ is a right $\mathfrak{A}$-module along with the datum of
an LCA group structure on its additive group $(M;+)$ such that right
multiplication by any $\alpha\in\mathfrak{A}$ is a continuous endomorphism
$(M;+)\overset{\cdot\alpha}{\longrightarrow}(M;+)$,

\item a morphism $M\rightarrow M^{\prime}$ is a continuous right
$\mathfrak{A}$-module homomorphism.
\end{enumerate}

We write $\left.  _{\mathfrak{A}}\mathsf{LCA}\right.  $ for the corresponding
category of left modules. We shall write $\mathsf{LCA}$ as a shorthand for
$\mathsf{LCA}_{\mathbb{Z}}$.
\end{definition}

As an equivalent alternative description, we may view the ring $\mathfrak{A}$
as equipped with the discrete topology and take locally compact topological
right $\mathfrak{A}$-modules as objects for $\mathsf{LCA}_{\mathfrak{A}}$.

\begin{remark}
\label{Rmk_OppositeOrder}Note that the opposite ring $\mathfrak{A}^{op}$ is an
order in the opposite algebra $A^{op}$ (which is also semisimple,
\S \ref{subsect_Basics}), so we have $\left.  _{\mathfrak{A}}\mathsf{LCA}%
\right.  =\left.  \mathsf{LCA}_{\mathfrak{A}^{op}}\right.  $. Hence, for most
considerations it will be sufficient to discuss them for right modules and the
left module case will be implied by switching to the opposite order in the
opposite algebra.
\end{remark}

Next, let $\mathbb{T}$ denote the unit circle (or equivalently $\mathbb{R}%
/\mathbb{Z}$) in the category $\mathsf{LCA}$. The following result naturally
extends the observation of Hoffmann and Spitzweck that $\mathsf{LCA}$ is a
quasi-abelian category.

\begin{proposition}
\label{Prop_LCAIsQuasiAbelianAndPontryaginDuality}The category $\mathsf{LCA}%
_{\mathfrak{A}}$ is a quasi-abelian exact category. There is an exact functor%
\begin{align*}
(-)^{\vee}:\mathsf{LCA}_{\mathfrak{A}}^{op}  & \longrightarrow\left.
_{\mathfrak{A}}\mathsf{LCA}\right. \\
M  & \longmapsto\operatorname*{Hom}(M,\mathbb{T})\text{,}%
\end{align*}
where the continuous right $\mathfrak{A}$-module homomorphism group
$\operatorname*{Hom}(M,\mathbb{T})$ is equipped with the compact-open topology
(that is: on the level of the underlying LCA group $(M;+)$ this is the
Pontryagin dual), and the left action%
\[
(\alpha\cdot\varphi)(m):=\varphi(m\cdot\alpha)\qquad\text{for all}\qquad
\alpha\in\mathfrak{A}\text{, }m\in M\text{.}%
\]
There is an analogous exact functor%
\[
(-)^{\vee}:\left.  _{\mathfrak{A}}\mathsf{LCA}^{op}\right.  \longrightarrow
\mathsf{LCA}_{\mathfrak{A}}%
\]
such that there is a natural equivalence of functors from the identity functor
to double dualization,%
\[
\eta:\operatorname*{id}\longrightarrow(-)^{\vee}\circ\left[  (-)^{\vee
}\right]  ^{op}\text{.}%
\]
In less technical terms: For every object $M\in\mathsf{LCA}_{\mathfrak{A}}$
there exists a reflexivity isomorphism $\eta(M):M\overset{\sim}%
{\longrightarrow}M^{\vee\vee}$, and the isomorphisms $\eta(M)$ are natural in
$M$. Under the forgetful functor $\mathsf{LCA}_{\mathfrak{A}}\rightarrow
\mathsf{LCA}$ (resp. $\left.  _{\mathfrak{A}}\mathsf{LCA}\right.
\rightarrow\mathsf{LCA}$), both duality functors restrict to ordinary
Pontryagin duality.
\end{proposition}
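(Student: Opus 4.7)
The plan is to bootstrap everything from the corresponding results on $\mathsf{LCA}$ via the forgetful functor $U\colon \mathsf{LCA}_{\mathfrak{A}}\to\mathsf{LCA}$. For any morphism $f\colon M\to N$ in $\mathsf{LCA}_{\mathfrak{A}}$, the closed subgroup $\ker U(f)\subseteq M$ and the topological quotient $N/\overline{\operatorname{im}U(f)}$ are both $\mathfrak{A}$-stable (because $f(m\alpha)=f(m)\alpha$, and the closure of an $\mathfrak{A}$-stable subgroup is $\mathfrak{A}$-stable since each $\cdot\alpha$ is continuous), and the induced $\mathfrak{A}$-actions remain continuous. Hence $U$ creates kernels and cokernels, and a morphism of $\mathsf{LCA}_{\mathfrak{A}}$ is strict (monic or epic) if and only if its underlying LCA morphism is. The quasi-abelian axioms — stability of strict epics under pullback and of strict monics under pushout — then pass up from Hoffmann--Spitzweck \cite{MR2329311}: one forms the pullback (resp. pushout) in $\mathsf{LCA}$, observes that the cone is $\mathfrak{A}$-equivariant and hence inherits a canonical continuous $\mathfrak{A}$-action, and checks that this realises the universal object in $\mathsf{LCA}_{\mathfrak{A}}$.

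For the duality functor, take the Pontryagin dual $U(M)^{\vee}=\operatorname{Hom}_{\mathsf{LCA}}(U(M),\mathbb{T})$ with the compact-open topology, and define the left $\mathfrak{A}$-action by $(\alpha\cdot\varphi)(m):=\varphi(m\alpha)$. This is a left module action by associativity of the original right action, and for each fixed $\alpha$ the map $\varphi\mapsto\alpha\cdot\varphi$ is continuous in the compact-open topology because it is precomposition with the continuous endomorphism $\cdot\alpha\colon M\to M$ (standard: the preimage of the subbasic open $\{\psi:\psi(K)\subseteq V\}$ is $\{\varphi:\varphi(K\alpha)\subseteq V\}$, and $K\alpha$ is compact). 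Functoriality is clear, and exactness of $(-)^{\vee}\colon\mathsf{LCA}_{\mathfrak{A}}^{op}\to{}_{\mathfrak{A}}\mathsf{LCA}$ is inherited from classical Pontryagin duality on $\mathsf{LCA}$, using the strictness characterisation from the first paragraph. The mirror functor $(-)^{\vee}\colon{}_{\mathfrak{A}}\mathsf{LCA}^{op}\to\mathsf{LCA}_{\mathfrak{A}}$ is constructed symmetrically via $(\psi\cdot\alpha)(n):=\psi(\alpha\cdot n)$; via Remark \ref{Rmk_OppositeOrder} the two constructions are instances of a single statement over the opposite order.

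For the reflexivity isomorphism $\eta(M)\colon M\overset{\sim}{\longrightarrow} M^{\vee\vee}$, take the classical Pontryagin biduality map $m\mapsto(\varphi\mapsto\varphi(m))$, which is already a topological isomorphism on the underlying LCA groups. It remains only to verify $\mathfrak{A}$-equivariance: unwinding the induced right action on $M^{\vee\vee}$, the identity $\eta(M)(m\alpha)=\eta(M)(m)\cdot\alpha$ collapses to the tautology $\varphi(m\alpha)=(\alpha\cdot\varphi)(m)$, and naturality in $M$ is inherited from the LCA case.

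The main obstacle, such as it is, is bookkeeping rather than structural: one must track carefully how Pontryagin duality transposes left and right $\mathfrak{A}$-actions, and one must repeatedly invoke that continuity of the $\mathfrak{A}$-action — built into the definition of $\mathsf{LCA}_{\mathfrak{A}}$ — is what ensures that closures, kernels, cokernels, pullbacks and pushouts all remain $\mathfrak{A}$-modules with continuous action. Once these routine checks are done, both the quasi-abelian structure and the duality are direct consequences of their known counterparts for $\mathsf{LCA}$.
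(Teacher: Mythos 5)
Your proposal is correct and follows essentially the same route as the paper: transport the quasi-abelian structure of $\mathsf{LCA}$ (Hoffmann--Spitzweck) through the forgetful functor by checking that kernels, cokernels, pullbacks and pushouts inherit a continuous $\mathfrak{A}$-action, equip the Pontryagin dual with the transposed action $(\alpha\cdot\varphi)(m)=\varphi(m\alpha)$ and verify continuity, and observe that the classical biduality map is $\mathfrak{A}$-equivariant. You merely spell out in more detail the steps the paper compresses into citations of \cite{MR2329311} and \cite{MR0442141}.
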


We warn the reader of the subtlety that $\operatorname*{Hom}(M,\mathbb{T})$
with the compact-open topology is again locally compact, but it is not at all
true that $\operatorname*{Hom}(M,N)$ will be locally compact for other choices
of $N$. See Moskowitz \cite[Theorem 4.3 (1$^{\prime}$)]{MR0215016} for a discussion.

\begin{proof}
(1) The proof of Hoffmann and Spitzweck for $\mathsf{LCA}$ carries over,
\cite[Proposition 1.2]{MR2329311}. It shows that $\mathsf{LCA}_{\mathfrak{A}}
$ is quasi-abelian and therefore carries a natural exact structure, by a
theorem of Schneiders, \cite[Proposition 4.4]{MR2606234} or the discussion
preceding \cite[Remark 1.1.11]{MR1779315}. (2)\ We quickly check that for
$\alpha,\beta\in\mathfrak{A}$ and $m\in M$ we have%
\[
(\beta\alpha\cdot\varphi)(m)=\varphi(m\cdot\beta\alpha)=\varphi((m\cdot
\beta)\cdot\alpha)=(\alpha\cdot\varphi)(m\cdot\beta)=(\beta\cdot(\alpha
\cdot\varphi))(m)
\]
and that $M^{\vee}=\operatorname*{Hom}(M,\mathbb{T})$ is an algebraic left
$\mathfrak{A}$-module follows literally from $M$ being an algebraic right
$\mathfrak{A}$-module. The continuity of the scalar action on $M^{\vee}$
follows correspondingly from the one of $M$. For the double dualization, note
that the standard reflexivity isomorphism of the underlying LCA group
$\eta:\operatorname*{id}\longrightarrow(-)^{\vee}\circ\left[  (-)^{\vee
}\right]  ^{op}$ has the required properties and additionally preserves the
$\mathfrak{A}$-module action. We refer to \cite{MR0442141} for a proof of the
duality statements for the underlying category $\mathsf{LCA}$.
\end{proof}

\begin{remark}
If $\mathfrak{A}$ happens to be commutative, we have $\mathfrak{A}%
^{op}=\mathfrak{A}$ and then we need not distinguish between left and right
modules over $\mathfrak{A}$. Then Proposition
\ref{Prop_LCAIsQuasiAbelianAndPontryaginDuality} extends to show that
$\mathsf{LCA}_{\mathfrak{A}}$ is an exact category with duality in the sense
of \cite[Definition 2.1]{MR2600285}. This remark applies for example if
$\mathfrak{A}$ is the ring of integers (or any order) in a number field. This
generalizes \cite[Proposition 2.4]{obloc}.
\end{remark}

\begin{lemma}
\label{lemma_CharacterizeAdmissibleMorphisms}A morphism $f:G^{\prime
}\rightarrow G$ in $\mathsf{LCA}_{\mathfrak{A}}$ is

\begin{enumerate}
\item an admissible monic if and only if it is injective and a closed map,

\item an admissible epic if and only if it is surjective and an open map.
\end{enumerate}
\end{lemma}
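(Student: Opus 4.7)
The plan is to reduce the statement to the standard computation of kernels and cokernels in the quasi-abelian category $\mathsf{LCA}_{\mathfrak{A}}$, and then translate this into the concrete topological conditions stated in (1) and (2).

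First I would invoke the general theory of quasi-abelian categories: since $\mathsf{LCA}_{\mathfrak{A}}$ carries the natural exact structure of a quasi-abelian category (Proposition \ref{Prop_LCAIsQuasiAbelianAndPontryaginDuality}), admissible monics are precisely the strict monomorphisms, i.e., those arising as the kernel of some morphism, and dually admissible epics are strict epimorphisms, arising as cokernels. I would then verify, as in the non-equivariant case of Hoffmann--Spitzweck \cite{MR2329311}, that in $\mathsf{LCA}_{\mathfrak{A}}$ the kernel of $f\colon G \to H$ is $f^{-1}(0)$ equipped with the subspace topology (and the induced $\mathfrak{A}$-action), while the cokernel of $g\colon H \to G$ is $G/\overline{g(H)}$ with the quotient topology. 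The continuity of the $\mathfrak{A}$-action on these constructions is automatic because right multiplication by $\alpha \in \mathfrak{A}$ is a continuous endomorphism and therefore preserves closures, subspace topologies and quotient topologies.

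For part (1), a strict monomorphism is then isomorphic to the inclusion of a closed $\mathfrak{A}$-submodule equipped with the subspace topology, which is trivially injective and closed. Conversely, if $f\colon G' \to G$ is an injective closed morphism in $\mathsf{LCA}_{\mathfrak{A}}$, then $f(G')$ is a closed $\mathfrak{A}$-submodule of $G$, and the corestriction $G' \to f(G')$ is a continuous $\mathfrak{A}$-linear bijection which is closed, hence a homeomorphism; thus $f$ realises $G'$ as the kernel of the admissible epic $G \twoheadrightarrow G/f(G')$ and is therefore an admissible monic. For part (2), I would use the elementary fact that for a continuous surjective homomorphism of topological groups, being a quotient map is equivalent to being open: if $U \subseteq G$ is open, then $f^{-1}(f(U)) = U \cdot \ker(f)$ is a union of translates of $U$ and therefore open, so that openness follows from the universal property of the quotient topology, and the converse is immediate. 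Hence strict epics, which by construction carry the quotient topology, coincide with continuous open surjections in $\mathsf{LCA}_{\mathfrak{A}}$. Conversely, a surjective open $\mathfrak{A}$-linear morphism $f$ induces an isomorphism $G/\ker(f) \overset{\sim}{\to} H$ in $\mathsf{LCA}_{\mathfrak{A}}$, exhibiting $f$ as the cokernel of the admissible monic $\ker(f) \hookrightarrow G$ guaranteed by part (1).

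The main obstacle is essentially bookkeeping: one must check that all relevant subgroups, quotients, closures and subspace/quotient topologies really live in $\mathsf{LCA}_{\mathfrak{A}}$, i.e. that they are locally compact with continuous $\mathfrak{A}$-action. The local compactness passes from the underlying $\mathsf{LCA}$ situation through the exact forgetful functor $\mathsf{LCA}_{\mathfrak{A}} \to \mathsf{LCA}$, and the $\mathfrak{A}$-equivariance is a routine verification. Once this bookkeeping is in place, the proof reduces essentially to the Hoffmann--Spitzweck analysis of $\mathsf{LCA}$.
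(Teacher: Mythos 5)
Your argument is correct and is essentially the paper's own approach: the paper proves this lemma simply by observing that the characterization of admissible monics and epics carries over verbatim from Hoffmann--Spitzweck's treatment of $\mathsf{LCA}$ in \cite{MR2329311}, which is precisely what you do, with the equivariant bookkeeping (closures, subspace and quotient topologies being preserved by the continuous $\mathfrak{A}$-action) spelled out explicitly.
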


\begin{proof}
This fact also carries over from $\mathsf{LCA}$, see \cite{MR2329311}.
\end{proof}

We recall a few basic concepts around topological groups.

\begin{definition}
\label{def_TopProps}Suppose $G\in\mathsf{LCA}$ is an LCA group. A subset
$C\subseteq G$ is called \emph{symmetric} if $g\in C$ implies $-g\in C$.

\begin{enumerate}
\item We say $G$ has \emph{no small subgroups} if there exists a neighbourhood
$U$ of the zero element such that $U$ does not contain any non-trivial
subgroups of $G$. Write $\mathsf{LCA}_{nss}$ for the full subcategory of
$\mathsf{LCA}$ of these groups.

\item We say $G$ is \emph{compactly generated} if there exists a symmetric
compact subset $C\subseteq G$ such that $G=\bigcup_{n\geq0}C^{n}$. We write
$\mathsf{LCA}_{cg}$ for the full subcategory of these groups.

\item We call $G$ a \emph{vector group} if it admits an isomorphism
$G\overset{\sim}{\rightarrow}\mathbb{R}^{n}$ for some $n\in\mathbb{Z}_{\geq0}%
$. We write $\mathsf{LCA}_{\mathbb{R}}$ for these groups.
\end{enumerate}
\end{definition}

The structure of these groups is discussed in Moskowitz \cite{MR0215016}. We
recall that every $G\in\mathsf{LCA}_{cg}$ is of the shape $\mathbb{R}%
^{n}\oplus\mathbb{Z}^{m}\oplus C$ for $C$ compact and $n,m$ finite; while
every $G\in\mathsf{LCA}_{nss}$ is of the shape $\mathbb{R}^{n}\oplus
\mathbb{T}^{m}\oplus D$ with $D$ discrete and again $n,m$ finite. This is
proven as Theorem 2.5 resp. Theorem 2.4 \textit{op. cit.} respectively.

\begin{definition}
Extending Definition \ref{def_TopProps} we define full subcategories
$\mathsf{LCA}_{\mathfrak{A},cg}:=\mathsf{LCA}_{\mathfrak{A}}\cap
\mathsf{LCA}_{cg}$, resp. $\mathsf{LCA}_{\mathfrak{A},nss}:=\mathsf{LCA}%
_{\mathfrak{A}}\cap\mathsf{LCA}_{nss}$.
\end{definition}

We shall later show that these subcategories are fully exact subcategories and
in particular may be regarded as exact categories themselves (Remark
\ref{rmk_LCAcg_IsClosedUnderExtensions}). For the moment, we can only treat
them as full additive subcategories.

\begin{lemma}
\label{lemma_CGExchangesWithNSS}The functor $(-)^{\vee}:\mathsf{LCA}%
_{\mathfrak{A}}^{op}\longrightarrow\left.  _{\mathfrak{A}}\mathsf{LCA}\right.
$

\begin{enumerate}
\item sends the full subcategory $\mathsf{LCA}_{\mathfrak{A},cg}^{op}$ to
$\left.  _{nss,\mathfrak{A}}\mathsf{LCA}\right.  $ and conversely;

\item sends the full subcategory $\mathsf{LCA}_{\mathfrak{A},nss}^{op}$ to
$\left.  _{cg,\mathfrak{A}}\mathsf{LCA}\right.  $ and conversely;

\item sends the full subcategory $\mathsf{LCA}_{\mathfrak{A},\mathbb{R}}^{op}
$ to $\left.  _{\mathbb{R},\mathfrak{A}}\mathsf{LCA}\right.  $ and conversely;

\item sends projectives to injectives, and injectives to projectives.
\end{enumerate}
\end{lemma}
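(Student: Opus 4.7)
The plan is to reduce parts (1)--(3) to the classical Pontryagin duality on underlying LCA groups, and to deduce part (4) formally from the fact that $(-)^{\vee}$ is an exact equivalence of the two opposite categories.

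First, I would observe that the defining properties ``compactly generated'', ``no small subgroups'', and ``vector group'' depend only on the underlying LCA group, not on the $\mathfrak{A}$-module structure. Since Proposition \ref{Prop_LCAIsQuasiAbelianAndPontryaginDuality} tells us that the forgetful functor $\mathsf{LCA}_{\mathfrak{A}} \to \mathsf{LCA}$ intertwines $(-)^{\vee}$ with the ordinary Pontryagin dual, each of (1), (2), (3) reduces to a statement about classical Pontryagin duality. So the $\mathfrak{A}$-module structure is just passively carried along, and the only thing to check is that the target subcategories on the left-module side are detected by the underlying LCA group in the same way, which is their definition.

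Next, I would invoke the structure theorem from Moskowitz recalled just before the lemma: every compactly generated $G$ is of the form $\mathbb{R}^{n} \oplus \mathbb{Z}^{m} \oplus C$ with $C$ compact, and every nss group is $\mathbb{R}^{n} \oplus \mathbb{T}^{m} \oplus D$ with $D$ discrete. Classical Pontryagin duality sends $\mathbb{R} \mapsto \mathbb{R}$, $\mathbb{Z} \mapsto \mathbb{T}$, and compact $\leftrightarrow$ discrete, so one structural form is exchanged with the other, proving (1) in the forward direction and (3). The converse direction of (1), and both directions of (2), then come for free from the reflexivity isomorphism $\eta: \mathrm{id} \overset{\sim}{\to} (-)^{\vee\vee}$ supplied by the same proposition: if cg maps into nss, then applying $(-)^{\vee}$ to an nss object produces something whose double dual is nss, forcing the single dual to be cg.

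For part (4), I would argue formally. By Proposition \ref{Prop_LCAIsQuasiAbelianAndPontryaginDuality}, $(-)^{\vee}$ is exact and, together with its companion on left modules, gives a mutually inverse pair of equivalences $\mathsf{LCA}_{\mathfrak{A}}^{op} \simeq {}_{\mathfrak{A}}\mathsf{LCA}$. In particular, by Lemma \ref{lemma_CharacterizeAdmissibleMorphisms} (and exactness), $(-)^{\vee}$ sends admissible monics to admissible epics and conversely. Now, $P \in \mathsf{LCA}_{\mathfrak{A}}$ is projective iff for every admissible epic $X \twoheadrightarrow Y$ the map $\mathrm{Hom}(P,X) \to \mathrm{Hom}(P,Y)$ is surjective. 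Dualizing, such an admissible epic in $\mathsf{LCA}_{\mathfrak{A}}$ is the $(-)^{\vee}$-image of an admissible monic $Y^{\vee} \hookrightarrow X^{\vee}$ in ${}_{\mathfrak{A}}\mathsf{LCA}$, and applying $(-)^{\vee}$ identifies lifting problems along $X \twoheadrightarrow Y$ against $P$ with extension problems along $Y^{\vee} \hookrightarrow X^{\vee}$ against $P^{\vee}$. This identification uses $\eta$ to replace the adjunction bookkeeping on Hom-sets. Hence $P$ projective implies $P^{\vee}$ injective, and symmetrically.

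I do not expect any serious obstacle; the only point requiring minor care is checking that the ``dualization swaps lifting with extension'' bijection above is natural with respect to the $\mathfrak{A}$-action, but this is immediate from the fact that $(-)^{\vee}$ is an $\mathfrak{A}$-equivariant equivalence of categories by construction of the left $\mathfrak{A}$-action $(\alpha \cdot \varphi)(m) = \varphi(m \cdot \alpha)$. Everything else is a direct reduction to classical Pontryagin duality on LCA groups.
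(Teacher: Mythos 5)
Your proposal is correct and follows essentially the same route as the paper: parts (1)--(3) are reduced to statements about the underlying LCA groups (the paper simply cites Moskowitz's Corollary 1 to Theorem 2.5, where you re-derive the exchange from the structure theorems plus reflexivity), and part (4) is the same formal observation that passing to the opposite category via the exact equivalence $(-)^{\vee}$ interchanges the universal properties of projectivity and injectivity.
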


\begin{proof}
(1), (2), (3) depend only on qualities of the underlying LCA\ group, so they
follow from \cite{MR0215016}, Corollary 1 to Theorem 2.5 \textit{loc. cit.}
(4) follows since going to the opposite category transitions the universal
property of projectivity into the one for injectivity, and conversely.
\end{proof}

\section{Basic decompositions}

\begin{theorem}
\label{thm_FirstDecomp}Suppose $M\in\mathsf{LCA}_{\mathfrak{A}}$.

\begin{enumerate}
\item Then there exists a clopen right $\mathfrak{A}$-submodule $H$ and an
exact sequence%
\[
H\overset{i}{\hookrightarrow}M\twoheadrightarrow D
\]
in $\mathsf{LCA}_{\mathfrak{A}}$, where $H$ is compactly generated, $D$
discrete, and $i$ an open map.

\item Then there exists a compact right $\mathfrak{A}$-submodule $C$ and an
exact sequence%
\[
C\hookrightarrow M\overset{q}{\twoheadrightarrow}N
\]
in $\mathsf{LCA}_{\mathfrak{A}}$, where $N$ has no small subgroups and $q$ is
an open and closed map.
\end{enumerate}

The analogous claims for left modules hold for $M\in\left.  _{\mathfrak{A}%
}\mathsf{LCA}\right.  $.
\end{theorem}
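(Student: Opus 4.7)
The plan is to establish (1) directly by an equivariant refinement of the standard LCA structure theory, and then to deduce (2) from (1) by applying Pontryagin duality through Proposition \ref{Prop_LCAIsQuasiAbelianAndPontryaginDuality} and Lemma \ref{lemma_CGExchangesWithNSS}.

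For (1), I first forget the $\mathfrak{A}$-action. As a plain LCA group, $M$ has an open compactly generated subgroup $H_{0}$: pick a symmetric relatively compact open neighborhood $U$ of $0$ and let $H_{0}$ be the subgroup generated by $\overline{U}$. Then $H_{0}$ is open (so automatically clopen) and compactly generated by the compact set $\overline{U}$. The subgroup $H_{0}$ need not be $\mathfrak{A}$-stable, so I enlarge it. Choose a $\mathbb{Z}$-module generating set $\alpha_{1}=1,\alpha_{2},\ldots,\alpha_{r}$ for $\mathfrak{A}$ and set
\[
H:=\sum_{i=1}^{r}H_{0}\cdot\alpha_{i}.
\]
Each summand $H_{0}\cdot\alpha_{i}$ is the image of $H_{0}$ under the continuous endomorphism $(-)\cdot\alpha_{i}$, hence a compactly generated subgroup; a finite sum of compactly generated subgroups is again compactly generated. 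The subgroup $H$ is open because $H_{0}\subseteq H$, hence clopen. To see that $H$ is $\mathfrak{A}$-stable, given any $\alpha\in\mathfrak{A}$, expand each product $\alpha_{i}\alpha=\sum_{k}c_{ik}\alpha_{k}$ with $c_{ik}\in\mathbb{Z}$; associativity of the module action and $\mathbb{Z}$-stability of $H_{0}$ give $H_{0}(\alpha_{i}\alpha)\subseteq\sum_{k}H_{0}\alpha_{k}=H$, so $H\alpha\subseteq H$. Therefore $H\in\mathsf{LCA}_{\mathfrak{A}}$ is a clopen compactly generated right $\mathfrak{A}$-submodule, the quotient $D:=M/H$ is discrete, and Lemma \ref{lemma_CharacterizeAdmissibleMorphisms} confirms that $H\hookrightarrow M\twoheadrightarrow D$ is admissible in $\mathsf{LCA}_{\mathfrak{A}}$.

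For (2), I apply (1) to the Pontryagin dual. By Proposition \ref{Prop_LCAIsQuasiAbelianAndPontryaginDuality} together with Remark \ref{Rmk_OppositeOrder}, $M^{\vee}$ lies in $\mathsf{LCA}_{\mathfrak{A}^{op}}$, so part (1) (applied to this opposite order) produces an admissible sequence $H'\hookrightarrow M^{\vee}\twoheadrightarrow D'$ with $H'$ compactly generated and $D'$ discrete. Dualizing this sequence, which is exact by Proposition \ref{Prop_LCAIsQuasiAbelianAndPontryaginDuality}, and using the reflexivity $M^{\vee\vee}\cong M$, I obtain an admissible sequence
\[
(D')^{\vee}\hookrightarrow M\twoheadrightarrow(H')^{\vee}
\]
in $\mathsf{LCA}_{\mathfrak{A}}$. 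By Lemma \ref{lemma_CGExchangesWithNSS}, $C:=(D')^{\vee}$ is compact and $N:=(H')^{\vee}$ has no small subgroups; the quotient map is open as an admissible epic and closed because the quotient by a compact subgroup is a proper map. The main technical obstacle is the $\mathfrak{A}$-stability verification in (1); it works only because $H_{0}$ is a genuine subgroup (hence $\mathbb{Z}$-stable) and because products in $\mathfrak{A}$ expand linearly in the chosen $\mathbb{Z}$-basis, so that the associativity $x(\alpha_{i}\alpha)=(x\alpha_{i})\alpha$ reduces the verification to the finitely many summands $H_{0}\alpha_{k}$. Everything else merely assembles standard LCA structure theory with the duality package already in the paper.
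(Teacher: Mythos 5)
Your proposal is correct and follows essentially the same route as the paper: part (1) by saturating an open compactly generated subgroup (built from a compact neighbourhood of $0$) under the finitely many $\mathbb{Z}$-module generators of $\mathfrak{A}$, and part (2) by dualizing part (1) via Pontryagin duality and the compactly-generated/no-small-subgroups exchange. The only cosmetic differences are that the paper generates $H$ directly from the set $\pm\bigcup_i U_0\cdot b_i$ and checks $\mathfrak{A}$-stability elementwise, while you form $\sum_i H_0\cdot\alpha_i$ with $\alpha_1=1$, and that the compactness of $(D')^{\vee}$ is plain Pontryagin duality (discrete dualizes to compact) rather than Lemma \ref{lemma_CGExchangesWithNSS}; neither affects correctness.
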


\begin{proof}
(1) The first claim is proven fairly analogously to \cite[Lemma 2.14]{obloc}.
We work with right modules. If $M=0$, take $H:=0$. Otherwise, let $m\neq0$ be
any element of $M$. As discussed in \S \ref{subsect_Basics}, $\mathfrak{A}%
\simeq\mathbb{Z}^{n}$ as abelian groups, so we may pick generators $b_{i}$ and
write $\mathfrak{A}=\mathbb{Z}\left\langle b_{1},\ldots,b_{n}\right\rangle $.
Next, since $M$ is locally compact, we find compact neighbourhoods of both
$0\in M$ and $m$, so let $U_{0}\subseteq M$ be their union. Define%
\[
U_{1}:=\bigcup_{i=1}^{n}U_{0}\cdot b_{i}%
\]
and the $U_{2}:=U_{1}\cup(-U_{1})$. We observe that (1) $U_{2}$ is symmetric,
(2) $U_{2}$ is compact since it is a finite union of compacta, (3) $U_{2}$ is
open since it contains an open neighbourhood of $0\in M$. Define
$H:=\bigcup_{m\geq1}U_{2}^{m}$. Then $H$ is compactly generated, and open
since it is a union of opens. Thus, $H$ is even clopen in $M$. We claim that
$H$ is a right $\mathfrak{A}$-module: Suppose $h\in H$ and $\alpha
\in\mathfrak{A}$. Then $h\in U_{2}^{m}$ for some $m$, so $h$ is a finite sum
of terms of the shape $\pm u\cdot b_{i}$ with $u\in U_{0}$. For each such term
we have%
\[
(\pm u\cdot b_{i})\cdot\alpha=\pm u\cdot(b_{i}\cdot\alpha)=\pm u\cdot
\sum_{j=1}^{n}c_{j}b_{j}\qquad\text{with}\qquad c_{i}\in\mathbb{Z}%
\]
since the $b_{i}$ form a $\mathbb{Z}$-basis of $\mathfrak{A}$. Unravelling
each $c_{j}\in\mathbb{Z}$ as a finite sum of \textquotedblleft$\pm
1$\textquotedblright, we note that this expression again is a finite sum of
terms of the form $\pm u\cdot b_{j}$, i.e. lies in $H$. Thus, $H\in
\mathsf{LCA}_{\mathfrak{A}}$ and the quotient $D:=M/H$ is discrete since $H$
was open. The claim for left modules can be proven symmetrically. (2) We apply
(1) to the Pontryagin dual, giving us an exact sequence%
\[
D^{\vee}\hookrightarrow M\twoheadrightarrow H^{\vee}%
\]
and the dual $D^{\vee}$ of a discrete module is compact, giving $C$, and the
quotient map $q$ is open. By \cite[Proposition 11]{MR0442141} it follows that
$q$ is also a closed map. Finally, the dual of a compactly generated module
has no small subgroups, see Lemma \ref{lemma_CGExchangesWithNSS}.
\end{proof}

\section{\label{sect_VectorModules}Vector $\mathfrak{A}$-modules}

Let $A$ be a finite-dimensional semisimple $\mathbb{Q}$-algebra and
$\mathfrak{A}\subset A$ an order.

\begin{lemma}
\label{lemma_AFlatOverOrder}$A$ is a left (and right) flat $\mathfrak{A}$-algebra.
\end{lemma}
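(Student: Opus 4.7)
The plan is to reduce this to the fact that $\mathbb{Q}$ is flat over $\mathbb{Z}$. From \S\ref{subsect_Basics} we already know that $\mathfrak{A}$ is a full rank $\mathbb{Z}$-lattice in $A$ and the inclusion $\mathfrak{A}\hookrightarrow A$ becomes an isomorphism after tensoring with $\mathbb{Q}$. Since $\mathbb{Z}$ is central in $\mathfrak{A}$, this upgrades to a canonical isomorphism of $(\mathfrak{A},\mathfrak{A})$-bimodules
\[
A \;\cong\; \mathfrak{A}\otimes_{\mathbb{Z}}\mathbb{Q}.
\]

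Next, for any right $\mathfrak{A}$-module $M$, associativity of the tensor product gives a natural isomorphism
\[
M\otimes_{\mathfrak{A}} A \;\cong\; M\otimes_{\mathfrak{A}}(\mathfrak{A}\otimes_{\mathbb{Z}}\mathbb{Q}) \;\cong\; M\otimes_{\mathbb{Z}}\mathbb{Q}.
\]
The functor $(-)\otimes_{\mathbb{Z}}\mathbb{Q}$ is exact because $\mathbb{Q}=\mathbb{Z}[S^{-1}]$ for the central multiplicative set $S=\mathbb{Z}\setminus\{0\}$, and localization at a central multiplicative set is a flat operation. Therefore $(-)\otimes_{\mathfrak{A}} A$ is exact on right $\mathfrak{A}$-modules, which by definition says $A$ is flat as a left $\mathfrak{A}$-module.

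The right case follows symmetrically, either by repeating the argument on the other side with $A\cong \mathbb{Q}\otimes_{\mathbb{Z}}\mathfrak{A}$, or by invoking Remark \ref{Rmk_OppositeOrder} and applying the left-module case to the opposite order $\mathfrak{A}^{op}\subseteq A^{op}$.

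There is no substantive obstacle here; the only thing to be mindful of is that flatness statements for non-commutative rings require care about sides, but this is handled cleanly because the localization $\mathbb{Z}\rightsquigarrow\mathbb{Q}$ happens in the center, so the bimodule structure on $A=\mathfrak{A}\otimes_{\mathbb{Z}}\mathbb{Q}$ is unambiguous and associativity applies verbatim.
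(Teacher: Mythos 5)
Your proof is correct. It does, however, run along a slightly different track than the paper's: you identify $A\cong\mathfrak{A}\otimes_{\mathbb{Z}}\mathbb{Q}$ as an $(\mathfrak{A},\mathfrak{A})$-bimodule and then transport the flatness of the central localization $\mathbb{Z}\rightarrow\mathbb{Q}$ through the natural isomorphism $M\otimes_{\mathfrak{A}}A\cong M\otimes_{\mathbb{Z}}\mathbb{Q}$, whereas the paper writes $A=\underrightarrow{\operatorname*{colim}}\,\tfrac{1}{n}\mathfrak{A}$ (indexed by divisibility), notes that each $\tfrac{1}{n}\mathfrak{A}\cong\mathfrak{A}$ is flat, and invokes the fact that a filtering colimit of flat modules is flat; both arguments use centrality of the scalars being inverted to handle the left and right module structures simultaneously. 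The two proofs are close cousins --- the flatness of $\mathbb{Q}$ over $\mathbb{Z}$ that you quote is itself the colimit/localization fact the paper uses --- but your version packages it as a clean base-change statement, giving the functorial identification $(-)\otimes_{\mathfrak{A}}A\cong(-)\otimes_{\mathbb{Z}}\mathbb{Q}$, which is arguably more transparent, while the paper's colimit presentation exhibits $A$ directly as a filtered union of copies of $\mathfrak{A}$ inside $A$ without passing through bimodule associativity. Your handling of the right-module case via symmetry, or via the opposite order as in Remark \ref{Rmk_OppositeOrder}, is also fine.
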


\begin{proof}
We have $\mathbb{Q}\mathfrak{A}=A$, so we can also write $A=\underrightarrow
{\operatorname*{colim}}\frac{1}{n}\mathfrak{A}$, where the colimit runs over
all integers, partially ordered by divisibility. This presents $A$ as a
filtering colimit of (clearly flat!) $\mathfrak{A}$-algebras since $\frac
{1}{n}\mathfrak{A}\cong\mathfrak{A}$ since there cannot be any non-trivial
torsion inside a $\mathbb{Q}$-vector space. However, a filtering colimit of
flat algebras is still flat. For this, see \cite[(4.4), Proposition]%
{MR1653294}; it just reduces to tensor products commuting with filtering
colimits. Since multiplication with $\frac{1}{n}$ is central, this argument
works both for the left and the right $\mathfrak{A}$-module structure.
\end{proof}

\begin{lemma}
\label{lemma_VectorModulesAreAlgebraicallyInjectiveAndProjective}Suppose
$M\in\mathsf{LCA}_{\mathfrak{A},\mathbb{R}}$. Then as an algebraic right
$\mathfrak{A}$-module, $M$ is injective and projective; i.e., $M$ is injective
and projective in $\mathsf{Mod}_{\mathfrak{A}}$. The corresponding statement
for left $\mathfrak{A}$-modules is also true.
\end{lemma}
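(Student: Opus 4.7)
The plan is to extend the $\mathfrak{A}$-action on $M$ to an $A_{\mathbb{R}}$-action, where semisimplicity trivializes both questions, and then descend back to $\mathfrak{A}$ by flatness.

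Since $M \in \mathsf{LCA}_{\mathfrak{A},\mathbb{R}}$ is a vector group, topologically $M \cong \mathbb{R}^n$. A classical fact says that every continuous additive endomorphism of $\mathbb{R}^n$ is automatically $\mathbb{R}$-linear: additivity gives $\mathbb{Q}$-linearity, and continuity upgrades this to $\mathbb{R}$-linearity via density of $\mathbb{Q}$ in $\mathbb{R}$. Applying this to each scalar multiplication $\cdot\alpha : M \to M$ for $\alpha \in \mathfrak{A}$ (which is continuous by the definition of $\mathsf{LCA}_{\mathfrak{A}}$), the $\mathfrak{A}$-action extends uniquely to an $A_{\mathbb{R}} := \mathfrak{A} \otimes_{\mathbb{Z}} \mathbb{R}$-module structure on $M$. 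Since $A$ is finite-dimensional semisimple over the perfect field $\mathbb{Q}$, it is separable, hence $A_{\mathbb{R}} = A \otimes_{\mathbb{Q}} \mathbb{R}$ is again semisimple. Every module over a semisimple ring is both projective and injective, so $M$ is $A_{\mathbb{R}}$-projective and $A_{\mathbb{R}}$-injective; in particular $M$ is a direct summand of $A_{\mathbb{R}}^k$ as an $A_{\mathbb{R}}$-module for some $k$.

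To descend to $\mathfrak{A}$: by Lemma \ref{lemma_AFlatOverOrder}, $A$ is $\mathfrak{A}$-flat, and further tensoring with the $\mathbb{Q}$-flat module $\mathbb{R}$ preserves flatness, so $A_{\mathbb{R}}$ is $\mathfrak{A}$-flat. Restriction of scalars along the flat map $\mathfrak{A} \to A_{\mathbb{R}}$ thus has an exact left adjoint $A_{\mathbb{R}} \otimes_{\mathfrak{A}} -$, and the standard formal consequence (a right adjoint of an exact functor preserves injectives) yields that $M$ is injective in $\mathsf{Mod}_{\mathfrak{A}}$. The left-module statement follows symmetrically by applying the argument to $\mathfrak{A}^{op}$, which is again an order in the semisimple algebra $A^{op}$ (Remark \ref{Rmk_OppositeOrder}).

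The main obstacle is the projectivity claim: the parallel descent would require $A_{\mathbb{R}}$ to be $\mathfrak{A}$-\emph{projective}, not merely flat, so that restriction preserves projectives — but this can fail in general (already $\mathbb{R}$ is not projective as a $\mathbb{Z}$-module). A more hands-on argument is needed: given a surjection $N \twoheadrightarrow M$ in $\mathsf{Mod}_{\mathfrak{A}}$, one can tensor with $\mathbb{Q}$ to obtain a surjection $N_{\mathbb{Q}} \twoheadrightarrow M$ of $A$-modules (using that $M$ is already a $\mathbb{Q}$-vector space), split it $A$-equivariantly by semisimplicity of $A$, and attempt to lift the section back to $N$ compatibly with the $\mathfrak{A}$-action. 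Carrying this through — or finding a cleaner replacement leveraging the special structure of vector $\mathfrak{A}$-modules — is the step I expect to require the most care.
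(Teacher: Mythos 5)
Your treatment of injectivity is correct and is essentially the paper's own argument: there the $\mathfrak{A}$-action is extended to $A$ (unique divisibility already suffices; your continuity argument even produces the $A_{\mathbb{R}}$-structure), and injectivity over $\mathfrak{A}$ is deduced from semisimplicity together with flatness of $A$ over $\mathfrak{A}$ (Lemma \ref{lemma_AFlatOverOrder}) via the identification $\operatorname{Hom}_{\mathfrak{A}}(N,M)\cong\operatorname{Hom}_{A}(N\otimes_{\mathfrak{A}}A,M)$; your adjoint-functor phrasing is the same computation.

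The projectivity half is a genuine gap, and it cannot be closed along the lines you sketch. The fact you cite as an obstacle to descent --- that $\mathbb{R}$ is not projective as a $\mathbb{Z}$-module --- is not merely an obstacle to one method: it contradicts the assertion itself in the admissible case $\mathfrak{A}=\mathbb{Z}$, $M=\mathbb{R}\in\mathsf{LCA}_{\mathbb{Z},\mathbb{R}}$. If your plan (tensor a surjection $N\twoheadrightarrow M$ with $\mathbb{Q}$, split $A$-equivariantly, lift the section back to $N$) could be carried out, then applying it to any surjection $\bigoplus_{I}\mathbb{Z}\twoheadrightarrow\mathbb{R}$ would exhibit the divisible group $\mathbb{R}$ as a direct summand of a free abelian group, which is impossible. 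So there is no correct completion of the sketch, and the step you flagged as ``requiring the most care'' is in fact unfixable as stated.

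For comparison, the paper's own proof of that half proceeds by Pontryagin duality: $M^{\vee}$ is again a vector module, hence algebraically injective by the first half, and one dualizes back, ``transforming injectivity into projectivity''. Be aware that this transfer only applies to lifting problems built from continuous maps of locally compact modules; an abstract surjection $N\twoheadrightarrow N''$ of discrete modules together with an abstract map $M\to N''$ cannot be encoded this way, since every continuous homomorphism from the connected group $M$ to a discrete module vanishes. What duality genuinely yields is projectivity of vector modules as objects of $\mathsf{LCA}_{\mathfrak{A}}$, i.e. Theorem \ref{thm_VectorModulesAreInjectiveAndBijective}, not projectivity in $\mathsf{Mod}_{\mathfrak{A}}$. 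For the subsequent applications only algebraic injectivity, flatness of $A_{\mathbb{R}}$ over $\mathfrak{A}$ (which your flat-base-change remark already provides, and which is all that is used, e.g., in Step 1 of the proof of Theorem \ref{thm_Main_b}), and the topological projectivity statement are needed; restricting your write-up to those claims makes it sound.
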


\begin{proof}
(Step 1) Firstly, we show injectivity for right $\mathfrak{A}$-modules. We
define a map of right $\mathfrak{A}$-modules%
\[
\Phi:M\longrightarrow\operatorname*{Hom}\nolimits_{A}(A,M)\text{,}\qquad
m\longmapsto(a\mapsto ma)\text{,}%
\]
where $\operatorname*{Hom}\nolimits_{A}(A,M)$ carries the right $\mathfrak{A}%
$-module structure given by $(f\cdot r)(a):=f(r\cdot a)$ for $r\in
\mathfrak{A}$. It is easy to check that $\Phi$ respects the right
$\mathfrak{A}$-module structure and the map is an isomorphism since $f\mapsto
f(1_{A})$ is an inverse map. Now, $M$ is an injective right $\mathfrak{A}%
$-module if and only if the functor $N\mapsto\operatorname*{Hom}%
\nolimits_{\mathfrak{A}}(N,M)$ is exact. We express this functor in a
different way by using the functorial isomorphisms%
\[
\operatorname*{Hom}\nolimits_{\mathfrak{A}}(N,M)\underset{\Phi}{\cong%
}\operatorname*{Hom}\nolimits_{\mathfrak{A}}(N,\operatorname*{Hom}%
\nolimits_{A}(A,M))\cong\operatorname*{Hom}\nolimits_{A}(N\otimes
_{\mathfrak{A}}A,M)\text{,}%
\]
stemming from $\Phi$ and the Hom-tensor adjunction. Since $A$ is a flat left
$\mathfrak{A}$-module (Lemma \ref{lemma_AFlatOverOrder}), the functor
$N\mapsto N\otimes_{\mathfrak{A}}A$ is exact. Moreover, the functor
$\operatorname*{Hom}\nolimits_{A}(-,M)$ is exact since $A$ is semisimple, so
every right $A$-module is injective (see \S \ref{subsect_Basics}). Hence, the
composed functor is also exact and it follows that $M$ is an injective
algebraic right $\mathfrak{A}$-module. (Step 2) The argument to show
injectivity for algebraic left $\mathfrak{A}$-modules is analogous. We only
need to replace the isomorphism $\Phi$ by%
\[
\Phi^{\prime}:M\longrightarrow\operatorname*{Hom}\nolimits_{A}(A,M)\text{,}%
\qquad m\longmapsto(a\mapsto am)
\]
and the left $\mathfrak{A}$-module structure $(r\cdot f)(a):=f(a\cdot r)$.
(Step 3) Next, we show projectivity for right $\mathfrak{A}$-modules $M$. To
this end, note that the Pontryagin dual $M^{\vee}$ is a left $\mathfrak{A}%
$-module. As the Pontryagin dual of a vector group is again a vector group, we
get $M^{\vee}\in\left.  _{\mathbb{R},\mathfrak{A}}\mathsf{LCA}\right.  $. By
Step 2 it follows that $M^{\vee}$ is an injective algebraic left
$\mathfrak{A}$-module. We take Pontryagin duals again, transforming the
universal property of injectivity into the universal property for
projectivity. Hence, $M^{\vee\vee}\cong M$ is a projective algebraic right
$\mathfrak{A}$-module (this generalizes the corresponding argument of
Moskowitz, \cite[Theorem 3.1]{MR0215016}). (Step 4) Projectivity for left
$\mathfrak{A}$-modules now follows analogously from Step 1: The dual $M^{\vee
}$ is a topological right $\mathfrak{A}$-module, injective as an algebraic
right $\mathfrak{A}$-module by Step 1, and by double dualization $M^{\vee\vee
}\cong M$ is projective as an algebraic left $\mathfrak{A}$-module.
\end{proof}

\begin{lemma}
\label{Lemma_CompactPartOfCG}Every object $M\in\mathsf{LCA}_{\mathfrak{A},cg}$
sits in a canonically determined exact sequence%
\[
C\hookrightarrow M\twoheadrightarrow W
\]
such that $C$ is a compact submodule, and as an LCA\ group $W$ is isomorphic
to $\mathbb{R}^{n}\oplus\mathbb{Z}^{m}$ for suitable $n,m\in\mathbb{Z}_{\geq
0}$.
\end{lemma}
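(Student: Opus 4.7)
The plan is to identify $C$ canonically as the \emph{maximal compact subgroup} of $M$; its existence will follow from the Moskowitz structure theorem recalled just before Definition \ref{def_TopProps}, and the fact that it is a right $\mathfrak{A}$-submodule will be automatic from the continuity of right multiplication.

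First, by \cite[Theorem 2.5]{MR0215016}, any compactly generated LCA group $M$ admits an isomorphism $M\cong\mathbb{R}^{n}\oplus\mathbb{Z}^{m}\oplus C_{0}$ of LCA groups, with $C_{0}$ compact. This decomposition is highly non-canonical, but I claim that $C_{0}$ itself is intrinsic to $M$. Indeed, $\mathbb{R}^{n}$ contains no non-trivial compact subgroup (a compact subgroup is bounded, hence closed in a Hausdorff group and of the form $\mathbb{R}^{a}\oplus L$ with $L$ a lattice; boundedness forces $a=0$ and then $L=\{0\}$), and $\mathbb{Z}^{m}$ likewise has no non-trivial compact subgroup (a compact subset of a discrete space is finite, and the only finite subgroup of $\mathbb{Z}^{m}$ is trivial by torsion-freeness). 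Consequently every compact subgroup of $M$ projects trivially to the $\mathbb{R}^{n}\oplus\mathbb{Z}^{m}$ factor, and is therefore contained in $C_{0}$. This shows that $C_{0}$ is the unique maximal compact subgroup of $M$; it is canonically defined, and we set $C:=C_{0}$.

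It remains to endow $C$ with the structure of a right $\mathfrak{A}$-submodule and to verify admissibility. For any $\alpha\in\mathfrak{A}$, the map $\cdot\alpha:M\to M$ is continuous, so it sends the compact subgroup $C$ to another compact subgroup of $M$, which by maximality must again lie in $C$. Thus $C$ is a right $\mathfrak{A}$-submodule, and in particular the inclusion is a morphism in $\mathsf{LCA}_{\mathfrak{A}}$. Since $C$ is closed (compact in a Hausdorff group), Lemma \ref{lemma_CharacterizeAdmissibleMorphisms}(1) identifies $C\hookrightarrow M$ as an admissible monic. The quotient $W:=M/C$ inherits a right $\mathfrak{A}$-module structure and, via any decomposition as above, is isomorphic to $\mathbb{R}^{n}\oplus\mathbb{Z}^{m}$ as an LCA group. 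The projection is continuous, surjective, and open (as the quotient of an LCA group by a closed subgroup), so by Lemma \ref{lemma_CharacterizeAdmissibleMorphisms}(2) it is an admissible epic. This produces the required short exact sequence in $\mathsf{LCA}_{\mathfrak{A}}$.

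The only genuinely delicate point in this plan is the canonicity claim, which rests precisely on the ``anti-compactness'' of $\mathbb{R}^{n}$ and $\mathbb{Z}^{m}$ spelled out above; once this is in place, the verification of the $\mathfrak{A}$-action and the admissibility statements are essentially automatic, and no choices survive in the final exact sequence.
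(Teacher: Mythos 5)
Your proof is correct and takes essentially the same route as the paper: both start from the Moskowitz decomposition $M\simeq\mathbb{R}^{n}\oplus\mathbb{Z}^{m}\oplus C$ and use that $\mathbb{R}^{n}\oplus\mathbb{Z}^{m}$ admits no non-trivial compact subgroups to force right multiplication by each $\alpha\in\mathfrak{A}$ to preserve $C$, the paper phrasing this via the universal property of kernels for the composite $C\rightarrow M\rightarrow M/C$ and you via maximality. Your explicit identification of $C$ as the unique maximal compact subgroup is a nice touch, as it justifies the word ``canonically'' in the statement, which the paper's proof leaves implicit.
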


\begin{proof}
Firstly, we shall work with $M$ viewed as an object of the category
$\mathsf{LCA}$ alone. Being compactly generated, there exists an isomorphism
$M\simeq\mathbb{R}^{n}\oplus\mathbb{Z}^{m}\oplus C$ in $\mathsf{LCA}$ with $C$
compact, \cite[Theorem 2.5]{MR0215016}. For every $\alpha\in\mathfrak{A}$ the
right multiplication map is continuous, giving the solid arrows in the
commutative diagram%
\[%
\xymatrix{
C \ar@{^{(}->}[d] \ar@{-->}[r] & C \ar@{^{(}->}[d] \\
M \ar[r]^{\cdot\alpha} \ar@{->>}[d] & M \ar@{->>}[d] \\
M/C & M/C\text{.}
}%
\]
As the image of the upper left compactum $C$ in the lower right quotient $M/C
$ is compact, but $M/C\simeq\mathbb{R}^{n}\oplus\mathbb{Z}^{m}$ has no
non-trivial compact subgroups, the universal property of kernels exhibits the
dashed horizontal arrow. As this holds for all $\alpha\in\mathfrak{A}$, it
follows that $C$ is (1) closed in $M$ since $C$ is compact, and (2) closed
under the right action of $\mathfrak{A}$. In particular, $C$ is a closed
submodule. This also implies that the quotient $W:=M/C$ also makes sense in
$\mathsf{LCA}_{\mathfrak{A}}$, with the same underlying LCA group, proving our claim.
\end{proof}

\begin{lemma}
\label{Lemma_RZOrRTHulls}Let $V\in\mathsf{LCA}_{\mathfrak{A}}$ be a vector
$\mathfrak{A}$-module. Suppose $M\in\mathsf{LCA}_{\mathfrak{A}}$. If the
underlying LCA group of $M$ is

\begin{enumerate}
\item isomorphic to $\mathbb{R}^{n}\oplus\mathbb{Z}^{m}$, then there exists an
admissible monic $M\hookrightarrow\tilde{M}$, where $\tilde{M}$ is a vector
$\mathfrak{A}$-module and any morphism $f$ as in%
\[%
\xymatrix{
M \ar@{^{(}->}[rr] \ar[dr]_{f} & & \tilde{M} \ar@{-->}[dl]^{f^{\prime}} \\
& V
}%
\]
admits a lift $f^{\prime}$. The cokernel of $M\hookrightarrow\tilde{M}$ has
underlying LCA group $\mathbb{T}^{m}$.

\item isomorphic to $\mathbb{R}^{n}\oplus\mathbb{T}^{m}$, then there exists an
admissible epic $\tilde{M}\twoheadrightarrow M$, where $\tilde{M}$ is a vector
$\mathfrak{A}$-module and any morphism $f$ as in%
\[%
\xymatrix{
& V \ar@{-->}[dl]_{f^{\prime}} \ar[dr]^{f} \\
\tilde{M} \ar@{->>}[rr] & & M
}%
\]
admits a lift $f^{\prime}$. The kernel of $\tilde{M}\twoheadrightarrow M$ has
underlying LCA group $\mathbb{Z}^{m}$.
\end{enumerate}
\end{lemma}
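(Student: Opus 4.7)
The two cases are interchanged by Pontryagin duality, so the plan is to prove (2) directly by covering space theory and then deduce (1) by dualizing via Proposition \ref{Prop_LCAIsQuasiAbelianAndPontryaginDuality} and Lemma \ref{lemma_CGExchangesWithNSS}.

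For case (2), the underlying LCA group $\mathbb{R}^n \oplus \mathbb{T}^m$ is a connected abelian Lie group whose universal cover, in the category of topological groups and with the identity serving as basepoint, is the vector group $\tilde{M} := \mathbb{R}^{n+m}$ together with the projection $\pi : \tilde{M} \twoheadrightarrow M$ given by the identity on $\mathbb{R}^n$ and the exponential on the $\mathbb{R}^m$-factor. The map $\pi$ is a surjective open group homomorphism with kernel $\pi_1(M) \cong \mathbb{Z}^m$, hence an admissible epic by Lemma \ref{lemma_CharacterizeAdmissibleMorphisms}. I would next transport the right $\mathfrak{A}$-action from $M$ to $\tilde{M}$: for each $\alpha \in \mathfrak{A}$, the composition $\mu_{\alpha} \circ \pi : \tilde{M} \to M$ admits, by simple connectedness of $\tilde{M}$, a unique continuous lift $\tilde{\mu}_{\alpha} : \tilde{M} \to \tilde{M}$ with $\tilde{\mu}_{\alpha}(0) = 0$. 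Uniqueness of lifts then forces $\tilde{\mu}_{\alpha}$ to be a group homomorphism and $\alpha \mapsto \tilde{\mu}_{\alpha}$ to be a continuous right $\mathfrak{A}$-action. This gives $\tilde{M} \in \mathsf{LCA}_{\mathfrak{A},\mathbb{R}}$ and realizes $\pi$ as an admissible epic in $\mathsf{LCA}_{\mathfrak{A}}$. For the lifting property, given $f : V \to M$ with $V$ a vector $\mathfrak{A}$-module (hence simply connected), let $f' : V \to \tilde{M}$ be the unique continuous lift with $f'(0) = 0$; then for each $\alpha$ the two maps $\tilde{\mu}_{\alpha} \circ f'$ and $f' \circ (\cdot \alpha)_V$ both lift $\mu_{\alpha} \circ f$ and vanish at $0$, whence they coincide, yielding $\mathfrak{A}$-linearity, and an analogous argument applied to the addition map $V \times V \to V$ yields additivity.

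Case (1) follows by Pontryagin duality. By Proposition \ref{Prop_LCAIsQuasiAbelianAndPontryaginDuality}, $M^{\vee} \in \left. _{\mathfrak{A}}\mathsf{LCA}\right.$ has underlying LCA group $\mathbb{R}^n \oplus \mathbb{T}^m$. The left-module analogue of case (2) (proved symmetrically, cf.\ Remark \ref{Rmk_OppositeOrder}) yields an admissible epic $\tilde{N} \twoheadrightarrow M^{\vee}$ with $\tilde{N}$ a vector left $\mathfrak{A}$-module and kernel of underlying LCA group $\mathbb{Z}^m$. Set $\tilde{M} := \tilde{N}^{\vee}$, which by Lemma \ref{lemma_CGExchangesWithNSS}(3) is a vector right $\mathfrak{A}$-module; dualizing the epic produces an admissible monic $M \cong M^{\vee\vee} \hookrightarrow \tilde{M}$ with cokernel of underlying LCA group $\mathbb{T}^m$. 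The factorization property of case (1) is the Pontryagin dual of the one in case (2): given $f : M \to V$, the dual $f^{\vee} : V^{\vee} \to M^{\vee}$ lifts through $\tilde{N}$ by case (2), and dualizing back supplies the desired extension $\tilde{M} \to V$.

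The main obstacle I anticipate is the careful verification that the $\mathfrak{A}$-action transports through the universal cover in case (2). Continuity of each $\tilde{\mu}_{\alpha}$ is automatic from covering space theory, but additivity of $\tilde{\mu}_{\alpha}$, the ring-action compatibility, and $\mathfrak{A}$-linearity of the lifted map $f'$ all have to be extracted from the uniqueness clause in the lifting criterion for covering spaces; in case (1) one must additionally keep bookkeeping of the left/right swap under duality straight.
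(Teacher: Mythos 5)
Your proposal is correct and follows essentially the same route as the paper: prove (2) via the universal cover of the connected group $\mathbb{R}^{n}\oplus\mathbb{T}^{m}$, transport the $\mathfrak{A}$-action and verify module-linearity of all lifts through the uniqueness clause of the covering lifting criterion, then obtain (1) by Pontryagin duality. The only cosmetic difference is that you take the concrete model $\tilde{M}\simeq\mathbb{R}^{n+m}$ with its given group structure and only lift the scalar actions, whereas the paper works with the abstract universal cover and also lifts addition and negation; the substance is identical.
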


We prove this by using covering space theory. This might appear like overkill,
but it gives a quick and clean solution to the problem.

\begin{proof}
We prove (2) for right $\mathfrak{A}$-modules: We first work in the category
$\mathsf{LCA}$ alone. Since $M$ is connected, locally path-connected and
semi-locally simply connected, $M$ admits a universal covering space
$\tilde{M}$, see e.g. \cite[(6.7) Theorem]{MR643101}. Take $0\in M$ to regard
it as a pointed space. The fundamental group $\pi_{1}(M,0_{M})\simeq
\mathbb{Z}^{m}$ acts properly discontinuously on $\tilde{M}$ via deck
transformations. In particular, the covering map $q:\tilde{M}\rightarrow M$ is
a topological quotient map. Now, we shall use the lifting property of
continuous maps along covering maps, \cite[(6.1) Theorem]{MR643101}. Firstly,
lift addition to $\tilde{M}$ by lifting the map%
\[
\tilde{M}\times\tilde{M}\overset{q\times q}{\longrightarrow}M\times
M\overset{+}{\longrightarrow}M
\]
to $\tilde{M}$. Similarly, lift the negation map. All such lifts are unique,
\cite[(5.1) Theorem]{MR643101}. A standard computation confirms that this
equips $\tilde{M}$ with the structure of a topological abelian group, see for
example \cite[(6.11) Theorem]{MR643101}. Moreover, $\tilde{M}$ is Hausdorff
and locally compact since the covering map $q$ is a local homeomorphism (or
just use that we know that $\tilde{M}\simeq\mathbb{R}^{n+m}$). It follows that
$\tilde{M}\in\mathsf{LCA}$ and $q$, being surjective and open, is an
admissible epic. Next, for each $\alpha\in\mathfrak{A}$ lift the scalar action
of $M$ by lifting%
\[
\tilde{M}\overset{q}{\longrightarrow}M\overset{\cdot\alpha}{\longrightarrow
}M\text{,}%
\]
again using \cite[(6.1) Theorem]{MR643101}. Since the lifts of these maps are
again continuous, we obtain $\tilde{M}\in\mathsf{LCA}_{\mathfrak{A}}$, and
since these were lifts of the scalar action on $M$, the morphism $q$ is even a
morphism in $\mathsf{LCA}_{\mathfrak{A}}$. Thus, we have constructed an
admissible epic $\tilde{M}\twoheadrightarrow M$ in $\mathsf{LCA}%
_{\mathfrak{A}}$, and now, finally, using that $\tilde{M}\simeq\mathbb{R}%
^{n+m}$, we see that $\tilde{M}$ is indeed a vector $\mathfrak{A}$-module. The
lifting property from $f$ to $f^{\prime}$ again is nothing but the topological
lifting, \cite[(6.1) Theorem]{MR643101}. We quickly check that $f^{\prime}$ is
a right $\mathfrak{A}$-module homomorphism: Since $q$ is a right
$\mathfrak{A}$-module homomorphism, we compute%
\[
q(f^{\prime}(v)\cdot_{\tilde{M}}\alpha)=q(f^{\prime}(v))\cdot_{M}%
\alpha=f(v)\cdot_{M}\alpha=f(v\cdot_{V}\alpha)\qquad\qquad\text{for all }v\in
V\text{,}%
\]
since $f$ is a right $\mathfrak{A}$-module homomorphism, but of course
$qf^{\prime}=f$ also implies%
\[
q(f^{\prime}(v\cdot_{V}\alpha))=f(v\cdot_{V}\alpha)
\]
and therefore $f^{\prime}(-)\cdot_{\tilde{M}}\alpha$ and $f^{\prime}%
(-\cdot_{V}\alpha)$ lift the same map, so by the uniqueness of lifts they must
agree. But this is just the statement that $f^{\prime}$ is a module
homomorphism. It is clear that a symmetric argument works for left
$\mathfrak{A}$-modules. Now, we prove (1) for right modules by observing that
the Pontryagin dual of $\mathbb{R}^{n}\oplus\mathbb{Z}^{m}$ is $\mathbb{R}%
^{n}\oplus\mathbb{T}^{m}$, so by part (2) we get an epic $\tilde
{M}\twoheadrightarrow M^{\vee}$ in $\left.  _{\mathfrak{A}}\mathsf{LCA}%
\right.  $. Dualizing again, this becomes an admissible monic
$M\hookrightarrow\tilde{M}^{\vee}$, but since the dual of a vector
$\mathfrak{A}$-module is again a vector $\mathfrak{A}$-module, this proves the
claim. Analogously for left modules $M$.
\end{proof}

\begin{remark}
The previous lemma can also be proved without using any covering spaces.
Nonetheless, we would hope that this type of argument could be extended to a
much wider class of topological groups. Even for the purposes of this text it
would be nice to handle all groups of the shape $G=\mathbb{R}^{n}%
\oplus\mathbb{T}^{m}\oplus\mathbb{Z}^{d}$. The first problem is that classical
covering space theory is only developed for connected base spaces. However,
regarding applications to topological groups, a formalism for covering spaces
of non-connected groups was developed by Taylor \cite{MR0087028} and extended
by Brown and Mucuk \cite{MR1253285}. The first most drastic difference of this
situation in comparison to the connected case is that the existence of a
universal covering space hinges not just on local constraints (like local
path-connectedness), but on a global topological obstruction, a class in
$H^{3}(\pi_{0}(G),\pi_{1}(G,e))$, for $e$ the neutral element of the group. In
the case at hand, this causes no concern, this obstruction class vanishes if
there is a section for the map $G\twoheadrightarrow\pi_{0}(G)$; use
\cite[(7.1)]{MR0087028} or \cite[Theorem 3 and Proposition 4]{MR0419643}%
.\ Such a section exists in our situation,%
\[
\mathbb{R}^{n}\oplus\mathbb{T}^{m}\oplus\mathbb{Z}^{d}\leftrightarrows
\mathbb{Z}^{d}\text{,}%
\]
but as we shall see in Example \ref{example_CyclicGroupCannotSplitOffTorus}
such sections cannot be expected to exist as $\mathfrak{A}$-module
homomorphisms in the case of non-hereditary orders. Besides this issue, the
lifting properties of module structures along such coverings appear not to be
completely understood. We thank R. Brown for interesting correspondence.
\end{remark}

\begin{remark}
Extending the previous remark, one could of course dream to work with even
more general LCA groups $G$. Besides the complications from lack of
connectedness, there are also issues around local path-connectedness. A
theorem of Dixmier shows that $G\in\mathsf{LCA}$ is locally path-connected if
and only if $G\cong\mathbb{R}^{n}\oplus C\oplus D$ with $D$ discrete and $C$
compact and path-connected, \cite[(8.38)]{MR637201}. This restricts the
consideration to compact groups, and for those Dixmier showed that a compact
$G\in\mathsf{LCA}$ is path-connected if and only if its dual $G^{\vee}$, which
is discrete, satisfies $\operatorname*{Ext}_{\mathbb{Z}}^{1}(G^{\vee
},\mathbb{Z})=0,$ \cite[(8.25) Remarks, (a)]{MR637201}. Such abelian groups
$G^{\vee}$ are called `Whitehead groups'\footnote{Of course some people just
call $K_{1}(R)$ the Whitehead group. What these groups have in common is that
they are both abelian groups, but that's about it.}. Free abelian groups
clearly have this property, so $G=\mathbb{T}^{\omega}$ is path-connected for
all cardinals $\omega$. Any other path-connected example $G$ would yield a
non-free Whitehead group $G^{\vee}$. It was shown by Shelah that it is
undecidable in ZFC set theory whether such groups exist or not. If we restrict
to second countable LCA groups, the $\mathbb{T}^{\omega}$ are the only compact
groups which are path-connected. We are now remote from the subject of this
paper, but we feel it might be worth to see that the assumptions of Lemma
\ref{Lemma_RZOrRTHulls} are clearly much more restrictive than required, yet
formulating a good generalization will be delicate.
\end{remark}

\begin{lemma}
\label{Lemma_LiftFromLattice}Suppose $G^{\prime}\in\mathsf{LCA}_{\mathfrak{A}%
}$ such that its underlying LCA group is isomorphic to $\mathbb{R}^{n}%
\oplus\mathbb{Z}^{m}$ for suitable $n,m\in\mathbb{Z}_{\geq0}$. Suppose $G$ is
a second such object, for possibly different values of $n$ and $m$. Suppose we
are given the solid arrows in the diagram%
\[%
\xymatrix{
G^{\prime} \ar@{^{(}->}[rr] \ar[dr]_{f} & & G \ar@{-->}[dl]^{\tilde{f}} \\
& V\text{,}
}%
\]
where $V$ is a vector $\mathfrak{A}$-module. Then a lift $\tilde{f}$ exists.
\end{lemma}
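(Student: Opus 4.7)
The plan is to reduce to an extension problem purely between vector $\mathfrak{A}$-modules, where semisimplicity of $A_{\mathbb{R}} := \mathfrak{A}\otimes_{\mathbb{Z}}\mathbb{R}$ makes such extensions automatic. First I apply Lemma \ref{Lemma_RZOrRTHulls}(1) to both $G$ and $G'$: this produces admissible monics $j\colon G\hookrightarrow\tilde{G}$ and $G'\hookrightarrow\tilde{G'}$ into vector $\mathfrak{A}$-modules, with the lifting property available on the $G'$ side. Applying this lifting property twice, the map $f$ extends to some $f'\colon\tilde{G'}\to V$, and the composite admissible monic $G'\hookrightarrow G\hookrightarrow\tilde{G}$ (viewed as a morphism from $G'$ into the vector $\mathfrak{A}$-module $\tilde{G}$) extends to some $\psi\colon\tilde{G'}\to\tilde{G}$.

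Assuming $\psi$ to be injective (the main technical point; see the next paragraph), the argument concludes cleanly. Both $\tilde{G}$ and $\tilde{G'}$ are $\mathbb{R}$-vector spaces equipped with a continuous $\mathfrak{A}$-action, hence are $A_{\mathbb{R}}$-modules, and $\psi$ is $A_{\mathbb{R}}$-linear. Since $A_{\mathbb{R}}$ is semisimple, the $A_{\mathbb{R}}$-submodule $\psi(\tilde{G'})\subseteq\tilde{G}$ splits off, yielding a continuous $\mathfrak{A}$-linear retraction $\rho\colon\tilde{G}\to\tilde{G'}$ with $\rho\circ\psi=\mathrm{id}$. The desired extension is then $\tilde{f}:=f'\circ\rho\circ j$. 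To check $\tilde{f}|_{G'}=f$, note that by construction $j$ restricted to $G'$ factors as $G'\hookrightarrow\tilde{G'}\xrightarrow{\psi}\tilde{G}$, and applying $\rho$ recovers the inclusion $G'\hookrightarrow\tilde{G'}$, on which $f'$ agrees with $f$.

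The hard part will be showing that $\psi$ is injective, and this is where the admissibility of $G'\hookrightarrow G$ plays an essential role. Being a continuous homomorphism of vector groups, $\psi$ is automatically $\mathbb{R}$-linear, so it factors as $\tilde{G'}\twoheadrightarrow\tilde{G'}/\ker(\psi)\hookrightarrow\tilde{G}$ with a closed embedding on the right (between finite-dimensional $\mathbb{R}$-vector spaces). Since $G'\hookrightarrow\tilde{G}$ is admissible monic, pulling back closedness along the closed embedding shows that $G'+\ker(\psi)$ must be closed in $\tilde{G'}$. Writing $\tilde{G'}=\mathbb{R}^{n_1}\oplus\mathbb{R}^{m_1}$ so that $G'=\mathbb{R}^{n_1}\oplus\mathbb{Z}^{m_1}$ (using the explicit description of the cokernel in Lemma \ref{Lemma_RZOrRTHulls}(1)), and letting $W$ be the projection of $\ker(\psi)$ onto $\mathbb{R}^{m_1}$, closedness of $G'+\ker(\psi)$ forces $W+\mathbb{Z}^{m_1}$ to be closed in $\mathbb{R}^{m_1}$. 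By the classical lattice criterion this makes $W$ a $\mathbb{Q}$-rational subspace. If $W$ were nonzero it would then contain a nonzero point of $\mathbb{Z}^{m_1}$, which lifts (using that the projection $\ker(\psi)\to W$ is injective, a consequence of $\ker(\psi)\cap\mathbb{R}^{n_1}\subseteq\ker(\psi)\cap G'=0$) to a nonzero element of $\ker(\psi)\cap G'$, contradicting injectivity of $G'\hookrightarrow\tilde{G}$. Hence $W=0$ and $\ker(\psi)=0$, completing the proof.
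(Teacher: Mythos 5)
Your proof is correct, but it takes a genuinely different route from the paper's. The paper argues directly: it first treats the case where $G$ is itself a vector $\mathfrak{A}$-module, using the classification of closed subgroups of a vector group to choose a real basis adapted to $G'\cong\mathbb{Z}^{r}\oplus\mathbb{R}^{N-r}$ and then extending $f$ $\mathbb{R}$-linearly (continuity is automatic, and $\mathfrak{A}$-equivariance is checked on basis vectors, which lie in $G'$); the general case follows by applying Lemma \ref{Lemma_RZOrRTHulls} (1) to $G$ alone and restricting the resulting lift. You instead take hulls of both $G$ and $G'$, lift $f$ and the composite monic $G'\hookrightarrow G\hookrightarrow\tilde{G}$ to maps $f',\psi$ out of $\tilde{G}'$, and split $\psi$ using that vector $\mathfrak{A}$-modules are $A_{\mathbb{R}}$-modules and that $A_{\mathbb{R}}$ is semisimple. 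The burden of the paper's density argument is thereby shifted into your injectivity claim for $\psi$, whose proof (closedness of $G'+\ker(\psi)$, hence of $W+\mathbb{Z}^{m_1}$, hence rationality of $W$, hence a nonzero integral point of $\ker(\psi)$ lying in $G'$) is sound; it is the same irrational-winding phenomenon in different clothing, and both arguments ultimately rest on the same structure theorem for closed subgroups of $\mathbb{R}^{N}$. What your approach buys is that no explicit extension formula is needed; what it costs is the extra algebraic input (semisimplicity of $A_{\mathbb{R}}$, harmless since $A$ is semisimple over the perfect field $\mathbb{Q}$) and a more delicate point-set argument. Two small points to tidy up: Lemma \ref{Lemma_RZOrRTHulls} is stated for a vector module $V$ fixed in advance, so to lift both $f$ and the composite monic from the same hull $\tilde{G}'$ you should either remark that the hull constructed there does not depend on the target, or simply invoke the lemma once with target the vector module $V\oplus\tilde{G}$ and compose with the projections; and the passage from closedness of $G'+\ker(\psi)$ to closedness of $W+\mathbb{Z}^{m_1}$ deserves the one-line justification that $G'+\ker(\psi)$ contains $\ker\pi=\mathbb{R}^{n_1}$, so it is saturated for the projection $\pi$ and its image under this open quotient map is closed.
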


\begin{proof}
We follow the strategy of Moskowitz \cite{MR0215016}. (Step 1) We work with
right modules. Suppose $G$ is a vector $\mathfrak{A}$-module. Then $G^{\prime
}$ is a closed subgroup of a vector group $\mathbb{R}^{n}$ on the level of
$\mathsf{LCA}$. It follows that there exists a basis $b_{1},\ldots,b_{n}$ of
$G$ as a real vector space such that%
\[
G^{\prime}=\mathbb{Z}\left\langle b_{1},\ldots,b_{r}\right\rangle
\oplus\mathbb{R}\left\langle b_{r+1},\ldots,b_{n}\right\rangle
\]
for some $0\leq r\leq n$, cf. \cite[Theorem 6]{MR0442141}. As $f$ is an
abelian group morphism, we have%
\[
f\left(  \sum a_{i}b_{i}\right)  =\sum a_{i}f(b_{i})
\]
for all $a_{1},\ldots,a_{r}\in\mathbb{Z}$ and $a_{r+1},\ldots,a_{n}%
\in\mathbb{Q}$. By continuity, the same holds if instead $a_{r+1},\ldots
,a_{n}\in\mathbb{R}$. Thus, if we define $\tilde{f}(\sum a_{i}b_{i}%
):=a_{i}f(b_{i})$ for all $a_{i}\in\mathbb{R}$, this indeed extends $f$. Being
a linear map on a finite-dimensional vector space, $\tilde{f}$ is necessarily
continuous. Moreover, it respects the $\mathfrak{A} $-module structure:\ As
$\tilde{f}$ is $\mathbb{R}$-linear, we can check this on the basis elements
$b_{i}$, however they all lie in the subgroup $G^{\prime}$, and so this
follows since $f$ respects the $\mathfrak{A}$-module structure. This proves
our claim. (Step 2) Now, let $G$ be arbitrary as in our claim. By Lemma
\ref{Lemma_RZOrRTHulls}, (1), we get%
\[
G^{\prime}\hookrightarrow G\overset{i}{\hookrightarrow}\tilde{G}\text{,}%
\]
where $\tilde{G}$ is a vector $\mathfrak{A}$-module. By Step 1 we can lift
$f:G^{\prime}\rightarrow V$ to a map $\tilde{G}\rightarrow V$, and restricting
to $G$, the map $f^{\prime}\circ i$ proves our claim. Finally, it is clear
that all these steps also work for left modules.
\end{proof}

\begin{lemma}
\label{Lemma_LiftCG}Suppose we are given the solid arrows in the diagram%
\[%
\xymatrix{
G^{\prime} \ar@{^{(}->}[rr] \ar[dr]_{f} & & G \ar@{-->}[dl]^{\tilde{f}} \\
& V\text{,}
}%
\]
where $V$ is a vector $\mathfrak{A}$-module and $G^{\prime},G\in
\mathsf{LCA}_{\mathfrak{A},cg}$. Then a lift $\tilde{f}$ exists.
\end{lemma}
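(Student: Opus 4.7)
The plan is to reduce Lemma \ref{Lemma_LiftCG} to the already-established Lemma \ref{Lemma_LiftFromLattice} by quotienting out the maximal compact submodules of $G'$ and $G$. Applying Lemma \ref{Lemma_CompactPartOfCG} yields canonical admissible exact sequences
\[
C' \hookrightarrow G' \twoheadrightarrow W', \qquad C \hookrightarrow G \twoheadrightarrow W,
\]
with $C', C$ compact and $W', W$ of the shape $\mathbb{R}^{n}\oplus\mathbb{Z}^{m}$ (for possibly different $n,m$) as LCA groups. Since $G' \cap C$ is a compact submodule of $G'$, maximality of $C'$ gives $G' \cap C \subseteq C'$; conversely the image of $C'$ in $W$ is a compact subgroup of $\mathbb{R}^{n}\oplus\mathbb{Z}^{m}$, hence trivial, so $C' \subseteq G' \cap C$. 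Thus $C' = G' \cap C$, and the inclusions induce a canonical morphism $j \colon W' \to W$ in $\mathsf{LCA}_{\mathfrak{A}}$.

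Next, because the vector $\mathfrak{A}$-module $V$ has no nontrivial compact subgroup, $f(C') = 0$, so $f$ descends uniquely through $G' \twoheadrightarrow W'$ to some $\bar f \colon W' \to V$. Once $j$ is known to be an admissible monic, Lemma \ref{Lemma_LiftFromLattice} produces an extension $\tilde g \colon W \to V$ of $\bar f$, and the composite $\tilde f := \tilde g \circ (G \twoheadrightarrow W)$ is the required lift: for $g' \in G'$ one has
\[
\tilde f(g') = \tilde g(g' + C) = \bar f(g' + C') = f(g'),
\]
using the commutative square induced by $C' = G' \cap C$.

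The main obstacle is therefore to verify that $j$ is an admissible monic, which by Lemma \ref{lemma_CharacterizeAdmissibleMorphisms} means injective and a closed map. Injectivity is precisely the equality $C' = G' \cap C$ established above. For closedness, I would first show that $G' + C$ is closed in $G$: given $g'_\alpha + c_\alpha \to x$, compactness of $C$ lets one pass to a subnet with $c_\alpha \to c \in C$, and then closedness of $G'$ forces $x - c \in G'$. Since $C$ is compact, the quotient map $q \colon G \twoheadrightarrow W$ is proper, so its restriction $q|_{G'} \colon G' \to W$ is a closed continuous map with image $(G'+C)/C$ and with fibres over that image the cosets of $G' \cap C = C'$. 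A closed continuous surjection onto its image is a quotient map, which means the subspace topology on $(G'+C)/C$ inherited from $W$ coincides with the quotient topology on $W' = G'/C'$. Hence $j$ is a closed embedding, completing the reduction and the argument.
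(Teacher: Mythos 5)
Your proposal is correct and follows essentially the same route as the paper: apply Lemma \ref{Lemma_CompactPartOfCG} to both $G'$ and $G$, show the induced map $j\colon W'\to W$ is an admissible monic (injectivity via $G'\cap C=C'$, closedness via compactness of $C$), descend $f$ to $W'$ using that $V$ has no nontrivial compact subgroups, extend along $j$ by Lemma \ref{Lemma_LiftFromLattice}, and compose with $G\twoheadrightarrow W$. The only cosmetic difference is that you verify closedness of $j$ via properness of $q$ and the quotient-onto-its-image argument (plus the closedness of $G'+C$), whereas the paper pushes closed sets through directly using that $i$, $\pi$, $\pi'$ are closed maps; the underlying fact (quotients by compact subgroups are closed maps) is the same.
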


\begin{proof}
We right away work in $\mathsf{LCA}_{\mathfrak{A}}$. Apply Lemma
\ref{Lemma_CompactPartOfCG} to both $G^{\prime}$ and $G$. We obtain a
commutative diagram%
\[%
\xymatrix{
C^{\prime} \ar@{^{(}->}[d]  & C \ar@{^{(}->}[d] \\
G^{\prime} \ar@{^{(}->}[r]^{i} \ar@{->>}[d]_{\pi^{\prime}} & G \ar@
{->>}[d]^{\pi} \\
W^{\prime} & W\text{.}
}%
\]
The map from $C^{\prime}$ on the upper left to $W$ on the lower right maps the
compactum $C^{\prime}$ to a compact subgroup of $W$. But $W$ is isomorphic to
$\mathbb{R}^{n}\oplus\mathbb{Z}^{m}$ as an LCA group, so this morphism must be
zero. The universal property of kernels yields the lift $i^{\sharp}$, and then
as a result the morphism $j$ gets induced to the quotients:%
\begin{equation}%
\xymatrix{
C^{\prime} \ar@{^{(}->}[d] \ar@{-->}[r]^{i^{\sharp}} & C \ar@{^{(}->}[d] \\
G^{\prime} \ar@{^{(}->}[r]^{i} \ar@{->>}[d]_{\pi^{\prime}} & G \ar@
{->>}[d]^{\pi} \\
W^{\prime} \ar@{-->}[r]_{j} & W\text{.}
}%
\label{l_xA1}%
\end{equation}
Observe that both $\pi,\pi^{\prime}$ are open morphisms (as they are
admissible epics), but they are additionally closed morphisms since
$C^{\prime},C$ are compact by \cite[Proposition 11]{MR0442141} (beware: not
every admissible epic in $\mathsf{LCA}$ is a closed map, see Example
\ref{example_NotEveryQuotientMapIsClosed}, neither does all open sets being
clopen imply that all open maps are closed; not at all).\newline(Step 1) We
claim that $j$ is injective: Suppose $w^{\prime}\in W^{\prime}$ and
$j(w^{\prime})=0$. Since $\pi^{\prime}$ is surjective, we find some
$g^{\prime}\in G^{\prime}$ such that $w^{\prime}=\pi^{\prime}(g^{\prime})$ and
thus $j\pi^{\prime}(g^{\prime})=0$, i.e. $\pi i(g^{\prime})=0$. But $i$ is
injective, so we may just as well regard $G^{\prime}$ as a subset of $G$. We
see that the kernel of $\pi i$ is just $G^{\prime}\cap C$; it is those
elements in $C$ which come from the subset $G^{\prime}$. However, $C$ is
compact, so $G^{\prime}\cap C$ is a compact subgroup of $G^{\prime}$. Its
image $\pi^{\prime}(G^{\prime}\cap C)\subseteq W^{\prime}$ must also be
compact, but since $W^{\prime}$ has no compact subgroups, we conclude that
$G^{\prime}\cap C\subseteq C^{\prime}$ (a different way to say this:
$C^{\prime}$ is the maximal compact subgroup of $G^{\prime}$). Thus, our
$g^{\prime}$ with $\pi i(g^{\prime})=0$ must satisfy $g^{\prime}\in C^{\prime
}$. Since $g^{\prime}$ was a preimage of $w^{\prime}$ in $G^{\prime}$, it
follows that $w^{\prime}=0$.\newline(Step 2) We claim that $j$ is a closed
map. Let $T\subseteq W^{\prime}$ be a closed set. As $\pi^{\prime}$ is
continuous, the preimage%
\[
(\pi^{\prime})^{-1}(T)=\{g^{\prime}\in G^{\prime}\mid\pi^{\prime}(g^{\prime
})\in T\}
\]
is closed in $G^{\prime}$. Now, note that $i$ is a closed map, and $\pi$ is
also a closed map (as we had recalled above), so $(\pi\circ i)(\pi^{\prime
})^{-1}(T)$ is closed in $W$. However, $\pi i=j\pi^{\prime}$, i.e.%
\[
(\pi\circ i)\left(  (\pi^{\prime})^{-1}(T)\right)  =(j\circ\pi^{\prime
})\left(  (\pi^{\prime})^{-1}(T)\right)
\]
and $\pi^{\prime}(\pi^{\prime})^{-1}(T)=T$ (we always have `$\subseteq$', and
since $\pi^{\prime}$ is surjective, we even have equality). Thus, this set
agrees with $j(T)$. As $T$ was an arbitrary closed subset of $W^{\prime}$, it
follows that $j$ maps closed sets to closed sets.\newline(Step 3) However,
combining the previous steps, it follows that $j$ is an admissible monic. We
obtain the solid arrows of the diagram%
\[%
\xymatrix{
C^{\prime} \ar@{^{(}->}[d] \ar[rr]^{i^{\sharp}} & & C \ar@{^{(}->}[d] \\
G^{\prime} \ar@{^{(}->}[rr]^{i} \ar[ddr]^<<<<<<{f} \ar@{->>}[d]_{\pi^{\prime}}
& & G \ar@{->>}[d]^{\pi} \\
W^{\prime} \ar@{^{(}->}[rr]_{j} \ar@{-->}[dr]_{g} & & W \ar@{..>}[dl] \\
& V
}%
\]
and since $V$ has no compact subgroups, but $C^{\prime}$ has compact image
under $f$, we get the dashed lift $g$ by the universal property of cokernels.
The morphism $j$ has all the properties required to apply Lemma
\ref{Lemma_LiftFromLattice}, so we get the lift depicted by a dotted arrow.
Composing with $\pi$, this yields the required lift and finishes the proof.
\end{proof}

\begin{remark}
At the beginning of the proof we have shown that $C^{\prime}$ maps to zero in
$W$, so $C^{\prime}\subseteq C$ and trivially $C^{\prime}\subseteq G^{\prime
}\cap C$, and in Step 1 we saw that $G^{\prime}\cap C\subseteq C^{\prime}$. It
follows that the upper square in Diagram \ref{l_xA1} is a pullback diagram. It
remains a pullback in the category of abelian groups and then applying the
Snake Lemma yields a different proof that $j$ is injective. We cannot use the
Snake Lemma right away in $\mathsf{LCA}$ itself, since at this step of the
proof we do not yet know that $j$ is an admissible morphism, compare
\cite[Corollary 8.13]{MR2606234}.
\end{remark}

\begin{example}
\label{example_NotEveryQuotientMapIsClosed}The parabola $\{(x,y)\in
\mathbb{R}^{2}\mid xy=1\}$ is closed, but gets mapped to the open set
$\mathbb{R}\setminus\{0\}$ under the projection to (either) factor. These
projection maps are therefore open, but not closed.
\end{example}

We recall the following elementary facts from topology.

\begin{lemma}
\label{lemma_CheckNearZero}If $f:G^{\prime}\rightarrow G$ is an abelian group
homomorphism of topological groups (not necessarily continuous), then $f$ is
continuous if and only if it is continuous in a neighbourhood of the neutral
element of $G^{\prime}$.
\end{lemma}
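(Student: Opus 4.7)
The plan is to reduce continuity at an arbitrary point to continuity at the neutral element, exploiting that in a topological group every translation is a homeomorphism. The ``only if'' direction is immediate, since a continuous map is continuous at every point, hence on any neighbourhood of $0 \in G'$.

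For the nontrivial direction, suppose $f$ is continuous on some neighbourhood $U$ of $0 \in G'$; in particular $f$ is continuous at $0$, and because $f$ is a group homomorphism we have $f(0)=0 \in G$. Fix an arbitrary point $g' \in G'$, and let $W \subseteq G$ be any open neighbourhood of $f(g')$. Then $W - f(g')$ is an open neighbourhood of $0 \in G$ (translation by $-f(g')$ is a homeomorphism of $G$), so by continuity of $f$ at $0$ there exists an open neighbourhood $V \subseteq G'$ of $0$ with $f(V) \subseteq W - f(g')$. The set $g' + V$ is then an open neighbourhood of $g'$ (translation by $g'$ is a homeomorphism of $G'$), and for any $x \in g' + V$, writing $x = g' + v$ with $v \in V$, additivity of $f$ gives
\[
f(x) = f(g') + f(v) \in f(g') + (W - f(g')) = W.
\]
Hence $f$ is continuous at $g'$. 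Since $g'$ was arbitrary, $f$ is continuous on all of $G'$.

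The only ingredients used are that addition is jointly continuous (so translations are homeomorphisms) and that $f$ respects addition; no further properties of $G'$ or $G$ enter. There is essentially no obstacle here — this is the standard translation argument for homomorphisms of topological groups, and the lemma is invoked later merely as a convenient technical device for checking continuity locally.
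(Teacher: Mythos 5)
Your proof is correct: the translation argument (continuity at the neutral element plus the homomorphism property, using that translations are homeomorphisms) is exactly the standard way to establish this, and the paper itself records the lemma as an elementary fact without supplying a proof. Nothing is missing; your observation that only joint continuity of addition and additivity of $f$ are used is accurate.
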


\begin{lemma}
\label{lemma_SumOpenWithArbitraryIsOpen}Suppose $G$ is a topological group. If
$H$ is an open subgroup and $E$ an arbitrary subgroup, then $H+E$ is an open subgroup.
\end{lemma}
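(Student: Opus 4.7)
The plan is to show first that $H+E$ is an open set, and then that it is a subgroup; the two arguments are essentially independent and both completely elementary.

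For openness, I would write
\[
H+E \;=\; \bigcup_{e\in E}(H+e).
\]
Each translate $H+e$ is open, since for any fixed $e\in G$ the translation map $g\mapsto g+e$ is a homeomorphism of $G$ and $H$ is open by assumption. A union of open sets is open, so $H+E$ is open. This is the only topological input and it requires nothing beyond the definition of a topological group.

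For the subgroup property, since $G$ is written additively (and by the context of the paper is abelian), closure under addition follows from $(h_1+e_1)+(h_2+e_2)=(h_1+h_2)+(e_1+e_2)\in H+E$, and closure under inversion follows from $-(h+e)=(-h)+(-e)\in H+E$, both using only that $H$ and $E$ are subgroups. Since $0\in H$ and $0\in E$, we have $0\in H+E$, so $H+E$ is nonempty.

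There is really no main obstacle here; the statement is a standard warm-up fact recorded only because it is used in the surrounding arguments (for instance, to combine an open compactly generated piece with an $\mathfrak{A}$-submodule in $\mathsf{LCA}_{\mathfrak{A}}$). The only small subtlety worth flagging is the implicit abelianness, which is justified by the fact that the lemma is stated within the LCA setting of the paper and the binary operation is written additively; in a non-abelian topological group the same argument shows $HE$ is open, but one would need a normality hypothesis on $E$ (or on $H$) to conclude that $HE$ is a subgroup.
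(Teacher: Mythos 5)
Your argument is correct and is essentially the paper's own proof: the paper's one-line justification (adding an open neighbourhood of zero in $H$ to any point of $H+E$) is the same translation-invariance observation as your decomposition $H+E=\bigcup_{e\in E}(H+e)$. The remarks on the subgroup property and abelianness are fine but routine.
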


\begin{proof}
Just add an open neighbourhood of zero inside $H$ to an arbitrary element of
$H+E$ to see this.
\end{proof}

\begin{theorem}
\label{thm_VectorModulesAreInjectiveAndBijective}Vector $\mathfrak{A}$-modules
are injective and projective objects in $\mathsf{LCA}_{\mathfrak{A}} $, as
well as in $\left.  _{\mathfrak{A}}\mathsf{LCA}\right.  $.
\end{theorem}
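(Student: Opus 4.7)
The plan is to prove that every vector $\mathfrak{A}$-module $V$ is injective in $\mathsf{LCA}_{\mathfrak{A}}$; the analogous statement in $\left._{\mathfrak{A}}\mathsf{LCA}\right.$ follows by a fully symmetric argument. Projectivity in both categories then follows automatically by Pontryagin duality: $V^{\vee}$ is again a vector $\mathfrak{A}$-module (Lemma \ref{lemma_CGExchangesWithNSS}(3)), so injectivity of the dual in the opposite-sided category translates, via Lemma \ref{lemma_CGExchangesWithNSS}(4), into projectivity of $V^{\vee\vee}\cong V$. Thus all four assertions reduce to the single injectivity claim.

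For the injectivity itself, I would start with an admissible monic $G'\hookrightarrow G$ and a morphism $f:G'\to V$, and apply Theorem \ref{thm_FirstDecomp}(1) to $G$ to extract a clopen compactly generated submodule $H\hookrightarrow G$ with discrete quotient. The intersection $G'\cap H$ is clopen in $G'$ and is a closed subgroup of the compactly generated $H$, so by the standard LCA structure fact that closed subgroups of compactly generated LCA groups are themselves compactly generated, we have $G'\cap H\in\mathsf{LCA}_{\mathfrak{A},cg}$. Lemma \ref{Lemma_LiftCG}, applied to the admissible monic $G'\cap H\hookrightarrow H$ and to $f|_{G'\cap H}$, then supplies a continuous $\mathfrak{A}$-linear extension $\tilde{f}_{H}:H\to V$.

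The next step is to glue $f$ and $\tilde{f}_{H}$ into a map $\bar{f}:G'+H\to V$ by the formula $\bar{f}(g+h):=f(g)+\tilde{f}_{H}(h)$. Well-definedness rests on the fact that $\tilde{f}_{H}$ agrees with $f$ on $G'\cap H$, $\mathfrak{A}$-linearity is automatic, and continuity at $0$ follows because $\bar{f}|_{H}=\tilde{f}_{H}$ is continuous on the open neighbourhood $H$ of $0$; Lemma \ref{lemma_CheckNearZero} promotes this to continuity everywhere. By Lemma \ref{lemma_SumOpenWithArbitraryIsOpen}, $G'+H$ is an open subgroup of $G$, hence admissibly monic. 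For the final extension to all of $G$, algebraic injectivity of $V$ as an abstract $\mathfrak{A}$-module (Lemma \ref{lemma_VectorModulesAreAlgebraicallyInjectiveAndProjective}) provides an algebraic extension $\tilde{f}:G\to V$ of $\bar{f}$, and continuity of $\tilde{f}$ is automatic since it agrees with the continuous $\bar{f}$ on the open subgroup $G'+H$, via a second application of Lemma \ref{lemma_CheckNearZero}.

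The main technical obstacle is the compact generation of $G'\cap H$, which must be invoked from the standard LCA structure theory; the rest is a careful but routine orchestration of the lifting lemmas already developed in \S\ref{sect_VectorModules}, combining a continuous extension on the compactly generated core with an algebraic extension on the discrete complement, where continuity is harmless because the algebraic extension is forced to coincide with an already continuous map on an open neighbourhood of the identity.
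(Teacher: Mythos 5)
Your proposal is correct and takes essentially the same route as the paper's proof: decompose $G$ via Theorem \ref{thm_FirstDecomp}, lift over the compactly generated part with Lemma \ref{Lemma_LiftCG} (using Moskowitz's fact that closed subgroups of compactly generated LCA groups are compactly generated), glue on the open subgroup $G'+H$, extend algebraically via Lemma \ref{lemma_VectorModulesAreAlgebraicallyInjectiveAndProjective} with continuity checked on the open subgroup via Lemma \ref{lemma_CheckNearZero}, and deduce projectivity by Pontryagin duality. The only, immaterial, difference is that you define the glued map on $G'+H$ by an explicit formula and verify well-definedness and continuity by hand, whereas the paper obtains it from the universal property of the cokernel of $G'\cap H\overset{\triangle}{\hookrightarrow}G'\oplus H\overset{+}{\twoheadrightarrow}G'+H$.
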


\begin{proof}
We demonstrate injectivity in $\mathsf{LCA}_{\mathfrak{A}}$. This means we
have to show the following: Suppose we are given the solid arrows in the
diagram%
\[%
\xymatrix{
G^{\prime} \ar@{^{(}->}[rr] \ar[dr]_{f} & & G \ar@{-->}[dl]^{\tilde{f}} \\
& V\text{,}
}%
\]
where $V$ is a vector $\mathfrak{A}$-module and $G^{\prime},G\in
\mathsf{LCA}_{\mathfrak{A}}$. Then a lift $\tilde{f}$ exists. We use
essentially the same reduction as Moskowitz \cite[Proposition 3.4]{MR0215016},
but in order to provide a complete proof, we repeat the argument: By Theorem
\ref{thm_FirstDecomp} we find a clopen compactly generated $H\hookrightarrow
G$ with $H\in\mathsf{LCA}_{\mathfrak{A}}$. We obtain the commutative diagram%
\[%
\xymatrix{
G^{\prime} \cap H \ar@{^{(}->}[d] \ar@{^{(}->}[r]^{s} & H \ar@{^{(}->}[d] \\
G^{\prime} \ar@{^{(}->}[r]_{i} & G
}%
\]
and $s$ is an admissible monic. Since $H$ is compactly generated, so is its
closed subgroup $G^{\prime}\cap H$ by \cite[Theorem 2.6]{MR0215016}. We lift
the restriction $f\mid_{G^{\prime}\cap H}:G^{\prime}\cap H\rightarrow V$ to
$H$ using Lemma \ref{Lemma_LiftCG}, call this $f^{\prime} $. Thus, we have
constructed a morphism $(f,f^{\prime}):G^{\prime}\oplus H\rightarrow V$. Since
$f$ and $f^{\prime}$ agree on $G^{\prime}\cap H$, we see that $(f,f^{\prime})$
restricts to the zero map on this intersection, so by the universal property
of cokernels applied to the exact sequence%
\[
G^{\prime}\cap H\overset{\triangle}{\hookrightarrow}G^{\prime}\oplus
H\overset{+}{\twoheadrightarrow}G^{\prime}+H\text{,}%
\]
where $\triangle x:=(x,-x)$, the map descends to a morphism $f^{\prime\prime
}:G^{\prime}+H\rightarrow V$. We note that on $G^{\prime}$ this map agrees
with $f$, so $f^{\prime\prime}$ is a partial lift. Next, since $H$ is open in
$G$, the sum $G^{\prime}+H$ is also open in $G$ (Lemma
\ref{lemma_SumOpenWithArbitraryIsOpen}). In particular, it is a closed
submodule, so we arrive at the solid arrows in a diagram%
\[%
\xymatrix{
G^{\prime} + H \ar[dr]_{f^{\prime\prime}} \ar@{^{(}->}[r] & G \ar@
{-->}[d] \ar@{->>}[r] & D \\
& V\text{,}
}%
\]
where $D$ is discrete since $G^{\prime}+H$ was additionally open. Since $V$ is
injective as an algebraic right $\mathfrak{A}$-module by Lemma
\ref{lemma_VectorModulesAreAlgebraicallyInjectiveAndProjective}, we get the
dashed arrow, as an algebraic $\mathfrak{A}$-module morphism. We need to check
that it is continuous. However, for set-theoretic maps of topological groups
it suffices to check continuity in a neighbourhood of the neutral element
(Lemma \ref{lemma_CheckNearZero}) and since $G^{\prime}+H$ is open, it
suffices to check it on $G^{\prime}+H$, but there $f$ agrees with
$f^{\prime\prime}$, which we know to be continuous. Thus, the dashed arrow
defines a morphism in $\mathsf{LCA}_{\mathfrak{A}}$. The argument for left
modules is analogous. Finally, projectivity follows by Pontryagin duality, see
Lemma \ref{lemma_CGExchangesWithNSS}.
\end{proof}

\section{\label{sect_SplitOffVectorSummand}Splitting off vector summands}

\begin{lemma}
\label{Lemma_KeySplittingLemma}Let $M\in\mathsf{LCA}_{\mathfrak{A}}$ be given.
Suppose for the underlying LCA group there exists a direct sum decomposition%
\[
M\cong V\oplus D\qquad\text{in}\qquad\mathsf{LCA}%
\]
for $V,D\in\mathsf{LCA}$, where $D$ is discrete.

\begin{enumerate}
\item If $V$ is a vector group, then it additionally has a canonical structure
as a topological right vector $\mathfrak{A}$-module. Hence, $V\in
\mathsf{LCA}_{\mathfrak{A}}$, and moreover there exists a direct sum
decomposition%
\begin{equation}
M\cong V\oplus D\label{l_xA2}%
\end{equation}
also in $\mathsf{LCA}_{\mathfrak{A}}$, where $D\in\mathsf{LCA}_{\mathfrak{A}}$
carries the discrete topology.

\item If $\mathfrak{A}$ is a hereditary order and $V\simeq\mathbb{R}^{n}%
\oplus\mathbb{T}^{m}$, then $V$ additionally has a canonical structure as an
object in $\mathsf{LCA}_{\mathfrak{A}}$. Moreover, the same direct sum
decomposition of Equation \ref{l_xA2} holds.
\end{enumerate}
\end{lemma}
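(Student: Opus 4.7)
The plan is to realize $V$ as the identity component $M^{0}$, which is automatically an $\mathfrak{A}$-submodule, and then to split the resulting extension using the injectivity/projectivity of vector $\mathfrak{A}$-modules (Theorem~\ref{thm_VectorModulesAreInjectiveAndBijective}). The hereditary hypothesis in part (2) will only enter at the very last step.

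For part (1): a vector group is connected while $D$ is totally disconnected, so the LCA decomposition $M\cong V\oplus D$ forces $M^{0}=V$ set-theoretically. Each $\cdot\alpha$ is continuous and fixes $0$, hence preserves $M^{0}$, so $V=M^{0}$ acquires a canonical $\mathfrak{A}$-submodule structure, and in particular an object structure in $\mathsf{LCA}_{\mathfrak{A}}$. Since $V$ is open in $M$ (preimage of $0\in D$), the quotient $M/V$ is discrete and matches $D$ topologically, giving an admissible short exact sequence $V\hookrightarrow M\twoheadrightarrow D$ in $\mathsf{LCA}_{\mathfrak{A}}$. Injectivity of $V$ extends $\mathrm{id}_{V}$ to a retraction $M\to V$, producing the splitting.

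For part (2): the same identity-component argument gives $V=M^{0}$ canonically, since $\mathbb{R}^{n}\oplus\mathbb{T}^{m}$ is connected. Inside $V$, the maximal compact subgroup $K$ (topologically $\mathbb{T}^{m}$) is $\mathfrak{A}$-stable, because $K\cdot\alpha$ is a compact subgroup of $V$ and hence contained in $K$. The quotient $V/K$ is a vector $\mathfrak{A}$-module (its underlying LCA group is $\mathbb{R}^{n}$ with no compact subgroups), so projectivity splits $K\hookrightarrow V\twoheadrightarrow V/K$ and gives $V\cong V_{\mathbb{R}}\oplus K$ in $\mathsf{LCA}_{\mathfrak{A}}$ with $V_{\mathbb{R}}$ a vector $\mathfrak{A}$-module. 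Injectivity of $V_{\mathbb{R}}$ then splits $V_{\mathbb{R}}\hookrightarrow M$, reducing the problem to a compact-by-discrete extension
\[
K\hookrightarrow M'\twoheadrightarrow D
\]
in $\mathsf{LCA}_{\mathfrak{A}}$, where $M'$ has underlying LCA group $\mathbb{T}^{m}\oplus D$.

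Splitting this last extension is the main obstacle, and this is precisely where hereditarity enters. I would Pontryagin dualize to obtain $D^{\vee}\hookrightarrow(M')^{\vee}\twoheadrightarrow K^{\vee}$ in $\left._{\mathfrak{A}}\mathsf{LCA}\right.$, where $K^{\vee}$ is discrete and $\mathbb{Z}$-isomorphic to $\mathbb{Z}^{m}$, hence finitely generated as a left $\mathfrak{A}$-module and $\mathfrak{A}$-torsion-free (the latter because every non-zero-divisor of $\mathfrak{A}$ is a unit in the semisimple algebra $A$, so any hypothetical $\mathfrak{A}$-torsion element would already be $\mathbb{Z}$-torsion). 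Over a hereditary order, finitely generated torsion-free modules are projective (a standard result for hereditary orders), so the dual sequence admits an algebraic $\mathfrak{A}$-linear section $K^{\vee}\to(M')^{\vee}$, and this section is automatically continuous because $K^{\vee}$ carries the discrete topology. Dualizing the section yields a retraction $M'\to K$ in $\mathsf{LCA}_{\mathfrak{A}}$, and assembling the pieces produces $M\cong V_{\mathbb{R}}\oplus K\oplus D\cong V\oplus D$. Without hereditarity, $K^{\vee}$ can fail to be projective, so this is precisely the step that would break over a merely Gorenstein order, consistent with the obstructions flagged in the Strategy of proof.
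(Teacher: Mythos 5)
Your proof is correct, but for part (2) it takes a genuinely different route from the paper's. For part (1) you match the paper (identify $V$ with the identity component, which is automatically an $\mathfrak{A}$-stable clopen submodule), except that you split off $V$ using its injectivity in $\mathsf{LCA}_{\mathfrak{A}}$ (Theorem \ref{thm_VectorModulesAreInjectiveAndBijective}), whereas the paper uses only the algebraic injectivity of Lemma \ref{lemma_VectorModulesAreAlgebraicallyInjectiveAndProjective} together with the observation that a section defined on the discrete quotient is automatically continuous; both are legitimate, as the theorem is established before this lemma, so there is no circularity. For part (2) the paper keeps $V$ intact: it applies the covering-space epic $\tilde{V}\twoheadrightarrow V$ of Lemma \ref{Lemma_RZOrRTHulls} and uses that over a hereditary ring quotients of injectives are injective, so that $V$ itself (underlying group $\mathbb{R}^{n}\oplus\mathbb{T}^{m}$) is algebraically injective and the part (1) argument applies verbatim. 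You instead decompose $V\cong V_{\mathbb{R}}\oplus K$ via the $\mathfrak{A}$-stable maximal compact subgroup and projectivity of the vector quotient, split off $V_{\mathbb{R}}$ by injectivity, and resolve the remaining extension $K\hookrightarrow M'\twoheadrightarrow D$ by Pontryagin duality, using that $K^{\vee}$ is a finitely generated $\mathbb{Z}$-torsion-free (hence lattice) left $\mathfrak{A}$-module and that over a hereditary order every lattice is projective (it embeds into a free module by semisimplicity of $A$, and submodules of projectives over a hereditary Noetherian ring are projective) — this standard fact should be cited or proved, and your excursion on $\mathfrak{A}$-torsion is a slight detour, since all the embedding argument needs is $\mathbb{Z}$-torsion-freeness of $K^{\vee}\simeq\mathbb{Z}^{m}$. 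The two uses of hereditarity are dual characterizations of the same hypothesis, so neither route is stronger; yours additionally yields the internal splitting $V\cong V_{\mathbb{R}}\oplus K$ and makes the obstruction visibly dual to the non-split extension of Example \ref{example_CyclicGroupCannotSplitOffTorus}, while the paper's route is shorter because algebraic injectivity of $V$ lets the discrete-section trick of part (1) do all the work in one step.
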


This is the analogue of \cite[Lemma 2.11]{obloc}. Note that we only obtain a
weaker result if $\mathfrak{A}$ fails to be hereditary. Indeed, the
impossibility to split off torus summands in $\mathsf{LCA}_{\mathfrak{A}}$ is
a key reason why the theory is more complicated than in \cite{obloc}.

\begin{proof}
(Claim 1) We note that $V\subseteq M$ can alternatively be described as the
connected component of the neutral element since $V$ is connected and $D$
discrete. As the action of $\mathfrak{A}$ is by continuous maps, the image of
a connected set is connected, so it necessarily maps $V$ into itself. It
follows that $V$ is an algebraic right $\mathfrak{A}$-submodule. Being a
subgroup, the action of $\mathfrak{A}$ is clearly still continuous, so it is
also a topological right $\mathfrak{A}$-module, i.e. $V\in\mathsf{LCA}%
_{\mathfrak{A}}$. In $\mathsf{LCA}$, we know that the quotient $M/V$ is
discrete, so $V$ is open (it is the preimage of the open $\{0\}$ under the
quotient map) in $M$ and thus clopen. Hence, we get a subobject%
\[
V\hookrightarrow M
\]
in $\mathsf{LCA}_{\mathfrak{A}}$ and the inclusion is an admissible monic.
From Lemma \ref{lemma_VectorModulesAreAlgebraicallyInjectiveAndProjective} we
learn that $V$ is algebraically an injective right $\mathfrak{A}$-module.
Thus, as algebraic $\mathfrak{A}$-module maps, the injection%
\[
V\hookrightarrow M\qquad\qquad\text{(in }\mathsf{Mod}_{\mathfrak{A}}\text{)}%
\]
splits and we get a direct sum decomposition $M\simeq V\oplus D^{\prime}$ for
some complement $D^{\prime}$ in $\mathsf{Mod}_{\mathfrak{A}}$. Quotienting out
$H$, we see that $D^{\prime}$ is algebraically isomorphic to $D$, but since
both carry the discrete topology, this is equivalent to them being isomorphic
in $\mathsf{LCA}_{\mathfrak{A}}$. Similarly, the algebraic right
$\mathfrak{A}$-module section $D^{\prime}\hookrightarrow M$ is tautologically
continuous as the preimage of any open is trivially open in the discrete
topology. Thus, we also get a section in $\mathsf{LCA}_{\mathfrak{A}}$, i.e.
we have%
\[
M\simeq H\oplus D
\]
in $\mathsf{LCA}_{\mathfrak{A}}$. This proves the first claim.\newline(Claim
2) We adapt the proof of Claim 1. Again, $V$ can be uniquely characterized as
the connected component. The same argument shows $V\in\mathsf{LCA}%
_{\mathfrak{A}}$ and again we obtain that $V\hookrightarrow M$ is a subobject
in $\mathsf{LCA}_{\mathfrak{A}}$. Next, apply Lemma \ref{Lemma_RZOrRTHulls} to
$V$. We obtain a canonical admissible epic%
\[
q:\tilde{V}\twoheadrightarrow V\text{,}%
\]
where $V$ is a vector $\mathfrak{A}$-module. By Lemma
\ref{lemma_VectorModulesAreAlgebraicallyInjectiveAndProjective} we deduce that
$\tilde{V}$ is algebraically an injective $\mathfrak{A}$-module. However,
being an epic in $\mathsf{LCA}_{\mathfrak{A}}$, $q$ is also an epic in
$\mathsf{Mod}_{\mathfrak{A}}$. Since $\mathfrak{A}$ is hereditary, all
quotients of an injective module are again injective, cf.
\cite[(3.22)\ Theorem]{MR1653294}. Thus, $V$ is also injective as an algebraic
right module. Now, proceed as in the proof of Claim 1.
\end{proof}

\begin{example}
\label{example_CyclicGroupCannotSplitOffTorus}Let us demonstrate that the
assumption of the order $\mathfrak{A}$ being hereditary is truly necessary.
Let $G\simeq\mathbb{Z}/n$ be a cyclic group. We work with right $\mathbb{Z}%
[G]$-modules, where $\mathbb{Z}[G]\simeq\mathbb{Z}[t]/(t^{n}-1)$ is the group
ring. We recall that the trivial $G$-module $\mathbb{Z}$ has the standard
projective resolution%
\begin{equation}
\left[  \cdots\overset{N}{\longrightarrow}\mathbb{Z}[G]\overset{1-t}%
{\longrightarrow}\mathbb{Z}[G]\overset{N}{\longrightarrow}\mathbb{Z}%
[G]\overset{1-t}{\longrightarrow}\mathbb{Z}[G]\right]  _{-\infty,0}%
\overset{\Sigma}{\longrightarrow}\mathbb{Z}\text{,}\label{lta_1}%
\end{equation}
where $N:=1+t+\cdots+t^{n-1}$ is the norm operator of $G$ and $\Sigma$ sends
each $g\in G$ to $1$. Write $N_{G}:=\operatorname*{im}(N)\subseteq
\mathbb{Z}[G]$ for the norm ideal, and $I_{G}:=\operatorname*{im}%
(1-t)\subseteq\mathbb{Z}[G] $ for the augmentation ideal. We have the short
exact sequence%
\[
N_{G}\hookrightarrow\mathbb{R}[G]\twoheadrightarrow\mathbb{R}[G]/N_{G}%
\]
inducing the long exact sequence%
\[
\operatorname*{Ext}\nolimits_{\mathbb{Z}[G]}^{i}(\mathbb{Z},\mathbb{R}%
[G])\longrightarrow\operatorname*{Ext}\nolimits_{\mathbb{Z}[G]}^{i}%
(\mathbb{Z},\mathbb{R}[G]/N_{G})\overset{\partial}{\longrightarrow
}\operatorname*{Ext}\nolimits_{\mathbb{Z}[G]}^{i+1}(\mathbb{Z},N_{G}%
)\longrightarrow\operatorname*{Ext}\nolimits_{\mathbb{Z}[G]}^{i+1}%
(\mathbb{Z},\mathbb{R}[G])
\]
and the two outer terms vanish since $\mathbb{R}[G]$ is an injective module
(Lemma \ref{lemma_VectorModulesAreAlgebraicallyInjectiveAndProjective}). Thus,
we obtain an isomorphism%
\[
\operatorname*{Ext}\nolimits_{\mathbb{Z}[G]}^{1}(\mathbb{Z},\mathbb{R}%
[G]/N_{G})\underset{\cong}{\overset{\partial}{\longrightarrow}}%
\operatorname*{Ext}\nolimits_{\mathbb{Z}[G]}^{2}(\mathbb{Z},N_{G})
\]
Next, observe that $\mathbb{Z}\overset{\sim}{\longrightarrow}N_{G}$, $1\mapsto
N$, is an isomorphism of $\mathbb{Z}[G]$-modules. Hence, $\operatorname*{Ext}%
\nolimits_{\mathbb{Z}[G]}^{2}(\mathbb{Z},N_{G})\cong H^{2}(G,\mathbb{Z})$ and
the latter is known to be isomorphic to $\mathbb{Z}/n$ (this computation can
easily be done using the resolution in Equation \ref{lta_1}. Instead of
unwinding $\partial$, let us describe the $\operatorname*{Ext}\nolimits^{1}%
$-group on the left explicitly. Using $\operatorname*{Ext}%
\nolimits_{\mathbb{Z}[G]}^{1}(\mathbb{Z},\mathbb{R}[G]/N_{G})\cong%
\mathbf{R}\operatorname*{Hom}_{\mathbb{Z}[G]}(\mathbb{Z},\mathbb{R}%
[G]/N_{G}[1])$, and the standard resolution, we learn that giving an element
of $\operatorname*{Ext}\nolimits^{1}$ is equivalent to giving a morphism of
complexes%
\[%
\xymatrix{
\cdots\ar[r]^{N} & \mathbb{Z}[G] \ar[r]^{1-t} \ar[d] & \mathbb{Z}%
[G] \ar[r]^{N} \ar[d] & \mathbb{Z}[G] \ar[r]^{1-t} \ar[d]^{\alpha}
& \mathbb{Z}[G] \ar[d] \\
\cdots\ar[r] & 0 \ar[r]  & 0 \ar[r] & \mathbb{R}[G]/{N_G} \ar[r] & 0
}%
\]
and since a $\mathbb{Z}[G]$-module morphism is defined on $\mathbb{Z}[G]$
itself by assigning a value to $1_{G}$, all possible maps $\alpha$ can only be
multiplication $1_{G}\mapsto1_{G}\cdot\alpha_{0}$ with $\alpha_{0}%
\in\mathbb{Z}/n\mathbb{Z}$. These maps are precisely those corresponding under
$\partial$ to $H^{2}(G,\mathbb{Z})\cong\mathbb{Z}/n$. Next, let us concretely
construct the extension corresponding to the element of the
$\operatorname*{Ext}\nolimits^{1}$-group. To this end, we need to form the
pushout $M_{\alpha}$ of%
\[%
\xymatrix{
\mathbb{Z}[G] \ar[r]^{1-t} \ar[d]  & \mathbb{Z}[G] \ar[d] \ar@{->>}%
[r] & \mathbb{Z} \ar@{=}[d] \\
\mathbb{R}[G]/{N_G} \ar[r] &  M_{\alpha} \ar@{->>}[r] & \mathbb{Z}
}%
\]
Concretely, that is%
\[
M_{\alpha}:=\frac{\left\{  (a,b)\mid a\in\mathbb{Z}[G]\text{, }b\in
\mathbb{R}[G]/N_{G}\right\}  }{\text{all pairs }(x(1-t),\overline{x}%
\alpha)\text{ with }x\in\mathbb{Z}[G]}\text{.}%
\]
We then get a short exact sequence%
\[
\mathbb{R}[G]/N_{G}\hookrightarrow M_{\alpha}\twoheadrightarrow\mathbb{Z}%
\]
in $\mathsf{LCA}_{\mathbb{Z}[G]}$; the maps are $b\mapsto(0,b)$ and
$(a,b)\mapsto\Sigma a$. On the level of the underlying LCA\ groups it looks
like $\mathbb{Z}^{n-1}\oplus\mathbb{T}\hookrightarrow M_{\alpha}%
\twoheadrightarrow\mathbb{Z}$ in $\mathsf{LCA}$. If this sequence splits in
$\mathsf{LCA}_{\mathbb{Z}[G]}$, then so it does in $\mathsf{Mod}%
_{\mathbb{Z}[G]}$. However, unless $\alpha=[0]$ in $H^{2}(G,\mathbb{Z})$, this
is impossible.
\end{example}

\begin{example}
The same computation can be adapted to many other finite groups. For example,
for $G:=D_{2n}$ the dihedral group of $2n$ elements, $H^{2}(D_{2n}%
,\mathbb{Z})\cong\mathbb{Z}/2$ also gives rise to an analogous example.
\end{example}

\begin{theorem}
\label{thm_DirectSumSplittingsForHereditaryOrders}Suppose $\mathfrak{A}$ is a
hereditary order.

\begin{enumerate}
\item Every $M\in\mathsf{LCA}_{\mathfrak{A},nss}$ admits an isomorphism
$M\cong V\oplus T\oplus D$, where $V$ is a vector right $\mathfrak{A}$-module,
$T$ a right $\mathfrak{A}$-module and finite-dimensional real torus, and $D$ a
discrete right $\mathfrak{A}$-module.

\item Every $M\in\mathsf{LCA}_{\mathfrak{A},cg}$ admits an isomorphism $M\cong
V\oplus G\oplus C$, where $V$ is a vector right $\mathfrak{A}$-module, $G$ a
right $\mathfrak{A}$-module and finite rank free $\mathbb{Z}$-module, and $C$
a compact right $\mathfrak{A}$-module.
\end{enumerate}
\end{theorem}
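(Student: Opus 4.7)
My strategy is to prove (1) directly by two successive splittings in $\mathsf{LCA}_{\mathfrak{A}}$, and then to deduce (2) by Pontryagin duality. The hard step, and the only place where hereditarity actually enters, will be the very first splitting in (1); everything else will be routine given the projectivity and injectivity results already established.

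For (1), let $M \in \mathsf{LCA}_{\mathfrak{A},nss}$. By the structure theorem for nss groups recalled right after Definition \ref{def_TopProps}, the underlying LCA group of $M$ is isomorphic to $V_{0} \oplus D_{0}$ with $V_{0} \simeq \mathbb{R}^{n} \oplus \mathbb{T}^{m}$ and $D_{0}$ discrete. I invoke Lemma \ref{Lemma_KeySplittingLemma}(2), which is precisely the step that uses hereditarity (and which Example \ref{example_CyclicGroupCannotSplitOffTorus} shows genuinely fails otherwise), to upgrade this to a decomposition $M \cong V_{0} \oplus D$ in $\mathsf{LCA}_{\mathfrak{A}}$. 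Next, to split $V_{0}$ further, I observe that the subgroup $T \subseteq V_{0}$ corresponding to the $\mathbb{T}^{m}$-factor is the unique maximal compact subgroup of $V_{0}$; since continuous group homomorphisms send compact subgroups into compact subgroups, $T$ is automatically stable under the $\mathfrak{A}$-action, so $T \hookrightarrow V_{0}$ is an admissible monic in $\mathsf{LCA}_{\mathfrak{A}}$. The quotient $V := V_{0}/T$ has underlying LCA group $\mathbb{R}^{n}$, so it is a vector $\mathfrak{A}$-module, hence projective by Theorem \ref{thm_VectorModulesAreInjectiveAndBijective}. Thus the admissible epic $V_{0} \twoheadrightarrow V$ splits, giving $V_{0} \cong V \oplus T$ in $\mathsf{LCA}_{\mathfrak{A}}$; concatenating yields $M \cong V \oplus T \oplus D$, as required.

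For (2), let $M \in \mathsf{LCA}_{\mathfrak{A},cg}$. By Lemma \ref{lemma_CGExchangesWithNSS}, the Pontryagin dual $M^{\vee}$ lies in $\left._{nss,\mathfrak{A}}\mathsf{LCA}\right.$. The left-module analogue of (1), whose proof is entirely symmetric because Lemma \ref{Lemma_KeySplittingLemma}, Lemma \ref{lemma_CGExchangesWithNSS} and Theorem \ref{thm_VectorModulesAreInjectiveAndBijective} are all stated in both variances, applies to $M^{\vee}$ and yields a decomposition $M^{\vee} \cong V' \oplus T' \oplus D'$ in $\left._{\mathfrak{A}}\mathsf{LCA}\right.$. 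Dualizing once more via the reflexivity isomorphism of Proposition \ref{Prop_LCAIsQuasiAbelianAndPontryaginDuality}, and using that the Pontryagin dual of $\mathbb{T}^{m}$ is the finite rank free discrete abelian group $\mathbb{Z}^{m}$, I obtain
\[
M \cong (V')^{\vee} \oplus (T')^{\vee} \oplus (D')^{\vee},
\]
where $(V')^{\vee}$ is again a vector right $\mathfrak{A}$-module (Lemma \ref{lemma_CGExchangesWithNSS}(3)), $(T')^{\vee}$ is a right $\mathfrak{A}$-module whose underlying abelian group is $\mathbb{Z}^{m}$, and $(D')^{\vee}$ is compact. This is the required decomposition, completing the reduction.
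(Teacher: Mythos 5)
Your proof is correct, and its skeleton coincides with the paper's: the only place hereditarity enters is Lemma \ref{Lemma_KeySplittingLemma}~(2), which you use exactly as the paper does to peel off the discrete summand, and part (2) is obtained by Pontryagin duality via Lemma \ref{lemma_CGExchangesWithNSS}, again as in the paper. The one step where you genuinely deviate is the further splitting of the piece $E$ with underlying group $\mathbb{R}^{n}\oplus\mathbb{T}^{m}$. The paper dualizes $E$ to an object with underlying group $\mathbb{R}^{n}\oplus\mathbb{Z}^{m}$, applies the left-module version of Lemma \ref{Lemma_KeySplittingLemma}~(1) (which rests only on the \emph{algebraic} injectivity of vector modules, Lemma \ref{lemma_VectorModulesAreAlgebraicallyInjectiveAndProjective}, plus a continuity check), and dualizes back. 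You instead observe that the torus part is the maximal compact subgroup, hence $\mathfrak{A}$-stable and an admissible monic by Lemma \ref{lemma_CharacterizeAdmissibleMorphisms}, and then split the resulting admissible epic onto the vector quotient using the full \emph{topological} projectivity of vector modules, Theorem \ref{thm_VectorModulesAreInjectiveAndBijective}. Your route is shorter and avoids one round of dualization inside part (1), at the cost of invoking a heavier input; since Theorem \ref{thm_VectorModulesAreInjectiveAndBijective} is established before this point, there is no circularity, and both arguments are equally valid.
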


The analogous claims hold for left modules. For $\mathfrak{A}$ the maximal
order of a Dedekind domain this is due to Levin \cite{MR0310125}. For
non-hereditary orders we cannot expect such direct sum decompositions to exist.

\begin{proof}
(1) We work with right modules. The proof of \cite[Proposition 2.14]{obloc}
can be adapted with the following change: As $M$ has no small subgroups, its
underlying\ LCA group is isomorphic to $\mathbb{R}^{n}\oplus\mathbb{T}%
^{m}\oplus D$ for $D$ some discrete group. Now by Lemma
\ref{Lemma_KeySplittingLemma} (2) this can be strengthened to a direct sum
decomposition $M\cong E\oplus D$ in $\mathsf{LCA}_{\mathfrak{A}}$ with
$D\in\mathsf{LCA}_{\mathfrak{A}}$ discrete and $E\in\mathsf{LCA}%
_{\mathfrak{A}}$ having underlying LCA group $\mathbb{R}^{n}\oplus
\mathbb{T}^{m}$ (as witnessed by Example
\ref{example_CyclicGroupCannotSplitOffTorus} this step required $\mathfrak{A}$
to be hereditary). Now, $E^{\vee}\in\left.  _{\mathfrak{A}}\mathsf{LCA}%
\right.  $ has underlying LCA group $\mathbb{R}^{n}\oplus\mathbb{Z}^{m}$. Use
the left module version of Lemma \ref{Lemma_KeySplittingLemma} to promote this
to $E^{\vee}\cong V\oplus G$ in $\left.  _{\mathfrak{A}}\mathsf{LCA}\right.  $
for $V$ a left vector module and $G$ a discrete left module with underlying
group $\mathbb{Z}^{n}$. Dualizing again, we get $E\cong V^{\vee}\oplus
G^{\vee}$ in $\mathsf{LCA}_{\mathfrak{A}}$, and $G^{\vee}$ is a torus as
required. (2) This argument is Pontryagin dual, using that duality exchanges
compactly generated modules with those without small subgroups, Lemma
\ref{lemma_CGExchangesWithNSS}.
\end{proof}

Combining all these results, we obtain some further flexibility with regards
to decompositions in the style of Theorem \ref{thm_FirstDecomp}, even without
having to assume that $\mathfrak{A}$ be hereditary.

\begin{lemma}
\label{Lemma_CompactSubobjectDecomp}Suppose $M\in\mathsf{LCA}_{\mathfrak{A}}
$. Then there exist exact sequences%
\begin{equation}
C\hookrightarrow M\twoheadrightarrow V\oplus D\qquad\text{and}\qquad
V^{\prime}\oplus C^{\prime}\hookrightarrow M\twoheadrightarrow D^{\prime
}\label{l_xA3}%
\end{equation}
with $C,C^{\prime}$ compact, $V,V^{\prime}$ vector $\mathfrak{A}$-modules and
$D,D^{\prime}$ discrete.
\end{lemma}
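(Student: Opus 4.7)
My strategy is to avoid trying to split off a torus summand from $M$ (which would fail over non-hereditary orders, cf.\ Example \ref{example_CyclicGroupCannotSplitOffTorus}) by absorbing all compact-group contributions into the submodule $C$, and only then extracting the vector and discrete parts. I will build the first sequence of \eqref{l_xA3} in four steps and deduce the second by Pontryagin duality.

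First, apply Theorem \ref{thm_FirstDecomp}(1) to produce $H \hookrightarrow M \twoheadrightarrow D$ in $\mathsf{LCA}_{\mathfrak{A}}$ with $H$ compactly generated and $D$ discrete. Second, apply Lemma \ref{Lemma_CompactPartOfCG} to $H$ to obtain $C \hookrightarrow H \twoheadrightarrow W$ with $C$ compact and with the underlying LCA group of $W$ isomorphic to $\mathbb{R}^{n}\oplus\mathbb{Z}^{m}$. Third, since this underlying group is literally a vector group plus a discrete group, Lemma \ref{Lemma_KeySplittingLemma}(1) lifts the decomposition to $\mathsf{LCA}_{\mathfrak{A}}$, giving $W\cong V\oplus G$ with $V$ a vector $\mathfrak{A}$-module and $G$ discrete. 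Crucially, this step requires no hereditary hypothesis on $\mathfrak{A}$, since the splitting is of vector$\oplus$discrete rather than of a torus summand.

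Fourth, the composite $C\hookrightarrow H\hookrightarrow M$ is an admissible monic, and passing to the Noether-type quotient in the quasi-abelian category $\mathsf{LCA}_{\mathfrak{A}}$ yields an exact sequence $V\oplus G \hookrightarrow M/C \twoheadrightarrow D$. By Theorem \ref{thm_VectorModulesAreInjectiveAndBijective} the module $V$ is injective in $\mathsf{LCA}_{\mathfrak{A}}$, so the composite admissible monic $V\hookrightarrow V\oplus G\hookrightarrow M/C$ admits a retraction. This yields $M/C\cong V\oplus Q$ with $Q$ sitting in an exact sequence $G\hookrightarrow Q\twoheadrightarrow D$; as both ends are discrete and the admissible monic $G\hookrightarrow Q$ has open image (its cokernel $D$ is discrete, so $G$ is the preimage of an open set), the middle $Q$ is discrete as well. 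Setting $N:=Q$ produces the first sequence.

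Finally, apply the construction above to $M^{\vee}\in{}_{\mathfrak{A}}\mathsf{LCA}$, invoking the left-module analogues of the same results (which the paper explicitly supplies throughout). This gives $C_{0}\hookrightarrow M^{\vee}\twoheadrightarrow V_{0}\oplus N_{0}$ with $C_{0}$ compact, $V_{0}$ a vector left $\mathfrak{A}$-module, and $N_{0}$ discrete. Dualizing via Proposition \ref{Prop_LCAIsQuasiAbelianAndPontryaginDuality} --- which is exact, satisfies the reflexivity isomorphism $\eta$, and exchanges the relevant subclasses (Lemma \ref{lemma_CGExchangesWithNSS}: the dual of a vector module is a vector module, of a discrete module is compact, and of a compact module is discrete) --- produces $V_{0}^{\vee}\oplus N_{0}^{\vee}\hookrightarrow M\twoheadrightarrow C_{0}^{\vee}$, which is the second sequence. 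The main conceptual obstacle is precisely the torus-splitting failure illustrated by Example \ref{example_CyclicGroupCannotSplitOffTorus}; the plan sidesteps this because the statement does not demand separating the torus part from the compact part, and everything compact can harmlessly be pushed into $C$, leaving the injectivity of vector modules to do the rest of the work.
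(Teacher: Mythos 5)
Your proposal is correct and follows essentially the same route as the paper's proof: Theorem \ref{thm_FirstDecomp}(1), then Lemma \ref{Lemma_CompactPartOfCG}, then Lemma \ref{Lemma_KeySplittingLemma}(1) combined with Noether's Lemma for the filtration $C\hookrightarrow H\hookrightarrow M$, splitting off the injective vector module $V$ from $M/C$ and noting the complement is an extension of discretes, with the second sequence obtained by dualizing the first for $M^{\vee}$. The only cosmetic difference is the order in which you invoke the splitting of $W$ versus forming the quotient $M/C$, which changes nothing.
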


\begin{proof}
We construct the sequence on the left first: By Theorem \ref{thm_FirstDecomp}
(1) there exists an exact sequence $H\hookrightarrow M\twoheadrightarrow
D^{\prime}$ with $H\in\mathsf{LCA}_{\mathfrak{A},cg}$ and $D^{\prime}$
discrete. Further, by Lemma \ref{Lemma_CompactPartOfCG} there is an exact
sequence $C\hookrightarrow H\twoheadrightarrow W$ with $C$ compact and
$W\in\mathsf{LCA}_{\mathfrak{A}}$ having underlying LCA group isomorphic to
$\mathbb{R}^{n}\oplus\mathbb{Z}^{m}$ for suitable $n,m$. This yields a
filtration $C\hookrightarrow H\hookrightarrow M$ and Noether's Lemma for exact
categories (\cite[Lemma 3.5]{MR2606234}) tells us that%
\[
H/C\hookrightarrow M/C\twoheadrightarrow M/H
\]
is exact in $\mathsf{LCA}_{\mathfrak{A}}$. Unravelling these quotients, we
learn that%
\[
W\hookrightarrow M/C\twoheadrightarrow D^{\prime}%
\]
is exact. Now, by Lemma \ref{Lemma_KeySplittingLemma} (1) the module $W$
splits as a direct sum $W\cong V\oplus\tilde{D}$ with $V$ a vector
$\mathfrak{A}$-module and $\tilde{D}$ discrete in $\mathsf{LCA}_{\mathfrak{A}}
$. Thus, by the above exact sequence, $V$ is a subobject of $M/C$. However, by
Theorem \ref{thm_VectorModulesAreInjectiveAndBijective} vector modules are
injective objects, so $V$ is a direct summand of $M/C$. The complementary
summand, call it $J$, then is an extension $\tilde{D}\hookrightarrow
J\twoheadrightarrow D^{\prime}$ of discrete modules, so $J$ is itself
discrete; and $M/C\cong V\oplus J$. Combining these results, we arrive at the
exact sequence $C\hookrightarrow M\twoheadrightarrow V\oplus J$ with $J$
discrete. Letting $D:=J$ proves our first claim. All arguments so far work for
both left and right $\mathfrak{A}$-modules. Thus, we obtain the second exact
sequence, i.e. Equation \ref{l_xA3} on the right, by using the first exact
sequence for the left module $M^{\vee}$ and dualizing back.
\end{proof}

We record an elementary fact:

\begin{lemma}
\label{Lemma_Elem1}A module $M\in\mathsf{Mod}_{\mathfrak{A}}$ is finitely
generated over $\mathfrak{A}$ if and only if it is finitely generated as a
$\mathbb{Z}$-module.
\end{lemma}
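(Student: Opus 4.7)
The plan is to exploit the fact, recorded in \S\ref{subsect_Basics}, that $\mathfrak{A} \simeq \mathbb{Z}^n$ as an abelian group, where $n = \dim_{\mathbb{Q}} A$. In particular, $\mathfrak{A}$ is itself finitely generated as a $\mathbb{Z}$-module, and $\mathbb{Z}$ sits inside $\mathfrak{A}$ as a central subring. Both implications are then bookkeeping.

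For the easy direction, suppose $M$ is finitely generated as a $\mathbb{Z}$-module, say by $m_1, \dots, m_k$. Since $\mathbb{Z} \subseteq \mathfrak{A}$, the same elements generate $M$ as a right $\mathfrak{A}$-module: if $m = \sum_i c_i m_i$ with $c_i \in \mathbb{Z}$, then a fortiori $m = \sum_i m_i \cdot c_i$ with $c_i \in \mathfrak{A}$. Hence $M$ is finitely generated over $\mathfrak{A}$.

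For the harder (but still routine) direction, suppose $M$ is finitely generated over $\mathfrak{A}$ by elements $m_1, \dots, m_k$, and pick a $\mathbb{Z}$-basis $b_1, \dots, b_n$ of $\mathfrak{A}$. Any $m \in M$ can be written as $m = \sum_i m_i \alpha_i$ with $\alpha_i \in \mathfrak{A}$, and each $\alpha_i$ expands as $\alpha_i = \sum_j c_{ij} b_j$ with $c_{ij} \in \mathbb{Z}$. Substituting gives
\[
m = \sum_{i,j} c_{ij}\, (m_i \cdot b_j),
\]
which expresses $m$ as a $\mathbb{Z}$-linear combination of the finite set $\{m_i \cdot b_j : 1 \le i \le k,\ 1 \le j \le n\} \subseteq M$. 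Hence $M$ is finitely generated as a $\mathbb{Z}$-module. No step here presents a real obstacle; the only thing to be mindful of is keeping the right (as opposed to left) module convention consistent, which is automatic because $b_j$ acts on the right.
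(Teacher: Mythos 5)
Your proof is correct and follows essentially the same route as the paper: both directions rest on $\mathfrak{A}\simeq\mathbb{Z}^{n}$, with the paper phrasing the forward direction via a surjection $\mathfrak{A}^{m}\twoheadrightarrow M$ refined to $\mathbb{Z}^{nm}\twoheadrightarrow M$, which is just the surjection-level formulation of your element-wise expansion in the generators $m_{i}\cdot b_{j}$. No issues.
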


\begin{proof}
This follows since $\mathfrak{A}\simeq\mathbb{Z}^{n}$ for $n:=\dim
_{\mathbb{Q}}A$, see \S \ref{subsect_Basics}. In detail: If $M$ is finitely
generated over $\mathfrak{A}$, then there exists a surjection $\mathfrak{A}%
^{m}\twoheadrightarrow M$ for some $m\in\mathbb{Z}_{\geq0}$, and thus a
surjection by $\mathbb{Z}^{nm}$. Conversely, any generators of $M$ over
$\mathbb{Z}$ of course also generate $M$ as an $\mathfrak{A}$-module.
\end{proof}

\begin{lemma}
\label{Lemma_ProjCovers}Let $\mathfrak{A}$ be an arbitrary order in $A$.

\begin{enumerate}
\item Every discrete module $M\in\mathsf{LCA}_{\mathfrak{A}}$ has a discrete
projective cover $P$ in $\mathsf{LCA}_{\mathfrak{A}}$. If $M$ is compactly
generated, $P$ is also compactly generated.

\item Every compact module $M\in\mathsf{LCA}_{\mathfrak{A}}$ has a compact
injective hull $I$ in $\mathsf{LCA}_{\mathfrak{A}}$. If $M$ has no small
subgroups, $I$ also has no small subgroups.
\end{enumerate}
\end{lemma}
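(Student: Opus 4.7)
The plan for (1) is to observe that for any index set $I$, the free module $\mathfrak{A}^{(I)}$ equipped with the discrete topology is a projective object in $\mathsf{LCA}_{\mathfrak{A}}$, and then to take a tautological surjection. Local compactness is automatic because the topology is discrete, and the $\mathfrak{A}$-action is continuous for the same reason. For projectivity, given an admissible epic $G \twoheadrightarrow H$ and a morphism $\varphi \colon \mathfrak{A}^{(I)} \to H$, one lifts $\varphi(e_i)$ to $G$ element-by-element, using surjectivity of the underlying set map (Lemma \ref{lemma_CharacterizeAdmissibleMorphisms}), to obtain an algebraic $\mathfrak{A}$-linear lift; continuity of this lift is then automatic, since every map out of a discrete space is continuous. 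For any discrete $M$, taking $I := M$ yields an admissible epic $\mathfrak{A}^{(M)} \twoheadrightarrow M$ in $\mathsf{LCA}_{\mathfrak{A}}$ by Lemma \ref{lemma_CharacterizeAdmissibleMorphisms}. If $M$ is additionally compactly generated, then as a discrete LCA group it is generated by a finite set (a compact subset of a discrete space is finite), and by Lemma \ref{Lemma_Elem1} this means $M$ is finitely generated over $\mathfrak{A}$; hence we may pick $P := \mathfrak{A}^n$, which is still discrete and projective, and is compactly generated because it is finitely generated over $\mathbb{Z}$.

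For (2), I would dualize via Pontryagin duality (Proposition \ref{Prop_LCAIsQuasiAbelianAndPontryaginDuality}). Given a compact $M$, its dual $M^{\vee} \in \left._{\mathfrak{A}}\mathsf{LCA}\right. = \mathsf{LCA}_{\mathfrak{A}^{op}}$ (Remark \ref{Rmk_OppositeOrder}) is discrete. Applying (1) in the opposite category produces an admissible epic $P \twoheadrightarrow M^{\vee}$ with $P$ discrete and projective. Dualizing and invoking the reflexivity isomorphism $M \cong M^{\vee\vee}$ yields an admissible monic $M \hookrightarrow P^{\vee}$ in $\mathsf{LCA}_{\mathfrak{A}}$; here $P^{\vee}$ is compact since $P$ is discrete, and injective by Lemma \ref{lemma_CGExchangesWithNSS}(4). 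If moreover $M$ has no small subgroups, then by Lemma \ref{lemma_CGExchangesWithNSS} the dual $M^{\vee}$ is compactly generated, so the second half of (1) lets one take $P$ compactly generated; then $P^{\vee}$ has no small subgroups, again by Lemma \ref{lemma_CGExchangesWithNSS}.

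The argument is almost entirely formal, relying on the duality machinery already built in the paper. The main point to verify, which goes beyond the purely algebraic theory of projective and injective $\mathfrak{A}$-modules, is the automatic continuity of lifts when the source (respectively, target after dualizing) carries the discrete (respectively, compact) topology. This is precisely what makes the passage between $\mathsf{LCA}_{\mathfrak{A}}$ and $\mathsf{Mod}_{\mathfrak{A}}$ free of cost in both of the special situations at hand, and I expect no genuine obstacle beyond this observation.
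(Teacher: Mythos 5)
Your proposal is correct and follows essentially the same route as the paper: produce an algebraic projective surjecting onto $M$ in $\mathsf{Mod}_{\mathfrak{A}}$ (the paper invokes enough projectives, you take a free module), equip it with the discrete topology so that continuity of the epic and of all lifts is automatic, and obtain (2) by Pontryagin duality together with the exchange of compactly generated and no-small-subgroups objects from Lemma \ref{lemma_CGExchangesWithNSS}. Your explicit verification that discrete algebraically projective modules are projective objects of $\mathsf{LCA}_{\mathfrak{A}}$ is exactly the observation the paper records in Proposition \ref{prop_ClassifyInjectivesAndProjectives}, so there is no substantive difference.
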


\begin{proof}
(1) The category $\mathsf{Mod}_{\mathfrak{A}}$ has enough projectives, so we
find a projective cover $P\twoheadrightarrow M$. Equipping $P$ with the
discrete topology, the epic is tautologically continuous and even an
admissible epic in $\mathsf{LCA}_{\mathfrak{A}}$ because in the case of the
discrete topology there is nothing to check topologically. For the second
statement, note that a discrete module is compactly generated if and only if
its underlying additive group is finitely generated by \cite[Theorem
2.5]{MR0215016}, so $M$ is a finitely generated $\mathfrak{A}$-module by Lemma
\ref{Lemma_Elem1}, and thus its projective cover is also finitely generated
over $\mathfrak{A}$, and using the converse of Lemma \ref{Lemma_Elem1}, and
\cite[Theorem 2.5]{MR0215016} we deduce that $P$ is compactly generated.
(2)\ Pontryagin dual to (1), using that duality swaps compactly generated
modules with those without small subgroups, Lemma
\ref{lemma_CGExchangesWithNSS}.
\end{proof}

\begin{corollary}
\label{cor_Aux_GlobalDimension}Let $\mathfrak{A}$ be an arbitrary order in $A
$.

\begin{enumerate}
\item Every discrete module $M\in\mathsf{LCA}_{\mathfrak{A}}$ has a projective
resolution by discrete modules in $\mathsf{LCA}_{\mathfrak{A}}$ of length at
most the projective dimension of $M$ in $\mathsf{Mod}_{\mathfrak{A}}$.

\item Every compact module $M\in\mathsf{LCA}_{\mathfrak{A}}$ has an injective
resolution by compact modules in $\mathsf{LCA}_{\mathfrak{A}}$ of length at
most the projective dimension of $M^{\vee}$ in $\left.  _{\mathfrak{A}%
}\mathsf{Mod}\right.  $.
\end{enumerate}
\end{corollary}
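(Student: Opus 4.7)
The plan is to iterate Lemma \ref{Lemma_ProjCovers} and exploit that, on discrete modules, the algebraic and the $\mathsf{LCA}_{\mathfrak{A}}$-theoretic notions of projectivity coincide.

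For part (1), the first step is to verify that a discrete algebraically projective right $\mathfrak{A}$-module $P$ is automatically projective as an object of $\mathsf{LCA}_{\mathfrak{A}}$. Given an admissible epic $X\twoheadrightarrow Y$ in $\mathsf{LCA}_{\mathfrak{A}}$ and a morphism $P\to Y$, algebraic projectivity provides an $\mathfrak{A}$-linear set-theoretic lift $P\to X$, and this lift is tautologically continuous because $P$ carries the discrete topology. Hence the projective cover supplied by Lemma \ref{Lemma_ProjCovers}(1) is genuinely projective in the category $\mathsf{LCA}_{\mathfrak{A}}$.

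Next I would build the resolution inductively. Write $d$ for the projective dimension of $M$ in $\mathsf{Mod}_{\mathfrak{A}}$. Apply Lemma \ref{Lemma_ProjCovers}(1) to obtain an admissible epic $P_{0}\twoheadrightarrow M$ in $\mathsf{LCA}_{\mathfrak{A}}$ with $P_{0}$ discrete projective. Its kernel $K_{1}$ is an algebraic $\mathfrak{A}$-submodule of the discrete module $P_{0}$, hence itself discrete, and the inclusion $K_{1}\hookrightarrow P_{0}$ is an admissible monic in $\mathsf{LCA}_{\mathfrak{A}}$. Repeating this process produces a complex $\cdots\to P_{1}\to P_{0}\to M\to 0$ in which every $P_{i}$ is discrete projective and every syzygy $K_{i}$ is discrete. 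The standard syzygy argument, carried out in $\mathsf{Mod}_{\mathfrak{A}}$, shows that $K_{d}$ is algebraically projective; by the first step it is therefore projective in $\mathsf{LCA}_{\mathfrak{A}}$. Truncating at $K_{d}$ yields a discrete projective resolution of $M$ of length at most $d$.

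For part (2), I would transfer the statement along Pontryagin duality. Given compact $M\in\mathsf{LCA}_{\mathfrak{A}}$, its dual $M^{\vee}\in\left._{\mathfrak{A}}\mathsf{LCA}\right.$ is discrete, since Pontryagin duality exchanges compact and discrete on the level of the underlying LCA groups. Applying the left-module analog of part (1) to $M^{\vee}$ produces a discrete projective resolution $P_{\bullet}\twoheadrightarrow M^{\vee}$ in $\left._{\mathfrak{A}}\mathsf{LCA}\right.$ of length at most the projective dimension of $M^{\vee}$ in $\left._{\mathfrak{A}}\mathsf{Mod}\right.$. The exact duality functor of Proposition \ref{Prop_LCAIsQuasiAbelianAndPontryaginDuality} sends discrete objects to compact ones and, by Lemma \ref{lemma_CGExchangesWithNSS}(4), projectives to injectives. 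Dualizing the resolution and invoking the reflexivity isomorphism $M^{\vee\vee}\cong M$ delivers an injective resolution of $M$ by compact objects of $\mathsf{LCA}_{\mathfrak{A}}$ of the claimed length. The only real obstacle is the bookkeeping in the first step: Lemma \ref{Lemma_ProjCovers} provides an algebraic projective cover equipped with the discrete topology, and one must ensure that categorical projectivity in $\mathsf{LCA}_{\mathfrak{A}}$ is not a strictly stronger condition. Once the automatic-continuity observation resolves this, both parts reduce to a standard syzygy count and a single application of Pontryagin duality.
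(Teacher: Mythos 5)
Your proposal is correct and matches the paper's argument, which is simply the inductive application of Lemma \ref{Lemma_ProjCovers} together with the (tacit) observation that a discrete algebraically projective module is projective in $\mathsf{LCA}_{\mathfrak{A}}$ by automatic continuity of lifts out of discrete objects. Your handling of part (2) by dualizing the entire resolution of $M^{\vee}$ rather than iterating the compact injective hulls of Lemma \ref{Lemma_ProjCovers}(2) is only a cosmetic difference, since that lemma is itself obtained by Pontryagin duality.
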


\begin{proof}
Inductive usage of Lemma \ref{Lemma_ProjCovers}.
\end{proof}

\begin{example}
\label{example_FinGroup_InfGlobalDim}For any finite group $G\neq1$, the global
dimension of $\mathbb{Z}[G]$ is infinite. To see this, use that $G$ contains a
non-trivial cyclic subgroup $C\simeq\mathbb{Z}/m$ with $m\geq2$. The group
cohomology of a cyclic group is periodic in the strictly positive degree range
(compare Example \ref{example_CyclicGroupCannotSplitOffTorus}, where we recall
this) with $H^{2n}(C,\mathbb{Z})\cong\mathbb{Z}/m$ for all $n\geq1$. Thus,%
\[
\operatorname*{Ext}\nolimits_{\mathbb{Z}[G]}^{2n}(\mathbb{Z}%
,\mathrm{\operatorname*{Ind}}_{C}^{G}(\mathbb{Z}))=H^{2n}%
(G,\mathsf{\operatorname*{Ind}}_{C}^{G}(\mathbb{Z}))\cong H^{2n}%
(C,\mathbb{Z})\cong\mathbb{Z}/m\neq0\text{.}%
\]

\end{example}

\begin{conjecture}
It is not possible to find strictly shorter projective resolutions of
discretes in $\mathsf{LCA}_{\mathfrak{A}}$ than in $\mathsf{Mod}%
_{\mathfrak{A}}$.
\end{conjecture}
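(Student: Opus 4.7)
The plan is to establish the conjecture by an $\operatorname{Ext}$-theoretic comparison between $\mathsf{Mod}_{\mathfrak{A}}$ and $\mathsf{LCA}_{\mathfrak{A}}$. Given a discrete $M$ with a projective resolution of length $n$ in $\mathsf{LCA}_{\mathfrak{A}}$, I would first iterate Lemma \ref{Lemma_ProjCovers}(1) to construct a (possibly infinite) resolution $\cdots\to Q_{1}\to Q_{0}\to M$ by discrete projective $\mathfrak{A}$-modules. Each $Q_{i}$ is automatically projective in $\mathsf{LCA}_{\mathfrak{A}}$ as well, since any algebraic $\mathfrak{A}$-module lift along an admissible epic is automatically continuous once the source carries the discrete topology. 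Hence $Q_{\bullet}\to M$ is a projective resolution simultaneously in $\mathsf{Mod}_{\mathfrak{A}}$ and in $\mathsf{LCA}_{\mathfrak{A}}$.

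Next, I would invoke the standard identification of Yoneda $\operatorname{Ext}$ with the cohomology of $\operatorname{Hom}(Q_{\bullet},-)$, which holds in any exact category that admits a projective resolution of $M$: the lift-and-pushout construction only requires projectivity of each $Q_{i}$ together with pushouts along admissible monics, both of which are available in the quasi-abelian setting (Proposition \ref{Prop_LCAIsQuasiAbelianAndPontryaginDuality}). For $N$ discrete, continuous $\mathfrak{A}$-module homomorphisms from the discrete $Q_{i}$ to $N$ coincide with arbitrary algebraic ones, so the two Hom complexes literally agree and
\[
\operatorname{Ext}\nolimits^{k}_{\mathsf{LCA}_{\mathfrak{A}}}(M,N)\;=\;\operatorname{Ext}\nolimits^{k}_{\mathfrak{A}}(M,N)\qquad\text{for all discrete }N\text{ and all }k.
\]

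The hypothesis that $M$ admits a projective resolution of length $n$ in $\mathsf{LCA}_{\mathfrak{A}}$ forces $\operatorname{Ext}^{k}_{\mathsf{LCA}_{\mathfrak{A}}}(M,-)=0$ for all $k>n$, so by the comparison just made $\operatorname{Ext}^{n+1}_{\mathfrak{A}}(M,N)=0$ for every algebraic $\mathfrak{A}$-module $N$ (regarded in $\mathsf{LCA}_{\mathfrak{A}}$ with its discrete topology). Since $\mathsf{Mod}_{\mathfrak{A}}$ has enough projectives this is equivalent to $\mathrm{pd}_{\mathfrak{A}}(M)\leq n$, proving the conjecture.

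The main technical obstacle I anticipate is the middle step: rigorously verifying that Yoneda $\operatorname{Ext}$ in the quasi-abelian $\mathsf{LCA}_{\mathfrak{A}}$ is indeed computed by $H^{\bullet}(\operatorname{Hom}(Q_{\bullet},-))$, despite $\mathsf{LCA}_{\mathfrak{A}}$ failing to have enough projectives globally. The classical dimension-shifting argument (lift a Yoneda $(n+1)$-extension inductively along the projectives $Q_{i}$, observe that the obstruction lands in $\operatorname{Ext}^{1}$ of the $n$-th syzygy and vanishes by projectivity) adapts in principle, but the chain-homotopy bookkeeping in the non-abelian exact setting must be executed carefully before the conjecture can truly be declared proved.
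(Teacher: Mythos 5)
The paper offers no proof to compare against here: this statement is deliberately left as a \emph{conjecture} (cf.\ also the discussion before Proposition \ref{prop_CGToRC}, where the possibility of a short resolution built from vector modules is explicitly left open). Your argument, however, looks essentially correct, and if written out carefully it would settle the conjecture rather than merely reprove it. Moreover, the step you flag as the main obstacle can be bypassed entirely. You do not need to identify Yoneda $\operatorname{Ext}$ in $\mathsf{LCA}_{\mathfrak{A}}$ with the cohomology of $\operatorname{Hom}(Q_{\bullet},-)$: all you need is the comparison theorem for projective resolutions, which holds in any exact category for any object admitting such resolutions (the proof only uses lifting of maps from projectives along admissible epics, the existence of kernels of admissible epics, and that admissible monics are monic — no global supply of projectives is required). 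Concretely, your discrete resolution $Q_{\bullet}\to M$ (whose terms are projective objects of $\mathsf{LCA}_{\mathfrak{A}}$ by the converse direction of Proposition \ref{prop_ClassifyInjectivesAndProjectives}, as you note) and the hypothesized length-$n$ resolution $P_{\bullet}\to M$ are chain homotopy equivalent in $\mathsf{LCA}_{\mathfrak{A}}$; applying the additive functor $\operatorname{Hom}_{\mathsf{LCA}_{\mathfrak{A}}}(-,N)$ for $N$ discrete gives
\[
\operatorname{Ext}\nolimits_{\mathfrak{A}}^{k}(M,N)\cong H^{k}\operatorname{Hom}\nolimits_{\mathsf{LCA}_{\mathfrak{A}}}(Q_{\bullet},N)\cong H^{k}\operatorname{Hom}\nolimits_{\mathsf{LCA}_{\mathfrak{A}}}(P_{\bullet},N)=0\qquad(k>n)\text{,}
\]
whence $M$ has projective dimension at most $n$ in $\mathsf{Mod}_{\mathfrak{A}}$. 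The one caveat you should state explicitly is the meaning of ``projective resolution in $\mathsf{LCA}_{\mathfrak{A}}$'': the argument requires the resolution to be exact in the sense of the exact structure, i.e.\ spliced from admissible (strict) short exact sequences, as are all resolutions constructed in the paper (Lemma \ref{Lemma_ProjCovers}, Corollaries \ref{cor_Aux_GlobalDimension} and \ref{cor_Aux_FiniteToriHaveProjResolutions}); for complexes that are exact only in some weaker, non-admissible sense available in a quasi-abelian category, the comparison theorem — and hence your proof — would not apply.
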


The following result complements Corollary \ref{cor_Aux_GlobalDimension}
regarding resolutions of opposite nature.

\begin{corollary}
\label{cor_Aux_FiniteToriHaveProjResolutions}Let $\mathfrak{A}$ be an
arbitrary order in $A$.

\begin{enumerate}
\item Suppose $T\in\mathsf{LCA}_{\mathfrak{A}}$ has underlying LCA group
$\mathbb{T}^{n}$ for some $n\in\mathbb{Z}_{\geq0}$. Then it admits a (possibly
infinitely long) projective resolution in $\mathsf{LCA}_{\mathfrak{A}}$
starting in a vector $\mathfrak{A}$-module $V$,%
\[
\cdots\longrightarrow P^{2}\longrightarrow P^{1}\longrightarrow
V\twoheadrightarrow T
\]
and if $\mathfrak{A}$ has finite global dimension $s$, then the length of this
resolution is at most $s+1$.

\item Suppose $G\in\mathsf{LCA}_{\mathfrak{A}}$ has underlying LCA group
$\mathbb{Z}^{n}$ for some $n\in\mathbb{Z}_{\geq0}$. Then it admits a (possibly
infinitely long) injective resolution in $\mathsf{LCA}_{\mathfrak{A}}$
starting in a vector $\mathfrak{A}$-module $V$,%
\[
G\hookrightarrow V\longrightarrow I^{1}\longrightarrow I^{2}\longrightarrow
\cdots\text{,}%
\]
and if $\mathfrak{A}$ has finite global dimension $s$, then the length of this
resolution is at most $s+1$.
\end{enumerate}
\end{corollary}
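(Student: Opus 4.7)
\smallskip

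The plan is to build the resolution by first using the hull construction of Lemma \ref{Lemma_RZOrRTHulls} to cover $T$ (resp.\ embed $G$) by a vector $\mathfrak{A}$-module, and then to resolve the remaining discrete (resp.\ compact) piece via Corollary \ref{cor_Aux_GlobalDimension}. Pontryagin duality reduces (2) to (1).

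\smallskip

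For (1), apply Lemma \ref{Lemma_RZOrRTHulls}(2) to $T$: we obtain an admissible epic $q\colon V\twoheadrightarrow T$ in $\mathsf{LCA}_{\mathfrak{A}}$ with $V$ a vector $\mathfrak{A}$-module, whose kernel $K$ has underlying LCA group $\mathbb{Z}^{n}$. In particular $K$ is discrete. By Lemma \ref{Lemma_Elem1} (applied after noting that finitely generated as $\mathbb{Z}$-module is equivalent to finitely generated as $\mathfrak{A}$-module), $K$ lies in $\mathsf{Mod}_{\mathfrak{A}}$, and Corollary \ref{cor_Aux_GlobalDimension}(1) supplies a projective resolution $\cdots\to P^{2}\to P^{1}\twoheadrightarrow K$ by discrete modules in $\mathsf{LCA}_{\mathfrak{A}}$ of length at most $\operatorname{pd}_{\mathsf{Mod}_{\mathfrak{A}}}(K)$. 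Splicing with $K\hookrightarrow V\twoheadrightarrow T$ produces
\[
\cdots\longrightarrow P^{2}\longrightarrow P^{1}\longrightarrow V\overset{q}{\twoheadrightarrow} T.
\]
By Theorem \ref{thm_VectorModulesAreInjectiveAndBijective}, $V$ is projective in $\mathsf{LCA}_{\mathfrak{A}}$, and the $P^{i}$ are projective there as well (an algebraically projective module equipped with the discrete topology lifts any admissible epic, since the lift, being defined on a discrete source, is automatically continuous). If $\mathfrak{A}$ has finite global dimension $s$, then $\operatorname{pd}(K)\leq s$, so the total length is bounded by $s+1$.

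\smallskip

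For (2), the dual object $G^{\vee}\in{}_{\mathfrak{A}}\mathsf{LCA}$ has underlying LCA group $\mathbb{T}^{n}$ (Pontryagin duality swaps $\mathbb{Z}^{n}$ with $\mathbb{T}^{n}$). Applying part (1) in the left-module version of the category (see Remark \ref{Rmk_OppositeOrder}, i.e.\ work with $\mathfrak{A}^{op}$) yields a projective resolution
\[
\cdots\longrightarrow P^{2}\longrightarrow P^{1}\longrightarrow V_{0}\twoheadrightarrow G^{\vee}
\]
in ${}_{\mathfrak{A}}\mathsf{LCA}$ with $V_{0}$ a vector $\mathfrak{A}$-module. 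By Proposition \ref{Prop_LCAIsQuasiAbelianAndPontryaginDuality} the dualization $(-)^{\vee}$ is exact, by Lemma \ref{lemma_CGExchangesWithNSS}(4) it interchanges projectives and injectives, and by Lemma \ref{lemma_CGExchangesWithNSS}(3) it sends vector $\mathfrak{A}$-modules to vector $\mathfrak{A}$-modules. Dualizing back, combined with the reflexivity isomorphism $G^{\vee\vee}\cong G$, gives the required resolution
\[
G\hookrightarrow V_{0}^{\vee}\longrightarrow (P^{1})^{\vee}\longrightarrow (P^{2})^{\vee}\longrightarrow\cdots
\]
of the same length, which is bounded by $s+1$ when $\mathfrak{A}$ has global dimension $s$.

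\smallskip

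No step looks genuinely hard; the only subtle point is checking that "projective in $\mathsf{Mod}_{\mathfrak{A}}$ plus discrete topology" really gives a projective object of $\mathsf{LCA}_{\mathfrak{A}}$, so that the Corollary \ref{cor_Aux_GlobalDimension} resolution is usable verbatim. This is essentially a one-line observation (lifts against admissible epics exist algebraically, and continuity is automatic out of a discrete source), already implicit in the proof of Lemma \ref{Lemma_ProjCovers}, but it is the glue that holds the whole argument together.
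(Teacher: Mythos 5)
Your proof is correct and follows essentially the same route as the paper: Lemma \ref{Lemma_RZOrRTHulls}(2) gives the admissible epic from a vector $\mathfrak{A}$-module with discrete kernel, the kernel is then resolved by discrete projectives (your appeal to Corollary \ref{cor_Aux_GlobalDimension} is just the inductive use of Lemma \ref{Lemma_ProjCovers} that the paper makes), and part (2) is obtained by Pontryagin duality. The ``subtle point'' you flag --- that an algebraically projective module with the discrete topology is projective in $\mathsf{LCA}_{\mathfrak{A}}$ --- is exactly the observation recorded in Proposition \ref{prop_ClassifyInjectivesAndProjectives}, so nothing is missing.
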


\begin{proof}
(1) By Lemma \ref{Lemma_RZOrRTHulls} (2) there exists an exact sequence
$D\hookrightarrow P\twoheadrightarrow T$ with $D$ discrete. By Lemma
\ref{Lemma_ProjCovers} discrete modules permit projective covers in
$\mathsf{LCA}_{\mathfrak{A}}$. Inductively, this produces a projective
resolution of $T$, the initial step being a vector module (which is projective
by Theorem \ref{thm_VectorModulesAreInjectiveAndBijective}) and all further
terms discrete. In particular, since the projective resolution of $D$ can be
carried out in $\mathsf{Mod}_{\mathfrak{A}}$, and then just be imported to
$\mathsf{LCA}_{\mathfrak{A}}$ by equipping every term with the discrete
topology, the claim about the projective dimension follows. (2) Pontryagin
dual to (1).
\end{proof}

\begin{question}
Besides the upper bound, is there a general formula for the injective
dimension of such a $G$ in $\mathsf{LCA}_{\mathfrak{A}}$ in terms of $G$,
regarded as an object in $\mathsf{Mod}_{\mathfrak{A}}$ alone?
\end{question}

\begin{example}
If $T$ has the torus $\mathbb{T}^{\omega}$ for some strictly infinite cardinal
$\omega$ as its underlying LCA group, then $\mathbb{T}^{\omega}$ has small
subgroups (since its Pontryagin dual is not compactly generated), and
Moskowitz proves that $T$ does not even admit a projective resolution only in
$\mathsf{LCA}$, \cite[Theorem 3.6 (2)]{MR0215016}. Hence, the
finite-dimensionality is crucial in Corollary
\ref{cor_Aux_FiniteToriHaveProjResolutions}. Similarly, if $D$ is any discrete
module in $\mathsf{LCA}_{\mathfrak{A}}$, then if we take an injective
resolution of $D$ in $\mathsf{Mod}_{\mathfrak{A}}$, say $D\hookrightarrow
I^{\bullet}$ and view this as an exact complex in $\mathsf{LCA}_{\mathfrak{A}%
}$, each $I^{n}$ equipped with the discrete topology, then in general there is
no hope that the modules $I^{n}$ are injective objects also in the category
$\mathsf{LCA}_{\mathfrak{A}}$.
\end{example}

We remind the reader that all global dimensions, including $+\infty$, can
occur, see Remark \ref{rmk_on_global_dims}.

\section{Structure theorems}

Below, we shall make frequent use of Schlichting's concepts of left and right
$s$-filtering categories, see \cite[Definition 3.1]{obloc} or \cite[Appendix
A]{MR3510209}.

\begin{proposition}
\label{Prop_CGIsLeftSFiltering}The full subcategory $\mathsf{LCA}%
_{\mathfrak{A},cg}\hookrightarrow\mathsf{LCA}_{\mathfrak{A}}$ is left $s$-filtering.
\end{proposition}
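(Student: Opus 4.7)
The plan is to verify Schlichting's three axioms for being left $s$-filtering, in the form used in \cite[Definition 3.1]{obloc}: (F1) $\mathsf{LCA}_{\mathfrak{A},cg}$ is closed under extensions in $\mathsf{LCA}_{\mathfrak{A}}$, (F2) it is closed under admissible subobjects, and (F3) every admissible epic $p: M \twoheadrightarrow H$ in $\mathsf{LCA}_{\mathfrak{A}}$ with $H \in \mathsf{LCA}_{\mathfrak{A},cg}$ admits an admissible-monic subobject $H' \hookrightarrow M$ with $H' \in \mathsf{LCA}_{\mathfrak{A},cg}$ such that the composition $H' \hookrightarrow M \twoheadrightarrow H$ is again an admissible epic.

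For (F1), given $A \hookrightarrow B \twoheadrightarrow C$ with $A, C$ compactly generated, I would pick symmetric compact generating sets $K_A \subset A$ and $K_C \subset C$. Using that $B \twoheadrightarrow C$ is an open surjection (Lemma \ref{lemma_CharacterizeAdmissibleMorphisms}), I lift $K_C$ to a compact $\tilde K_C \subset B$ with $p(\tilde K_C) = K_C$: cover $K_C$ by finitely many translates of the image of a precompact open neighbourhood of $0 \in B$, take preimages of the chosen centres, and intersect the union with the closed set $p^{-1}(K_C)$. Then $K_A \cup \tilde K_C \cup (-\tilde K_C)$ is a compact symmetric set generating $B$ topologically, and the $\mathfrak{A}$-module structure needs no extra argument. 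For (F2), any closed subgroup of a compactly generated LCA group is compactly generated by \cite[Theorem 2.6]{MR0215016}, and the $\mathfrak{A}$-module structure is inherited from the admissible monic.

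The substance of the proof lies in (F3). Given $p: M \twoheadrightarrow H$ as above, I would choose a compact symmetric generating set $K \subset H$ and lift it to a compact $\tilde K \subset M$ with $p(\tilde K) = K$ by the same trick as in (F1). Then I would reapply the construction in the proof of Theorem \ref{thm_FirstDecomp}(1) to the compact seed $C_0 := \tilde K \cup (-\tilde K) \cup U$, where $U$ is a compact symmetric neighbourhood of $0 \in M$: form $C_1 := \bigcup_{i=1}^n C_0 \cdot b_i$ for a fixed $\mathbb{Z}$-basis $b_1, \ldots, b_n$ of $\mathfrak{A}$, symmetrize to $C_2 := C_1 \cup (-C_1)$, and set $H' := \bigcup_{m \geq 1} C_2^m$. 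The same closure computation as in Theorem \ref{thm_FirstDecomp}(1) shows that $H'$ is a right $\mathfrak{A}$-submodule, compactly generated by $C_2$, and open in $M$ because $C_2 \supseteq U$ contains a neighbourhood of the origin; so $H' \hookrightarrow M$ is an admissible monic in $\mathsf{LCA}_{\mathfrak{A},cg}$. Since $\tilde K \subset H'$ one has $p(H') \supseteq \langle K \rangle = H$, and because $H'$ is open in $M$ and $p$ is an open map, $p|_{H'}$ is an open surjection, i.e.\ an admissible epic by Lemma \ref{lemma_CharacterizeAdmissibleMorphisms}.

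The main technical obstacle I foresee is the lifting of the compact generating set $K$ through the open surjection $p$. The argument is standard in LCA theory but must be executed with care, and it is precisely here that the hypotheses ``$p$ an admissible epic'' (that is: open and surjective) and ``$H$ compactly generated'' (so that one \emph{starts} with a compact set $K$, for which a finite subcover suffices) enter in an essential way; after this point (F3) reduces to a verbatim reuse of the construction of Theorem \ref{thm_FirstDecomp}(1).
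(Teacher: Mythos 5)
Your verification of the ``special'' half is fine, but your axiom list is not the definition of left $s$-filtering, and this leaves a genuine gap. In the sense of \cite[Definition 3.1]{obloc} and \cite[Definition A.1]{MR3510209}, left $s$-filtering means \emph{left filtering} together with \emph{left special}, and left filtering is the condition that \emph{every} morphism $f\colon U\rightarrow M$ with $U\in\mathsf{LCA}_{\mathfrak{A},cg}$ and $M\in\mathsf{LCA}_{\mathfrak{A}}$ factors through an admissible monic $U'\hookrightarrow M$ with $U'\in\mathsf{LCA}_{\mathfrak{A},cg}$. Your (F1)+(F2)+(F3) do not imply this: finitely generated free abelian groups inside finitely generated abelian groups are closed under extensions and admissible subobjects, and satisfy your (F3) because every admissible epic onto a free module splits, yet the surjection $\mathbb{Z}\rightarrow\mathbb{Z}/2$ factors through no free admissible subobject of $\mathbb{Z}/2$; so that subcategory is not left filtering. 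Hence left filtering must be checked separately. In the case at hand it holds and is not hard: for $f\colon U\rightarrow M$ the closure $\overline{f(U)}$ is a closed $\mathfrak{A}$-submodule, and the closure of a compactly generated subgroup of an LCA group is again compactly generated (if $S=\bigcup_{n}C^{n}$ and $V$ is a compact symmetric neighbourhood of $0$ in $\overline{S}$, then $\bigcup_{n}(C\cup V)^{n}$ is an open, hence closed, subgroup of $\overline{S}$ containing $S$, so it equals $\overline{S}$); this is exactly \cite[Lemma 3.2]{obloc}, which the paper invokes. Conversely, your (F1) need not be verified at all: extension-closedness is a \emph{consequence} of the proposition, cf.\ Corollary \ref{Cor_Ext} and Remark \ref{rmk_LCAcg_IsClosedUnderExtensions}.

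Your argument for the special condition, on the other hand, is correct and genuinely different from (and shorter than) the paper's. The paper constructs a compactly generated object mapping to $G$ by combining Lemma \ref{Lemma_CompactSubobjectDecomp}, the hulls of Lemma \ref{Lemma_RZOrRTHulls}, projective covers from Lemma \ref{Lemma_ProjCovers}, and Pontryagin's Open Mapping Theorem to see the resulting surjection is open; the middle vertical arrow of the resulting diagram is not a subobject inclusion, which the definition does not require. You instead produce a clopen compactly generated $\mathfrak{A}$-submodule $H'\subseteq M$ with $p|_{H'}$ an open surjection, by lifting a compact generating set of $H$ through the open map $p$ (the finite-subcover lift is standard and correct) and saturating under the $\mathfrak{A}$-action as in Theorem \ref{thm_FirstDecomp}(1). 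Since $\ker(p|_{H'})=H'\cap\ker p$ is a closed subgroup of $H'$, hence compactly generated by \cite[Theorem 2.6]{MR0215016}, this yields the required morphism of short exact sequences restricting to the identity on $H$, with both left-hand terms in $\mathsf{LCA}_{\mathfrak{A},cg}$ and with $p|_{H'}$ an admissible epic by Lemma \ref{lemma_CharacterizeAdmissibleMorphisms} --- in fact a stronger statement than left special asks for. One small wrinkle: $C_{2}\supseteq U$ is not literally true, so openness of $H'$ should instead be argued as in Theorem \ref{thm_FirstDecomp}(1): writing $1_{\mathfrak{A}}=\sum_{j}c_{j}b_{j}$ shows $U\subseteq C_{2}^{m}$ for a fixed $m$, and a subgroup containing a neighbourhood of $0$ is open. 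With the left filtering point added and this phrasing repaired, your proof is complete and arguably simpler than the one in the paper.
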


\begin{proof}
Firstly, it is left filtering. This amounts to the purely topological fact
that the closure of a compactly generated LCA group inside a locally compact
group is again compactly generated. The proof of \cite[Lemma 3.2]{obloc} works
verbatim. It remains to prove that the inclusion $\mathsf{LCA}_{\mathfrak{A}%
,cg}\hookrightarrow\mathsf{LCA}_{\mathfrak{A}}$ is left special. If
$G^{\prime\prime}\hookrightarrow G\overset{q}{\twoheadrightarrow}G^{\prime}$
is an exact sequence with $G^{\prime}\in\mathsf{LCA}_{\mathfrak{A},cg}$, apply
Lemma \ref{Lemma_CompactSubobjectDecomp} to $G$. We arrive at the solid arrows
in the following commutative diagram%
\begin{equation}%
\xymatrix{
C \ar@{^{(}->}[d]_{i} \ar[dr]_{h} \ar[drr]^{0} \\
G \ar@{->>}[r]_{q} \ar@{->>}[d] & G^{\prime} \ar@{->>}[r]_-{r} & G^{\prime
}/{\operatorname{im}(h)} \\
V \oplus D \ar@{-->}[urr]_{w} \text{.}
}%
\label{l_xA5}%
\end{equation}
We explain how to set this up: (1) The map $h$ is defined as the composition
$q\circ i$. (2) Since $C$ is compact, its set-theoretic image
$\operatorname*{im}_{\mathsf{Set}}(h)$ is also compact, and thus in particular
closed in $G^{\prime}$. As a result, the set-theoretic image agrees with the
image in the sense of the category $\mathsf{LCA}_{\mathfrak{A}}$ (this is a
subtle issue with terminology, see \cite[\S 2, Notation]{obloc}), and moreover
the quotient $G^{\prime}/\operatorname*{im}(h)$ exists in $\mathsf{LCA}%
_{\mathfrak{A}} $. (3) Proceeding along the resulting quotient map in the
diagram above, the dashed arrow $w$ exists by the universal property of
cokernels. Since $\mathsf{LCA}_{\mathfrak{A}}$ is quasi-abelian, it is
idempotent complete and thus by \cite[Proposition 7.6]{MR2606234} the morphism
$w$ must be an admissible epic. Let us try to characterize the underlying
LCA\ group of $X:=G^{\prime}/\operatorname*{im}(h)$. As $G^{\prime}$ is
compactly generated, so are all its quotients by \cite[Theorem 2.6]%
{MR0215016}. Thus, as an LCA group, we have $X\simeq\mathbb{R}^{A}%
\oplus\mathbb{Z}^{B}\oplus C$ for suitable $A,B\in\mathbb{Z}_{\geq0}$ and $C$
compact. On the other hand, $V\oplus D$ has the underlying LCA group
$\mathbb{R}^{n}\oplus D$ with $D$ discrete and by \cite[Corollary 2 to Theorem
7]{MR0442141} all closed subgroups can be classified, and as a result all
admissible quotients can only have the underlying LCA group $\mathbb{R}%
^{N}\oplus\mathbb{T}^{M}\oplus D^{\prime}$ for suitable $N,M\in\mathbb{Z}%
_{\geq0}$ and $D^{\prime}$ discrete. Having these two contrasting structural
results, we learn that $X$ can only have $\mathbb{R}^{N}\oplus\mathbb{T}%
^{M}\oplus\mathbb{Z}^{B}$ as its underlying LCA group for suitable
$N,M,B\in\mathbb{Z}_{\geq0}$ (a very clean way to see this is to use the
canonical filtration of \cite[Proposition 2.2]{MR2329311} in $\mathsf{LCA}$
since it canonically identifies the three direct summands as graded pieces of
the filtration). Apply Lemma \ref{Lemma_CompactPartOfCG} to $X$, giving us an
exact sequence%
\[
T\hookrightarrow X\twoheadrightarrow W\qquad\text{in}\qquad\mathsf{LCA}%
_{\mathfrak{A}}%
\]
such that the underlying LCA groups of $T$ are $\mathbb{T}^{M}$, and
$\mathbb{R}^{N}\oplus\mathbb{Z}^{B}$ for $W$. Apply Lemma
\ref{Lemma_KeySplittingLemma} (1) to $W$ to promote this exact sequence to%
\[
T\hookrightarrow X\twoheadrightarrow\tilde{V}\oplus\tilde{D}\qquad
\text{in}\qquad\mathsf{LCA}_{\mathfrak{A}}%
\]
with $\tilde{V}$ a vector $\mathfrak{A}$-module and $\tilde{D}$ having
underlying LCA group $\mathbb{Z}^{B}$.\ We pick admissible epics from
projective objects $P_{i}\in\mathsf{LCA}_{\mathfrak{A}}$ to these objects as
follows: (1) $c_{1}:P_{1}\twoheadrightarrow T$ can be produced by Lemma
\ref{Lemma_RZOrRTHulls} (2), in this case $P_{1}$ is a vector $\mathfrak{A}%
$-module; (2) $c_{2}:P_{2}\twoheadrightarrow\tilde{V}$ is just the identity
since by Theorem \ref{thm_VectorModulesAreInjectiveAndBijective} the object
$\tilde{V}$ is itself already projective, and (3) $c_{3}:P_{3}%
\twoheadrightarrow\tilde{D}$ is taken as a projective cover, which exists by
Lemma \ref{Lemma_ProjCovers}; in this case $P_{3}$ is a compactly generated
discrete $\mathfrak{A}$-module. Since morphisms from projectives can be lifted
along admissible epics, we obtain the lift $\tilde{c}_{23}$ of%
\begin{equation}%
\xymatrix{
P_1 \ar@{->>}[d]_{c_1} \ar[dr] & & P_2 \oplus P_3 \ar@{->>}[d]^{c_2 + c_3}
\ar@{-->}[dl]^{\tilde{c}_{23}} \\
T \ar@{^{(}->}[r] & X \ar@{->>}[r] & \tilde{V} \oplus\tilde{D}\text{.}
}%
\label{l_xA6}%
\end{equation}
Recalling that $X=G^{\prime}/\operatorname*{im}(h)$, take this morphism
$P_{1}\oplus P_{2}\oplus P_{3}\longrightarrow G^{\prime}/\operatorname*{im}%
(h)$, which arises as the sum $c_{1}+\tilde{c}_{23}$, and lift it also along
the admissible epic $r$ in Diagram \ref{l_xA5}. Call this lift $\hat{c}$. Then
we arrive at a commutative diagram%
\[%
\xymatrix{
C \oplus P_1 \oplus P_2 \oplus P_3 \ar[d]_{i + {\hat{c}}^{\prime}}
\ar[dr]^{h + \hat{c}} \\
G \ar@{->>}[r]_{q} & G^{\prime}\text{,}
}%
\]
where $\hat{c}^{\prime}$ is a further lift of the morphism $\hat{c}$ along the
admissible epic $q$. Let us study the morphism $h+\hat{c}$: Firstly, it is a
morphism in $\mathsf{LCA}_{\mathfrak{A}}$. Next, tracing through the
construction, it is surjective: all elements in $\operatorname*{im}%
(h)\subseteq G^{\prime}$ are of course surjected on by $C$, and $P_{1}\oplus
P_{2}\oplus P_{3}$ were constructed just in such a way to surject onto the
quotient $X=G^{\prime}/\operatorname*{im}(h)$; see Diagram \ref{l_xA6}.
Finally, since $C$ is compact, $P_{1},P_{2}$ are vector $\mathfrak{A}%
$-modules, and $P_{3}$ is compactly generated, $C\oplus P_{1}\oplus
P_{2}\oplus P_{3}$ is compactly generated in total. In particular, it is
$\sigma$-compact (i.e. a countable union of compact sets), and thus by
Pontryagin's Open Mapping Theorem, $h+\hat{c}$ is an open map, see
\cite[Theorem 3]{MR0442141}. Being surjective and open, $h+\hat{c}$ is an
admissible epic in $\mathsf{LCA}_{\mathfrak{A}}$, Lemma
\ref{lemma_CharacterizeAdmissibleMorphisms}. Thus, we can set up the following
commutative diagram%
\[%
\xymatrix{
K \ar@{^{(}->}[r] \ar@{-->}[d] & C \oplus P_1 \oplus P_2 \oplus P_3 \ar
[d] \ar@{->>}[r]^-{h + \hat{c}} & G^{\prime} \ar@{=}[d] \\
G^{\prime\prime} \ar@{^{(}->}[r] & G \ar@{->>}[r]_{q} & G^{\prime}\text{,}
}%
\]
where $K:=\ker(h+\hat{c})$, and the dashed arrow stems from the universal
property of kernels. Since $C\oplus P_{1}\oplus P_{2}\oplus P_{3}$ is
compactly generated, so is its subobject $K$ by \cite[Theorem 2.6
(2)]{MR0215016}, and thus the entire top row is an exact sequence with objects
in $\mathsf{LCA}_{\mathfrak{A},cg}$. This confirms that $\mathsf{LCA}%
_{\mathfrak{A},cg}\hookrightarrow\mathsf{LCA}_{\mathfrak{A}}$ is left special.
\end{proof}

\begin{corollary}
\label{Cor_Ext}For an exact sequence $G^{\prime}\hookrightarrow
G\twoheadrightarrow G^{\prime\prime}$ of objects in $\mathsf{LCA}%
_{\mathfrak{A}}$ we have $G\in\mathsf{LCA}_{\mathfrak{A},cg}$ if and only if
$G^{\prime},G^{\prime\prime}\in\mathsf{LCA}_{\mathfrak{A},cg}$.
\end{corollary}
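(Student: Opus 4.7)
The plan is to reduce the statement to a purely topological fact about LCA groups, since being compactly generated is a property of the underlying LCA group alone (the $\mathfrak{A}$-module structure is irrelevant to this notion, by the very definition of $\mathsf{LCA}_{\mathfrak{A},cg}=\mathsf{LCA}_{\mathfrak{A}}\cap\mathsf{LCA}_{cg}$). Hence it suffices to prove: in $\mathsf{LCA}$, an extension $G'\hookrightarrow G\twoheadrightarrow G''$ has $G$ compactly generated if and only if both $G'$ and $G''$ are compactly generated.

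For the direction $(\Rightarrow)$, I would just invoke Moskowitz \cite[Theorem 2.6]{MR0215016}: part (2) of that result gives that every closed subgroup of a compactly generated LCA group is itself compactly generated, which handles $G'$ (noting that admissible monics in $\mathsf{LCA}_{\mathfrak{A}}$ are closed injections by Lemma \ref{lemma_CharacterizeAdmissibleMorphisms}), and the same theorem handles quotients, giving the claim for $G''$.

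For the direction $(\Leftarrow)$, I would argue as follows. Pick symmetric compact generating sets $C'\subseteq G'$ and $C''\subseteq G''$. The admissible epic $q\colon G\twoheadrightarrow G''$ is a continuous open surjection; since $G$ is locally compact, every point of $G''$ admits a symmetric compact neighbourhood in $G$ mapping onto a neighbourhood in $G''$. Covering $C''$ by finitely many such neighbourhoods and taking the union of preimage parts yields a symmetric compact set $\widetilde{C}\subseteq G$ with $q(\widetilde{C})\supseteq C''$. Set $K:=C'\cup\widetilde{C}\subseteq G$, which is symmetric and compact. For any $g\in G$ one has $q(g)\in\bigcup_{n}(C'')^{n}$, so there is some $n$ and $k\in\widetilde{C}^{n}$ with $q(g)=q(k)$; then $g-k\in G'=\bigcup_{m}(C')^{m}$, and altogether $g\in K^{n+m}$. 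Hence $G=\bigcup_{n\geq 0}K^{n}$ is compactly generated.

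The only mildly subtle step is the lift of a compact generating set through the admissible epic $q$; this is routine using local compactness plus finite subcovers, so there is no real obstacle. I expect the entire argument to be short, essentially a repackaging of Moskowitz's Theorem 2.6 together with the open mapping property of admissible epics. As a stylistic alternative, the $(\Leftarrow)$ direction could also be extracted from Proposition \ref{Prop_CGIsLeftSFiltering} together with the closure of left $s$-filtering subcategories under extensions, but the direct topological argument above is cleaner and avoids invoking extra machinery.
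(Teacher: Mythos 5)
Your proof is correct, but it takes a genuinely different route from the paper: there the corollary is a one-line formal consequence of Proposition \ref{Prop_CGIsLeftSFiltering}, citing the general properties of left $s$-filtering subcategories from \cite[Proposition A.2 and Definition A.1 (1)]{MR3510209}, so all the real work is hidden in the left-special part of that proposition. You instead reduce to the underlying LCA groups --- legitimate, since membership in $\mathsf{LCA}_{\mathfrak{A},cg}$ is by definition a condition on the underlying group and admissible monics resp.\ epics in $\mathsf{LCA}_{\mathfrak{A}}$ are closed injections resp.\ open surjections (Lemma \ref{lemma_CharacterizeAdmissibleMorphisms}) --- and then prove the topological statement directly: the forward implication is exactly \cite[Theorem 2.6]{MR0215016} (the same citation the paper uses elsewhere for closed subgroups and quotients of compactly generated groups), and for the converse you lift a compact symmetric generating set of $G''$ through the open surjection $q$ by a preimage-point/compact-neighbourhood/finite-subcover argument and adjoin a compact generating set of $G'$; your verification that $K=C'\cup\widetilde{C}$ generates is fine (modulo the cosmetic point that $g\in K^{n}$ rather than $K^{n+m}$ when $g-k=0$, harmless since one takes the union over all powers, and the slightly loose phrasing of the lifting step, whose intended argument is standard). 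What each approach buys: yours is self-contained, uses none of the $s$-filtering machinery (in particular not the left-special property), and could serve as an input at an earlier stage, e.g.\ to endow $\mathsf{LCA}_{\mathfrak{A},cg}$ with its exact structure independently of Proposition \ref{Prop_CGIsLeftSFiltering}; the paper's proof is essentially free once that proposition is established and emphasizes that extension-closure is a purely formal consequence of the $s$-filtering property, which is what is actually needed for the localization sequence later.
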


\begin{proof}
\cite[Proposition A.2 and Definition A.1 (1)]{MR3510209}.
\end{proof}

\begin{remark}
\label{rmk_LCAcg_IsClosedUnderExtensions}In particular, $\mathsf{LCA}%
_{\mathfrak{A},cg}\hookrightarrow\mathsf{LCA}_{\mathfrak{A}}$ is closed under
extensions and thus carries a natural exact structure itself, \cite[Lemma
10.20]{MR2606234}.
\end{remark}

\begin{remark}
\label{rmk_ProofsByDualization}By duality (Proposition
\ref{Prop_LCAIsQuasiAbelianAndPontryaginDuality}) and the exchange properties
of Lemma \ref{lemma_CGExchangesWithNSS}, Proposition
\ref{Prop_CGIsLeftSFiltering} literally yields that%
\[
\left.  _{nss,\mathfrak{A}}\mathsf{LCA}^{op}\right.  \hookrightarrow\left.
_{\mathfrak{A}}\mathsf{LCA}^{op}\right.
\]
is left $s$-filtering. However, by the symmetry of left and right
$s$-filtering, this just means that $\left.  _{nss,\mathfrak{A}}%
\mathsf{LCA}\right.  \hookrightarrow\left.  _{\mathfrak{A}}\mathsf{LCA}%
\right.  $ is right $s$-filtering. Applying this observation to the opposite
order $\mathfrak{A}$, we get that $\mathsf{LCA}_{\mathfrak{A},nss}%
\hookrightarrow\mathsf{LCA}_{\mathfrak{A}}$ is also right $s$-filtering. Using
duality on this statement and Lemma \ref{lemma_CGExchangesWithNSS} once more,
we learn that $\left.  _{cg,\mathfrak{A}}\mathsf{LCA}^{op}\right.
\hookrightarrow\left.  _{\mathfrak{A}}\mathsf{LCA}^{op}\right.  $ is right
$s$-filtering, and thus $\left.  _{cg,\mathfrak{A}}\mathsf{LCA}\right.
\hookrightarrow\left.  _{\mathfrak{A}}\mathsf{LCA}\right.  $ is left
$s$-filtering. In particular, Corollary \ref{Cor_Ext} generalizes to
$\mathsf{LCA}_{\mathfrak{A},nss}$, and all full subcategories%
\[
\mathsf{LCA}_{\mathfrak{A},nss}\hookrightarrow\mathsf{LCA}_{\mathfrak{A}%
}\text{,}\qquad\mathsf{LCA}_{\mathfrak{A},cg}\hookrightarrow\mathsf{LCA}%
_{\mathfrak{A}}\text{,}\qquad\left.  _{cg,\mathfrak{A}}\mathsf{LCA}\right.
\hookrightarrow\left.  _{\mathfrak{A}}\mathsf{LCA}\right.  \text{,}%
\qquad\left.  _{nss,\mathfrak{A}}\mathsf{LCA}\right.  \hookrightarrow\left.
_{\mathfrak{A}}\mathsf{LCA}\right.
\]
are fully exact subcategories and hence carry the structure of an exact
category themselves.
\end{remark}

\section{Injectives and projectives}

We determine the projective objects in $\mathsf{LCA}_{\mathfrak{A}}$. To this
end, we follow the strategy of Moskowitz in \cite{MR0215016}, who deals with
the case of $\mathsf{LCA}$ itself.

\begin{proposition}
\label{prop_ClassifyInjectivesAndProjectives}Let $\mathfrak{A}$ be an
arbitrary order.

\begin{enumerate}
\item Every injective object in $\mathsf{LCA}_{\mathfrak{A}}$ is isomorphic to
$V\oplus I$, where $V$ is a vector $\mathfrak{A}$-module and $I$ compact
connected such that $I^{\vee}$ is projective as an algebraic left
$\mathfrak{A}$-module.

\item Every projective object in $\mathsf{LCA}_{\mathfrak{A}}$ is isomorphic
to $V\oplus P$, where $V$ is a vector $\mathfrak{A}$-module and $P$ discrete
such that $P$ is projective as an algebraic right $\mathfrak{A}$-module.
\end{enumerate}

Conversely, all objects of this shape are injective resp. projective.
\end{proposition}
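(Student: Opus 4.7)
The plan is to prove (2) for all orders $\mathfrak{A}$; statement (1) then follows via Pontryagin duality (Proposition \ref{Prop_LCAIsQuasiAbelianAndPontryaginDuality}) combined with Lemma \ref{lemma_CGExchangesWithNSS}(4), which swaps projectives and injectives between $\mathsf{LCA}_{\mathfrak{A}}$ and $\mathsf{LCA}_{\mathfrak{A}^{op}}$. I would first dispose of the converse in (2): vector $\mathfrak{A}$-modules are projective by Theorem \ref{thm_VectorModulesAreInjectiveAndBijective}, and a discrete module that is algebraically projective is projective in $\mathsf{LCA}_{\mathfrak{A}}$ because every algebraic lift out of a discrete source is automatically continuous. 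In particular $\mathfrak{A}$ itself, with the discrete topology, is projective in $\mathsf{LCA}_{\mathfrak{A}}$; this is a fact I need below.

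The heart of the argument is a sublemma: \emph{if $G$ is injective in $\mathsf{LCA}_{\mathfrak{A}}$ and admits an admissible epic $G \twoheadrightarrow D$ with $D$ discrete, then $D = 0$.} To prove it, suppose $x \in D$ is nonzero and consider the right $\mathfrak{A}$-module map $f:\mathfrak{A}\to D$, $a\mapsto x\cdot a$. By projectivity of $\mathfrak{A}$, $f$ lifts to some $\tilde f:\mathfrak{A}\to G$. The inclusion $\mathfrak{A}\hookrightarrow A_{\mathbb{R}}$ realizes $\mathfrak{A}$ as a closed full-rank discrete lattice in the vector group $A_{\mathbb{R}}$ and is therefore an admissible monic by Lemma \ref{lemma_CharacterizeAdmissibleMorphisms}; injectivity of $G$ extends $\tilde f$ to a morphism $A_{\mathbb{R}}\to G$. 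The composite $A_{\mathbb{R}}\to G\twoheadrightarrow D$ is then a continuous morphism from the connected group $A_{\mathbb{R}}$ to the Hausdorff discrete module $D$, hence identically zero. But its restriction to $\mathfrak{A}$ must equal $f$, forcing $x=0$, a contradiction.

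For the forward direction of (2), given $P$ projective, Lemma \ref{Lemma_CompactSubobjectDecomp} produces an exact sequence $C\hookrightarrow P\twoheadrightarrow V\oplus D$ with $C$ compact, $V$ a vector $\mathfrak{A}$-module, and $D$ discrete. The admissible epic $P\twoheadrightarrow V$ splits by projectivity of $V$, yielding $P\cong V\oplus K$ with $K$ projective (as a summand) and sitting in $C\hookrightarrow K\twoheadrightarrow D$. Applying Pontryagin duality, $K^{\vee}$ is injective in $\mathsf{LCA}_{\mathfrak{A}^{op}}$ and the dualized sequence is $D^{\vee}\hookrightarrow K^{\vee}\twoheadrightarrow C^{\vee}$ with $C^{\vee}$ discrete. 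The sublemma, applied now to the order $\mathfrak{A}^{op}$, forces $C^{\vee}=0$, so $C=0$ and $K\cong D$ is discrete. Being both projective in $\mathsf{LCA}_{\mathfrak{A}}$ and discrete, $K$ is algebraically projective by the same continuity argument as in the converse, completing the proof.

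The principal obstacle is the sublemma: it turns on the analytic fact that the vector group $A_{\mathbb{R}}$ is connected, coupled with the arithmetic embedding $\mathfrak{A}\hookrightarrow A_{\mathbb{R}}$, and without this interplay there is no mechanism to annihilate a discrete quotient of an injective module. Everything else is a formal consequence of Lemma \ref{Lemma_CompactSubobjectDecomp}, Theorem \ref{thm_VectorModulesAreInjectiveAndBijective}, and the exactness of Pontryagin duality.
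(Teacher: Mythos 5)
Your argument is correct in substance, but it is organized dually to the paper's. The paper classifies the injectives first: it shows that any injective object $G$ of $\mathsf{LCA}_{\mathfrak{A}}$ is \emph{connected} (for $x\in G$ one extends $\alpha\mapsto x\cdot\alpha$ along the admissible monic $\mathfrak{A}\hookrightarrow A_{\mathbb{R}}$ and uses connectedness of $A_{\mathbb{R}}$), then feeds this into the decomposition of Lemma \ref{Lemma_CompactSubobjectDecomp} to kill the discrete quotient and, after passing to the connected component of the compact part, obtains $G\cong V\oplus C^{0}$; part (2) is then deduced by Pontryagin duality. You go the other way: you classify the projectives first, splitting off the vector summand by projectivity of $V$ (Theorem \ref{thm_VectorModulesAreInjectiveAndBijective}) and killing the compact kernel by dualizing and invoking your sublemma that an injective object has no nonzero discrete admissible quotient. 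That sublemma rests on exactly the same mechanism as the paper's connectedness claim --- the admissible monic $\mathfrak{A}\hookrightarrow A_{\mathbb{R}}$ (Lemma \ref{lemma_CharacterizeAdmissibleMorphisms}), injectivity, and connectedness of $A_{\mathbb{R}}$ --- phrased in a weaker, quotient-level form, with an extra (harmless) use of projectivity of the discrete module $\mathfrak{A}$ to first lift into $G$; the paper avoids that lift by choosing the element $x$ inside $G$ itself. Both routes then use the observation that for discrete modules topological and algebraic projectivity coincide, and both appeal to Lemma \ref{lemma_CGExchangesWithNSS}(4) and exactness of duality.

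The one point your write-up does not address is the clause in part (1) that the compact factor $I$ is \emph{connected}; in the paper this comes for free from the stronger statement that injectives are connected, whereas your derivation of (1) from (2) by duality only yields $I$ compact with $I^{\vee}$ algebraically projective. This is a one-line repair: $I^{\vee}$ is a projective algebraic left $\mathfrak{A}$-module, hence (since $\mathfrak{A}\simeq\mathbb{Z}^{n}$ additively, so projectives are summands of free, torsion-free abelian groups) torsion-free as an abelian group, and the Pontryagin dual of a torsion-free discrete group is connected. With that sentence added, your proof is complete and valid for arbitrary orders, as in the paper.
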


\begin{proof}
(1) Suppose $G\in\mathsf{LCA}_{\mathfrak{A}}$ is injective. Firstly, we claim
that $G$ must be injective in $\mathsf{Mod}_{\mathfrak{A}}$ as well. To see
this, let $M^{\prime}\hookrightarrow M\twoheadrightarrow M^{\prime\prime}$ be
any exact sequence in $\mathsf{Mod}_{\mathfrak{A}}$. Equipped with the
discrete topology, this is exact in $\mathsf{LCA}_{\mathfrak{A}}$. Let
$f:M\rightarrow G$ be any morphism in $\mathsf{Mod}_{\mathfrak{A}}$. As $M$
carries the discrete topology, $f$ is actually continuous, tautologically.
Since $G$ is injective in $\mathsf{LCA}_{\mathfrak{A}}$, there exists a lift
$f^{\prime}$ as in%
\[%
\xymatrix{
M^{\prime} \ar@{^{(}->}[rr] \ar[dr]_{f} & & M \ar@{-->}[dl]^{f^{\prime}} \\
& G
}%
\]
and if we forget the topology, the underlying algebraic morphism of
$f^{\prime}$ is of course also a lift of $f$ in $\mathsf{Mod}_{\mathfrak{A}}$.
Thus, $G$ is injective in $\mathsf{Mod}_{\mathfrak{A}}$. Next, we claim that
$G$ must be connected. To this end, consider the exact sequence $\mathfrak{A}%
\hookrightarrow A\otimes_{\mathbb{Q}}\mathbb{R}\twoheadrightarrow T$, where
the middle term is viewed with the standard real topology, and $T$ is plainly
defined as the corresponding quotient. For any element $x\in G$ we define a
morphism $f:\mathfrak{A}\rightarrow G$ by $f(\alpha):=x\cdot\alpha$. As $G$ is
injective, we can lift $f$ along%
\begin{equation}%
\xymatrix{
\mathfrak{A} \ar@{^{(}->}[rr] \ar[dr]_{f} & & {A \otimes_{\mathbb{Q}}
\mathbb{R}} \ar@{-->}[dl]^{\tilde{f} } \\
& G
}%
\label{l_xA7}%
\end{equation}
and (following Moskowitz's idea) note that since $A\otimes_{\mathbb{Q}%
}\mathbb{R}$ is connected (it has underlying LCA group $\mathbb{R}^{d}$ for
$d:=\dim_{\mathbb{Q}}A$), the image of the continuous map $\tilde{f}$ must
also be connected. However, $\tilde{f}(1_{\mathfrak{A}})=f(1_{\mathfrak{A}%
})=x$ was arbitrary, so we deduce that all elements of $G$ lie in the
connected component of the neutral element. Thus, $G$ is connected. Next,
apply Lemma \ref{Lemma_CompactSubobjectDecomp} to $G$, giving an exact
sequence%
\[
V\oplus C\hookrightarrow G\overset{q}{\twoheadrightarrow}D\qquad
\text{in}\qquad\mathsf{LCA}_{\mathfrak{A}}%
\]
with $V$ a vector $\mathfrak{A}$-module, $C$ compact and $D$ discrete. As $G$
is connected, its image under the quotient map $q$ is also connected, but $D$
is discrete, so we must have $D=0$ as $q$ is also surjective. Next, let
$C^{0}$ denote the connected component of $C$; at first this only makes sense
in $\mathsf{LCA}$. However, for every $\alpha\in\mathfrak{A}$ we get the solid
arrows of the commutative diagram (of LCA groups)%
\[%
\xymatrix{
C^0 \ar@{^{(}->}[d] \ar@{-->}[r] & C^0 \ar@{^{(}->}[d] \\
C \ar[r]^{\cdot\alpha} \ar@{->>}[d] & C \ar@{->>}[d] \\
C/{C^0} & C/{C^0}\text{.}
}%
\]
The composition of maps from the connected $C^{0}$ on the upper left to the
totally disconnected $C/C^{0}$ on the lower right is necessarily zero by
continuity. Thus, the dashed arrow exists by the universal property of
kernels. As this holds for all $\alpha\in\mathfrak{A}$, it follows that the
closed subgroup $C^{0}$ actually defines a subobject of $C$ in $\mathsf{LCA}%
_{\mathfrak{A}}$, and the quotient $C/C^{0}$ makes sense in $\mathsf{LCA}%
_{\mathfrak{A}}$ accordingly. Thus, we obtain a new exact sequence%
\[
V\oplus C^{0}\hookrightarrow\underset{=G}{V\oplus C}\overset{q^{\prime}%
}{\twoheadrightarrow}C/C^{0}\text{.}%
\]
Again, since $G$ is connected, but $C/C^{0}$ totally disconnected, we must
have $C/C^{0}=0$ because $q^{\prime}$ is surjective. It follows that $G\cong
V\oplus C^{0}$, where $V$ is a vector $\mathfrak{A}$-module. Recall that if a
product of objects is injective, then so are its factors. Hence, $C^{0}$ is
compact and injective in $\mathsf{LCA}_{\mathfrak{A}}$. By Pontryagin duality,
it follows that $C^{0\vee}$ is a discrete projective left $\mathfrak{A}%
$-module. This proves (1)\ for right modules. Note that the same argument
works for left modules: just change that $f$ needs to be defined as
$f(\alpha):=\alpha\cdot x$ in the context of Diagram \ref{l_xA7}. Finally, (2)
follows by Pontryagin duality, noting that it exchanges injectives with
projectives, compact with discrete, and duals of vector $\mathfrak{A}$-modules
remain vector $\mathfrak{A}$-modules. (3)\ We conclude by showing that all
such objects are indeed injective resp. projective. For the vector
$\mathfrak{A}$-module summand, this is just Theorem
\ref{thm_VectorModulesAreInjectiveAndBijective}. Next, if $P$ is projective in
$\mathsf{Mod}_{\mathfrak{A}}$ and discrete, then any lift as algebraic module
maps is continuous, because any map originating from a discrete space is
continuous. For injectives, argue by duality.
\end{proof}

\section{Isolating the real part}

We recall from \cite{obloc} that $\mathsf{LCA}_{\mathbb{R}C}$ is our
short-hand for $\mathsf{LCA}_{\mathbb{Z},\mathbb{R}C}$, i.e. the category of
LCA groups which admit an isomorphism to $\mathbb{R}^{n}\oplus C$ for some $n
$ and some compact $C$. As was shown \textit{loc. cit.} this is an idempotent
complete exact category, \cite[Lemma 3.10]{obloc}. These results turn out to
generalize to topological $\mathfrak{A}$-modules. Let us quickly set this up
to the extent which we shall need later:

\begin{definition}
\label{def_RC}Let $\mathsf{LCA}_{\mathfrak{A},\mathbb{R}C}$ denote the full
subcategory of $\mathsf{LCA}_{\mathfrak{A}}$ whose objects have underlying LCA
group in $\mathsf{LCA}_{\mathbb{R}C}$.
\end{definition}

So far, this only equips $\mathsf{LCA}_{\mathfrak{A},\mathbb{R}C}$ with the
structure of an additive category. Better though, let us show that the
definition agrees with another ostensibly more restrictive definition:

\begin{lemma}
\label{lemma_RC1prep}Equivalently, $\mathsf{LCA}_{\mathfrak{A},\mathbb{R}C}$
can be defined as the full subcategory of $\mathsf{LCA}_{\mathfrak{A}}$ whose
objects admit a direct sum decomposition $G\cong V\oplus C$ with $V$ a vector
$\mathfrak{A}$-module and $C$ compact in $\mathsf{LCA}_{\mathfrak{A}}$.
\end{lemma}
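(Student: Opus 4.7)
My plan is to prove both inclusions between the two descriptions. One direction is tautological: if $G \cong V \oplus C$ in $\mathsf{LCA}_{\mathfrak{A}}$ with $V$ a vector $\mathfrak{A}$-module and $C$ compact, then the underlying LCA group is $\mathbb{R}^{n} \oplus C \in \mathsf{LCA}_{\mathbb{R}C}$, placing $G$ in $\mathsf{LCA}_{\mathfrak{A},\mathbb{R}C}$ per Definition \ref{def_RC}.

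For the converse, suppose $G \in \mathsf{LCA}_{\mathfrak{A},\mathbb{R}C}$, so that the underlying LCA group is isomorphic to $\mathbb{R}^{n} \oplus K$ for some compact $K$. In particular $G$ is compactly generated, so $G \in \mathsf{LCA}_{\mathfrak{A},cg}$, and I plan to apply Lemma \ref{Lemma_CompactPartOfCG}: this produces a canonical exact sequence
$$C \hookrightarrow G \twoheadrightarrow W \qquad \text{in} \qquad \mathsf{LCA}_{\mathfrak{A}}$$
with $C$ compact and $W$ isomorphic as an LCA group to $\mathbb{R}^{a} \oplus \mathbb{Z}^{b}$ for some $a,b \geq 0$. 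Tracing through that lemma's construction, $C$ is exactly the compact factor in the LCA-level decomposition $G \simeq \mathbb{R}^{a} \oplus \mathbb{Z}^{b} \oplus C$, which is the maximal compact subgroup of $G$. Since the maximal compact subgroup of $\mathbb{R}^{n} \oplus K$ is $K$ itself (both $\mathbb{R}^{n}$ and any lattice $\mathbb{Z}^{b}$ contain no nontrivial compact subgroup), we must have $b = 0$ and the underlying LCA group of $W$ is $\mathbb{R}^{n}$. Hence $W$ is a vector $\mathfrak{A}$-module.

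To finish, I invoke Theorem \ref{thm_VectorModulesAreInjectiveAndBijective}, according to which vector $\mathfrak{A}$-modules are projective in $\mathsf{LCA}_{\mathfrak{A}}$. The admissible epic $G \twoheadrightarrow W$ therefore admits a section in $\mathsf{LCA}_{\mathfrak{A}}$, the exact sequence splits, and we obtain $G \cong W \oplus C$ with $W$ a vector $\mathfrak{A}$-module and $C$ compact, as required. The only mild obstacle is justifying that the compact subobject produced by Lemma \ref{Lemma_CompactPartOfCG} really captures all of the compact part (so that the quotient $W$ has no residual $\mathbb{Z}^{b}$ factor), but this is immediate from how $C$ is identified in that lemma's proof with the maximal compact subgroup of the underlying LCA group.
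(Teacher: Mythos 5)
Your argument is correct, and it takes a genuinely different route from the paper. The paper passes to the Pontryagin dual: $G^{\vee}$ has underlying group $\mathbb{R}^{n}\oplus\tilde{D}$ with $\tilde{D}$ discrete, so Lemma \ref{Lemma_KeySplittingLemma} (1) (which rests on the connected-component argument and the \emph{algebraic} injectivity of vector modules, Lemma \ref{lemma_VectorModulesAreAlgebraicallyInjectiveAndProjective}) splits off the vector summand there, and dualizing back gives $G\cong V\oplus C$. You instead stay on the primal side: Lemma \ref{Lemma_CompactPartOfCG} identifies the maximal compact $\mathfrak{A}$-submodule $C$ (correctly: it is the compact factor of the structure-theoretic decomposition, and since compact subgroups of $\mathbb{R}^{a}\oplus\mathbb{Z}^{b}$ are trivial it is independent of choices, so for underlying group $\mathbb{R}^{n}\oplus K$ one gets $C=K$ and quotient $W$ with underlying group $\mathbb{R}^{n}$), and then you split $G\twoheadrightarrow W$ using the \emph{topological} projectivity of vector $\mathfrak{A}$-modules, Theorem \ref{thm_VectorModulesAreInjectiveAndBijective}. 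Both routes are valid for an arbitrary order (neither needs the hereditary hypothesis, which only enters in part (2) of Lemma \ref{Lemma_KeySplittingLemma}), and both cite only results established before this lemma, so there is no circularity. The trade-off: your proof avoids duality altogether but leans on the heavier Theorem \ref{thm_VectorModulesAreInjectiveAndBijective}, whereas the paper's proof gets by with the purely algebraic injectivity statement at the cost of a round trip through $\left. _{\mathfrak{A}}\mathsf{LCA}\right.$; your splitting-off-the-quotient pattern is in fact the same device the paper uses shortly afterwards in Lemma \ref{Lemma_CompactSubobjectDecomp}.
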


\begin{proof}
Suppose $G\in\mathsf{LCA}_{\mathfrak{A}}$ is as in\ Definition \ref{def_RC}.
The dual $G^{\vee}\in\left.  _{\mathfrak{A}}\mathsf{LCA}\right.  $ has
underlying LCA group of the shape $\mathbb{R}^{n}\oplus\tilde{D}$ for
$\tilde{D}$ discrete, so Lemma \ref{Lemma_KeySplittingLemma} (1) applies,
allowing us to decompose $G^{\vee}$ in $\left.  _{\mathfrak{A}}\mathsf{LCA}%
\right.  $ as a direct sum of a vector left $\mathfrak{A}$-module and a
discrete module, and dualizing back, we get an isomorphism $G\cong V\oplus C$
with $V$ a vector module and $C$ compact in $\mathsf{LCA}_{\mathfrak{A}}$. The
converse is clear.
\end{proof}

\begin{proposition}
\label{prop_RC1}Let $\mathfrak{A}$ be an arbitrary order.

\begin{enumerate}
\item The full subcategory $\mathsf{LCA}_{\mathfrak{A},\mathbb{R}%
C}\hookrightarrow\mathsf{LCA}_{\mathfrak{A}}$ is closed under extensions.

\item The full subcategory $\mathsf{LCA}_{\mathfrak{A},\mathbb{R}%
C}\hookrightarrow\mathsf{LCA}_{\mathfrak{A}}$ is left filtering; indeed
filtering by a clopen subobject.

\item The category $\mathsf{LCA}_{\mathfrak{A},\mathbb{R}C}$ is idempotent complete.
\end{enumerate}
\end{proposition}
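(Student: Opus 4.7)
The plan is to reduce each statement to its underlying-LCA analogue, which is essentially \cite[Lemma 3.10]{obloc} for $\mathsf{LCA}_{\mathbb{R}C}$, and then to check compatibility with the $\mathfrak{A}$-module structure. Part (1) is essentially routine: given an exact sequence $G'\hookrightarrow G\twoheadrightarrow G''$ in $\mathsf{LCA}_{\mathfrak{A}}$ with the outer terms in $\mathsf{LCA}_{\mathfrak{A},\mathbb{R}C}$, the analogous closure-under-extensions statement for $\mathsf{LCA}_{\mathbb{R}C}$ puts the underlying LCA group of $G$ into $\mathsf{LCA}_{\mathbb{R}C}$, and Definition \ref{def_RC} then gives $G\in\mathsf{LCA}_{\mathfrak{A},\mathbb{R}C}$ immediately.

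For (2), I first invoke Lemma \ref{lemma_RC1prep} to write $G\cong V\oplus C$ with $V$ a vector $\mathfrak{A}$-module and $C$ compact. I apply Theorem \ref{thm_FirstDecomp}(1) to $E$, choosing the initial compact neighborhood in the proof large enough to contain $f(C)$; this produces a clopen compactly generated subobject $H\hookrightarrow E$ with $f(C)\subseteq H$. Since $V$ is connected and $E/H$ discrete, also $f(V)\subseteq H$, so $f(G)\subseteq H$. Next I apply Lemma \ref{Lemma_CompactPartOfCG} to $H$, getting $C_H\hookrightarrow H\twoheadrightarrow W$ with $W$ of LCA shape $\mathbb{R}^n\oplus\mathbb{Z}^m$. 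The crucial observation is that the only compact subgroup of $\mathbb{R}^n\oplus\mathbb{Z}^m$ is trivial, so the image of the compact subgroup $f(C)$ in $W$ vanishes, forcing $f(C)\subseteq C_H$. The image of $f(V)$ in $W$ is connected, hence lies in the connected component $W_0\cong\mathbb{R}^n$, which is a clopen vector $\mathfrak{A}$-submodule of $W$ (continuous module maps preserve the identity component, so $W_0$ is $\mathfrak{A}$-stable). Letting $H_0$ be the preimage of $W_0$ in $H$, the sequence $C_H\hookrightarrow H_0\twoheadrightarrow W_0$ is exact with compact kernel and vector quotient, so $H_0\in\mathsf{LCA}_{\mathfrak{A},\mathbb{R}C}$ by part (1). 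Since $W_0$ is clopen in $W$, $H_0$ is clopen in $H$, hence clopen in $E$, giving the required factorization $G\to H_0\hookrightarrow E$.

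For (3), an idempotent $e:G\to G$ in $\mathsf{LCA}_{\mathfrak{A},\mathbb{R}C}$ splits in the ambient quasi-abelian category $\mathsf{LCA}_{\mathfrak{A}}$ by \cite[Proposition 7.6]{MR2606234}, yielding $G=G_1\oplus G_2$. The underlying LCA group of $G$ is in $\mathsf{LCA}_{\mathbb{R}C}$, and the latter is idempotent complete by \cite[Lemma 3.10]{obloc}, so both summands have underlying LCA group in $\mathsf{LCA}_{\mathbb{R}C}$. By Definition \ref{def_RC}, $G_i\in\mathsf{LCA}_{\mathfrak{A},\mathbb{R}C}$.

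The main obstacle I expect is in (2): one must ensure that the compact piece $f(C)$ of the domain does not spill into a discrete $\mathbb{Z}^m$-summand of $W$, which would prevent the chosen witness from lying in $\mathsf{LCA}_{\mathfrak{A},\mathbb{R}C}$ and destroy the clopen refinement. The resolution is precisely the topological rigidity above: compact subgroups of $\mathbb{R}^n\oplus\mathbb{Z}^m$ are trivial, making the absorption $f(C)\subseteq C_H$ automatic and thereby upgrading the filtering property to witness by a clopen subobject.
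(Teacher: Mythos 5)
Your proof is correct, but parts (2) and (3) take routes that differ from the paper's. In (2) the paper also begins by writing the source as a direct sum of a vector module and a compact module via Lemma \ref{lemma_RC1prep}, but it then decomposes the \emph{target} using Lemma \ref{Lemma_CompactSubobjectDecomp}, i.e.\ an exact sequence $V\oplus C\hookrightarrow G\twoheadrightarrow D$ with $D$ discrete: the composite from the source to $D$ kills the connected part and sends the compact part to a finite subgroup $\operatorname{im}(h)$, and the clopen witness is the preimage $K$ of $\operatorname{im}(h)$, an extension of a finite discrete module by $V\oplus C$, hence in $\mathsf{LCA}_{\mathfrak{A},\mathbb{R}C}$ by part (1). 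You instead build the witness from Theorem \ref{thm_FirstDecomp}(1) and Lemma \ref{Lemma_CompactPartOfCG} and cut down to the preimage $H_{0}$ of the connected component of $W$, an extension of a vector module by a compact one; this works equally well and yields the clopen refinement just as directly. The one place where you go beyond the quoted statements is ``choosing the initial compact neighborhood large enough to contain $f(C)$'': Theorem \ref{thm_FirstDecomp} as stated does not provide this, so you must either re-run its proof with $U_{0}$ enlarged by the compactum $f(C)$ (which does work, since the construction only uses that $U_{0}$ is compact and a neighbourhood of $0$), or avoid the tweak by taking any clopen compactly generated $H\subseteq E$ and replacing it by the preimage of the finite image of $f(C)$ in $E/H$ (an $\mathfrak{A}$-submodule), which is still clopen and compactly generated by Corollary \ref{Cor_Ext} --- in effect the paper's own enlargement step. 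For (3) the paper follows \cite[Lemma 3.10]{obloc} and deduces idempotent completeness from the clopen factorization established in (2), whereas you split the idempotent in the ambient category and identify the summands using idempotent completeness of $\mathsf{LCA}_{\mathbb{R}C}$ on underlying groups; this is more direct, but note that idempotents split in $\mathsf{LCA}_{\mathfrak{A}}$ simply because the category has kernels (being quasi-abelian), while \cite[Proposition 7.6]{MR2606234} is the statement about compositions and admissible epics used elsewhere in the paper, not an idempotent-splitting criterion. Part (1) coincides with the paper's argument, which cites \cite[Lemma 3.8]{obloc} for closure of $\mathsf{LCA}_{\mathbb{R}C}$ under extensions.
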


\begin{proof}
This can be proven largely as in \cite{obloc}, just by replacing some
ingredients by their analogues in our setting. We give a sketch: (1) By our
definition of $\mathsf{LCA}_{\mathfrak{A},\mathbb{R}C}$, this is only a
condition on the underlying LCA groups. In particular, it follows from
$\mathsf{LCA}_{\mathbb{R}C}$ being closed under extensions in $\mathsf{LCA}$,
which is \cite[Lemma 3.8]{obloc} for $\mathcal{O}=\mathbb{Z}$. (2) Suppose
$f:G^{\prime}\rightarrow G$ is any morphism with $G^{\prime}\in\mathsf{LCA}%
_{\mathfrak{A},\mathbb{R}C}$. Apply Lemma \ref{lemma_RC1prep} to $G^{\prime}
$. Next, applying the right hand side sequence of Lemma
\ref{Lemma_CompactSubobjectDecomp} to $G$, we then arrive at the diagram below
on the left:%
\[%
\xymatrix{
& V \oplus C \ar@{^{(}->}[d] \\
V^{\prime} \oplus C^{\prime} \ar[r]_{f} \ar[dr]_{h} & G \ar@{->>}[d] \\
& D.
}%
\qquad\qquad%
\xymatrix{
& K \ar@{^{(}->}[d] \\
V^{\prime} \oplus C^{\prime} \ar@{-->}[ur] \ar[r]_{f} \ar[dr]_{0}
& G \ar@{->>}[d] \\
& D/{\operatorname{im}(h)}.
}%
\]
As $h$ is continuous, it necessarily maps the connected $V^{\prime}$ to zero
in the discrete $D$, and maps the compact $C^{\prime}$ to a compact subgroup
of $D$, which is necessarily finite. Now, we obtain from this the diagram
above on the right, where $K$ is simply defined as the kernel. As
$D/\operatorname*{im}(h)$ is discrete, $K$ is open. The dashed lift exists by
the universal property of kernels, and moreover $K$ can be unraveled to arise
as the extension $V\oplus C\hookrightarrow K\twoheadrightarrow
\operatorname*{im}(h)$. Since $\operatorname*{im}(h)$ is finite discrete, it
is trivially compact, and thus by part (1) of our claim, $K\in\mathsf{LCA}%
_{\mathfrak{A},\mathbb{R}C}$ and the dashed arrow is the required
factorization. For more details, see the proof of \cite[Lemma 3.9]{obloc},
which uses the same pattern. (3) Just as \cite[Lemma 3.10]{obloc} this follows
from that in part (2) we actually showed that any morphism from $\mathsf{LCA}%
_{\mathfrak{A},\mathbb{R}C}$ to an object $G\in\mathsf{LCA}_{\mathfrak{A}}$
factors over a clopen subobject $K\in\mathsf{LCA}_{\mathfrak{A},\mathbb{R}C}$,
i.e. one with discrete quotient. The argument \textit{loc. cit.} then also
works in our generality.
\end{proof}

\begin{corollary}
$\mathsf{LCA}_{\mathfrak{A},\mathbb{R}C}$ is an idempotent complete fully
exact subcategory of $\mathsf{LCA}_{\mathfrak{A}}$.
\end{corollary}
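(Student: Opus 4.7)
The plan is to observe that this corollary is a direct packaging of the three parts of Proposition \ref{prop_RC1}, with no additional work required beyond invoking a general principle about exact categories.

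First, I would note that a full additive subcategory of an exact category which is closed under extensions automatically inherits the structure of an exact category, by a standard result of Bühler \cite[Lemma 10.20]{MR2606234} (already cited in Remark \ref{rmk_LCAcg_IsClosedUnderExtensions} for the analogous statement about $\mathsf{LCA}_{\mathfrak{A},cg}$). Part (1) of Proposition \ref{prop_RC1} supplies exactly this closure-under-extensions property: if $G' \hookrightarrow G \twoheadrightarrow G''$ is a conflation in $\mathsf{LCA}_{\mathfrak{A}}$ with $G', G'' \in \mathsf{LCA}_{\mathfrak{A},\mathbb{R}C}$, then $G \in \mathsf{LCA}_{\mathfrak{A},\mathbb{R}C}$. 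Hence $\mathsf{LCA}_{\mathfrak{A},\mathbb{R}C}$ is a fully exact subcategory in the sense of Bühler.

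Second, idempotent completeness is exactly Part (3) of Proposition \ref{prop_RC1}, so nothing further is needed.

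The proof therefore amounts to two lines. There is no substantive obstacle, since the genuine mathematical content (the structural decomposition of objects via Lemma \ref{lemma_RC1prep} and Lemma \ref{Lemma_CompactSubobjectDecomp}, and the factorization through a clopen subobject used to establish idempotent splitting) has already been carried out in the preceding proposition. The corollary is purely a formal restatement: \emph{fully exact} just means \emph{full, additive, and closed under extensions}, which are items (1) and (from the definition of $\mathsf{LCA}_{\mathfrak{A},\mathbb{R}C}$) the additive and fullness properties; and \emph{idempotent complete} is item (3).
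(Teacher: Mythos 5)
Your proposal is correct and matches the paper's own proof: closure under extensions from Proposition \ref{prop_RC1} (1) combined with \cite[Lemma 10.20]{MR2606234} gives the fully exact structure, and Proposition \ref{prop_RC1} (3) gives idempotent completeness. Nothing further is needed.
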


\begin{proof}
By Proposition \ref{prop_RC1} (1) it is an extension-closed subcategory of
$\mathsf{LCA}_{\mathfrak{A}}$, so by \cite[Lemma 10.20]{MR2606234} this
induces a canonical exact structure on $\mathsf{LCA}_{\mathfrak{A}%
,\mathbb{R}C}$, and moreover renders it a fully exact subcategory
\cite[Definition 10.21]{MR2606234}. Proposition \ref{prop_RC1} (3) settles
being idempotent complete.
\end{proof}

From now on, we may therefore regard $\mathsf{LCA}_{\mathfrak{A},\mathbb{R}C}
$ as an exact category.

\begin{definition}
Let $\mathsf{LCA}_{\mathfrak{A},\mathbb{R}D}$ denote the full subcategory of
$\mathsf{LCA}_{\mathfrak{A}}$ whose objects have underlying LCA group
$\mathbb{R}^{n}\oplus D$ for some $n$ and some discrete $D$.
\end{definition}

We will only work with this category in passing later (namely in the
formulation of Lemma \ref{lemma_P1}), but note that under duality it gets sent
to $\left.  _{\mathbb{R}C,\mathfrak{A}}\mathsf{LCA}^{op}\right.  $, so the
above results can all be transported to $\mathsf{LCA}_{\mathfrak{A}%
,\mathbb{R}D}$ by dualization, in the spirit of Remark
\ref{rmk_ProofsByDualization}. In particular, it is an idempotent complete
fully exact subcategory of $\mathsf{LCA}_{\mathfrak{A}}$, and the full
subcategory $\mathsf{LCA}_{\mathfrak{A},\mathbb{R}C}\hookrightarrow
\mathsf{LCA}_{\mathfrak{A}}$ is right filtering. We have no use for these
facts however, so we leave the details to the reader.

\begin{definition}
\label{def_LCAAC}Let $\mathsf{LCA}_{\mathfrak{A},C}$ (resp. $\mathsf{LCA}%
_{\mathfrak{A},D}$) be the full subcategory of $\mathsf{LCA}_{\mathfrak{A}}$
of compact (resp. discrete) objects.
\end{definition}

All these considerations carry over to left modules with the obvious
modifications. Clearly $\mathsf{LCA}_{\mathfrak{A},D}\overset{\sim
}{\longrightarrow}\mathsf{Mod}_{\mathfrak{A}}$ is an equivalence of
categories, by simply forgetting the topology and conversely equipping all
objects with the discrete topology. In particular, $\mathsf{LCA}%
_{\mathfrak{A},D}$ is an abelian category. Pontryagin duality induces an
equivalence $\mathsf{LCA}_{\mathfrak{A},C}^{op}\overset{\sim}{\longrightarrow
}\left.  _{D,\mathfrak{A}}\mathsf{LCA}\right.  $, so the compact counterpart
is also naturally an abelian category.

\begin{example}
The category $\mathsf{LCA}_{\mathfrak{A},\mathbb{R}C}$ is not abelian. Indeed,
already in $\mathsf{LCA}_{\mathbb{R}C}$ the morphism $\mathbb{R}%
\twoheadrightarrow\mathbb{T}$ has no kernel. It is an admissible epic in
$\mathsf{LCA}$, but not an admissible epic in $\mathsf{LCA}_{\mathbb{R}C}$.
\end{example}

\begin{proposition}
\label{prop_CLeftSFiltInRC}The full subcategory $\mathsf{LCA}_{\mathfrak{A}%
,C}\hookrightarrow\mathsf{LCA}_{\mathfrak{A},\mathbb{R}C}$ is left $s$-filtering.
\end{proposition}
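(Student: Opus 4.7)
The plan is to verify Schlichting's two conditions for a left $s$-filtering subcategory, in analogy with Proposition \ref{Prop_CGIsLeftSFiltering} but adapted to the present setting.

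For left filtering, I would take a morphism $f\colon C\to G$ with $C$ compact and $G\in\mathsf{LCA}_{\mathfrak{A},\mathbb{R}C}$. By Lemma \ref{lemma_RC1prep} there is a direct sum decomposition $G\cong V\oplus C_{G}$ in $\mathsf{LCA}_{\mathfrak{A}}$, with $V$ a vector $\mathfrak{A}$-module and $C_{G}$ compact. The projection $G\twoheadrightarrow V$ sends the compact set $f(C)$ to a compact subgroup of $V\simeq\mathbb{R}^{n}$, which must be trivial. Hence $f$ factors through $C_{G}\hookrightarrow G$, and this inclusion is admissible in $\mathsf{LCA}_{\mathfrak{A},\mathbb{R}C}$ because its cokernel $V$ remains in the subcategory.

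For left special, I would start with an admissible epic $q\colon G\twoheadrightarrow C$ in $\mathsf{LCA}_{\mathfrak{A},\mathbb{R}C}$ with $C$ compact, so that the kernel $G'$ also lies in $\mathsf{LCA}_{\mathfrak{A},\mathbb{R}C}$. By Lemma \ref{lemma_RC1prep} write $G'\cong V'\oplus C'$ with $V'$ a vector $\mathfrak{A}$-module and $C'$ compact. The composition $V'\hookrightarrow G'\hookrightarrow G$ is an admissible monic, and now the decisive input is Theorem \ref{thm_VectorModulesAreInjectiveAndBijective}: since vector $\mathfrak{A}$-modules are injective objects of $\mathsf{LCA}_{\mathfrak{A}}$, this monic splits, yielding a decomposition $G\cong V'\oplus\tilde{C}$ with $\tilde{C}\cong G/V'$. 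Because $V'\subseteq G'$, the quotient $\tilde{C}$ fits into an exact sequence $C'\hookrightarrow\tilde{C}\twoheadrightarrow C$, so $\tilde{C}$ is an extension of two compacts, hence itself compact.

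Let $s\colon\tilde{C}\hookrightarrow G$ denote the section from this splitting. Since $V'\subseteq\ker(q)$, we have $q(G)=q(\tilde{C})=C$, so $q\circ s\colon\tilde{C}\twoheadrightarrow C$ is surjective; Pontryagin's open mapping theorem then makes it open, hence an admissible epic. Its kernel is a closed subobject of $\tilde{C}$ and therefore compact, so $q\circ s$ is in fact admissible already in $\mathsf{LCA}_{\mathfrak{A},\mathbb{R}C}$. This furnishes an object $\tilde{C}\in\mathsf{LCA}_{\mathfrak{A},C}$ together with a factorization $\tilde{C}\hookrightarrow G\twoheadrightarrow C$ of an admissible epic through $q$, which is precisely the left special condition. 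The principal obstacle, and the step that distinguishes the present proof from Proposition \ref{Prop_CGIsLeftSFiltering}, is finding a way to split off the vector summand $V'$ from the whole of $G$ rather than only from the kernel $G'$; the injectivity of vector $\mathfrak{A}$-modules upgrades the decomposition of $G'$ supplied by Lemma \ref{lemma_RC1prep} into a decomposition of $G$ itself, and this is exactly what reduces the problem to a compact cover.
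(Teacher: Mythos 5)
Your proof is correct, and while the left filtering half is essentially the paper's argument (a compact image must land in the compact part of $G$; you factor through the summand $C_{G}$ where the paper factors through the set-theoretic image, an immaterial difference), your left special half takes a genuinely different route. The paper applies Lemma \ref{lemma_RC1prep} to $G$ itself, restricts the given admissible epic to the compact summand of $G$, and then shows by a structure-theoretic contradiction that this restriction is already surjective: the induced admissible epic from the vector summand $V$ onto the compact quotient $C'/\operatorname{im}(h)$ would force the kernel to contain $\mathbb{Z}$-summands, which is impossible in $\mathsf{LCA}_{\mathfrak{A},\mathbb{R}C}$; hence the restriction is an admissible epic with compact kernel. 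You instead decompose the kernel $G'$, use the injectivity of vector $\mathfrak{A}$-modules (Theorem \ref{thm_VectorModulesAreInjectiveAndBijective}) to split $V'$ off from all of $G$, and note that the complementary summand $\tilde{C}\cong G/V'$ is compact (an extension of compacts, via Noether's lemma) and surjects onto $C$ for trivial reasons, openness being automatic by compactness of the source. Your route trades the paper's elementary but somewhat delicate topological contradiction for a heavier tool that is already available in the paper; and since the summand being split off is a vector module rather than a torus, no hereditarity hypothesis sneaks in (contrast Example \ref{example_CyclicGroupCannotSplitOffTorus}), so nothing is lost. What the paper's version buys in exchange is the slightly sharper conclusion that the restriction of the epic to the canonical compact summand of $G$ is itself already an admissible epic, whereas your compact cover $\tilde{C}$ is in general a larger compact subobject containing $C'$.
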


\begin{proof}
This is proven exactly as in \cite[Lemma 3.11]{obloc}. Left filtering: Given
$V\oplus C\in\mathsf{LCA}_{\mathfrak{A},\mathbb{R}C}$ with $V$ a vector module
and $C$ compact and $a:C^{\prime}\rightarrow V\oplus C$ is any morphism with
$C^{\prime}$ compact, then the set-theoretic image $\operatorname*{im}%
\nolimits_{\mathsf{Set}}(a)$ is again a compact right $\mathfrak{A}$-module.
Since $V$ has no non-trivial compact subgroups, we obtain $\operatorname*{im}%
\nolimits_{\mathsf{Set}}(a)\subseteq C$ and therefore the exact sequence%
\[
\operatorname*{im}\nolimits_{\mathsf{Set}}(a)\hookrightarrow V\oplus
C\twoheadrightarrow V\oplus C/\operatorname*{im}\nolimits_{\mathsf{Set}}%
\]
in $\mathsf{LCA}_{\mathfrak{A}}$. As all objects in this sequence lie in
$\mathsf{LCA}_{\mathfrak{A},\mathbb{R}C}$, it follows that it is also exact in
$\mathsf{LCA}_{\mathfrak{A},\mathbb{R}C}$ since $\mathsf{LCA}_{\mathfrak{A}%
,\mathbb{R}C}$ is fully exact in $\mathsf{LCA}_{\mathfrak{A}}$. Thus, the
initial arrow $\operatorname*{im}\nolimits_{\mathsf{Set}}(a)\hookrightarrow
V\oplus C$ is an admissible monic in the category $\mathsf{LCA}_{\mathfrak{A}%
,\mathbb{R}C}$. Thus,%
\[
C\longrightarrow\operatorname*{im}\nolimits_{\mathsf{Set}}(a)\hookrightarrow
V\oplus C
\]
is the required factorization which shows that $\mathsf{LCA}_{\mathfrak{A}%
,\mathbb{R}C}\hookrightarrow\mathsf{LCA}_{\mathfrak{A}}$ is left filtering.
Left special: Briefly, if $V\oplus C\twoheadrightarrow C^{\prime}$ is an
admissible epic with $C^{\prime}\in\mathsf{LCA}_{\mathfrak{A},C}$, then we
obtain a commutative diagram%
\[%
\xymatrix{
C \ar@{^{(}->}[d] \ar[dr]_{h} \ar[drr]^{0} \\
V \oplus C \ar@{->>}[d] \ar@{->>}[r]_-{f} & C^{\prime} \ar@{->>}%
[r] & C^{\prime}/{\operatorname{im}(h)} \\
V \ar@{-->}[urr]_{b}
}%
\]
and since $\mathsf{LCA}_{\mathfrak{A},\mathbb{R}C}$ is idempotent complete,
\cite[Proposition 7.6]{MR2606234} implies that $b$ must be an admissible epic.
A topological consideration (relying only on the underlying LCA groups) can
now be carried out, exactly as in \cite[Lemma 3.11]{obloc}, contrasting that
$V\simeq\mathbb{R}^{n}$ as an LCA group, but $C^{\prime}/\operatorname*{im}%
(h)$ being compact. The idea is that $C^{\prime}/\operatorname*{im}(h) $ can
only be some torus, but this again is only possible if the kernel of $b $
contains some summand of the shape $\mathbb{Z}^{i}$ with $i>0$, but this is
impossible in $\mathsf{LCA}_{\mathbb{R}C}$. As in \textit{loc. cit.}, we
deduce that $h$ must have been an admissible epic to start with. This proves
left special.
\end{proof}

\begin{proposition}
\label{Prop_KRCEqualsKR}Let $\mathfrak{A}$ be an order in a finite-dimensional
semisimple $\mathbb{Q}$-algebra $A$. Define $A_{\mathbb{R}}:=A\otimes
_{\mathbb{Z}}\mathbb{R}$. For every localizing invariant
$K:\operatorname*{Cat}_{\infty}^{\operatorname*{ex}}\rightarrow\mathsf{A}$,
there is a canonical equivalence $K(\mathsf{LCA}_{\mathfrak{A},\mathbb{R}%
C})\overset{\sim}{\longrightarrow}K(A_{\mathbb{R}})$.
\end{proposition}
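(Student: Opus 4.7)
The plan is to exploit the left $s$-filtering inclusion $\mathsf{LCA}_{\mathfrak{A},C}\hookrightarrow\mathsf{LCA}_{\mathfrak{A},\mathbb{R}C}$ established in Proposition \ref{prop_CLeftSFiltInRC}. By Schlichting's localization theorem for $K$-theory of exact categories, extended to general localizing invariants, this yields a cofiber sequence
\[
K(\mathsf{LCA}_{\mathfrak{A},C})\longrightarrow K(\mathsf{LCA}_{\mathfrak{A},\mathbb{R}C})\longrightarrow K(\mathsf{LCA}_{\mathfrak{A},\mathbb{R}C}/\mathsf{LCA}_{\mathfrak{A},C}).
\]
It therefore suffices to kill the fiber term and to identify the cofiber term with $K(A_{\mathbb{R}})$.

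First I would show $K(\mathsf{LCA}_{\mathfrak{A},C})\simeq 0$ by an Eilenberg swindle. By Tychonoff's theorem, the countable product $E(C):=\prod_{\mathbb{N}}C$ of any compact right $\mathfrak{A}$-module is again compact, and equipped with the factorwise $\mathfrak{A}$-action it lies in $\mathsf{LCA}_{\mathfrak{A},C}$. The usual shift exact sequence $C\hookrightarrow E(C)\twoheadrightarrow E(C)$ is $\mathfrak{A}$-equivariant, so $E\simeq \mathrm{id}\oplus E$ on $K$-theory, forcing the identity on $K(\mathsf{LCA}_{\mathfrak{A},C})$ to be null.

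Next I would identify the quotient $\mathsf{LCA}_{\mathfrak{A},\mathbb{R}C}/\mathsf{LCA}_{\mathfrak{A},C}$ with the exact category $\mathsf{LCA}_{\mathfrak{A},\mathbb{R}}$ of vector $\mathfrak{A}$-modules. Compose the full inclusion $\mathsf{LCA}_{\mathfrak{A},\mathbb{R}}\hookrightarrow\mathsf{LCA}_{\mathfrak{A},\mathbb{R}C}$ with the quotient functor. Essential surjectivity is Lemma \ref{lemma_RC1prep}: every object of $\mathsf{LCA}_{\mathfrak{A},\mathbb{R}C}$ splits as $V\oplus C$, so it becomes isomorphic to $V$ after inverting the compact part. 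For fully faithfulness, I would use that vector $\mathfrak{A}$-modules are both injective and projective in $\mathsf{LCA}_{\mathfrak{A}}$ (Theorem \ref{thm_VectorModulesAreInjectiveAndBijective}), and hence in the fully exact subcategory $\mathsf{LCA}_{\mathfrak{A},\mathbb{R}C}$. This rigidity collapses the calculus of fractions in the $s$-filtered quotient: a zig-zag through a compact overobject or subobject of a vector module splits off, so every morphism $V\to V'$ in the localization lifts uniquely to a morphism in $\mathsf{LCA}_{\mathfrak{A},\mathbb{R}C}$. Finally, $\mathsf{LCA}_{\mathfrak{A},\mathbb{R}}\simeq\operatorname{PMod}(A_{\mathbb{R}})$: a continuous $\mathfrak{A}$-action on $\mathbb{R}^n$ extends uniquely, by $\mathbb{Q}$-linearity and density, to an $A_{\mathbb{R}}$-module structure, and is automatically projective because $A_{\mathbb{R}}$ is semisimple.

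The main obstacle is the fully faithfulness step: in Schlichting's framework the morphisms in the quotient are classes of admissible fractions, so one must verify that every such fraction between vector modules reduces to an ordinary continuous $\mathfrak{A}$-linear map. This is exactly the argument carried out in \cite[\S 3]{obloc} for Dedekind rings, and it transports to our setting because the subcategory $\mathsf{LCA}_{\mathfrak{A},\mathbb{R}C}$ explicitly excludes torus summands, so none of the non-hereditary obstructions (cf. Example \ref{example_CyclicGroupCannotSplitOffTorus}) interfere here; the argument should go through essentially verbatim, giving the asserted equivalence and thereby $K(\mathsf{LCA}_{\mathfrak{A},\mathbb{R}C})\simeq K(\operatorname{PMod}(A_{\mathbb{R}}))=K(A_{\mathbb{R}})$.
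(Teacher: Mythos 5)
Your proposal follows essentially the same route as the paper: the left $s$-filtering inclusion $\mathsf{LCA}_{\mathfrak{A},C}\hookrightarrow\mathsf{LCA}_{\mathfrak{A},\mathbb{R}C}$ plus Schlichting localization, the Eilenberg swindle via Tychonoff to kill $K(\mathsf{LCA}_{\mathfrak{A},C})$, and the identification of the quotient with (finitely generated, automatically projective) $A_{\mathbb{R}}$-modules using Lemma \ref{lemma_RC1prep} and the automatic $\mathbb{R}$-linearity of continuous maps between vector modules. The only cosmetic difference is that you verify full faithfulness of the quotient identification by splitting weak isomorphisms via injectivity/projectivity of vector modules, whereas the paper simply observes that vector modules have no nontrivial compact subgroups, so inverting epics with compact kernel does not change Hom-sets; both justifications are fine.
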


\begin{proof}
Again, we follow the pattern of \cite{obloc}. The steps are as follows: (1)
Since $\mathsf{LCA}_{\mathfrak{A},C}\hookrightarrow\mathsf{LCA}_{\mathfrak{A}%
,\mathbb{R}C}$ is left $s$-filtering by\ Proposition \ref{prop_CLeftSFiltInRC}%
, we get an exact sequence of exact categories%
\[
\mathsf{LCA}_{\mathfrak{A},C}\hookrightarrow\mathsf{LCA}_{\mathfrak{A}%
,\mathbb{R}C}\twoheadrightarrow\mathsf{LCA}_{\mathfrak{A},\mathbb{R}%
C}/\mathsf{LCA}_{\mathfrak{A},C}\text{.}%
\]
We recall that the quotient exact category $\mathsf{LCA}_{\mathfrak{A}%
,\mathbb{R}C}/\mathsf{LCA}_{\mathfrak{A},C}$ arises as the localization
$\mathsf{LCA}_{\mathfrak{A},\mathbb{R}C}[\Sigma_{e}^{-1}]$, where $\Sigma
_{\in}$ is the collection of admissible epics with kernel in the subcategory
$\mathsf{LCA}_{\mathfrak{A},C}$. Following Schlichting \cite{MR2079996} this
is again an exact category.\newline(2) We claim that there is an exact
equivalence of exact categories%
\[
\Psi:\mathsf{Mod}_{A_{\mathbb{R}},fg}\longrightarrow\mathsf{LCA}%
_{\mathfrak{A},\mathbb{R}C}/\mathsf{LCA}_{\mathfrak{A},C}\text{.}%
\]
We define the functor $\Psi$ by sending a finitely generated $A_{\mathbb{R}}
$-module to its underlying real vector space with its natural real vector
space topology. To this end, note that $A$ is a finite-dimensional
$\mathbb{Q}$-algebra, so $A_{\mathbb{R}}$ is a finite-dimensional $\mathbb{R}%
$-algebra, and hence all finitely generated modules over it are also
finite-dimensional over the reals. Any such vector space has a canonical real
topology and it is of course locally compact. Moreover, every $\mathbb{R}%
$-linear map is automatically continuous with respect to this topology. This
shows that $\Psi$ is a functor. It is now easy to check that it is additive
and moreover exact. Next, we claim that $\Psi$ is fully faithful. This is also
easy: The essential image in $\mathsf{LCA}_{\mathfrak{A},\mathbb{R}C}$ (i.e.
without quotienting out $\mathsf{LCA}_{\mathfrak{A},C}$) consists only of
vector modules, so any continuous morphism between them is (a) an abelian
group map, (b) by divisibility then also a $\mathbb{Q}$-vector space map, and
(c) since it is also continuous, a density argument shows that it is even an
$\mathbb{R}$-vector space map. Thus, the functor $\mathsf{Mod}_{A_{\mathbb{R}%
},fg}\longrightarrow\mathsf{LCA}_{\mathfrak{A},\mathbb{R}C}$ is fully
faithful. Quotienting out $\mathsf{LCA}_{\mathfrak{A},C}$ does not harm this
as we invert admissible epics with kernel in $\mathsf{LCA}_{\mathfrak{A},C}$,
but a real vector space has no non-trivial compact subgroups. Hence, $\Psi$ is
also fully faithful. Finally, we claim that $\Psi$ is essentially surjective:
Given any $G\in\mathsf{LCA}_{\mathfrak{A},\mathbb{R}C}$, Lemma
\ref{lemma_RC1prep} yields the exact sequence, $C\hookrightarrow G\overset
{q}{\twoheadrightarrow}V$, and after inverting $\Sigma_{e}$, $q$ becomes an
isomorphism, and $V$ lies in the image of $\Psi$, so $G$ lies in the essential
image. Being exact, fully faithful and essentially surjective, $\Psi$ is an
exact equivalence of exact categories.\newline(3) As $K$ is localizing,
Schlichting's Localization Theorem (in the formulation of \cite[Theorem
4.1]{obloc}) yields a fiber sequence in $\mathsf{A}$ of the shape%
\[
K(\mathsf{LCA}_{\mathfrak{A},C})\longrightarrow K(\mathsf{LCA}_{\mathfrak{A}%
,\mathbb{R}C})\overset{a}{\longrightarrow}K(\mathsf{Mod}_{A_{\mathbb{R}}%
,fg})\text{,}%
\]
where we have used Step (2) to identify the third term. On the other hand, by
the Eilenberg swindle, $K(\mathsf{LCA}_{\mathfrak{A},C})=0$, cf. \cite[Lemma
4.2]{obloc}, since infinite products of compact spaces are compact by
Tychonoff's Theorem. Thus, $a$ must be an equivalence in $\mathsf{A}$.
Finally, since $A_{\mathbb{R}}$ is a semisimple algebra, every right
$A_{\mathbb{R}}$-module is projective (see \S \ref{subsect_Basics}). Hence,
$K(\mathsf{Mod}_{A_{\mathbb{R}},fg})\overset{\sim}{\longrightarrow
}K(A_{\mathbb{R}})$. This finishes the proof.
\end{proof}

\section{\label{sect_CGPiece}The compactly generated piece}

\begin{proposition}
\label{prop_CGToRC}For every localizing invariant $K:\operatorname*{Cat}%
_{\infty}^{\operatorname*{ex}}\rightarrow\mathsf{A}$, there is a canonical
equivalence $K(\mathsf{LCA}_{\mathfrak{A},\mathbb{R}C})\overset{\sim
}{\longrightarrow}K(\mathsf{LCA}_{\mathfrak{A},cg})$.
\end{proposition}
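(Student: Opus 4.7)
The plan is to mimic Proposition \ref{Prop_KRCEqualsKR} one level up, via a Schlichting Localization argument. Ideally one would show $\mathsf{LCA}_{\mathfrak{A},C} \hookrightarrow \mathsf{LCA}_{\mathfrak{A},cg}$ is left $s$-filtering, identify the Schlichting quotient with $\mathsf{Mod}_{A_{\mathbb{R}},fg}$, and conclude using $K(\mathsf{LCA}_{\mathfrak{A},C}) = 0$ (the Eilenberg swindle via Tychonoff's Theorem, as in the proof of \ref{Prop_KRCEqualsKR}). Combined with the earlier identification $K(\mathsf{LCA}_{\mathfrak{A},\mathbb{R}C})\simeq K(A_{\mathbb{R}})$, this would deliver the advertised equivalence.

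A direct copy of the proof of Proposition \ref{prop_CLeftSFiltInRC} fails, however, at the left special condition: the admissible epic $\mathbb{Z}\twoheadrightarrow\mathbb{Z}/2$ in $\mathsf{LCA}_{\mathbb{Z},cg}$ has compact target but does not factor through any nontrivial compact subobject of $\mathbb{Z}$. This is precisely one of the obstructions foreshadowed in the Strategy of proof. To sidestep it I would first use Lemma \ref{Lemma_CompactPartOfCG} and Lemma \ref{Lemma_RZOrRTHulls}(1) in tandem to show that every $M\in\mathsf{LCA}_{\mathfrak{A},cg}$ fits into a canonical $4$-term exact sequence
\[
0\longrightarrow C_M\longrightarrow M\longrightarrow\tilde{W}_M\longrightarrow T_M\longrightarrow 0,
\]
with $C_M, \tilde{W}_M, T_M\in\mathsf{LCA}_{\mathfrak{A},\mathbb{R}C}$, where $C_M$ is the canonical compact subobject, $\tilde{W}_M$ the vector hull of $W_M=M/C_M$, and $T_M$ the torus cokernel. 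Immediately this gives the $K_0$-relation $[M]=[C_M]+[\tilde{W}_M]-[T_M]$, so the comparison map is already surjective on $K_0$.

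To upgrade this to an equivalence of the full $K$-theory spectra, I would set up a resolution-style argument (Quillen's resolution theorem, or equivalently Waldhausen approximation) against the subcategory $\mathsf{LCA}_{\mathfrak{A},\mathbb{R}C}$. The obstruction is that $\mathsf{LCA}_{\mathfrak{A},\mathbb{R}C}$ is not in general closed under kernels of admissible epics in $\mathsf{LCA}_{\mathfrak{A},cg}$ (for example $\ker(\mathbb{R}\twoheadrightarrow\mathbb{T})=\mathbb{Z}$ is not in $\mathsf{LCA}_{\mathfrak{A},\mathbb{R}C}$), so the resolution theorem does not apply in its simplest form. The fix advertised in the Strategy is to move the offending (potentially infinite) syzygies into the compact piece $\mathsf{LCA}_{\mathfrak{A},C}$, whose $K$-theory is annihilated by the Tychonoff swindle; combined with Proposition \ref{Prop_KRCEqualsKR} this yields the desired equivalence.

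The main obstacle is executing this swindle-absorption cleanly in the framework of localizing invariants: one needs to produce a zig-zag of exact functors (or equivalently a bifiltration in the style of Waldhausen's additivity theorem) that implements the comparison and respects the canonical $4$-term sequence above, and to verify the relevant approximation or additivity axioms. This is exactly the technical content that permits the proposition to hold for arbitrary, not just hereditary, orders, where the direct topological splittings of \cite{obloc} (Theorem \ref{thm_DirectSumSplittingsForHereditaryOrders}) are unavailable and the infinite global dimension of rings like $\mathbb{Z}[G]$ (Example \ref{example_FinGroup_InfGlobalDim}) would otherwise cause the iterated resolution to never terminate.
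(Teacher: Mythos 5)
Your write-up is a plan rather than a proof: the decisive step is exactly the one you defer. The paper's argument needs two facts (its Lemma \ref{lemma_P2}): (i) every $G\in\mathsf{LCA}_{\mathfrak{A},cg}$ admits a two-term \emph{coresolution} $G\hookrightarrow Y^{1}\twoheadrightarrow Y^{2}$ with $Y^{1},Y^{2}\in\mathsf{LCA}_{\mathfrak{A},\mathbb{R}C}$, and (ii) $\mathsf{LCA}_{\mathfrak{A},\mathbb{R}C}$ is closed under \emph{cokernels} of admissible monics inside $\mathsf{LCA}_{\mathfrak{A},cg}$. With these, Keller's resolution-type theorem \cite[Theorem 12.1]{MR1421815} gives $\mathcal{D}^{b}(\mathsf{LCA}_{\mathfrak{A},\mathbb{R}C})\simeq\mathcal{D}^{b}(\mathsf{LCA}_{\mathfrak{A},cg})$, hence agreement of every localizing invariant via \cite[Corollary 5.11]{MR3070515}. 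You instead aim a resolution argument in the direction that genuinely fails (resolving \emph{by} $\mathbb{R}C$-objects, which would need kernel-closure, and you correctly note $\ker(\mathbb{R}\twoheadrightarrow\mathbb{T})=\mathbb{Z}$ kills that), and then declare that the fix is some ``swindle-absorption'' whose execution you concede is the main obstacle. No Eilenberg swindle is needed at this stage at all; the swindle only enters earlier, in identifying $K(\mathsf{LCA}_{\mathfrak{A},\mathbb{R}C})\simeq K(A_{\mathbb{R}})$.

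Concretely, your four-term sequence $C_M\hookrightarrow M\rightarrow\tilde W_M\twoheadrightarrow T_M$ does not substitute for (i): it exhibits $C_M$ as a subobject of $M$, not $M$ as a subobject of an $\mathbb{R}C$-object, so it gives a $K_0$-relation but cannot feed any resolution/approximation theorem, and the additivity-style bookkeeping you would need from it is never set up. The actual content you are missing is the dual construction: embed $M$ into (compact injective hull of its compact part) $\oplus$ (vector hull of its $\mathbb{Z}^m$-part) and verify this sum map is injective and closed, i.e.\ an admissible monic with cokernel again in $\mathsf{LCA}_{\mathfrak{A},\mathbb{R}C}$ — in the paper this is obtained by Pontryagin-dualizing the projective-cover-plus-vector-cover construction (Lemma \ref{lemma_P1}, using Lemma \ref{Lemma_ProjCovers} and Lemma \ref{Lemma_RZOrRTHulls}), the openness/closedness of the combined map being the nontrivial topological point. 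Until you produce this coresolution and the cokernel-closure statement (ii), the proposition is not proved; the Schlichting-localization detour you open with is moot for the same reason you abandon it, and its quotient would in any case not be $\mathsf{Mod}_{A_{\mathbb{R}},fg}$, since the classes of objects like $\mathbb{Z}^m$ survive the passage modulo compacts.
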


This generalizes a corresponding result for $\mathfrak{A}:=\mathcal{O}$ the
maximal order in a number field, which appears as the first step in the proof
of \cite[Lemma 4.3]{obloc}. The proof idea\textit{\ loc. cit.} runs as
follows: Given any $G\in\mathsf{LCA}_{\mathcal{O},cg}$, write it as a direct
sum of a vector module, something with underlying LCA group $\mathbb{Z}^{m}$,
and a compact summand. Then use that for all these three types of summands,
there exists an injective resolution of length at most $1$, and thus for all
of $G$, some length $1$ injective resolution%
\[
G\overset{\sim}{\longrightarrow}\left[  I^{0}\longrightarrow I^{1}\right]
_{0,1}\text{.}%
\]
Then, by the classification of injectives, $I^{0}$ and $I^{1}$ would live in
the full subcategory $\mathsf{LCA}_{\mathcal{O},\mathbb{R}C}$. In our present
setting, this kind of argument fails in two ways:\ Firstly, decomposing
$G\in\mathsf{LCA}_{\mathfrak{A},cg}$ into a direct sum as above is
\textit{not} generally possible (Example
\ref{example_CyclicGroupCannotSplitOffTorus}). But even if we could circumvent
this issue, there is a further problem. Suppose $\mathfrak{A}:=\mathbb{Z}%
[C_{n}]$, where $C_{n}$ is the cyclic group of order $n\geq2$. In
$\mathsf{Mod}_{\mathfrak{A}}$ the trivial $C_{n}$-module $\mathbb{Z}$ has
infinite projective dimension (indeed, Equation \ref{lta_1} describes a
resolution and from its periodicity one gets the periodicity of
$\operatorname*{Ext}_{\mathbb{Z}[C_{n}]}^{i}(\mathbb{Z},\mathbb{Z})$ in $i$
for $i\geq1$, but if $\mathbb{Z}$ admitted a finite projective resolution,
this would be impossible). This does not strictly rule out the possibility
that in the larger category $\mathsf{LCA}_{\mathfrak{A}}$ a finite resolution
by projectives might exist. However, we see no evidence for this. By duality,
its dual $\mathbb{Z}^{\vee}$, which lies in $\mathsf{LCA}_{\mathfrak{A},cg}$,
should \textit{not} be expected to possess a finite injective resolution.

We can fix both problems:

\begin{itemize}
\item A more careful study of the topology shows that we can circumvent
splitting $G$ as direct sum entirely.

\item The possible lack of finite injective resolutions can be salvaged by a
little trick: homologically, it suffices to resolve objects of $\mathsf{LCA}%
_{\mathfrak{A},cg}$ by any resolutions coming from $\mathsf{LCA}%
_{\mathfrak{A},\mathbb{R}C}$, so these resolutions need \textit{not} consist
of injectives. So we will drag the pieces of possibly infinite injective
dimension into a compact module, as all compact modules lie in $\mathsf{LCA}%
_{\mathfrak{A},\mathbb{R}C}$, injective or not.
\end{itemize}

Note that we have already seen that the value of $K(\mathsf{LCA}%
_{\mathfrak{A},\mathbb{R}C})$ is not affected by the part coming from compact
modules since it can be killed off by an Eilenberg swindle (Proposition
\ref{Prop_KRCEqualsKR}), so in a sense the above procedure drags these
unwanted pieces of too big injective dimension to zero.

\begin{lemma}
\label{lemma_P1}Suppose $\mathfrak{A}$ is an arbitrary order.

\begin{enumerate}
\item Suppose $G\in\mathsf{LCA}_{\mathfrak{A},nss}$. Then there exists an
exact sequence%
\[
Y_{2}\hookrightarrow Y_{1}\twoheadrightarrow G
\]
with $Y_{1},Y_{2}\in\mathsf{LCA}_{\mathfrak{A},\mathbb{R}D}$.

\item Suppose%
\[
X\hookrightarrow Y\twoheadrightarrow Y^{\prime}%
\]
is an exact sequence in $\mathsf{LCA}_{\mathfrak{A},nss}$ with $Y,Y^{\prime
}\in\mathsf{LCA}_{\mathfrak{A},\mathbb{R}D}$. Then it follows that
$X\in\mathsf{LCA}_{\mathfrak{A},\mathbb{R}D}$.
\end{enumerate}
\end{lemma}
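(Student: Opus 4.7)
The plan for part (1) is to construct the two-term resolution by separately covering the connected component and the discrete quotient of $G$. Since $G$ has no small subgroups, its underlying LCA group has the form $\mathbb{R}^n \oplus \mathbb{T}^m \oplus D$ with $D$ discrete, so the connected component of the identity $G^0$ is open in $G$; continuity of the right $\mathfrak{A}$-action (images of connected sets are connected) ensures $G^0$ is a submodule, fitting in an exact sequence $G^0 \hookrightarrow G \twoheadrightarrow D_G$ in $\mathsf{LCA}_{\mathfrak{A}}$ with $G^0$ having underlying LCA group $\mathbb{R}^n \oplus \mathbb{T}^m$ and $D_G$ discrete.

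Next I would apply Lemma \ref{Lemma_RZOrRTHulls} (2) to $G^0$ to obtain an admissible epic $q : \tilde{G}^0 \twoheadrightarrow G^0$ with $\tilde{G}^0$ a vector $\mathfrak{A}$-module (underlying LCA $\mathbb{R}^{n+m}$) whose kernel has underlying LCA $\mathbb{Z}^m$, and take a discrete projective cover $P \twoheadrightarrow D_G$ via Lemma \ref{Lemma_ProjCovers}; by Proposition \ref{prop_ClassifyInjectivesAndProjectives} (2) the object $P$ is projective in $\mathsf{LCA}_{\mathfrak{A}}$, so I can lift this cover to a morphism $\sigma : P \to G$. Setting $Y_1 := \tilde{G}^0 \oplus P$, I would show that $(v,p) \mapsto q(v) + \sigma(p)$ is an admissible epic $Y_1 \twoheadrightarrow G$: it is surjective, and openness is immediate because $P$ is discrete, so every basic open set $U \times \{p\}$ in $Y_1$ maps to a translate of the open set $q(U) \subset G^0 \subset G$ (using that $G^0$ is open in $G$). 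Using Lemma \ref{lemma_CharacterizeAdmissibleMorphisms} this yields the admissible epic. Its kernel $Y_2$ projects to $P$ with fibers cosets of $\ker q \cong \mathbb{Z}^m$ sitting as a lattice inside $\tilde{G}^0$; each fiber is discrete in the subspace topology, and since $P$ is discrete this makes $Y_2$ itself discrete. Hence $Y_1 \in \mathsf{LCA}_{\mathfrak{A},\mathbb{R}D}$ and $Y_2 \in \mathsf{LCA}_{\mathfrak{A},D} \subset \mathsf{LCA}_{\mathfrak{A},\mathbb{R}D}$, as required.

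For part (2), I would start from the fact that $X$ is a closed $\mathfrak{A}$-submodule of $Y$, which has underlying LCA group $\mathbb{R}^a \oplus D_Y$ with $D_Y$ discrete. The connected component $Y^0 = \mathbb{R}^a$ is open in $Y$, so $X \cap Y^0$ is open in $X$; as a closed subgroup of $\mathbb{R}^a$ it is isomorphic to $\mathbb{R}^c \oplus \mathbb{Z}^d$, whose connected component $\mathbb{R}^c$ is open in it, hence open in $X$. Thus $X^0 = \mathbb{R}^c$ and $X/X^0$ is discrete. Since vector groups are injective in $\mathsf{LCA}$ by Theorem \ref{thm_VectorModulesAreInjectiveAndBijective}, the extension $X^0 \hookrightarrow X \twoheadrightarrow X/X^0$ splits as LCA groups, realizing the underlying LCA group of $X$ as $\mathbb{R}^c \oplus (X/X^0)$, so $X \in \mathsf{LCA}_{\mathfrak{A},\mathbb{R}D}$.

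The main obstacle is part (1): without the hereditary hypothesis one cannot split off a torus summand of $G$ as a direct $\mathfrak{A}$-module summand (Example \ref{example_CyclicGroupCannotSplitOffTorus}), so the cover must be assembled from the canonical vector-module hull of the connected component together with a discrete projective cover of the quotient, with openness of the composite map and discreteness of its kernel established purely topologically rather than through a splitting.
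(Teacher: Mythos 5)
Your proof is correct and takes essentially the same route as the paper's: cover the piece of $G$ with underlying group $\mathbb{R}^n\oplus\mathbb{T}^m$ by a vector $\mathfrak{A}$-module via Lemma \ref{Lemma_RZOrRTHulls} (2), cover the discrete quotient by a discrete projective via Lemma \ref{Lemma_ProjCovers}, lift, and check that the summed map is surjective and open, hence an admissible epic whose kernel lies in $\mathsf{LCA}_{\mathfrak{A},\mathbb{R}D}$. The only (harmless) deviations are in bookkeeping: you get the initial decomposition of $G$ from the structure theorem for groups without small subgroups and the clopen connected component instead of dualizing Lemma \ref{Lemma_CompactPartOfCG}, and for the kernel and for part (2) you argue directly (discreteness of the fibers, resp.\ splitting off the open vector subgroup using Theorem \ref{thm_VectorModulesAreInjectiveAndBijective}), where the paper instead cites the classification of closed subgroups of groups of the form $\mathbb{R}^{\ell}\oplus D'$.
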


Note that since in (1) all terms of the sequence lie in $\mathsf{LCA}%
_{\mathfrak{A},nss}$, it makes no difference whether we speak of the sequence
being exact in the category $\mathsf{LCA}_{\mathfrak{A}}$ or in $\mathsf{LCA}%
_{\mathfrak{A},nss}$; both concepts agree since $\mathsf{LCA}_{\mathfrak{A}%
,nss}$ is fully exact in $\mathsf{LCA}_{\mathfrak{A}}$.

\begin{proof}
(Step 1) We begin by addressing the first claim. Since $G\in\mathsf{LCA}%
_{\mathfrak{A},nss}$, its dual is compactly generated, i.e. $G^{\vee}%
\in\left.  _{cg,\mathfrak{A}}\mathsf{LCA}\right.  $, Lemma
\ref{lemma_CGExchangesWithNSS}. By Lemma \ref{Lemma_CompactPartOfCG} there is
an exact sequence $C\hookrightarrow G^{\vee}\twoheadrightarrow W$ in $\left.
_{\mathfrak{A}}\mathsf{LCA}\right.  $ with $C$ compact and $W$ having
underlying LCA group $\mathbb{R}^{n}\oplus\mathbb{Z}^{m}$. Dualizing back, we
get an exact sequence $H\hookrightarrow G\twoheadrightarrow D$ with $H$ having
underlying LCA group $\mathbb{R}^{n}\oplus\mathbb{T}^{m}$ and $D$ discrete. We
will set up a commutative diagram%
\begin{equation}%
\xymatrix{
V \ar@{->>}[d]_{q} & & P \ar@{-->}[dl]_{f^{\prime}} \ar@{->>}[d]^{f} \\
H \ar@{^{(}->}[r]_{j} & G \ar@{->>}[r]_{w} & D
}%
\label{l_B6}%
\end{equation}
as follows: (1) Since $D$ is discrete, we may pick a projective cover $P$,
which is still discrete, by Lemma \ref{Lemma_ProjCovers}, so $P\in
\mathsf{LCA}_{\mathfrak{A},\mathbb{R}D}$. Being projective, the lift
$f^{\prime}$ along the admissible epic $w$ exists. (2) As $H$ has underlying
LCA group $\mathbb{R}^{n}\oplus\mathbb{T}^{m}$, by Lemma
\ref{Lemma_RZOrRTHulls} (2) there exists a vector $\mathfrak{A}$-module $V$
mapping to $H$ through the admissible epic $q$. This sets up the diagram.

From this, we obtain a morphism $h:=f^{\prime}+jq$,%
\begin{equation}
V\oplus P\overset{h}{\longrightarrow}G\text{.}\label{l_B7}%
\end{equation}
(Step 2) A simple diagram chase shows that $h$ is a surjective map. Since
$f^{\prime}$ and $jq$ are continuous $\mathfrak{A}$-module homomorphisms, so
is their sum. Next, we claim that $h$ is an open map. The underlying
topological space of $V\oplus P$ is just the product $V\times P$ in
$\mathsf{Top}$, so the Cartesian opens $\{U_{1}\times U_{2}\}_{U_{1},U_{2}}$
with $U_{1}\subseteq V$ open and $U_{2}\subseteq P$ open form a basis of the
topology. Hence, if $U\subseteq V\times P$ is an arbitrary open, we may write
it as%
\[
U=\bigcup_{i\in\mathcal{I}}U_{1,i}\times U_{2,i}\qquad\text{for suitable
opens}\qquad U_{1,i}\subseteq V\text{, }U_{2,i}\subset P\text{,}%
\]
and $\mathcal{I}$ some index set. Then%
\[
h\left(  U\right)  =h\left(  \bigcup_{i\in\mathcal{I}}U_{1,i}\times
U_{2,i}\right)  =\bigcup_{i\in\mathcal{I}}h(U_{1,i}\times U_{2,i}%
)=\bigcup_{i\in\mathcal{I}}\left(  jq(U_{1,i})+f^{\prime}(U_{2,i})\right)
\text{.}%
\]
Now, the map $j$ is open (since the quotient $D$ by $j$ in Diagram \ref{l_B6}
is discrete, see \cite[Proposition 14]{MR0442141}) and $q$ is an admissible
epic and therefore also open. Thus, $jq$ is an open map, and thus
$jq(U_{1,i})$ is an open set. It follows from Lemma
\ref{lemma_SumOpenWithArbitraryIsOpen} that all sets $jq(U_{1,i})+f^{\prime
}(U_{2,i})$ are open, and thus $h(U)$, being a union of open sets. As $U$ was
arbitrary, it follows that $h$ is an open map. As $h=f^{\prime}+jq$ is open
and surjective, it is an admissible epic. Hence, we can promote Equation
\ref{l_B7} to an exact sequence%
\[
K\hookrightarrow V\oplus P\overset{h}{\twoheadrightarrow}G
\]
in $\mathsf{LCA}_{\mathfrak{A}}$, where $K$ is plainly defined as the kernel,
so $K\in\mathsf{LCA}_{\mathfrak{A}}$. As the underlying LCA group of $V\oplus
P$ is $\mathbb{R}^{\ell}\oplus D^{\prime}$ for some $\ell\in\mathbb{Z}_{\geq
0}$ and $D^{\prime}$ discrete (i.e. $V\oplus P\in\mathsf{LCA}_{\mathfrak{A}%
,\mathbb{R}D}$), the closed subgroup $K$ must also lie in $\mathsf{LCA}%
_{\mathfrak{A},\mathbb{R}D}$ by \cite{MR0442141}, Corollary 2 to Theorem 7,
combined with the following Remark \textit{loc. cit.} to exclude the
possibility of a torus factor. Hence, letting $Y_{2}:=K$ and $Y_{1}:=V\oplus
P$ proves our claim. Note that the same argument which showed $K\in
\mathsf{LCA}_{\mathfrak{A},\mathbb{R}D}$ also settles our second claim.
\end{proof}

We actually need the dual formulation:

\begin{lemma}
\label{lemma_P2}Suppose $\mathfrak{A}$ is an arbitrary order.

\begin{enumerate}
\item Suppose $G\in\mathsf{LCA}_{\mathfrak{A},cg}$. Then there exists an exact
sequence%
\[
G\hookrightarrow Y^{1}\twoheadrightarrow Y^{2}%
\]
with $Y^{1},Y^{2}\in\mathsf{LCA}_{\mathfrak{A},\mathbb{R}C}$.

\item Suppose%
\[
Y^{\prime}\hookrightarrow Y\twoheadrightarrow X
\]
is an exact sequence in $\mathsf{LCA}_{\mathfrak{A},cg}$ with $Y,Y^{\prime}%
\in\mathsf{LCA}_{\mathfrak{A},\mathbb{R}C}$. Then it follows that
$X\in\mathsf{LCA}_{\mathfrak{A},\mathbb{R}C}$. In other words: The full
subcategory $\mathsf{LCA}_{\mathfrak{A},\mathbb{R}C}$ is closed under
cokernels in $\mathsf{LCA}_{\mathfrak{A},cg}$.
\end{enumerate}
\end{lemma}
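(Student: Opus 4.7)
The plan is to deduce Lemma \ref{lemma_P2} from Lemma \ref{lemma_P1} by applying Pontryagin duality. By Proposition \ref{Prop_LCAIsQuasiAbelianAndPontryaginDuality} the duality functor $(-)^{\vee}\colon \mathsf{LCA}_{\mathfrak{A}}^{op}\to \left._{\mathfrak{A}}\mathsf{LCA}\right.$ is exact and reflexive. By Lemma \ref{lemma_CGExchangesWithNSS} it exchanges $\mathsf{LCA}_{\mathfrak{A},cg}$ with $\left._{nss,\mathfrak{A}}\mathsf{LCA}\right.$, and since vector $\mathfrak{A}$-modules are self-dual while compact groups are Pontryagin dual to discrete ones (and conversely), it sends $\mathsf{LCA}_{\mathfrak{A},\mathbb{R}C}$ to $\left._{\mathbb{R}D,\mathfrak{A}}\mathsf{LCA}\right.$. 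Lemma \ref{lemma_P1} is stated for right modules over an arbitrary order; via Remark \ref{Rmk_OppositeOrder} (apply it to $\mathfrak{A}^{op}$) it yields the analogous statements for left $\mathfrak{A}$-modules, which is what I shall use below.

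For part (1), given $G\in \mathsf{LCA}_{\mathfrak{A},cg}$, I first form the dual $G^{\vee}$, which lies in $\left._{nss,\mathfrak{A}}\mathsf{LCA}\right.$. Applying the left-module version of Lemma \ref{lemma_P1}(1) to $G^{\vee}$ produces an exact sequence
\[
Y_2\hookrightarrow Y_1\twoheadrightarrow G^{\vee}
\]
with $Y_1, Y_2\in \left._{\mathbb{R}D,\mathfrak{A}}\mathsf{LCA}\right.$. Dualizing this sequence and invoking reflexivity $G^{\vee\vee}\cong G$ yields the required exact sequence $G\hookrightarrow Y_1^{\vee}\twoheadrightarrow Y_2^{\vee}$ in $\mathsf{LCA}_{\mathfrak{A}}$, with $Y_i^{\vee}\in \mathsf{LCA}_{\mathfrak{A},\mathbb{R}C}$ by the exchange property noted in the first paragraph.

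For part (2), note that the given sequence $Y'\hookrightarrow Y\twoheadrightarrow X$ in $\mathsf{LCA}_{\mathfrak{A},cg}$ is a fortiori exact in $\mathsf{LCA}_{\mathfrak{A}}$, since $\mathsf{LCA}_{\mathfrak{A},cg}\subset \mathsf{LCA}_{\mathfrak{A}}$ is a fully exact subcategory (Remark \ref{rmk_LCAcg_IsClosedUnderExtensions}). Applying the exact duality functor delivers an exact sequence
\[
X^{\vee}\hookrightarrow Y^{\vee}\twoheadrightarrow (Y')^{\vee}
\]
in $\left._{\mathfrak{A}}\mathsf{LCA}\right.$, all three terms of which lie in $\left._{nss,\mathfrak{A}}\mathsf{LCA}\right.$, and with the two outer terms lying in $\left._{\mathbb{R}D,\mathfrak{A}}\mathsf{LCA}\right.$ by hypothesis. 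The left-module version of Lemma \ref{lemma_P1}(2) then forces $X^{\vee}\in \left._{\mathbb{R}D,\mathfrak{A}}\mathsf{LCA}\right.$, and dualizing once more gives $X\cong X^{\vee\vee}\in \mathsf{LCA}_{\mathfrak{A},\mathbb{R}C}$.

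Beyond Lemma \ref{lemma_P1} itself there is no real obstacle here: the entire argument is a formal transport across Pontryagin duality. The only bookkeeping requiring care is verifying that duality genuinely interchanges the subcategories labelled by $\mathbb{R}C$ and $\mathbb{R}D$ (which is immediate from the structure theory of the underlying LCA groups) and that exactness of the relevant sequences is preserved in the appropriate ambient categories (which follows from the exactness of the duality functor together with the fully exact nature of $\mathsf{LCA}_{\mathfrak{A},cg}$ in $\mathsf{LCA}_{\mathfrak{A}}$).
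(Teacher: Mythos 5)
Your proof is correct and is essentially the paper's own argument: dualize, apply Lemma \ref{lemma_P1} (for the opposite order, i.e.\ for left modules), and dualize back, using Lemma \ref{lemma_CGExchangesWithNSS} and the fact that duality swaps the $\mathbb{R}C$ and $\mathbb{R}D$ conditions on underlying LCA groups. Your write-up merely spells out the bookkeeping (opposite order, exactness of duality, reflexivity) that the paper leaves implicit.
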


\begin{proof}
For $G\in\mathsf{LCA}_{\mathfrak{A},cg}$ we have $G^{\vee}\in\mathsf{LCA}%
_{\mathfrak{A}^{op},nss}$. Apply Lemma \ref{lemma_P1} to $G^{\vee}$, then
dualize back and use Lemma \ref{lemma_CGExchangesWithNSS}.
\end{proof}

\begin{elaboration}
The above proof is quick, but of course one can also prove this directly by
dualizing the proof of Lemma \ref{lemma_P1}. The topological argument is a
little different in this case, and in some ways more involved. We sketch
it:\ Firstly, given $G\in\mathsf{LCA}_{\mathfrak{A},cg}$, by Lemma
\ref{Lemma_CompactPartOfCG} there is an exact sequence $C\hookrightarrow
G\twoheadrightarrow W$ in $\mathsf{LCA}_{\mathfrak{A}}$ with compact and $W$
having underlying LCA group of the shape $\mathbb{R}^{n}\oplus\mathbb{Z}^{m}
$. By Lemma \ref{Lemma_KeySplittingLemma} (1) one may isolate a vector
$\mathfrak{A}$-module summand $V$ in $W$, and being projective (Theorem
\ref{thm_VectorModulesAreInjectiveAndBijective}), this quotient object of $G$
splits off as a direct summand. Thus, we may write $G\cong G^{\prime}\oplus
V$. This decomposes $G^{\prime}$ as $C\hookrightarrow G^{\prime}%
\twoheadrightarrow W^{\prime}$ with $W^{\prime}$ having underlying group
$\mathbb{Z}^{m}$. By Lemma \ref{Lemma_ProjCovers} the compact $C$ has a
compact injective hull $I$, and by Lemma \ref{Lemma_RZOrRTHulls} there exists
an admissible monic $W^{\prime}\hookrightarrow V^{\prime}$ with $V^{\prime}$ a
vector module. We obtain the diagram%
\[%
\xymatrix{
C \ar@{^{(}->}[r] \ar@{^{(}->}[d]_{i} & G^{\prime} \ar@{->>}[r]^{q} \ar@
{-->}[dl]^{e} & W^{\prime} \ar@{^{(}->}[d]^{f} \\
I & & V^{\prime}\text{,}
}%
\]
analogous to Diagram \ref{l_B6}. This time we wish to prove that the sum
$h:=e+fq:G^{\prime}\rightarrow I\oplus V^{\prime}$ is a closed map. To this
end, note that, as a topological space, $G^{\prime}$ is the disjoint union
$G^{\prime}=\coprod_{w\in W^{\prime}}(C+\tilde{w})$, where $\tilde{w}$ denotes
an arbitrary (but fixed) lift of $w\in W^{\prime}$ to $G^{\prime}$. Thus, any
closed subset $T\subseteq G^{\prime}$ has the shape $T=\coprod\limits_{w}%
(T_{w}+\tilde{w})$ with $T_{w}\subseteq C$ closed. Hence,%
\begin{align*}
h(T)  & =\bigcup_{w\in W^{\prime}}h(T_{w}+\tilde{w})=\bigcup_{w}%
e(T_{w})+e(\tilde{w})+fq(T_{w})+fq(\tilde{w})\\
& =\bigcup_{w\in W^{\prime}}e(T_{w})+e(\tilde{w})+f(w)=\coprod_{w\in
W^{\prime}}\left(  e(T_{w}+\tilde{w})\right)  \times\{f(w)\}
\end{align*}
We have $T_{w}\subseteq C$, but $e\mid_{C}=i$, which is a closed map since it
is an admissible monic, so $e(T_{w})$ is closed, and therefore also
$e(T_{w}+\tilde{w})$ as translation is a homeomorphism. Note that,
topologically, $f$ embeds the lattice $W^{\prime}\simeq\mathbb{Z}^{m}$ into a
$\mathbb{R}^{m}$, so each value of $f(w)$ is attained only once, explaining
why the possibly countable union in the second line is a \textit{disjoint}
union. Each $e(T_{w}+\tilde{w})$ being closed, this means that $h(T)$ is
closed in $\coprod_{w\in W^{\prime}}I\times\{w\}=I\times W^{\prime}$, which in
turn is closed in $I\times V^{\prime}$, which is the underlying topological
space of $I\oplus V^{\prime}$.
\end{elaboration}

Armed with the previous lemma, we can prove the proposition.

\begin{proof}
[Proof of Proposition \ref{prop_CGToRC}]We use \cite[Theorem 12.1]{MR1421815}.
We note that the conditions C1 and C2 \textit{loc. cit.} hold (or more
specifically the conditions stated right below the axioms \textit{loc. cit.}),
thanks to Lemma \ref{lemma_P2}. Moreover, since the resolution provided by our
lemma is finite, this gives a triangulated equivalence of the bounded derived
categories $\mathcal{D}^{b}(\mathsf{LCA}_{\mathfrak{A},\mathbb{R}C}%
)\overset{\sim}{\longrightarrow}\mathcal{D}^{b}(\mathsf{LCA}_{\mathfrak{A}%
,cg})$. This suffices to know that the inclusion functor $\mathsf{LCA}%
_{\mathfrak{A},\mathbb{R}C}\hookrightarrow\mathsf{LCA}_{\mathfrak{A},cg}$
induces an equivalence of stable $\infty$-categories, \cite[Corollary
5.11]{MR3070515}.
\end{proof}

\begin{remark}
\label{rmk_on_global_dims}Since the topic of complications arising from high
global dimension has come up, let us quickly recapitulate the situation:

\begin{enumerate}
\item Every hereditary order $\mathfrak{A}$ has global dimension $\leq1$, so
these are regular.

\item Every maximal order $\mathfrak{A}$ is hereditary, so these are covered
by (1). See \cite[(26.12)\ Theorem]{MR1038525} for a proof.

\item Group rings $\mathbb{Z}[G]$ unfortunately have infinite global dimension
unless $G=1$. See Example \ref{example_FinGroup_InfGlobalDim}.

\item Between global dimension $\leq1$ and $\infty$, all other values are also
possible. Jategaonkar \cite[Theorem 2, and Remark]{MR0309988} gives (a) a
construction of $\mathbb{Z}_{(p)}$-orders in the matrix ring $M_{n}%
(\mathbb{Q})$ of global dimension $n$ for any prescribed $n\geq2$, and (b) on
the last pages of the paper gives an example of orders $\mathfrak{A}%
_{1},\mathfrak{A}_{2}\subset M_{2n+1}(\mathbb{Q})$ with%
\[
\operatorname*{gldim}\mathfrak{A}_{1}-\operatorname*{gldim}\mathfrak{A}_{2}=n
\]
for any chosen $n\geq1$. As a little elaboration: Firstly, observe that if
$\mathfrak{A}\subset A$ is an arbitrary order, then%
\begin{equation}
\operatorname*{gldim}\mathfrak{A}=\sup_{p}\left(  \operatorname*{gldim}%
\mathfrak{A}\otimes\mathbb{Z}_{(p)}\right)  \text{,}\label{l_Ja1}%
\end{equation}
where $p$ runs through all primes, \cite[Corollary to Proposition
2.6]{MR0117252}. For example, take%
\[
\Gamma_{3}:=%
\begin{pmatrix}
\mathbb{Z} & \mathbb{Z} & \mathbb{Z}\\
5\mathbb{Z} & \mathbb{Z} & \mathbb{Z}\\
5^{2}\mathbb{Z} & 5\mathbb{Z} & \mathbb{Z}%
\end{pmatrix}
\subset M_{3}(\mathbb{Q})\text{.}%
\]
We have $\operatorname*{gldim}\Gamma_{3}\otimes\mathbb{Z}_{(5)}=2$ (this is
Jategaonkar's example, \cite[Example after Theorem 2]{MR0309988}, over the
discrete valuation ring $\mathbb{Z}_{(5)}$).\ For all primes $p\neq5$, we have
$\Gamma_{3}\otimes\mathbb{Z}_{(p)}=M_{3}(\mathbb{Z}_{(p)})$. By Morita
invariance, the module category is equivalent to the one of $\mathbb{Z}_{(p)}%
$, so $\operatorname*{gldim}\Gamma_{3}\otimes\mathbb{Z}_{(p)}=1$. Finally, by
Equation \ref{l_Ja1}, this shows that $\operatorname*{gldim}\mathfrak{A}=2$.
In a similar way, all the examples in Jategaonkar's paper can be promoted from
the DVR situation to $\mathbb{Z}$-orders.

\item All orders of finite global dimension are of course regular, so the
methods of this paper apply to them. However, we must admit that we are not
aware of any \textquotedblleft real life\textquotedblright\ case study of the
ETNC where orders of finite global dimension $>1$ have played a r\^{o}le.
\end{enumerate}
\end{remark}

\section{Proof of the main theorem}

\begin{proposition}
\label{prop_PrepareMain}Let $A$ be any finite-dimensional semisimple
$\mathbb{Q}$-algebra and $\mathfrak{A}\subseteq A$ an order (not necessarily
regular). Suppose $K:\operatorname*{Cat}_{\infty}^{\operatorname*{ex}%
}\rightarrow\mathsf{A}$ is a localizing invariant with values in $\mathsf{A}$.
Then there is a fiber sequence%
\[
K(\mathsf{Mod}_{\mathfrak{A},fg})\overset{g}{\longrightarrow}K(\mathsf{LCA}%
_{\mathfrak{A},cg})\overset{h}{\longrightarrow}K(\mathsf{LCA}_{\mathfrak{A}})
\]
in $\mathsf{A}$. Here the map $g$ is induced from the exact functor sending a
finitely generated right $\mathfrak{A}$-module to itself, equipped with the
discrete topology. The map $h$ is induced from the inclusion $\mathsf{LCA}%
_{\mathfrak{A},cg}\hookrightarrow\mathsf{LCA}_{\mathfrak{A}}$.
\end{proposition}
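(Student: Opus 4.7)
The plan is to apply Schlichting's Localization Theorem in two parallel instances and then to compare the resulting quotient categories. By Proposition~\ref{Prop_CGIsLeftSFiltering} the inclusion $\mathsf{LCA}_{\mathfrak{A},cg}\hookrightarrow\mathsf{LCA}_{\mathfrak{A}}$ is left $s$-filtering, so this produces a fiber sequence
\[
K(\mathsf{LCA}_{\mathfrak{A},cg})\longrightarrow K(\mathsf{LCA}_{\mathfrak{A}})\longrightarrow K(\mathsf{LCA}_{\mathfrak{A}}/\mathsf{LCA}_{\mathfrak{A},cg})
\]
in $\mathsf{A}$. On the algebraic side, since $\mathfrak{A}\simeq\mathbb{Z}^{n}$ as an abelian group is Noetherian, $\mathsf{Mod}_{\mathfrak{A},fg}\hookrightarrow\mathsf{Mod}_{\mathfrak{A}}$ is a Serre inclusion and in particular $s$-filtering, yielding a second fiber sequence whose middle term $K(\mathsf{Mod}_{\mathfrak{A}})$ vanishes by a countable-coproduct Eilenberg swindle inside $\mathsf{Mod}_{\mathfrak{A}}$. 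Hence
\[
K(\mathsf{Mod}_{\mathfrak{A}}/\mathsf{Mod}_{\mathfrak{A},fg})\simeq\Sigma\, K(\mathsf{Mod}_{\mathfrak{A},fg}).
\]

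The crux is to identify the two quotient categories. Equipping a module with its discrete topology defines an exact functor $F\colon\mathsf{Mod}_{\mathfrak{A}}\to\mathsf{LCA}_{\mathfrak{A}}$, and by Lemma~\ref{Lemma_Elem1} a discrete module lies in $\mathsf{LCA}_{\mathfrak{A},cg}$ iff it is finitely generated over $\mathfrak{A}$. Thus $F$ sends $\mathsf{Mod}_{\mathfrak{A},fg}$ into $\mathsf{LCA}_{\mathfrak{A},cg}$ and descends to a comparison functor
\[
\overline{F}\colon\mathsf{Mod}_{\mathfrak{A}}/\mathsf{Mod}_{\mathfrak{A},fg}\longrightarrow\mathsf{LCA}_{\mathfrak{A}}/\mathsf{LCA}_{\mathfrak{A},cg}.
\]
Essential surjectivity of $\overline{F}$ is immediate from Theorem~\ref{thm_FirstDecomp}(1): every $M\in\mathsf{LCA}_{\mathfrak{A}}$ sits in an exact sequence $H\hookrightarrow M\twoheadrightarrow D$ with $H$ compactly generated and $D$ discrete, so in the Schlichting quotient $M$ becomes isomorphic to $D=F(D)$. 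To promote this to an equivalence on $K$-theory, one represents morphisms in $\mathsf{LCA}_{\mathfrak{A}}/\mathsf{LCA}_{\mathfrak{A},cg}$ as roofs, applies Theorem~\ref{thm_FirstDecomp} to each intermediate object in the roof, and uses Noether's Lemma together with the fact that the compactly generated subobject $H$ is \emph{clopen} (the embedding $H\hookrightarrow M$ is open) to propagate the discrete quotients compatibly through the whole diagram. This exhibits every roof as equivalent to one of discrete modules, i.e.\ in the image of $\overline{F}$.

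Assembling the pieces, the first fiber sequence reads
\[
K(\mathsf{LCA}_{\mathfrak{A},cg})\longrightarrow K(\mathsf{LCA}_{\mathfrak{A}})\longrightarrow \Sigma\, K(\mathsf{Mod}_{\mathfrak{A},fg}),
\]
and rotating yields the asserted fiber sequence. The identification of the rotated boundary map with the exact functor $g$ is automatic from the naturality of Schlichting's theorem applied to the commutative square with vertical arrows $F_{0}\colon\mathsf{Mod}_{\mathfrak{A},fg}\to\mathsf{LCA}_{\mathfrak{A},cg}$ and $F\colon\mathsf{Mod}_{\mathfrak{A}}\to\mathsf{LCA}_{\mathfrak{A}}$. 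The main obstacle is the second paragraph: verifying that $\overline{F}$ is a K-theory equivalence. Since quotients of exact categories are well-behaved only after passage to the associated stable $\infty$-category of bounded complexes, the cleanest implementation will likely proceed at the level of $\mathcal{D}^{b}$ and conclude via \cite[Corollary 5.11]{MR3070515}, in direct analogy with the concluding step of the proof of Proposition~\ref{prop_CGToRC}.
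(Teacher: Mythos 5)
Your plan is the same as the paper's: two Schlichting localization sequences (for $\mathsf{Mod}_{\mathfrak{A},fg}\subset\mathsf{Mod}_{\mathfrak{A}}$ and, via Proposition \ref{Prop_CGIsLeftSFiltering}, for $\mathsf{LCA}_{\mathfrak{A},cg}\subset\mathsf{LCA}_{\mathfrak{A}}$), an Eilenberg swindle killing $K(\mathsf{Mod}_{\mathfrak{A}})$, and a comparison of the two quotient categories via the discrete-topology functor; your rotation step is just the bi-Cartesian-square argument in the paper phrased differently, and the identification of $g$ via naturality is exactly how the paper pins down the first map. The only place your proposal falls short of a proof is the step you yourself flag as the crux: the identification $\overline{F}$ of the quotients. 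Here the paper's resolution is simpler than what you envision. One shows directly that the induced functor $\Phi:\mathsf{Mod}_{\mathfrak{A}}/\mathsf{Mod}_{\mathfrak{A},fg}\to\mathsf{LCA}_{\mathfrak{A}}/\mathsf{LCA}_{\mathfrak{A},cg}$ is an exact \emph{equivalence of exact categories}: essential surjectivity follows from the decomposition $V\oplus C\hookrightarrow G\twoheadrightarrow D$ of Lemma \ref{Lemma_CompactSubobjectDecomp} (your use of Theorem \ref{thm_FirstDecomp}(1) works equally well), and full faithfulness follows because all objects in the strict image are discrete, so continuous and algebraic $\mathfrak{A}$-module maps coincide, while Lemma \ref{Lemma_Elem1} shows that a discrete module is compactly generated exactly when it is finitely generated over $\mathfrak{A}$, so the two localizations invert corresponding classes of admissible epics and the hom-sets agree. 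In particular no passage to $\mathcal{D}^{b}$ and \cite[Corollary 5.11]{MR3070515} is needed at this point, and note that your roof-manipulation sketch, as stated, would at best give fullness (every morphism in the LCA quotient between discrete objects is represented by an algebraic roof) but does not address faithfulness, which is where the ``same inverted morphisms'' observation is what actually closes the argument.
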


\begin{proof}
(Step 1) We construct the following commutative diagram
\begin{equation}%
\xymatrix{
K({\mathsf{Mod}_{{\mathfrak{A}},fg}}) \ar[r] \ar[d]_{g} & K({\mathsf
{Mod}_{\mathfrak{A}}}) \ar[r] \ar[d] & K({{\mathsf{Mod}_{\mathfrak{A}}}%
}/{{\mathsf{Mod}_{{\mathfrak{A}},fg}}}) \ar[d]^{\Phi} \\
K(\mathsf{LCA}_{\mathfrak{A},cg}) \ar[r] & K(\mathsf{LCA}_{\mathfrak{A}}%
) \ar[r] & K({\mathsf{LCA}_{\mathfrak{A}}}/{\mathsf{LCA}_{\mathfrak{A},cg}})
}%
\label{l_B3}%
\end{equation}
as follows: (a) We leave it to the reader to check that $\mathsf{Mod}%
_{\mathfrak{A},fg}$ is left $s$-filtering in $\mathsf{Mod}_{\mathfrak{A}}$.
This is indeed true for all associative unital rings, and amounts to showing
that the image of each finitely generated right module is again finitely
generated, as well as that every surjection onto a finitely generated module
can be restricted to a surjection originating from a finitely generated
submodule of the source. (b) Being left $s$-filtering, we get the exact
sequence of exact categories%
\begin{equation}
\mathsf{Mod}_{\mathfrak{A},fg}\hookrightarrow\mathsf{Mod}_{\mathfrak{A}%
}\twoheadrightarrow\mathsf{Mod}_{\mathfrak{A}}/\mathsf{Mod}_{\mathfrak{A}%
,fg}\text{.}\label{l_B1}%
\end{equation}
(c) By Proposition \ref{Prop_CGIsLeftSFiltering} we also know that compactly
generated $\mathfrak{A}$-modules are left $s$-filtering in $\mathsf{LCA}%
_{\mathfrak{A}}$. Correspondingly, we get an exact sequence of exact
categories%
\begin{equation}
\mathsf{LCA}_{\mathfrak{A},cg}\hookrightarrow\mathsf{LCA}_{\mathfrak{A}%
}\twoheadrightarrow\mathsf{LCA}_{\mathfrak{A}}/\mathsf{LCA}_{\mathfrak{A}%
,cg}\text{.}\label{l_B2}%
\end{equation}
Next, we construct a morphism from the sequence in\ Equation \ref{l_B1} to the
one in Equation \ref{l_B2}, given by exact functors. Firstly, $\mathsf{Mod}%
_{\mathfrak{A},fg}\rightarrow\mathsf{LCA}_{\mathfrak{A},cg}$ just sends a
finitely generated $\mathfrak{A}$-module to itself with the discrete topology.
By Lemma \ref{Lemma_Elem1} the underlying abelian group is then finitely
generated, and thus indeed compactly generated as an LCA group. Since
everything carries the discrete topology, we do not need to worry about
continuity, and moreover the functor is clearly exact (in $\mathsf{LCA} $ a
sequence of discrete groups is exact if and only it is exact plainly as
abelian groups). Secondly, $\mathsf{Mod}_{\mathfrak{A}}\rightarrow
\mathsf{LCA}_{\mathfrak{A}}$ is defined the same way, just without the finite
generation condition. Thirdly, we get an exact functor induced to the quotient
exact categories%
\begin{equation}
\Phi:\mathsf{Mod}_{\mathfrak{A}}/\mathsf{Mod}_{\mathfrak{A},fg}\longrightarrow
\mathsf{LCA}_{\mathfrak{A}}/\mathsf{LCA}_{\mathfrak{A},cg}\text{.}\label{l_B4}%
\end{equation}
It is exact by construction. Now, we apply the invariant $K$ and arrive at
Diagram \ref{l_B3} as desired. Both rows are fiber sequences by Schlichting's
Localization Theorem, see \cite[Theorem 4.1]{obloc}.\newline(Step 2) Next, we
claim that $\Phi$ is an exact equivalence of exact categories. The functor is
essentially surjective: Given any $G$ in $\mathsf{LCA}_{\mathfrak{A}}$, Lemma
\ref{Lemma_CompactSubobjectDecomp} produces an exact sequence%
\[
V\oplus C\hookrightarrow G\overset{q}{\twoheadrightarrow}D\qquad
\text{in}\qquad\mathsf{LCA}_{\mathfrak{A}}%
\]
with $V$ a vector $\mathfrak{A}$-module, $C$ compact and $D$ discrete. We note
that $V\oplus C$ is compactly generated, so in the quotient category
$\mathsf{LCA}_{\mathfrak{A}}/\mathsf{LCA}_{\mathfrak{A},cg}$ the morphism $q$
becomes an isomorphism. However, $D$ clearly lies in the strict image of
$\Phi$ since it is discrete. Next, since all modules in the strict image of
the functor are discrete, there is no difference between continuous or just
algebraic right $\mathfrak{A}$-module homomorphisms. In particular, since a
discrete module in $\mathsf{LCA}_{\mathfrak{A}}$ is compactly generated if and
only if it is finitely generated as an abelian group, and thus by Lemma
\ref{Lemma_Elem1} if and only if it is finitely generated over $\mathfrak{A}$,
we observe that the categorical quotients on the left and right in Equation
\ref{l_B4} invert the same morphisms. Thus, $\Phi$ is fully faithful. Being
exact, essentially surjective, and fully faithful, $\Phi$ is an exact
equivalence of exact categories. It follows that $K(\Phi)$ is an equivalence
in $\mathsf{A}$. This shows that the right arrow in Diagram \ref{l_B3} is an
equivalence and thus the left square is bi-Cartesian in $\mathsf{A}$%
.\newline(Step 3) Next, $\mathsf{Mod}_{\mathfrak{A}}$ is closed under
countable coproducts, so $K(\mathsf{Mod}_{\mathfrak{A}})=0$ by the\ Eilenberg
swindle, \cite[Lemma 4.2]{obloc}. Thus, the left bi-Cartesian square in
Diagram \ref{l_B3} itself pins down a contraction, and the lower left three
terms give a fiber sequence%
\begin{equation}
K(\mathsf{Mod}_{\mathfrak{A},fg})\overset{g}{\longrightarrow}K(\mathsf{LCA}%
_{\mathfrak{A},cg})\longrightarrow K(\mathsf{LCA}_{\mathfrak{A}}%
)\text{,}\label{l_Bb1}%
\end{equation}
where $g$ sends a finitely generated $\mathfrak{A}$-module to itself with the
discrete topology. This is exactly the statement which we had claimed.
\end{proof}

We are ready to prove the main result of the paper.

\begin{theorem}
\label{thm_Main_b}Let $A$ be any finite-dimensional semisimple $\mathbb{Q}%
$-algebra and $\mathfrak{A}\subseteq A$ a regular order. Write $A_{\mathbb{R}%
}:=A\otimes_{\mathbb{Q}}\mathbb{R}$. Suppose $K:\operatorname*{Cat}_{\infty
}^{\operatorname*{ex}}\rightarrow\mathsf{A}$ is a localizing invariant with
values in $\mathsf{A}$. Then there is a fiber sequence%
\[
K(\mathfrak{A})\overset{i}{\longrightarrow}K(A_{\mathbb{R}})\overset
{j}{\longrightarrow}K(\mathsf{LCA}_{\mathfrak{A}})
\]
in $\mathsf{A}$. The map $i$ is induced from the exact functor $M\mapsto
M\otimes_{\mathfrak{A}}A_{\mathbb{R}}$, and the map $j$ is induced from the
functor interpreting a finitely generated projective right $A_{\mathbb{R}}%
$-module as its underlying LCA group (using the real vector space topology),
along with the right action coming from $\mathfrak{A}\subset A_{\mathbb{R}}$.
\end{theorem}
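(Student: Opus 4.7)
The plan is to assemble Theorem \ref{thm_Main_b} from three previously established ingredients: the fiber sequence of Proposition \ref{prop_PrepareMain}, the chain of equivalences $K(\mathsf{LCA}_{\mathfrak{A},cg}) \simeq K(\mathsf{LCA}_{\mathfrak{A},\mathbb{R}C}) \simeq K(A_{\mathbb{R}})$ furnished by Propositions \ref{prop_CGToRC} and \ref{Prop_KRCEqualsKR}, and the regularity hypothesis on $\mathfrak{A}$, which via Quillen's resolution theorem applied to the inclusion $\mathsf{PMod}(\mathfrak{A}) \hookrightarrow \mathsf{Mod}_{\mathfrak{A},fg}$ identifies $K(\mathfrak{A})$ with $K(\mathsf{Mod}_{\mathfrak{A},fg})$. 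Substituting all three identifications into the fiber sequence of Proposition \ref{prop_PrepareMain} yields the fiber sequence of the theorem.

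The main subtlety is identifying the connecting map $g$ of Proposition \ref{prop_PrepareMain} with the map $i$ claimed in the statement, namely $P \mapsto P \otimes_{\mathfrak{A}} A_{\mathbb{R}}$. Given a finitely generated projective $P \in \mathsf{PMod}(\mathfrak{A})$, its underlying abelian group is free of finite rank (as $P$ is a summand of $\mathfrak{A}^n \simeq \mathbb{Z}^{dn}$), so I have at my disposal the canonical exact sequence in $\mathsf{LCA}_{\mathfrak{A},cg}$,
\[
P \hookrightarrow P_{\mathbb{R}} \twoheadrightarrow T,
\]
where $P$ is discrete, $P_{\mathbb{R}} := P \otimes_{\mathbb{Z}} \mathbb{R} \cong P \otimes_{\mathfrak{A}} A_{\mathbb{R}}$ is a vector $\mathfrak{A}$-module, and $T$ is a compact torus. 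Both $P_{\mathbb{R}}$ and $T$ lie in $\mathsf{LCA}_{\mathfrak{A},\mathbb{R}C}$. Tracking $[g(P)] = [P]$ through the inverse of the inclusion-induced equivalence $K(\mathsf{LCA}_{\mathfrak{A},\mathbb{R}C}) \xrightarrow{\sim} K(\mathsf{LCA}_{\mathfrak{A},cg})$ identifies it with $[P_{\mathbb{R}}] - [T]$ in $K(\mathsf{LCA}_{\mathfrak{A},\mathbb{R}C})$. Under the projection $K(\mathsf{LCA}_{\mathfrak{A},\mathbb{R}C}) \to K(A_{\mathbb{R}})$ from Proposition \ref{Prop_KRCEqualsKR}, the compact summand $T$ dies (its class factors through $K(\mathsf{LCA}_{\mathfrak{A},C}) = 0$), while $P_{\mathbb{R}}$ maps to $P \otimes_{\mathfrak{A}} A_{\mathbb{R}}$ as an $A_{\mathbb{R}}$-module, which is precisely the value of $i$.

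The principal obstacle is upgrading this identification from the level of $\pi_0$ to a genuine identification of maps of spectra (or whatever target $\mathsf{A}$ is). To do this I would organize the above construction as a natural short exact sequence of exact functors from $\mathsf{PMod}(\mathfrak{A})$ to $\mathsf{LCA}_{\mathfrak{A},cg}$ with middle term the vector module functor $P \mapsto P_{\mathbb{R}}$, left-hand monic the unit $P \mapsto P$ (discretely), and cofiber the torus functor $P \mapsto T$, which factors through $\mathsf{LCA}_{\mathfrak{A},C}$. The additivity theorem for localizing invariants then implies that in $\mathsf{A}$ the map induced by $P \mapsto P$ differs from the map induced by $P \mapsto P_{\mathbb{R}}$ by the map induced by $P \mapsto T$; since the latter factors through $K(\mathsf{LCA}_{\mathfrak{A},C}) = 0$, the two maps agree, completing the identification at the spectral level.
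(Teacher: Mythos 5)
Your proposal is correct and follows essentially the same route as the paper: the paper also combines Proposition \ref{prop_PrepareMain} with the equivalences of Propositions \ref{prop_CGToRC} and \ref{Prop_KRCEqualsKR}, identifies $K(\mathfrak{A})\simeq K(\mathsf{Mod}_{\mathfrak{A},fg})$ via regularity, and identifies $g$ with $i$ by exactly your device, namely the exact functor $M\mapsto\left[M\hookrightarrow M\otimes_{\mathfrak{A}}A_{\mathbb{R}}\twoheadrightarrow (M\otimes_{\mathfrak{A}}A_{\mathbb{R}})/M\right]$ into the category of exact sequences, additivity, and the vanishing of $K(\mathsf{LCA}_{\mathfrak{A},C})$ by the Eilenberg swindle.
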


\begin{proof}
(Step 1) As in \cite{MR1884523}, we write $\operatorname*{PMod}(R)$ for the
exact category of finitely generated projective right $R$-modules. Note that
$A_{\mathbb{R}}$ is a projective $\mathfrak{A}$-module (Lemma
\ref{lemma_VectorModulesAreAlgebraicallyInjectiveAndProjective}), and thus in
particular flat over $\mathfrak{A}$. Hence, the tensor functor%
\[
\operatorname*{PMod}(\mathfrak{A})\rightarrow\operatorname*{PMod}%
(A_{\mathbb{R}}),\qquad\qquad M\mapsto M\otimes_{\mathfrak{A}}A_{\mathbb{R}}%
\]
is indeed exact. Moreover, the output is a finitely generated $A_{\mathbb{R}}
$-module, but since $A$ is semisimple over $\mathbb{Q}$, so is its base change
$A_{\mathbb{R}}$. Hence, all modules are projective, so the functor really
goes to $\operatorname*{PMod}(A_{\mathbb{R}})$. It follows that the morphism
$K(\mathfrak{A})\overset{i}{\longrightarrow}K(A_{\mathbb{R}})$ exists. The
inclusion $\operatorname*{PMod}(R)\subseteq\mathsf{Mod}_{\mathfrak{A},fg}$ is
an exact functor, and the induced map%
\begin{equation}
K(\mathfrak{A})=_{\operatorname*{def}}\left.  K(\operatorname*{PMod}%
(\mathfrak{A}))\right.  \overset{\sim}{\longrightarrow}K(\mathsf{Mod}%
_{\mathfrak{A},fg})\label{l_B3a1}%
\end{equation}
is an equivalence. This can be seen as follows: By assumption $\mathfrak{A}$
is a regular ring, i.e. every finitely generated right $\mathfrak{A}$-module
admits a finite projective resolution. Thus, we may apply (the categorical
dual of) \cite[Theorem 12.1]{MR1421815}. The required Conditions C1, C2
\textit{loc. cit.} are satisfied, see (the categorical dual of) \cite[Example
12.2]{MR1421815} and $\mathsf{Mod}_{\mathfrak{A},fg}$ clearly has enough
projectives. Invoking this theorem, we deduce that $\mathcal{D}^{b}%
(\operatorname*{PMod}(\mathfrak{A}))\overset{\sim}{\longrightarrow}%
\mathcal{D}^{b}(\mathsf{Mod}_{\mathfrak{A},fg})$ is a triangulated
equivalence, and thus the functor induces an equivalence of stable $\infty
$-categories, \cite[Corollary 5.11]{MR3070515}. This confirms that the map in
Equation \ref{l_B3a1} is indeed an equivalence. (Step 2) Next, we invoke
Proposition \ref{prop_PrepareMain} so that we have the fiber sequence%
\[
K(\mathsf{Mod}_{\mathfrak{A},fg})\overset{g}{\longrightarrow}K(\mathsf{LCA}%
_{\mathfrak{A},cg})\longrightarrow K(\mathsf{LCA}_{\mathfrak{A}})
\]
in $\mathsf{A}$. For the next step, we shall need an alternative description
of $g$. For an exact category $\mathsf{C}$, we write $\mathcal{E}\mathsf{C}$
for the exact category of exact sequences, \cite[Exercise 3.9]{MR2606234}. We
set up a functor%
\begin{equation}
p:\operatorname*{PMod}(\mathfrak{A})\longrightarrow\mathcal{E}\mathsf{LCA}%
_{\mathfrak{A},cg}\label{l_Ba1}%
\end{equation}
by%
\[
M\mapsto\left[  M\hookrightarrow M\otimes_{\mathfrak{A}}A_{\mathbb{R}%
}\twoheadrightarrow(M\otimes_{\mathfrak{A}}A_{\mathbb{R}})/M\right]  \text{,}%
\]
where (a) on the left $M$ is regarded with the discrete topology, (b)
$M\otimes_{\mathfrak{A}}A_{\mathbb{R}}$ is equipped with the locally compact
topology coming from the finite-dimensional real vector space structure, (c)
$(M\otimes_{\mathfrak{A}}A_{\mathbb{R}})/M$ is equipped with the quotient
topology making the sequence exact. The functor $p$ is exact. To see this,
observe that $\operatorname*{PMod}(\mathfrak{A})$ is split exact, so the
obvious additivity of the functor suffices for its exactness. We write
$p_{i}:\operatorname*{PMod}(\mathfrak{A})\rightarrow\mathsf{LCA}%
_{\mathfrak{A},cg}$ with $i=1,2,3$ for the three individual functors to the
left, middle resp. right term of the exact sequence. Note that for
$M:=\mathfrak{A}$, we have $p_{1}(M)\simeq\mathbb{Z}^{n}$ on the level of the
underlying LCA group (see \S \ref{subsect_Basics}), moreover $p_{2}%
(M)\simeq\mathbb{R}^{n}$ and $p_{3}(M)\simeq\mathbb{T}^{n}$, and on the level
of the underlying LCA groups the sequence $p(M)$ is just $\mathbb{Z}%
^{n}\hookrightarrow\mathbb{R}^{n}\twoheadrightarrow\mathbb{T}^{n}$. Since
every localizing invariant is also additive in the sense of \cite{MR3070515},
the exact functor $p$ induces a relation among the induced maps $p_{i\ast
}:K(\mathfrak{A})\rightarrow K(\mathsf{LCA}_{\mathfrak{A},cg})$, namely
$p_{2\ast}=p_{1\ast}+p_{3\ast}$. The functor $p_{3}$ admits a factorization%
\[
p_{3}:\operatorname*{PMod}(\mathfrak{A})\longrightarrow\mathsf{LCA}%
_{\mathfrak{A},C}\longrightarrow\mathsf{LCA}_{\mathfrak{A},cg}\text{,}%
\]
with $\mathsf{LCA}_{\mathfrak{A},C}$ being the full subcategory of compact
modules (Definition \ref{def_LCAAC}), because we have just seen that $p_{3}$
maps every $M\in\operatorname*{PMod}(\mathfrak{A})$ to a compact module. By
the Eilenberg swindle, $K(\mathsf{LCA}_{\mathfrak{A},C})=0$, see \cite[Lemma
4.2]{obloc}. Thus, $p_{3\ast}$ must be the zero map. We conclude that
$p_{1\ast}=p_{2\ast}$. The diagram%
\[%
\xymatrix{
K(\operatorname{PMod}(\mathfrak{A})) \ar[r]^{\sim} \ar[d]_{p_1} & K(\mathsf
{Mod}_{\mathfrak{A},fg}) \ar[dl]^{g} \\
K(\mathsf{LCA}_{\mathfrak{A},cg})
}%
\]
commutes, where the top horizontal map is the equivalence of Equation
\ref{l_B3a1}, and $g$ is the map of Equation \ref{l_Bb1}. Combining these two
facts, we deduce that upon (equivalently) replacing $K(\mathsf{Mod}%
_{\mathfrak{A},fg})$ by $K(\mathfrak{A})$ in Equation \ref{l_Bb1}, we obtain%
\begin{equation}
K(\mathfrak{A})\overset{p_{2}}{\longrightarrow}K(\mathsf{LCA}_{\mathfrak{A}%
,cg})\longrightarrow K(\mathsf{LCA}_{\mathfrak{A}})\text{.}\label{l_Bb2}%
\end{equation}
(Step 3) Using Proposition \ref{prop_CGToRC}, we have $K(\mathsf{LCA}%
_{\mathfrak{A},\mathbb{R}C})\overset{\sim}{\longrightarrow}K(\mathsf{LCA}%
_{\mathfrak{A},cg})$, and by Proposition \ref{Prop_KRCEqualsKR} moreover
$K(\mathsf{LCA}_{\mathfrak{A},\mathbb{R}C})\overset{\sim}{\longrightarrow
}K(A_{\mathbb{R}})$. Combining both equivalences, we may replace the middle
term of the sequence in Equation \ref{l_Bb2} by $K(A_{\mathbb{R}})$. Even
better, note from the construction of the maps in the cited propositions that
this equivalence stems from the inclusion of categories $\operatorname*{PMod}%
(A_{\mathbb{R}})\subseteq\mathsf{LCA}_{\mathfrak{A},cg}$ (where each module
$M\in\operatorname*{PMod}(A_{\mathbb{R}})$ is regarded as equipped with the
locally compact topology coming from its real vector space structure), and the
image of the functor $p_{2}$ indeed lies in this subcategory, so we get a new
fiber sequence%
\[
K(\mathfrak{A})\overset{p_{2}}{\longrightarrow}K(A_{\mathbb{R}})\overset
{j}{\longrightarrow}K(\mathsf{LCA}_{\mathfrak{A}})\text{,}%
\]
where the first map is still induced from the exact functor $p_{2}$. Finally,
note that the functor underlying $p_{2}$ is exactly the map $i$ which we are
discussing in the statement of the theorem, so this correctly identifies the
first map. Similarly, the second map arises as the composition of
$\operatorname*{PMod}(A_{\mathbb{R}})\subseteq\mathsf{LCA}_{\mathfrak{A},cg}$
and the inclusion $\mathsf{LCA}_{\mathfrak{A},cg}\hookrightarrow
\mathsf{LCA}_{\mathfrak{A}}$, see Equation \ref{l_B2}, so we also see that the
map $j$ is exactly the one as we claim in the theorem. This finishes the proof.
\end{proof}

We obtain the formulation of the introduction:

\begin{theorem}
\label{thm_main_asannounced}Suppose $\mathfrak{A}$ is a regular order in a
finite-dimensional semisimple $\mathbb{Q}$-algebra $A$. Then there is a long
exact sequence%
\[
\cdots\longrightarrow K_{n}(\mathfrak{A})\longrightarrow K_{n}(A_{\mathbb{R}%
})\longrightarrow K_{n}(\mathsf{LCA}_{\mathfrak{A}})\longrightarrow
K_{n-1}(\mathfrak{A})\longrightarrow\cdots\text{,}%
\]
and for all $n$, there are canonical isomorphisms%
\[
K_{n}(\mathsf{LCA}_{\mathfrak{A}})\cong K_{n-1}(\mathfrak{A},\mathbb{R}%
)\text{.}%
\]
Here $K_{\ast}(-)$ denotes ordinary (Quillen) algebraic $K$-theory.
\end{theorem}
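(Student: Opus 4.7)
The plan is to derive the theorem as a direct consequence of Theorem~\ref{thm_Main_b}. I specialize the localizing invariant $K$ to (nonconnective) algebraic $K$-theory, viewed as a functor with values in the $\infty$-category of spectra. Applying Theorem~\ref{thm_Main_b} to this $K$ yields a fiber sequence of spectra
\[
K(\mathfrak{A}) \overset{i}{\longrightarrow} K(A_{\mathbb{R}}) \overset{j}{\longrightarrow} K(\mathsf{LCA}_{\mathfrak{A}}).
\]
Passing to the associated long exact sequence on homotopy groups produces exactly the long exact sequence in the statement. Since $\mathfrak{A}$ is regular by hypothesis and $A_{\mathbb{R}}$ is semisimple, nonconnective $K$-theory agrees with Quillen $K$-theory in nonnegative degrees for both rings, so no ambiguity arises; the negative $K$-groups (if any) merely serve to continue the sequence in the opposite direction.

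For the canonical isomorphism $K_n(\mathsf{LCA}_{\mathfrak{A}}) \cong K_{n-1}(\mathfrak{A},\mathbb{R})$, my plan is to exploit the definition of the relative $K$-theory spectrum. By construction, $K(\mathfrak{A},\mathbb{R})$ is the fiber of $i:K(\mathfrak{A})\to K(A_{\mathbb{R}})$, its homotopy groups producing Sequence~\ref{l_Intro1}. In the stable $\infty$-categorical setting, the cofiber of a morphism is canonically the shift by one of its fiber. Hence Theorem~\ref{thm_Main_b} supplies a canonical equivalence
\[
K(\mathsf{LCA}_{\mathfrak{A}}) \;\simeq\; \mathrm{cofib}(i) \;\simeq\; \mathrm{fib}(i)[1] \;\simeq\; K(\mathfrak{A},\mathbb{R})[1],
\]
and applying $\pi_n$ delivers the asserted canonical isomorphism.

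There is no substantial new obstacle at this step; all the heavy lifting has been done in Theorem~\ref{thm_Main_b}. The only point requiring a modicum of care is bookkeeping: one must verify that the first map $K_n(\mathfrak{A}) \to K_n(A_{\mathbb{R}})$ in Sequence~\ref{l_Intro1} agrees with $i_{\ast}$ under the identification, which is immediate since both are induced by the scalar-extension functor $M \mapsto M \otimes_{\mathfrak{A}} A_{\mathbb{R}}$ on finitely generated projective modules, and that the connecting maps line up, which is forced by the universal property of fiber/cofiber sequences once the shift convention is fixed.
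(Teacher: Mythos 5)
Your proposal is correct and follows essentially the same route as the paper: apply Theorem \ref{thm_Main_b} to non-connective $K$-theory as a localizing invariant, take the long exact sequence of homotopy groups, and obtain the relative-group comparison from the fact that the cofiber of $i$ is the suspension of its fiber $K(\mathfrak{A},\mathbb{R})$. The only bookkeeping you gloss over, and which the paper spells out, is the replacement of non-connective by Quillen $K$-groups for the third term: one uses that $\mathbb{K}_{-i}(\mathfrak{A})=\mathbb{K}_{-i}(A_{\mathbb{R}})=0$ for $i\geq 1$ by regularity (forcing $\mathbb{K}_{-i}(\mathsf{LCA}_{\mathfrak{A}})=0$ by exactness) and that $\mathsf{LCA}_{\mathfrak{A}}$, having all kernels, is idempotent complete, so $\mathbb{K}_{0}(\mathsf{LCA}_{\mathfrak{A}})=K_{0}(\mathsf{LCA}_{\mathfrak{A}})$.
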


\begin{proof}
We use that \textit{non-connective} algebraic $K$-theory is a localizing
invariant $\mathbb{K}:\operatorname*{Cat}_{\infty}^{\operatorname*{ex}%
}\rightarrow\mathsf{Sp}$ with values in spectra, \cite{MR3070515}. Thus,
Theorem \ref{thm_Main_b} applies, and the long exact sequence of homotopy
groups associated to the fiber sequence gives us a long exact sequence%
\[
\cdots\longrightarrow\mathbb{K}_{n}(\mathfrak{A})\longrightarrow\mathbb{K}%
_{n}(A_{\mathbb{R}})\longrightarrow\mathbb{K}_{n}(\mathsf{LCA}_{\mathfrak{A}%
})\longrightarrow\mathbb{K}_{n-1}(\mathfrak{A})\longrightarrow\cdots
\]
of non-connective $K$-groups. Around degree zero it reads%
\[
\cdots\longrightarrow\mathbb{K}_{0}(\mathfrak{A})\longrightarrow\mathbb{K}%
_{0}(A_{\mathbb{R}})\longrightarrow\mathbb{K}_{0}(\mathsf{LCA}_{\mathfrak{A}%
})\longrightarrow\mathbb{K}_{-1}(\mathfrak{A})\longrightarrow\mathbb{K}%
_{-1}(A_{\mathbb{R}})\longrightarrow\cdots\text{.}%
\]
Since $\mathfrak{A}$ and $A_{\mathbb{R}}$ are regular rings, their
non-connective $K$-theory agrees with the ordinary $K$-theory, i.e.
$\mathbb{K}_{i}(\mathfrak{A})=K_{i}(\mathfrak{A})$ for all $i\geq1$ anyway,
they agree for $K_{0}$ by idempotent completeness (same for $A_{\mathbb{R}}$),
and $\mathbb{K}_{-i}(\mathfrak{A})=\mathbb{K}_{-i}(A_{\mathbb{R}})=0$ for all
$i\geq1$, \cite[Remark 7]{MR2206639}. Observe that the exactness of the
sequence then implies $\mathbb{K}_{-i}(\mathsf{LCA}_{\mathfrak{A}})=0$ for
$i\geq1$. Next, the category $\mathsf{LCA}_{\mathfrak{A}}$ has all kernels, so
it is idempotent complete. It follows that $\mathbb{K}_{0}(\mathsf{LCA}%
_{\mathfrak{A}})=K_{0}(\mathsf{LCA}_{\mathfrak{A}})$, \cite[Remark
3]{MR2206639}. Hence, all letters $\mathbb{K}$ in the above sequence can be
replaced by $K$ without a change. We have proven the claim.
\end{proof}

With the tools which we have available, here is the best we can do at the
moment regarding non-regular orders.

\begin{theorem}
\label{thm_Gthy_Version}Suppose $\mathfrak{A}$ is an arbitrary order in a
finite-dimensional semisimple $\mathbb{Q}$-algebra $A$. Then there is a long
exact sequence%
\[
\cdots\longrightarrow G_{n}(\mathfrak{A})\longrightarrow K_{n}(A_{\mathbb{R}%
})\longrightarrow K_{n}(\mathsf{LCA}_{\mathfrak{A}})\longrightarrow
G_{n-1}(\mathfrak{A})\longrightarrow\cdots\text{,}%
\]
where $G_{n}(\mathfrak{A}):=K_{n}(\mathsf{Mod}_{\mathfrak{A},fg})$ denotes the
$K$-theory of the category of finitely generated right $\mathfrak{A}$-modules
(this is often called \textquotedblleft$G$-theory\textquotedblright).
\end{theorem}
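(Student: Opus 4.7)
The strategy is to run the very same argument as in the proof of Theorem \ref{thm_Main_b}, simply deleting the single step that required regularity. In that proof, regularity was used in exactly one place, namely to invoke Keller's theorem \cite[Theorem 12.1]{MR1421815} and conclude that the inclusion $\operatorname{PMod}(\mathfrak{A}) \hookrightarrow \mathsf{Mod}_{\mathfrak{A},fg}$ induces an equivalence $K(\mathfrak{A}) \xrightarrow{\sim} K(\mathsf{Mod}_{\mathfrak{A},fg})$. Since the $G$-theory statement is phrased directly in terms of $K(\mathsf{Mod}_{\mathfrak{A},fg})$, this step is simply skipped, and everything else in the argument carries over without change.

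Concretely, I would first apply Proposition \ref{prop_PrepareMain} to a localizing invariant $K:\operatorname{Cat}_{\infty}^{\operatorname{ex}}\rightarrow\mathsf{A}$. That proposition is explicitly stated for arbitrary (not necessarily regular) orders and yields the fiber sequence
\[
K(\mathsf{Mod}_{\mathfrak{A},fg})\longrightarrow K(\mathsf{LCA}_{\mathfrak{A},cg})\longrightarrow K(\mathsf{LCA}_{\mathfrak{A}}).
\]
Next, I would substitute the middle term using the chain of equivalences
\[
K(\mathsf{LCA}_{\mathfrak{A},cg})\xleftarrow{\sim} K(\mathsf{LCA}_{\mathfrak{A},\mathbb{R}C})\xrightarrow{\sim} K(A_{\mathbb{R}})
\]
supplied by Proposition \ref{prop_CGToRC} and Proposition \ref{Prop_KRCEqualsKR}. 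Both results are stated for arbitrary orders: the ``compactly generated trick'' of \S\ref{sect_CGPiece} was explicitly engineered to bypass regularity, by dragging modules of possibly infinite projective/injective dimension into a piece whose $K$-theory vanishes by an Eilenberg swindle. This produces the desired fiber sequence
\[
G(\mathfrak{A})\longrightarrow K(A_{\mathbb{R}})\longrightarrow K(\mathsf{LCA}_{\mathfrak{A}})
\]
with $G(\mathfrak{A}):=K(\mathsf{Mod}_{\mathfrak{A},fg})$ by definition.

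Finally, I would specialize to the non-connective $K$-theory spectrum $\mathbb{K}:\operatorname{Cat}_{\infty}^{\operatorname{ex}}\rightarrow\mathsf{Sp}$ of \cite{MR3070515}, extract the long exact sequence of homotopy groups, and identify the non-connective groups with Quillen $K$-theory in the usual way (following verbatim the last paragraph of the proof of Theorem \ref{thm_main_asannounced}): in positive degrees this is automatic, in degree zero it follows from idempotent completeness ($\mathsf{LCA}_{\mathfrak{A}}$ is idempotent complete because it is quasi-abelian, and $\mathsf{Mod}_{\mathfrak{A},fg}$ because it is abelian, using that $\mathfrak{A}$ is Noetherian as a consequence of being finitely generated over $\mathbb{Z}$, see \S\ref{subsect_Basics}), and in negative degrees it follows from Schlichting's vanishing theorem for the non-connective $K$-theory of abelian categories (for $\mathsf{Mod}_{\mathfrak{A},fg}$) together with regularity of $A_{\mathbb{R}}$, after which exactness forces $\mathbb{K}_{-i}(\mathsf{LCA}_{\mathfrak{A}})=0$ as well.

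The main obstacle is effectively none: the whole infrastructure of \S\S\ref{sect_CatFramework}--\ref{sect_CGPiece} is built so that every auxiliary statement used in the proof of Theorem \ref{thm_Main_b} is already available for arbitrary orders, with the single exception of the resolution-theoretic identification $K(\mathfrak{A})\simeq G(\mathfrak{A})$. Consequently, the $G$-theory variant drops out as an immediate byproduct of the same machinery; it is precisely the natural statement one obtains when one refuses to use regularity at any point.
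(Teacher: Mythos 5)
Your proposal is correct and follows essentially the same route as the paper: apply Proposition \ref{prop_PrepareMain} (valid for arbitrary orders) to non-connective $K$-theory, substitute the middle term via Propositions \ref{prop_CGToRC} and \ref{Prop_KRCEqualsKR}, and then identify non-connective with Quillen $K$-groups using regularity of $A_{\mathbb{R}}$, Schlichting's vanishing of negative $K$-theory for the Noetherian abelian category $\mathsf{Mod}_{\mathfrak{A},fg}$, and idempotent completeness in degree zero, with exactness forcing the vanishing of $\mathbb{K}_{-i}(\mathsf{LCA}_{\mathfrak{A}})$. This is precisely the paper's argument, so no further comparison is needed.
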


\begin{proof}
Use Proposition \ref{prop_PrepareMain} for non-connective $K$-theory
$\mathbb{K}:\operatorname*{Cat}_{\infty}^{\operatorname*{ex}}\rightarrow
\mathsf{Sp}$ as in the proof of Theorem \ref{thm_main_asannounced}. The
argument that $\mathbb{K}_{n}(A_{\mathbb{R}})=K_{n}(A_{\mathbb{R}})$ holds for
all $n$ is still valid since $A_{\mathbb{R}}$ is regular. Since $\mathsf{Mod}%
_{\mathfrak{A},fg}$ is a Noetherian abelian category, \cite[Theorem
7]{MR2206639} shows that $\mathbb{K}_{n}(\mathsf{Mod}_{\mathfrak{A},fg})=0$
for all $n<0$, and $\mathbb{K}_{0}(\mathsf{Mod}_{\mathfrak{A},fg}%
)=K_{0}(\mathsf{Mod}_{\mathfrak{A},fg})$ since the category is idempotent
complete, \cite[Remark 3]{MR2206639}. As in the proof of Theorem
\ref{thm_main_asannounced}, we obtain $\mathbb{K}_{n}(\mathsf{LCA}%
_{\mathfrak{A}})=K_{n}(\mathsf{LCA}_{\mathfrak{A}})$.
\end{proof}

It is unclear to us whether $G$-theory can fulfill any meaningful r\^{o}le in
the context of the ETNC. To the best of my knowledge, it has not been considered.

\section{Equivariant Haar measure\label{section_HaarMeasureDet}}

In this section we will supply some details regarding the interpretation of
Haar measures as a determinant functor. For background on Picard groupoids we
refer to \cite[\S 4]{MR902592}, \cite[\S 2.1]{MR1884523} or \cite[\S 2]%
{MR2842932}. We write $\mathsf{Picard}$ for the category of Picard groupoids,
defined as in \cite[1.3. Definition]{MR2981952}.

\subsection{Determinant functors}

Let $\mathsf{C}$ be an exact category. Let $\mathsf{C}^{\times}$ denote its
maximal inner groupoid, i.e. the category with the same objects but we only
keep isomorphisms as morphisms.

\begin{definition}
[{\cite[\S 4.3]{MR902592}}]\label{def_DetFunctor}Suppose $(\mathsf{P}%
,\otimes)$ is a Picard groupoid. A \emph{determinant functor} on $\mathsf{C}$
is a functor%
\[
\mathcal{P}:\mathsf{C}^{\times}\longrightarrow\mathsf{P}%
\]
with the following extra structure and axioms:

\begin{enumerate}
\item For every exact sequence $\Sigma:G^{\prime}\hookrightarrow
G\twoheadrightarrow G^{\prime\prime}$ in $\mathsf{C}$ we are given an
isomorphism%
\begin{equation}
\mathcal{P}(\Sigma):\mathcal{P}(G)\overset{\sim}{\longrightarrow}%
\mathcal{P}(G^{\prime})\underset{\mathsf{P}}{\otimes}\mathcal{P}%
(G^{\prime\prime})\text{.}\label{lcey0}%
\end{equation}
Moreover, this isomorphism is functorial in morphisms of exact sequences.

\item For every zero object $Z$ in $\mathsf{C}$ we are given an isomorphism
$z:\mathcal{P}(Z)\overset{\sim}{\longrightarrow}1_{\mathsf{P}}$ to the neutral
object of the Picard groupoid. Henceforth, we simply write $0$ for a zero object.

\item If $f:G\rightarrow G^{\prime}$ is an isomorphism in $\mathsf{C}$, write%
\[
\Sigma_{l}:0\hookrightarrow G\twoheadrightarrow G^{\prime}\qquad
\text{and}\qquad\Sigma_{r}:G\hookrightarrow G^{\prime}\twoheadrightarrow0
\]
for the depicted exact sequences. We demand that%
\begin{equation}
\mathcal{P}(G)\underset{\mathcal{P}(\Sigma_{l})}{\overset{\sim}%
{\longrightarrow}}\mathcal{P}(0)\underset{\mathsf{P}}{\otimes}\mathcal{P}%
(G^{\prime})\underset{z\otimes\operatorname*{id}}{\overset{\sim}%
{\longrightarrow}}1_{\mathsf{P}}\underset{\mathsf{P}}{\otimes}\mathcal{P}%
(G^{\prime})\underset{\mathsf{P}}{\overset{\sim}{\longrightarrow}}%
\mathcal{P}(G^{\prime})\label{lcey1}%
\end{equation}
agrees with the map $\mathcal{P}(f):\mathcal{P}(G)\overset{\sim}%
{\longrightarrow}\mathcal{P}(G^{\prime})$. Analogously, we demand that
$\mathcal{P}(f^{-1})$ agrees with a variant of Equation \ref{lcey1} using
$\Sigma_{r}$ instead of $\Sigma_{l}$.

\item If $G_{1}\hookrightarrow G_{2}\hookrightarrow G_{3}$ is given, we demand
that the diagram%
\[%
\xymatrix{
\mathcal{P}(G_3) \ar[r]^-{\sim} \ar[d]_{\sim} & \mathcal{P}(G_1) \otimes
\mathcal{P}(G_3/G_1) \ar[d]^{\sim} \\
\mathcal{P}(G_2) \otimes\mathcal{P}(G_3/G_2) \ar[r]_-{\sim} & \mathcal
{P}(G_1) \otimes\mathcal{P}(G_2/G_1) \otimes\mathcal{P}(G_3/G_2)
}%
\]
commutes.
\end{enumerate}
\end{definition}

\subsection{Picard groupoids as spectra}

Let us quickly recall that there are (at least) two different perspectives how
to think about Picard groupoids. On the one hand, one can think of the
algebraic definition in terms of a symmetric monoidal category as in
\cite[\S 2]{MR1884523}, and on the other hand one can think of them as spectra
whose possibly non-zero homotopy groups are confined to degrees $0$ and $1$.
We denote the latter category by $\mathsf{Sp}^{0,1}$, its objects are also
known as \textquotedblleft stable $1$-types\textquotedblright.

\begin{theorem}
\label{thm_hoPicvshoSp01}There is an equivalence of homotopy categories%
\begin{equation}
\Psi:\operatorname*{Ho}(\mathsf{Picard})\overset{\sim}{\longrightarrow
}\operatorname*{Ho}(\mathsf{Sp}^{0,1})\text{.}\label{lceyAJ1}%
\end{equation}
This correspondence preserves the notions of homotopy groups $\pi_{0},\pi_{1}$
on either side.
\end{theorem}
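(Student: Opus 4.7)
The plan is to exhibit $\Psi$ as a version of the classical infinite loop space machine applied to symmetric monoidal groupoids, and to construct a homotopy inverse by extracting a Picard groupoid from a stable $1$-type. First I would define $\Psi$ on objects as follows: given a Picard groupoid $(\mathsf{P},\otimes)$, apply Segal's $\Gamma$-space construction to the symmetric monoidal category $\mathsf{P}$ (equivalently, use Thomason's construction or May's infinite loop space machine for permutative categories, after strictification). Because $\mathsf{P}$ is groupoidal and grouplike, the resulting $\Gamma$-space is very special, and its associated spectrum $\Psi(\mathsf{P})$ is an $\Omega$-spectrum whose underlying space $\Omega^{\infty}\Psi(\mathsf{P})$ is equivalent to the classifying space $B\mathsf{P}$. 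Since $\mathsf{P}$ is a $1$-groupoid, $B\mathsf{P}$ has vanishing $\pi_i$ for $i\ge 2$, so $\Psi(\mathsf{P})\in\mathsf{Sp}^{0,1}$. Morphisms of Picard groupoids (symmetric monoidal functors and monoidal natural isomorphisms) extend functorially.

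Next I would construct a candidate inverse $\Phi:\operatorname{Ho}(\mathsf{Sp}^{0,1})\to\operatorname{Ho}(\mathsf{Picard})$. Given $E\in\mathsf{Sp}^{0,1}$, take the infinite loop space $X:=\Omega^{\infty}E$; it is a grouplike $E_{\infty}$-space with $\pi_i X=0$ for $i\ge 2$. Define $\Phi(E)$ to be the fundamental groupoid $\Pi_{\le 1}(X)$. The $E_{\infty}$-structure on $X$ descends to a symmetric monoidal structure on $\Pi_{\le 1}(X)$, and grouplikeness translates into the Picard axiom that every object is invertible up to isomorphism; the symmetry isomorphisms come from the $E_{\infty}$-multiplication, with the correct hexagon coming from the higher homotopies. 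Tautologically, $\pi_0\Phi(E)=\pi_0 E$ and $\operatorname{Aut}_{\Phi(E)}(1)=\pi_1 E$.

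The third step is to produce natural equivalences $\Phi\Psi\simeq\operatorname{id}$ and $\Psi\Phi\simeq\operatorname{id}$ in the respective homotopy categories. For $\Phi\Psi\simeq\operatorname{id}$, the unit $\mathsf{P}\to\Pi_{\le 1}(\Omega^{\infty}\Psi(\mathsf{P}))\simeq\Pi_{\le 1}(B\mathsf{P})$ is the evident symmetric monoidal functor, and it induces isomorphisms on $\pi_0,\pi_1$ by the standard identifications $\pi_0 B\mathsf{P}=\pi_0\mathsf{P}$ and $\pi_1 B\mathsf{P}=\operatorname{Aut}_{\mathsf{P}}(1_{\mathsf{P}})$; since both sides are Picard groupoids, a symmetric monoidal functor inducing isomorphisms on $\pi_0,\pi_1$ is an equivalence. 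For $\Psi\Phi\simeq\operatorname{id}$, the counit $\Psi(\Pi_{\le 1}\Omega^{\infty}E)\to E$ is a map in $\mathsf{Sp}^{0,1}$ inducing isomorphisms on $\pi_0,\pi_1$; since both source and target live in $\mathsf{Sp}^{0,1}$, any such map is a stable equivalence. This last check also confirms the claim that $\Psi$ preserves the homotopy-group functors $\pi_0$ and $\pi_1$.

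The main obstacle is the coherent construction of $\Psi$: turning the abstract symmetric monoidal structure on $\mathsf{P}$ into the rigid input required by an infinite loop space machine (a $\Gamma$-space or permutative category) demands a strictification, and one must verify that the resulting spectrum is genuinely $\Omega$ in positive degrees with only two non-trivial homotopy groups. Rather than redo this, I would simply appeal to the standard comparison theorems (e.g.\ Segal \cite{MR1884523}-style arguments, Thomason's equivalence between symmetric monoidal categories and connective spectra, or Mandell's version for permutative categories), together with the truncation observation that a $1$-groupoid has a $2$-truncated classifying space. All remaining verifications — naturality of the unit and counit, and that the comparisons respect $\pi_0$ and $\pi_1$ — are then either tautological or follow from the definitions.
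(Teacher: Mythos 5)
Your proposal is correct in outline and is essentially the argument given in the sources on which the paper relies: the paper offers no proof of Theorem \ref{thm_hoPicvshoSp01} itself, citing \cite[\S 5.1, Theorem 5.3]{MR2981817} and \cite[1.5 Theorem]{MR2981952}, and those references proceed exactly as you do --- feed the (strictified) Picard groupoid into an infinite loop space machine, observe that the resulting connective spectrum is a stable $1$-type because $B\mathsf{P}$ is a homotopy $1$-type, recover a Picard groupoid from a stable $1$-type via the fundamental groupoid of $\Omega^{\infty}E$ with its $E_{\infty}$-structure, and detect the unit and counit equivalences on $\pi_{0}$ and $\pi_{1}$ (the same detection principle, \cite[Lemma 5.7]{MR2981817}, is invoked later in the paper). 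Two small caveats: your ``counit'' $\Psi(\Pi_{\leq 1}\Omega^{\infty}E)\to E$ is most naturally a zig-zag through the group-completion and Postnikov comparison maps supplied by the machine, which is harmless at the level of homotopy categories, and the reference you attach to Segal is actually Burns--Flach \cite{MR1884523} rather than Segal's paper.
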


Proofs are given in \cite[\S 5.1, Theorem 5.3]{MR2981817} or
\cite[1.5\ Theorem]{MR2981952}, but this fact was known long before, certainly
to Deligne and\ Drinfeld. Under this correspondence, the homotopy groups
$\pi_{i}$ for $i=0,1$ of a Picard groupoid $(\mathsf{P},\otimes)$ which are
discussed in Burns--Flach \cite[\S 2.1]{MR1884523} match isomorphically with
the homotopy groups of $\Psi(\mathsf{P},\otimes)$ as a spectrum. Let us point
out that the spectra in $\mathsf{Sp}^{0,1}$ can also be modelled alternatively
as genuine spaces (even in the format of CW complexes) under a further
equivalence between the homotopy categories of connective spectra with
infinite loop spaces. We will not make use of this at all, but it gives the
homotopy groups $\pi_{i}$ a very concrete meaning. Nonetheless, it is
typically futile to try to visualize the actual topology of such spaces.

\begin{definition}
[{Deligne, \cite[\S 4.2]{MR902592}}]\label{def_VirtObjects}Suppose
$\mathsf{C}$ is an exact category. The truncation $\tau_{\leq1}K(\mathsf{C})$
of the connective $K$-theory spectrum, viewed as a Picard groupoid (via
Equation \ref{lceyAJ1}), is called the \emph{Picard groupoid of virtual
objects}. We denote it by%
\[
V(\mathsf{C}):=\Psi^{-1}\left(  \tau_{\leq1}K(\mathsf{C})\right)  \text{.}%
\]

\end{definition}

\begin{remark}
Picard groupoids can also be modelled in very different ways, e.g., the stable
quadratic modules of Baues \cite{MR2353254}, see also \cite{MR3302579} for a
broader discussion.
\end{remark}

\subsection{Universal determinant}

Let us also recall a basic feature of $K$-theory. The following discussion
depends a little on what definition the reader has in mind for $K_{1}$. If $R
$ denotes a unital associative ring, the first $K$-theory group is most
frequently defined using the formula%
\begin{equation}
K_{1}(R):=\frac{\operatorname*{GL}(R)}{[\operatorname*{GL}%
(R),\operatorname*{GL}(R)]}\text{,}\label{lceyAJ3}%
\end{equation}
i.e. $K_{1}$ is the abelianization of the group $\operatorname*{GL}(R)$, which
in turn is the group of matrices of arbitrary large rank, viewed as a
compatible system by embedding rank $(n\times n)$-matrices into $(n^{\prime
}\times n^{\prime})$-matrices for $n^{\prime}\geq n$ as the top left minor and
completing the diagonal by $1$'s, and all off-diagonal terms by $0$'s.

From this description one can attach to every finitely generated projective
$R$-module $M\in\operatorname*{PMod}(R)$ along with an automorphism
$f:M\overset{\sim}{\rightarrow}M$ an element of $K_{1}$. If $M$ is free, this
is clear: Just pick an $R$-module isomorphism $\phi:M\simeq R^{n}$ for a
suitable $n$; then along this isomorphism $f$ becomes an element $\phi
f\phi^{-1}\in\operatorname*{GL}_{n}(R)\subseteq\operatorname*{GL}(R)$, and
thanks to the abelianization, the choice of the isomorphism $\phi$ was
irrelevant. If $M$ is projective, write $M$ as a direct summand of a free
module, say $M\oplus M^{\prime}\simeq R^{n}$, and prolong $f$ as $f\oplus0$ to
$R^{n}$. Then proceed as before. This construction gives a well-defined map%
\[
\operatorname*{Aut}(M)\longrightarrow K_{1}(R)\text{,}%
\]
for any $M\in\operatorname*{PMod}(R)$. Of course, choosing $M$ also gives a
well-defined element in $K_{0}(R)$, namely the class $[M]$ representing the
module. Both of these constructions can be combined, giving a symmetric
monoidal functor of Picard groupoids%
\[
\operatorname*{PMod}(R)^{\times}\longrightarrow V(R):=V(\operatorname*{PMod}%
(R))\text{.}%
\]
See also \cite[\S 2.5]{MR1884523}.

Of course, other authors right away define $K_{1}(R)$ through generators and
relations, where generators have the shape $[M,f]$, which $M$ being a finitely
generated projective $R$-module and $f$ an automorphism, e.g., \cite[\S 1.1]%
{MR2276851}. A more detailed discussion of the variants of these constructions
can be found in Weibel \cite{MR3076731}, Chapter III, Lemma 1.6 and Corollary 1.6.3.

A key insight due to Deligne is that the universal determinant functor of an
exact category arises through its $K$-theory, and essentially just amounts to
truncating the $K$-theory spectrum to degrees $[0,1]$. For an arbitrary exact
category $\mathsf{C}$, one can set up a canonical symmetric monoidal functor%
\begin{equation}
\mathsf{C}^{\times}\longrightarrow V(\mathsf{C})\text{.}\tag{$(\ast
)$}\label{lceyAJ4}%
\end{equation}
We have allowed ourselves the quick detour through projective $R$-modules and
the explicit presentation of Equation \ref{lceyAJ3} to motivate this
construction in a fairly concrete case.

\begin{remark}
The construction of the functor in Equation \ref{lceyAJ4} requires entering
the simplicial details of the definition of $K$-theory. The $K$-theory of an
exact category $\mathsf{C}$ is a special case of the $K$-theory of a
Waldhausen category (take the admissible monics as the cofibrations and
isomorphisms as weak equivalences). Waldhausen constructs a map%
\[
\left\vert \mathsf{C}^{\times}\right\vert \longrightarrow\Omega\left\vert
(S_{\bullet}\mathsf{C})^{\times}\right\vert =K(\mathsf{C})\text{,}%
\]
where $\left\vert -\right\vert $ denotes geometric realization, $S_{\bullet} $
is Waldhausen's $S$-construction, and $\Omega$ is the loop space functor, see
\cite[bottom of p. 329]{MR802796} or \cite[Chapter IV, Remark 8.3.2]%
{MR3076731}. This induces a map to the truncation,%
\[
\left\vert \mathsf{C}^{\times}\right\vert \longrightarrow K(\mathsf{C}%
)\longrightarrow\tau_{\leq1}K(\mathsf{C})
\]
and under the identification of the spaces with Picard groupoids, this becomes
the functor in Equation \ref{lceyAJ4}.
\end{remark}

The universality of $\mathsf{C}^{\times}\rightarrow V(\mathsf{C})$ as a
determinant functor now essentially reduces to the fact that the truncation
$\tau_{\leq1}$ is the universal operation having a stable $[0,1]$-type as output.

\begin{theorem}
[Deligne]\label{thm_deligne}Suppose $\mathsf{C}$ is an exact category. Then
the functor of Equation \ref{lceyAJ4} is universal for all determinant
functors:\ If $\mathcal{P}:\mathsf{C}^{\times}\rightarrow\mathsf{P}$ is any
determinant functor, then it factors%
\[%
\xymatrix{
\mathsf{C}^{\times} \ar[d]_{\ast} \ar[r]^{\mathcal{P} } & \mathsf{P} \\
{\Psi^{-1}}{\tau}_{\le1}K(\mathsf{C}), \ar[ur]
}%
\]
where $(\ast)$ is the morphism from $\mathsf{C}^{\times}$ to the virtual objects.
\end{theorem}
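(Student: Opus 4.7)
The plan is to leverage the bridge between Picard groupoids and stable $[0,1]$-types provided by the equivalence $\Psi$ of Theorem \ref{thm_hoPicvshoSp01}, so that the universal property becomes a statement about Postnikov truncation of spectra. The first step is to establish that the groupoid of determinant functors from $\mathsf{C}^\times$ to a Picard groupoid $\mathsf{P}$ is naturally equivalent to the mapping space $\operatorname{Map}_{\mathsf{Sp}}(K(\mathsf{C}), \Psi(\mathsf{P}))$. Granting this identification, the universal property is immediate: since $\Psi(\mathsf{P})$ has homotopy concentrated in degrees $0$ and $1$, any map $K(\mathsf{C}) \to \Psi(\mathsf{P})$ factors uniquely through the Postnikov truncation $\tau_{\leq 1} K(\mathsf{C})$, and applying $\Psi^{-1}$ yields the desired factorization of determinant functors through $V(\mathsf{C})$.

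Next, I would verify that the canonical functor $(\ast)\colon \mathsf{C}^\times \to V(\mathsf{C})$ is itself a determinant functor, so that the abstract factorization above produces a genuine factorization of determinant functors. Here one unpacks Waldhausen's construction: the composite $|\mathsf{C}^\times| \to \Omega|S_\bullet \mathsf{C}^\times|$ exploits that $1$-simplices in $S_\bullet \mathsf{C}$ are admissible exact sequences, so an exact sequence $G' \hookrightarrow G \twoheadrightarrow G''$ produces a path in the relevant classifying space which, after looping, yields the tensor-multiplicativity isomorphism of Equation \ref{lcey0}. The normalization on zero objects and the compatibility with three-step filtrations, i.e.\ axioms (2)--(4) of Definition \ref{def_DetFunctor}, are encoded respectively by the degeneracies and the $2$-simplices of $S_\bullet \mathsf{C}$.

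The main technical obstacle is the first step, namely establishing the equivalence between determinant functors and spectrum maps. One direction---composing with $(\ast)$ to send a spectrum map to a determinant functor---is formal. For the converse, the content is that the universal $1$-type built from $\mathsf{C}^\times$ subject to the determinant axioms coincides with $\tau_{\leq 1} K(\mathsf{C})$. I would argue this via the universal property of Waldhausen's $S$-construction: $S_\bullet \mathsf{C}$ is precisely the universal simplicial delooping that turns admissible short exact sequences into coherent additivity, so its $1$-truncation is the free Picard groupoid on the symbols of $\mathsf{C}^\times$ modulo the relations imposed by determinant functoriality. This is essentially the point of \cite[\S 4]{MR902592}, and modern treatments such as \cite{MR2981817}, \cite{MR2981952} make it precise in the spectrum language used here. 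Uniqueness of the factorization up to unique isomorphism then follows from the universal property of Postnikov truncation.
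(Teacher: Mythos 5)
The paper does not actually prove this statement: its ``proof'' consists of the single sentence referring the reader to Deligne \cite{MR902592}, so there is no argument of the paper's own to compare yours against. Your sketch follows what is by now the standard modern route (Deligne's original idea as made precise in the spectrum language of \cite{MR2981817}, \cite{MR2981952}): identify determinant functors $\mathsf{C}^{\times}\to\mathsf{P}$ with maps of spectra $K(\mathsf{C})\to\Psi(\mathsf{P})$, observe that since $\Psi(\mathsf{P})$ is a stable $[0,1]$-type such maps factor uniquely through the Postnikov truncation $\tau_{\leq1}K(\mathsf{C})$, and check that the canonical functor $(\ast)$ of Equation \ref{lceyAJ4} is itself a determinant functor so that the factorization is one of determinant functors. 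This outline is correct, and your use of Theorem \ref{thm_hoPicvshoSp01} to transport the truncation statement back to Picard groupoids is the right move (with the mild caveat that the theorem as quoted is only an equivalence of homotopy categories, so promoting a spectrum-level factorization to a symmetric monoidal functor equipped with the data of Definition \ref{def_DetFunctor} requires the finer, but standard, form of that comparison).

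The one place where you should be careful is your ``first step'': the assertion that the groupoid of determinant functors out of $\mathsf{C}^{\times}$ is equivalent to $\operatorname{Map}_{\mathsf{Sp}}(K(\mathsf{C}),\Psi(\mathsf{P}))$ \emph{is} the theorem, not a lemma on the way to it. Your justification --- that $S_{\bullet}\mathsf{C}$ is ``the universal simplicial delooping turning exact sequences into coherent additivity, so its $1$-truncation is the free Picard groupoid on the symbols of $\mathsf{C}^{\times}$ modulo the determinant relations'' --- is essentially a restatement of the conclusion rather than an argument for it. A complete proof has to compute the stable $1$-type of $\Omega\lvert S_{\bullet}\mathsf{C}\rvert$ explicitly, which comes down to analyzing the simplices of $S_{\bullet}\mathsf{C}$ in degrees $\leq 3$ (objects, exact sequences, and the compatibility of two-step filtrations encoding axiom (4) of Definition \ref{def_DetFunctor}), and then matching the resulting generators-and-relations description against the axioms of a determinant functor; this is exactly what Deligne \cite{MR902592} and, in the spectrum/stable-quadratic-module formulation, \cite{MR2981952} carry out. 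So as a blind proposal your plan is sound and correctly locates where the work lies, but the hard verification is delegated to the literature rather than performed --- which, to be fair, is also what the paper itself does.
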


We refer to Deligne \cite{MR902592} for the proof.

Suppose $A$ is an abelian group. The category of $A$-torsors is a Picard
groupoid, denote it by $\mathsf{Tors}(A)$, see \cite[Appendix A.1]{MR1988970}
for background. In this situation%
\[
\pi_{0}(\mathsf{Tors}(A))=0\qquad\qquad\text{and}\qquad\qquad\pi
_{1}(\mathsf{Tors}(A))=A\text{.}%
\]

The other important Picard groupoid is the \emph{determinant line}
$\mathsf{Pic}_{R}^{\mathbb{Z}}$ for a commutative ring $R$. This formalism is
developed by Knudsen and Mumford \cite[Chapter 1]{MR0437541}. They also prove
its universality.

\begin{theorem}
[Knudsen--Mumford]Suppose $R$ is a commutative ring. Then the universal
determinant functor of the category of finitely generated projective
$R$-modules $\operatorname*{PMod}(R)$ is equivalent to the determinant line,%
\begin{align*}
\operatorname*{PMod}(R)  & \longrightarrow\mathsf{Pic}_{R}^{\mathbb{Z}}\\
M  & \longmapsto\left(  \bigwedge\nolimits^{\operatorname*{top}}%
M,\operatorname*{rk}M\right)  \text{.}%
\end{align*}

\end{theorem}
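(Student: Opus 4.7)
The plan is to invoke Deligne's Theorem \ref{thm_deligne}, which identifies the universal determinant functor on $\operatorname{PMod}(R)^{\times}$ with the canonical map to virtual objects $\operatorname{PMod}(R)^{\times} \to V(\operatorname{PMod}(R))$. The claim then reduces to producing an equivalence of Picard groupoids $F\colon V(\operatorname{PMod}(R)) \overset{\sim}{\longrightarrow} \mathsf{Pic}_R^{\mathbb{Z}}$ through which the classical determinant line $\mathcal{D}(M) := (\wedge^{\operatorname{top}} M, \operatorname{rk} M)$ factors.

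The first step is to verify that $\mathcal{D}$ is a determinant functor in the sense of Definition \ref{def_DetFunctor}. Every exact sequence $M' \hookrightarrow M \twoheadrightarrow M''$ in $\operatorname{PMod}(R)$ splits by projectivity of $M''$, and standard multilinear algebra supplies a canonical isomorphism $\wedge^{\operatorname{top}} M \cong \wedge^{\operatorname{top}} M' \otimes_R \wedge^{\operatorname{top}} M''$ independent of the chosen splitting; ranks add. Axioms (2)--(4) of Definition \ref{def_DetFunctor} are then routine coherence checks carried out in \cite[Ch.~1]{MR0437541}. Crucially, the sign $(-1)^{nn'}$ on the symmetry constraint of $\mathsf{Pic}_R^{\mathbb{Z}}$ is exactly the Koszul sign incurred when swapping top exterior powers of ranks $n$ and $n'$, so the symmetric monoidal structure transports correctly. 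By Deligne's universal property, this produces the functor $F$.

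It remains to show $F$ is an equivalence. By Theorem \ref{thm_hoPicvshoSp01} it suffices to verify that $F$ induces isomorphisms on $\pi_0$ and $\pi_1$, the compatibility of the Postnikov $k$-invariants being automatic from the sign computation above. On $\pi_0$ the induced map is the rank-and-determinant $K_0(R) \to \pi_0(\mathsf{Pic}_R^{\mathbb{Z}})$; on $\pi_1$ it is the determinant map on automorphisms $K_1(R) \to \pi_1(\mathsf{Pic}_R^{\mathbb{Z}}) = \operatorname{Aut}_{\mathsf{Pic}_R^{\mathbb{Z}}}(R,0)$. Surjectivity on both is immediate: every invertible module $L$ is the top wedge of itself, and every unit in $R^\times$ arises as the determinant of $\alpha \in \operatorname{GL}_1(R)$. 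The hard part, and the heart of the Knudsen--Mumford theorem, is injectivity. Following \cite[Ch.~1]{MR0437541}, the argument proceeds by exhibiting an explicit candidate inverse $G\colon \mathsf{Pic}_R^{\mathbb{Z}} \to V(\operatorname{PMod}(R))$ sending $(L,n) \mapsto [L] + (n-1)[R]$ and each automorphism of the unit object to its image under the canonical map $R^\times = \operatorname{GL}_1(R) \hookrightarrow \operatorname{GL}(R) \twoheadrightarrow K_1(R)$; one then checks directly that the axioms of Definition \ref{def_DetFunctor} force $F \circ G$ and $G \circ F$ to be canonically isomorphic to identities. The delicate points will be (a) confirming $G$ respects the symmetric monoidal structure, signs included, and (b) verifying $G$ is well-defined modulo the equivalence relation of $\mathsf{Pic}_R^{\mathbb{Z}}$. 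Both reduce to matching the relations defining $K_0(R), K_1(R)$ by generators and relations against the determinant-functor axioms, which is the substantive content of the theorem.
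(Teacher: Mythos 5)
Your reduction to Deligne's Theorem \ref{thm_deligne}, and the verification that $M\mapsto(\bigwedge^{\operatorname*{top}}M,\operatorname*{rk}M)$ is a determinant functor (splitness of exact sequences in $\operatorname*{PMod}(R)$, the Koszul sign matching the symmetry constraint of $\mathsf{Pic}_{R}^{\mathbb{Z}}$), are fine, and the criterion you invoke --- a symmetric monoidal functor of Picard groupoids is an equivalence iff it induces isomorphisms on $\pi_{0}$ and $\pi_{1}$ --- is the one used elsewhere in the paper. The genuine gap is the injectivity step, which you label ``the hard part'' and outsource to an inverse $G:(L,n)\mapsto\lbrack L]+(n-1)[R]$, $R^{\times}\rightarrow K_{1}(R)$: such an inverse cannot exist for a general commutative ring. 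Indeed $\pi_{0}(V(\operatorname*{PMod}(R)))=K_{0}(R)$ and $\pi_{1}=K_{1}(R)$, and the maps induced by your $F$ are $(\operatorname*{rk},\det):K_{0}(R)\rightarrow H^{0}(\operatorname*{Spec}R,\mathbb{Z})\oplus\operatorname*{Pic}(R)$ and $\det:K_{1}(R)\rightarrow R^{\times}$; these have nontrivial kernels in general (for instance $SK_{0}$ of a smooth affine complex surface with nontrivial $CH^{2}$, or any commutative $R$ with $SK_{1}(R)\neq0$). Hence $G\circ F\cong\operatorname*{id}$ is false in this generality, and no coherence checking of signs or gradings can repair it: an equivalence $V(\operatorname*{PMod}(R))\simeq\mathsf{Pic}_{R}^{\mathbb{Z}}$ holds only under extra hypotheses on $R$ (local rings, fields, rings with $\widetilde{K}_{0}=\operatorname*{Pic}$ and $SK_{1}=0$, etc.).

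This is also where your route diverges from the paper. The paper does not attempt to identify $V(\operatorname*{PMod}(R))$ with $\mathsf{Pic}_{R}^{\mathbb{Z}}$ by a homotopy-group computation; its proof is a citation of \cite[Chapter 1, Theorem 1]{MR0437541}, together with the explicit warning that Knudsen--Mumford use a different notion of determinant functor, involving a normalization, so the cited result must be translated with care. Their universality is universality among determinant functors with values in (graded) line bundles, together with uniqueness of the extension to perfect complexes --- not the statement that the graded determinant line is Deligne's universal Picard groupoid of virtual objects. Your argument, by contrast, would prove that stronger statement, which is exactly what fails; to make your strategy work you would have to either restrict the class of rings $R$ or reformulate the claim in the Knudsen--Mumford sense rather than as an equivalence onto $V(\operatorname*{PMod}(R))$.
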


This is, essentially, \cite[Chapter 1, Theorem 1]{MR0437541}.\textit{\ Loc.
cit.} they give a different definition of a determinant functor, which
involves a normalization, so one has to translate the cited result with some
care. Although not relevant for our purposes, Knudsen and Mumford develop the
determinant line right away not just for commutative rings, but for an
arbitrary scheme.

The determinant line plays a fundamental r\^{o}le for the ETNC with
commutative coefficients, in the works predating \cite{MR1884523}.

With these preparations, we are ready to identify the Haar torsor as the
universal determinant functor on the exact category $\mathsf{LCA}$. We do not
know who made this observation first. In a perhaps not truly proven state it
seems to have been folklore for a while, but a complete proof probably only
became possible with the work of Clausen \cite{clausen}.

\begin{theorem}
[Universality of the Haar torsor]\label{thm_UnivOfHaarTorsor}The Haar functor
$Ha:\mathsf{LCA}^{\times}\rightarrow\mathsf{Tors}(\mathbb{R}_{>0}^{\times})$
is the universal determinant functor of the category $\mathsf{LCA}$. In
particular, Deligne's Picard groupoid of virtual objects for $\mathsf{LCA}$ is
isomorphic to the Picard groupoid of $\mathbb{R}_{>0}^{\times}$-torsors.
\end{theorem}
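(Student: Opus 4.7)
The plan is to invoke Deligne's universality characterization (Theorem \ref{thm_deligne}): the universal determinant functor on $\mathsf{LCA}$ is the canonical $\mathsf{LCA}^{\times}\to V(\mathsf{LCA})$. So the statement reduces to (a) identifying $V(\mathsf{LCA})$ with $\mathsf{Tors}(\mathbb{R}_{>0}^{\times})$ as Picard groupoids, and (b) checking that under this identification the canonical comparison functor agrees with the Haar functor. The Haar functor already has the structure of a determinant functor, as worked out in \S\ref{subsect_HaarIsDetFunctor} (properties \ref{lcey4a}--\ref{lcey3a}), so it factors canonically through $V(\mathsf{LCA})$.

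For (a), I would apply the main theorem (Theorem \ref{thm_Main_b}) to the regular order $\mathfrak{A}=\mathbb{Z}$ inside $A=\mathbb{Q}$. This produces the fiber sequence
\[
K(\mathbb{Z})\longrightarrow K(\mathbb{R})\longrightarrow K(\mathsf{LCA})
\]
in spectra. Reading off the long exact sequence in low degrees, using $K_{0}(\mathbb{Z})\overset{\sim}{\to}K_{0}(\mathbb{R})=\mathbb{Z}$, $K_{1}(\mathbb{Z})=\{\pm1\}$, and $K_{1}(\mathbb{R})=\mathbb{R}^{\times}$, one obtains $K_{0}(\mathsf{LCA})=0$ and $K_{1}(\mathsf{LCA})\cong\mathbb{R}^{\times}/\{\pm1\}\cong\mathbb{R}_{>0}^{\times}$. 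Hence $V(\mathsf{LCA})$ has vanishing $\pi_{0}$ and $\pi_{1}\cong\mathbb{R}_{>0}^{\times}$. Because any connected Picard groupoid is determined up to equivalence by its $\pi_{1}$ (the $k$-invariant lives in $H^{3}(\pi_{0},\pi_{1})$ and dies when $\pi_{0}=0$; equivalently, a connected stable $(0,1)$-type is just an Eilenberg--MacLane spectrum), we conclude that $V(\mathsf{LCA})\simeq\mathsf{Tors}(\mathbb{R}_{>0}^{\times})$.

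For (b), by Theorem \ref{thm_deligne} the Haar functor factors as
\[
Ha:\mathsf{LCA}^{\times}\longrightarrow V(\mathsf{LCA})\overset{F}{\longrightarrow}\mathsf{Tors}(\mathbb{R}_{>0}^{\times}),
\]
and it remains to show $F$ is an equivalence. Both Picard groupoids are connected, so by a Whitehead-type argument for stable $(0,1)$-types it suffices to check that $F_{\ast}:\pi_{1}V(\mathsf{LCA})\to\mathbb{R}_{>0}^{\times}$ is an isomorphism. Under the identification from Step (a), the generator of $\pi_{1}V(\mathsf{LCA})$ coming from the surjection $\mathbb{R}^{\times}=K_{1}(\mathbb{R})\twoheadrightarrow K_{1}(\mathsf{LCA})$ is the class of an automorphism $\mathbb{R}\xrightarrow{\lambda}\mathbb{R}$ of the LCA group $\mathbb{R}$ for $\lambda\in\mathbb{R}^{\times}$. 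Under $F_{\ast}$ this class is sent, by construction of $F$ from $Ha$ (see Equation \ref{lcey4b}), to the modulus $|\lambda|\in\mathbb{R}_{>0}^{\times}$. So $F_{\ast}$ is exactly the quotient map $\mathbb{R}^{\times}\twoheadrightarrow\mathbb{R}_{>0}^{\times}$, whose kernel $\{\pm1\}$ coincides with the image of $K_{1}(\mathbb{Z})$; hence $F_{\ast}$ descends to an isomorphism on $K_{1}(\mathsf{LCA})\cong\mathbb{R}_{>0}^{\times}$.

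The genuinely nontrivial input is the identification $K_{1}(\mathsf{LCA})\cong\mathbb{R}_{>0}^{\times}$, which is the content of the main theorem of the paper applied at $\mathfrak{A}=\mathbb{Z}$ (and is essentially Clausen's computation); once that is in hand, the remaining work is the compatibility check on $\pi_{1}$. The main obstacle I anticipate is making this compatibility rigorous: one must unwind the explicit description of the boundary map $K_{1}(\mathbb{R})\to K_{1}(\mathsf{LCA})$ used in the fiber sequence and verify that it carries a scalar $\lambda$ to the class of the automorphism $\mathbb{R}\xrightarrow{\lambda}\mathbb{R}$ in $\mathsf{LCA}$, as invoked above. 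This is a bookkeeping matter traceable through the proof of Theorem \ref{thm_Main_b} (specifically the functor $p$ in Equation \ref{l_Ba1} realizes automorphisms of $\mathfrak{A}$-modules as automorphisms of the corresponding discrete--vector--torus triple), but it is where the one explicit calculation of the argument has to be performed.
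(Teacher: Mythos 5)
Your proposal is correct and follows essentially the same route as the paper's proof: $Ha$ is a determinant functor and hence factors through $V(\mathsf{LCA})$ by Deligne's universality, the main theorem at $\mathfrak{A}=\mathbb{Z}$ gives $\pi_{0}V(\mathsf{LCA})=0$ and $\pi_{1}V(\mathsf{LCA})\cong\mathbb{R}_{>0}^{\times}$, and the comparison functor $F$ is an equivalence because it induces isomorphisms on $\pi_{0}$ and $\pi_{1}$, the $\pi_{1}$-map being the modulus. The compatibility you flag at the end is already supplied by the statement of Theorem \ref{thm_Main_b} (the map $j$ is induced by the functor regarding a projective $A_{\mathbb{R}}$-module as an LCA group with its real topology, so $\lambda\in K_{1}(\mathbb{R})$ goes to the class of $\cdot\lambda$ on $\mathbb{R}$), so no unwinding of the functor $p$ is needed.
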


\begin{proof}
Let $\det:\mathsf{LCA}^{\times}\longrightarrow V(\mathsf{LCA})$ be the
universal determinant functor in the sense of Deligne. Since $Ha$ is a
determinant functor (we checked this in \S \ref{subsect_HaarIsDetFunctor}), we
obtain a canonical factorization $F:V(\mathsf{LCA})\longrightarrow
\mathsf{Tors}(\mathbb{R}_{>0}^{\times})$. Now, by using Clausen's computation
of the $K$-theory of $\mathsf{LCA}$ \cite{clausen}, there is a long exact
sequence%
\[
K_{1}(\mathbb{Z})\longrightarrow K_{1}(\mathbb{R})\longrightarrow
K_{1}(\mathsf{LCA})\longrightarrow K_{0}(\mathbb{Z})\longrightarrow
K_{0}(\mathbb{R})\longrightarrow K_{0}(\mathsf{LCA})\longrightarrow0\text{.}%
\]
While this result is originally due to Clausen, it of course also follows from
our main result Theorem \ref{thm_main_asannounced} in the special case of the
order $\mathfrak{A:}=\mathbb{Z}$ in $A:=\mathbb{Q}$. Unravelling the terms, we
obtain%
\begin{equation}
\mathbb{Z}^{\times}\longrightarrow\mathbb{R}^{\times}\longrightarrow
K_{1}(\mathsf{LCA})\longrightarrow\mathbb{Z}\overset{1}{\longrightarrow
}\mathbb{Z}\longrightarrow K_{0}(\mathsf{LCA})\longrightarrow0\text{,}%
\label{l_cio4}%
\end{equation}
and therefore $K_{0}(\mathsf{LCA})=0$ and the absolute value isomorphism
$K_{1}(\mathsf{LCA})\cong\mathbb{R}^{\times}/\mathbb{Z}^{\times}\overset{\sim
}{\underset{\operatorname{abs}}{\longrightarrow}}\mathbb{R}_{>0}^{\times}$. By
Deligne's Theorem \ref{thm_deligne} the Picard groupoid of virtual objects is,
when regarded as a spectrum, the truncation of $K(\mathsf{LCA})$ to a stable
$[0,1]$-type. Hence, on $\pi_{0},\pi_{1}$ the functor $F$ induces the maps%
\begin{align*}
0  & =\pi_{0}(V(\mathsf{LCA}))\longrightarrow\pi_{0}(\mathsf{Tors}%
(\mathbb{R}_{>0}^{\times}))=0\\
\mathbb{R}_{>0}^{\times}  & =\pi_{1}(V(\mathsf{LCA}))\longrightarrow\pi
_{1}(\mathsf{Tors}(\mathbb{R}_{>0}^{\times}))\cong\mathbb{R}_{>0}^{\times}%
\end{align*}
because the homotopy groups of $V(\mathsf{LCA})$ in degree $0,1$ then agree
with the ones of $K(\mathsf{LCA})$. Since the construction of the modulus, see
Equation \ref{lcey4b}, can be recast as the comparison of a trivialization of
the torsor with the pullback of that trivialization along an automorphism, it
follows that the map on the level of $\pi_{1}$ is an isomorphism. However, a
map among stable $[0,1]$-types, or equivalently a symmetric monoidal functor
among Picard groupoids is an equivalence if it induces an isomorphism of
$\pi_{0}$ and $\pi_{1}$, \cite[Lemma 5.7]{MR2981817}. Hence, $F$ is an
equivalence of Picard groupoids and via the isomorphism $F$,%
\[
\det:\mathsf{LCA}^{\times}\longrightarrow V(\mathsf{LCA})\underset{F}%
{\overset{\sim}{\longrightarrow}}\mathsf{Tors}(\mathbb{R}_{>0}^{\times
})\text{,}%
\]
we identify the Haar functor as the universal determinant functor of
$\mathsf{LCA}$.
\end{proof}

\begin{theorem}
\label{marker_ComputeVLCAA}Let $\mathfrak{A}$ be a regular order in a
finite-dimensional semisimple $\mathbb{Q}$-algebra $A$. Then%
\begin{align*}
\pi_{0}V(\mathsf{LCA}_{\mathfrak{A}})  & \cong K_{-1}(\mathfrak{A}%
,\mathbb{R})\\
\pi_{1}V(\mathsf{LCA}_{\mathfrak{A}})  & \cong K_{0}(\mathfrak{A}%
,\mathbb{R})\text{,}%
\end{align*}
and, stronger, the Picard groupoid of equivariant volumes $V(\mathsf{LCA}%
_{\mathfrak{A}})$ is equivalent to the $1$-truncation of the $(-1)$-shift of
the fiber of the morphism $K(\mathfrak{A})\longrightarrow K(A_{\mathbb{R}})$.
\end{theorem}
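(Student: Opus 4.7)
The plan is to read this statement off formally from Theorem~\ref{thm_Main_b} together with Deligne's definition of virtual objects (Definition~\ref{def_VirtObjects}). First I would apply Theorem~\ref{thm_Main_b} to the localizing invariant $\mathbb{K}:\operatorname*{Cat}_{\infty}^{\operatorname*{ex}}\rightarrow\mathsf{Sp}$ of non-connective $K$-theory. This produces a fiber sequence of spectra
\[
\mathbb{K}(\mathfrak{A})\longrightarrow\mathbb{K}(A_{\mathbb{R}})\longrightarrow\mathbb{K}(\mathsf{LCA}_{\mathfrak{A}}).
\]
Rotating the triangle canonically identifies $\mathbb{K}(\mathsf{LCA}_{\mathfrak{A}})$ with the cofiber $\Sigma F$, where $F:=\operatorname{fib}(\mathbb{K}(\mathfrak{A})\rightarrow\mathbb{K}(A_{\mathbb{R}}))$; the suspension $\Sigma F$ is precisely what the statement calls the $(-1)$-shift of $F$, in the chain-complex convention where $[-1]$ denotes the shift that suspends.

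Next I would invoke Definition~\ref{def_VirtObjects}, which says $V(\mathsf{LCA}_{\mathfrak{A}})=\Psi^{-1}(\tau_{\leq1}K(\mathsf{LCA}_{\mathfrak{A}}))$ with $K$ connective. As in the proof of Theorem~\ref{thm_main_asannounced}, regularity of $\mathfrak{A}$ and $A_{\mathbb{R}}$ together with idempotent completeness of $\mathsf{LCA}_{\mathfrak{A}}$ forces all negative homotopy groups of $\mathbb{K}(\mathfrak{A})$, $\mathbb{K}(A_{\mathbb{R}})$, and $\mathbb{K}(\mathsf{LCA}_{\mathfrak{A}})$ to vanish, so each of the three non-connective spectra above is already connective and $\tau_{\leq1}\mathbb{K}=\tau_{\leq1}K$ throughout. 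Applying $\tau_{\leq1}$ to the canonical equivalence $K(\mathsf{LCA}_{\mathfrak{A}})\simeq\Sigma F$ and transporting through the equivalence $\Psi$ of Theorem~\ref{thm_hoPicvshoSp01} then gives the claimed canonical equivalence of Picard groupoids.

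The two isomorphisms on homotopy groups fall out immediately: $\pi_{i}V(\mathsf{LCA}_{\mathfrak{A}})=K_{i}(\mathsf{LCA}_{\mathfrak{A}})$ for $i=0,1$ by construction, and Theorem~\ref{thm_main_asannounced} identifies $K_{i}(\mathsf{LCA}_{\mathfrak{A}})$ with $K_{i-1}(\mathfrak{A},\mathbb{R})$, giving $\pi_{0}V=K_{-1}(\mathfrak{A},\mathbb{R})$ and $\pi_{1}V=K_{0}(\mathfrak{A},\mathbb{R})$. The only point requiring care is the shift bookkeeping: one has to recognize that the sign pattern dictated by Theorem~\ref{thm_main_asannounced} forces the shift to be a suspension rather than a loop, whence the ``$(-1)$-shift'' terminology. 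Beyond this, there is no substantive obstacle --- the theorem is a purely formal consequence of the main result and the definition of $V$.
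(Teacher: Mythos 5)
Your argument is correct and is essentially the paper's own proof: both rest on the fiber sequence of Theorem \ref{thm_Main_b}, the identification of $K(\mathsf{LCA}_{\mathfrak{A}})$ with the suspension of the fiber $K(\mathfrak{A},\mathbb{R})=\operatorname{hofib}\bigl(K(\mathfrak{A})\rightarrow K(A_{\mathbb{R}})\bigr)$, and then truncation to a stable $[0,1]$-type via Definition \ref{def_VirtObjects} and Theorem \ref{thm_hoPicvshoSp01}. Your extra care about connective versus non-connective $K$-theory (vanishing of negative $K$-groups by regularity and idempotent completeness) is exactly the bookkeeping the paper delegates to the proof of Theorem \ref{thm_main_asannounced}, so there is no substantive difference.
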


\begin{proof}
The ring morphism $c:\mathfrak{A}\longrightarrow A_{\mathbb{R}}$, which
plainly arises from tensoring $(-)\mapsto(-)\otimes_{\mathbb{Z}}\mathbb{R}$,
induces the relative $K$-theory fiber sequence%
\[
K(\mathfrak{A},\mathbb{R})\longrightarrow K(\mathfrak{A})\overset
{c}{\longrightarrow}K(A_{\mathbb{R}})\text{,}%
\]
where $K(\mathfrak{A},\mathbb{R}):=\operatorname*{hofib}(c:K(\mathfrak{A}%
)\longrightarrow K(A_{\mathbb{R}}))$ is just defined as the homotopy fiber.
This is equivalent to the conventions used in \cite{MR1884523}. By Theorem
\ref{thm_Main_b} it follows that there is an equivalence $K(\mathsf{LCA}%
_{\mathfrak{A}})\longrightarrow\Sigma K(\mathfrak{A},\mathbb{R})$, where
$\Sigma$ denotes the suspension. On the level of homotopy groups, this is a
shift of degree by one, thus%
\[
K_{i}(\mathfrak{A},\mathbb{R})\cong K_{i+1}(\mathsf{LCA}_{\mathfrak{A}})
\]
holds for all integers $i$.
\end{proof}

\bigskip

\section{Real and $p$-adic comparison}

Let $\mathfrak{A}$ be an arbitrary order in a finite-dimensional semisimple
$\mathbb{Q}$-algebra $A$. We recall that $A:=\mathfrak{A}\otimes_{\mathbb{Z}%
}\mathbb{Q}$. As in \cite[\S 2.7]{MR1884523}, we define%
\[
\widehat{A}:=A\otimes_{\mathbb{Z}}\widehat{\mathbb{Z}}=\mathfrak{A}%
\otimes_{\mathbb{Z}}\mathbb{A}_{fin}\text{,}%
\]
where $\mathbb{A}_{fin}$ denotes the \textit{finite part }of the ad\`{e}les of
the rational number field $\mathbb{Q}$, i.e. the restricted product $\left.
\prod\nolimits_{p}^{\prime}\right.  (\mathbb{Q}_{p},\mathbb{Z}_{p})$, where
$p$ only runs over the primes (excluding the infinite place). When we speak of
$\widehat{A}$, we regard it with the locally compact topology coming from
$\mathbb{A}$. In more detail:\ As an abelian group, we have $\mathfrak{A}%
\otimes_{\mathbb{Z}}\mathbb{A}_{fin}\overset{\phi}{\simeq}\mathbb{A}_{fin}%
^{n}$ for $n:=\dim_{\mathbb{Q}}A$, and if we equip the right side with the
standard topology of the ad\`{e}les, then this induces a topology on the left
side. This topology is independent of the choice of the isomorphism $\phi$.

\begin{theorem}
[Reciprocity Law]\label{thm_Recip}Let $\mathfrak{A}$ be an arbitrary order in
a finite-dimensional semisimple $\mathbb{Q}$-algebra $A$. Then the composition%
\begin{equation}
K(A)\longrightarrow K(\widehat{A})\oplus K(A_{\mathbb{R}})\overset
{+}{\longrightarrow}K(\mathsf{LCA}_{\mathfrak{A}})\label{lmito1}%
\end{equation}
is zero.

\begin{enumerate}
\item The first arrow is induced from the exact functor $M\mapsto(M\otimes
_{A}\widehat{A}\,,\,M\otimes_{\mathbb{Z}}\mathbb{R)}$.

\item The second arrow is induced from mapping a free right $\widehat{A}%
$-module to itself, regarded with its natural topology, as explained in above
the theorem. Similarly, map a free right $A_{\mathbb{R}}$-module to itself,
equipped with the real topology.
\end{enumerate}
\end{theorem}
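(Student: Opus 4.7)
The plan is to realize the composition in Equation \ref{lmito1} as the middle vertex of a short exact sequence of functors, whose outer two vertices both induce null-homotopic maps in $K$-theory. Concretely, I construct an exact functor
\[
F:\operatorname{PMod}(A)\longrightarrow \mathcal{E}\mathsf{LCA}_{\mathfrak{A}},\qquad M\longmapsto\bigl[\,M\overset{\Delta}{\hookrightarrow}(M\otimes_A\widehat{A})\oplus(M\otimes_{\mathbb{Z}}\mathbb{R})\twoheadrightarrow T_M\,\bigr],
\]
where $M$ carries the discrete topology, the middle term its natural locally compact topology, $\Delta$ is the diagonal, and $T_M$ is the cokernel; $\mathcal{E}\mathsf{LCA}_{\mathfrak{A}}$ denotes the exact category of short exact sequences. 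Writing $F_1,F_2,F_3$ for the three vertex-extraction functors, additivity (valid for every localizing invariant by \cite{MR3070515}) yields a homotopy $F_{2\ast}\sim F_{1\ast}+F_{3\ast}$ of maps $K(A)\to K(\mathsf{LCA}_{\mathfrak{A}})$.

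The construction of $F$ reduces to the classical adelic sequence. For $M=A$, since $A\cong\mathbb{Q}^d$ as an abelian group with $d=\dim_{\mathbb{Q}}A$ (\S \ref{subsect_Basics}), the sequence coincides componentwise with $\mathbb{Q}\hookrightarrow\mathbb{A}_{\mathbb{Q}}\twoheadrightarrow\mathbb{A}_{\mathbb{Q}}/\mathbb{Q}$ in $\mathsf{LCA}$; here $\Delta$ is an admissible monic by the classical discreteness of $A$ inside its adeles, and $T_A$ is compact by the classical compactness of the adelic quotient. For $M=A^n$ one sums; for an arbitrary $M\in\operatorname{PMod}(A)$, writing $M$ as a direct summand of some $A^n$ and using additivity of each of the three constructions $M\mapsto M$, $M\mapsto M\otimes_A\widehat{A}$, $M\mapsto M\otimes_{\mathbb{Z}}\mathbb{R}$ (together with the cokernel operation) transports the exact sequence to $M$. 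Because $A$ is semisimple, $\operatorname{PMod}(A)$ is split exact, so exactness of $F$ as a functor between exact categories collapses to additivity, which is clear. Finally, all maps are $\mathfrak{A}$-linear since $\mathfrak{A}\subset A$, so $F$ genuinely lands in $\mathcal{E}\mathsf{LCA}_{\mathfrak{A}}$.

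To conclude, I observe that $F_1$ factors through the full subcategory $\mathsf{LCA}_{\mathfrak{A},D}\simeq\mathsf{Mod}_{\mathfrak{A}}$ and $F_3$ factors through $\mathsf{LCA}_{\mathfrak{A},C}$ (by construction $M$ is discrete and $T_M$ is compact). Both $K(\mathsf{LCA}_{\mathfrak{A},D})$ and $K(\mathsf{LCA}_{\mathfrak{A},C})$ vanish by the Eilenberg swindle (countable coproducts for the discrete side, countable products and Tychonoff's theorem for the compact side; cf. \cite[Lemma 4.2]{obloc}, precisely as was used in the proofs of Proposition \ref{Prop_KRCEqualsKR} and Proposition \ref{prop_PrepareMain}). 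Therefore $F_{1\ast}=F_{3\ast}=0$, and the additivity relation collapses to $F_{2\ast}=0$. Splitting $F_2$ as the direct sum $F_{2,a}\oplus F_{2,b}$ with $F_{2,a}(M)=M\otimes_A\widehat{A}$ and $F_{2,b}(M)=M\otimes_{\mathbb{Z}}\mathbb{R}$, and noting that these two functors factor respectively through $\operatorname{PMod}(\widehat{A})\to\mathsf{LCA}_{\mathfrak{A}}$ and $\operatorname{PMod}(A_{\mathbb{R}})\to\mathsf{LCA}_{\mathfrak{A}}$, one recognizes $F_{2\ast}=F_{2,a,\ast}+F_{2,b,\ast}$ as exactly the composition in Equation \ref{lmito1}.

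The principal obstacle I foresee is not conceptual but bookkeeping: packaging the adelic sequence into a genuine exact functor on $\operatorname{PMod}(A)$, rather than as a mere pointwise construction, and checking carefully that $\Delta$ is an admissible monic and the quotient map an admissible epic in $\mathsf{LCA}_{\mathfrak{A}}$ (injectivity/surjectivity plus the closed/open conditions of Lemma \ref{lemma_CharacterizeAdmissibleMorphisms}) for every projective $M$ simultaneously. Once this is in place, all remaining ingredients — additivity of localizing invariants and the vanishing of the $K$-theories of the discrete and compact subcategories — are already available in the paper, so the reciprocity law drops out cleanly.
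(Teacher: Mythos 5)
Your proposal is correct and follows essentially the same route as the paper: the paper defines three exact functors on $\operatorname{PMod}(A)$ via the adelic sequence $A\hookrightarrow A\otimes_{\mathbb{Q}}\mathbb{A}\twoheadrightarrow(A\otimes_{\mathbb{Q}}\mathbb{A})/A$ (your middle term $(M\otimes_A\widehat{A})\oplus(M\otimes_{\mathbb{Z}}\mathbb{R})$ is exactly $M\otimes_{\mathbb{Q}}\mathbb{A}$), applies additivity of localizing invariants, and kills the outer terms by the Eilenberg swindle in $\mathsf{LCA}_{\mathfrak{A},D}$ and $\mathsf{LCA}_{\mathfrak{A},C}$. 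The only cosmetic difference is that the paper sidesteps your ``bookkeeping'' concern by specifying the functors on the projective generator $A$ of the split exact category $\operatorname{PMod}(A)$, rather than constructing the sequence for every $M$ directly.
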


This is a non-commutative version of the reciprocity-like statements of
\cite{clausen}. Note in the proof below that the essential bits of the proof
are not so much arithmetic, but rather topological. However, of course
arithmetic and topology are intertwined concepts when it comes to the ad\`{e}les.

\begin{proof}
Write $\mathbb{A}$ for the full ad\`{e}les of the rational numbers
$\mathbb{Q}$, i.e. $\mathbb{A}\cong\mathbb{A}_{fin}\oplus\mathbb{R}$. Recall
that%
\[
\mathbb{Q}\hookrightarrow\mathbb{A}\twoheadrightarrow\mathbb{A}/\mathbb{Q}%
\]
is an exact sequence in $\mathsf{LCA}$, where $\mathbb{Q}$ is embedded
diagonally as a closed subgroup. The quotient $\mathbb{A}/\mathbb{Q}$ is
compact connected. We define three exact functors%
\[
f_{i}:\operatorname*{PMod}(A)\longrightarrow\mathsf{LCA}_{\mathfrak{A}}%
\]%
\[
f_{1}(A):=A\text{,}\qquad f_{2}(A):=A\otimes_{\mathbb{Q}}\mathbb{A}%
\text{,}\qquad f_{3}(A):=(A\otimes_{\mathbb{Q}}\mathbb{A})/A\text{,}%
\]
where $A$ in $f_{1}$ is equipped with the discrete topology, $A\otimes
_{\mathbb{Q}}\mathbb{A}$ in $f_{2}$ (which is $\simeq\mathbb{A}^{n}$ as an
LCA\ group) with the natural topology of the ad\`{e}les, and the values of
$f_{3}$ with the quotient topology: Since $\mathbb{Q}\hookrightarrow
\mathbb{A}$ is closed, $A\hookrightarrow A\otimes_{\mathbb{Q}}\mathbb{A}$ is
also closed. It suffices that we have given the values of $f_{i}$ for
$i=1,2,3$ only for $A$ since $A$ is a projective generator of the split exact
category $\operatorname*{PMod}(A)$, so in order to define an exact functor it
suffices to specify its value on a projective generator. Since for every
$W\in\operatorname*{PMod}(A)$ we obtain that $f_{1}(W)\hookrightarrow
f_{2}(W)\twoheadrightarrow f_{3}(W)$ is exact in $\mathsf{LCA}_{\mathfrak{A}}%
$, and functorially so, the Additivity Theorem implies that%
\[
f_{2\ast}=f_{1\ast}+f_{3\ast}:K(A)\longrightarrow K(\mathsf{LCA}%
_{\mathfrak{A}})
\]
as maps of spectra (this is analoguos to an idea in the proof of Theorem
\ref{thm_Main_b}). As $f_{1\ast}=f_{3\ast}=0$, because they factor over the
fully exact subcategories $\mathsf{LCA}_{\mathfrak{A},D}$ resp. $\mathsf{LCA}%
_{\mathfrak{A},C}$ whose $K$-theory is contractible by \cite[Lemma 4.2]%
{obloc}, we deduce $f_{2\ast}=0$. Finally, observe that the composition of the
functors in the statement of our theorem literally give the functor $f_{2}$,
finishing the proof.
\end{proof}

\begin{example}
Suppose $F$ is a number field and $\mathfrak{A}:=\mathcal{O}$ its ring of
integers. Then the reciprocity law, Theorem \ref{thm_Recip}, states that the
composition%
\[
K(F)\longrightarrow K(\mathbb{A}_{F,fin})\oplus K(F\otimes_{\mathbb{Q}%
}\mathbb{R})\longrightarrow K(\mathsf{LCA}_{\mathcal{O}})
\]
is zero. In degree one, this becomes $F^{\times}\rightarrow K_{1}%
(\mathbb{A}_{F})\rightarrow K(\mathsf{LCA}_{\mathcal{O}})$ being the zero map.
This is quite close to the statement that%
\[
F^{\times}\hookrightarrow\mathbb{A}^{\times}\twoheadrightarrow K_{1}%
(\mathsf{LCA}_{F})
\]
is an exact sequence, proven in joint work with Arndt in \cite{kthyartin}.
These reciprocity statements are modelled after Clausen's ideas in
\cite{clausen}.
\end{example}

At the beginning of the paper we had discussed the difference between the real
determinant line and the Haar torsor, and we had made some claims regarding
their precise relation which so far we have not proved, and we fill this gap now.

\begin{proposition}
\label{prop_CC1}We compare the real/$p$-adic determinant line torsor with the
Haar torsor in the non-equivariant setting:

\begin{enumerate}
\item In Example \ref{ex_DetA1} the map%
\[
\pi_{1}(\mathsf{Pic}_{R}^{\mathbb{Z}})\rightarrow\pi_{1}(V(\mathsf{LCA}))
\]
is given by $\alpha\mapsto\left\vert \alpha\right\vert $, i.e. the Haar torsor
differs from the graded real determinant line precisely by forgetting the sign.

\item In Example \ref{ex_DetA2} the map%
\[
\pi_{1}(\mathsf{Pic}_{\mathbb{Q}_{p}}^{\mathbb{Z}})\longrightarrow\pi
_{1}(V(\mathsf{LCA}))
\]
is given by $\alpha\longmapsto\left\vert \alpha\right\vert _{p}$, i.e. the
Haar torsor differs from the graded $p$-adic determinant line precisely by
just keeping the $p$-adic valuation and nothing else (so it does not see the
multiplication by any unit in $\mathbb{Z}_{p}^{\times}$).
\end{enumerate}
\end{proposition}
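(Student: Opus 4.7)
The plan is to reduce both parts to a single mechanism: for each commutative ring $R \in \{\mathbb{R}, \mathbb{Q}_{p}\}$, the exact functor $\psi_{R} \colon \operatorname{PMod}(R) \to \mathsf{LCA}$ which equips a finitely generated projective $R$-module with its natural LCA topology induces a symmetric monoidal functor of Picard groupoids $V(\operatorname{PMod}(R)) \to V(\mathsf{LCA})$. On $\pi_{1}$, after invoking the Knudsen--Mumford identification $V(\operatorname{PMod}(R)) \simeq \mathsf{Pic}_{R}^{\mathbb{Z}}$ on the source and the Haar identification $V(\mathsf{LCA}) \simeq \mathsf{Tors}(\mathbb{R}_{>0}^{\times})$ of Theorem \ref{thm_UnivOfHaarTorsor} on the target, this becomes a homomorphism $R^{\times} \to \mathbb{R}_{>0}^{\times}$ sending $\alpha$ to the modulus of multiplication-by-$\alpha$ on the LCA group $\psi_{R}(R)$. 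Indeed, under Knudsen--Mumford the generator $\alpha \in R^{\times} = \pi_{1}(\mathsf{Pic}_{R}^{\mathbb{Z}})$ corresponds to multiplication by $\alpha$ on the rank-one free module $R$, and under Theorem \ref{thm_UnivOfHaarTorsor} an element of $\pi_{1}(V(\mathsf{LCA}))$ is precisely the modulus of the corresponding LCA automorphism.

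For part (1), the modulus of multiplication by $\alpha$ on $(\mathbb{R},+)$ is the Lebesgue scaling factor $|\alpha|$; this is in effect the whole of the proof. If one prefers a purely $K$-theoretic verification, Theorem \ref{thm_main_asannounced} applied with $\mathfrak{A} = \mathbb{Z}$ yields the long exact sequence of Equation \ref{l_cio4},
\[
\mathbb{Z}^{\times} \hookrightarrow \mathbb{R}^{\times} \longrightarrow K_{1}(\mathsf{LCA}) \longrightarrow \mathbb{Z} \xrightarrow{1} \mathbb{Z},
\]
whose right-hand map is the isomorphism $[\mathbb{Z}] \mapsto [\mathbb{R}]$. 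Exactness gives $K_{1}(\mathsf{LCA}) \cong \mathbb{R}^{\times}/\{\pm 1\}$, and the Haar identification of Theorem \ref{thm_UnivOfHaarTorsor} carries this quotient onto $\mathbb{R}_{>0}^{\times}$ by the absolute value. Hence the map in question is $\alpha \mapsto |\alpha|$.

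For part (2), the ring $\mathbb{Z}_{p}$ is not a $\mathbb{Z}$-order, so there is no immediate analogue of the long exact sequence above and I would instead identify the map by computing the modulus directly on a generating set of $\mathbb{Q}_{p}^{\times} \cong \mathbb{Z}_{p}^{\times} \times p^{\mathbb{Z}}$. For a unit $u \in \mathbb{Z}_{p}^{\times}$, multiplication by $u$ on $\mathbb{Q}_{p}$ restricts to an automorphism of the compact subgroup $\mathbb{Z}_{p}$ with discrete quotient $\mathbb{Q}_{p}/\mathbb{Z}_{p}$, and the argument of Example \ref{ex_DetA2} (relying on Lemma \ref{lemma_ConstraintDetFunctor} for the compact and discrete pieces) forces the modulus to be $1$, matching $|u|_{p} = 1$. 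For the uniformizer $\alpha = p$, one computes the pullback of any Haar measure under $\cdot p$ directly using $[\mathbb{Z}_{p} : p\mathbb{Z}_{p}] = p$, obtaining the scaling factor $1/p = |p|_{p}$. Multiplicativity of the modulus on $\operatorname{Aut}(\mathbb{Q}_{p})$ then gives the assignment $\alpha \mapsto |\alpha|_{p}$ on all of $\mathbb{Q}_{p}^{\times}$.

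The main obstacle is less a mathematical difficulty than a careful bookkeeping one: one must verify that the map on $\pi_{1}$ between the Picard groupoids of virtual objects induced by an exact functor coincides, under the Knudsen--Mumford and Haar identifications, with the standard $K_{1}$-map written down in terms of automorphisms of rank-one free modules. This is the content of the naturality of the truncation $\tau_{\leq 1}K$ in Definition \ref{def_VirtObjects} together with the universal property of Theorem \ref{thm_deligne}. The sign issue in part (1) is precisely what is handled by the appearance of the kernel $\mathbb{Z}^{\times} = \{\pm 1\}$ in Equation \ref{l_cio4}, so the long-exact-sequence verification is the cleanest way to keep track of it.
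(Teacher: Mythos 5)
Your argument is correct, and part (1) coincides with the paper's proof (the identification of $K_{1}(\mathbb{R})\rightarrow K_{1}(\mathsf{LCA})$ with the absolute value via the sequence in Equation \ref{l_cio4}, which is exactly how the paper argues). For part (2), however, you take a genuinely different route. You compute the map locally: units of $\mathbb{Z}_{p}^{\times}$ are forced to act trivially by the compact/discrete rigidity of Lemma \ref{lemma_ConstraintDetFunctor}, and the value on the uniformizer is obtained by a direct Haar-measure computation, $[\mathbb{Z}_{p}:p\mathbb{Z}_{p}]=p$ giving modulus $p^{-1}=\left\vert p\right\vert _{p}$, after which multiplicativity on $\mathbb{Q}_{p}^{\times}\cong p^{\mathbb{Z}}\oplus\mathbb{Z}_{p}^{\times}$ finishes the job. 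The paper instead deliberately avoids any local normalization and runs a global-to-local argument: it applies the Reciprocity Law (Theorem \ref{thm_Recip}) to the class $[p]\in K_{1}(\mathbb{Q})$, decomposes the ad\`{e}les as in Equation \ref{lcio5}, kills all factors at $\ell\neq p,\infty$ by the Example \ref{ex_DetA2} argument, uses part (1) at the real place, and solves $1=\left\vert p\right\vert \cdot\beta$ for the unknown $p$-adic contribution $\beta$. Your approach is more elementary and self-contained, essentially the classical computation of the module of an automorphism of a local field; what it does not deliver is the point the paper is advertising, namely that the $p$-adic normalization (and with it the product formula) can be \emph{derived} from the topological reciprocity statement rather than put in by hand, which is why the paper routes the proof through Theorem \ref{thm_Recip}. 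Both arguments rest on the same bookkeeping fact you flag at the end --- that under the equivalence $F$ of Theorem \ref{thm_UnivOfHaarTorsor} the $K_{1}$-class of an automorphism of an LCA group is sent to its modulus, so that the induced map on $\pi_{1}$ is computed by moduli of multiplication maps; this is indeed guaranteed by the construction of $F$ as the factorization of the Haar determinant functor through the universal one (Theorem \ref{thm_deligne}) together with Equation \ref{lcey4b}, and the paper uses the same fact implicitly, so there is no gap.
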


\begin{proof}
(1) The map in question is induced from the map of $K$-theory spectra
$K(\mathbb{R})\rightarrow K(\mathsf{LCA})$ upon truncation $\tau_{\leq1}$ and
$\Psi^{-1}$, so we just need to understand $K_{1}(\mathbb{R})\rightarrow
K_{1}(\mathsf{LCA})$. This map already appears in the proof of Theorem
\ref{thm_UnivOfHaarTorsor}, namely in Equation \ref{l_cio4} and our discussion
ibid. showed that it agrees with the absolute value homomorphism. (2) This is
a little trickier, but very beautiful (as we believe). We use the Reciprocity
Law, Theorem \ref{thm_Recip}, and prove this by a global-to-local argument. We
only need the case $\mathfrak{A}:=\mathbb{Z}$ and $A:=\mathbb{Q}$. Then the
Reciprocity Law in degree one tells us that the composition%
\begin{equation}
a:K_{1}(\mathbb{Q})\longrightarrow K_{1}(\mathbb{A}_{fin})\oplus
K_{1}(\mathbb{R})\longrightarrow K_{1}(\mathsf{LCA})\label{lcio5_a}%
\end{equation}
is zero. Clearly multiplication by $p$, $\mathbb{Q}\overset{\cdot p}%
{\underset{\sim}{\longrightarrow}}\mathbb{Q}$, is an isomorphism, so it
defines an element $[p]\in\mathbb{Q}^{\times}\cong K_{1}(\mathbb{Q})$.
Multiplication by $p$ also defines an isomorphism of the ad\`{e}les in
$\mathsf{LCA}$,%
\[
\mathbb{A}\overset{\cdot p}{\mathbb{\underset{\sim}{\longrightarrow}}%
}\mathbb{A}\text{,}\qquad\qquad\left(  \;\text{or }\mathbb{A}_{fin}%
\oplus\mathbb{R}\overset{\cdot p}{\mathbb{\underset{\sim}{\longrightarrow}}%
}\mathbb{A}_{fin}\oplus\mathbb{R}\;\right)
\]
and we can get this isomorphism by applying the functor underlying the map $a$
in Equation \ref{lcio5_a}, namely $a(\mathbb{Q})\overset{a(-\cdot
p)}{\underset{\sim}{\longrightarrow}}a(\mathbb{Q})$. However, since $a=0$ on
the level of $K$-theory by the Reciprocity Law, the class $a\left(
[p]\right)  \in K_{1}(\mathsf{LCA})$ is zero. Now, writing the ad\`{e}les as a
product%
\begin{equation}
\mathbb{A}\simeq\mathbb{Q}_{p}\oplus\left.  \prod\nolimits_{\ell\neq p,\infty
}^{\prime}\right.  (\mathbb{Q}_{\ell},\mathbb{Z}_{\ell})\oplus\mathbb{R}%
\text{,}\label{lcio5}%
\end{equation}
where the restricted product runs over all places $\ell\notin\{p,\infty\}$, we
also have the following: On each $\mathbb{Q}_{\ell}$ with $\ell\neq p$ and
$\ell$ a finite place, we have $p\in\mathbb{Z}_{\ell}^{\times}$, so by the
argument in Example \ref{ex_DetA2} (applied for the prime $\ell$) the
multiplication by $p$ automorphism induces the trivial class in $K_{1}%
(\mathsf{LCA})$. Since this applies to all topological $\ell$-torsion factors
of $\left.  \prod\nolimits_{\ell\neq p,\infty}^{\prime}\right.  (\mathbb{Q}%
_{\ell},\mathbb{Z}_{\ell})$, the induced map of multiplication with $p$ on
$K$-theory on this object is zero. Further, on $\mathbb{R}$ we know by\ part
(1) of this proof that we get $p=\left\vert p\right\vert \in K_{1}%
(\mathsf{LCA})$, and on $\mathbb{Q}_{p}$ we do not yet know what class in
$K_{1}$ we get, but this is what we wanted to compute. We may call it
$\beta\in\mathbb{R}_{>0}^{\times}$. However, since we know that on the entire
ad\`{e}les the $K$-theory class is the neutral element, we obtain by the
additivity along direct sums in Equation \ref{lcio5} the identity%
\[
1=\left\vert p\right\vert \cdot\beta\cdot1\text{,}%
\]
where $\beta$ is the class of multiplication by $p$ on $\mathbb{Q}_{p}$ in
$K_{1}(\mathsf{LCA})$; this corresponds to the multiplicativity of the
modulus, Equation \ref{lcey3b}. We deduce from this equation that
$\beta=p^{-1}=\left\vert p\right\vert _{p}$, i.e. the standard normalization
of the $p$-adic absolute value of $p$. Next, for $x\in\mathbb{Z}_{p}^{\times}$
by the argument in Example \ref{ex_DetA2}, this time applied for the prime $p$
itself, such an $x$ induces also the trivial class in $K$-theory. Since%
\[
\mathbb{Q}_{p}^{\times}\simeq\left\langle p^{\mathbb{Z}}\right\rangle
\oplus\mathbb{Z}_{p}^{\times}\text{,}%
\]
by multiplicativity, these computations describe the entire map%
\[
K_{1}(\mathbb{Q}_{p})\longrightarrow K_{1}(\mathsf{LCA})\text{,}%
\]
and we see that it indeed agrees with the $p$-adic valuation.
\end{proof}

\begin{remark}
The reader will observe that the statement $a=0$ in the previous proof is
precisely the product formula%
\[
\prod_{v}\left\vert x\right\vert _{v}=1\qquad\text{for}\qquad x\in
\mathbb{Q}^{\times}\text{,}%
\]
(where $v$ runs through all places), but here we do \emph{not} get it from
using the standard normalized definitions of absolute values, but rather by
the topological properties of the ad\`{e}les alone. These considerations work
also for ad\`{e}les of arbitrary number fields and can also be exploited in
other degrees of $K$-theory to get the vanishing statements of reciprocity
laws. See \cite{clausen}, \cite{kthyartin}.
\end{remark}

\subsection{Dreams}

\subsubsection{Equivariant measure}

Without doubt, as much as the term \textquotedblleft equivariant Haar
measure\textquotedblright\ and the surrounding picture is suggestive, we have
only constructed the associated torsor, but not an actual formalism of a
measure. For example, our torsor description tells us how to relate the
equivariant measures on objects sitting in exact sequences $A\hookrightarrow
B\twoheadrightarrow C$, where $A$ is a subobject. By using
translation-invariance as a guiding principle, one would like to believe that
this can be extended to all cosets $A+b$ with $b\in B$, too. An approach using
classical $K$-theory is not possible since cosets like $A+b$ for $b\neq0$ are
not subobjects. Perhaps Zakharevich's $K$-theory of assemblers
\cite{MR3558230} could be the basic language for attacking these issues. More
optimistically, there should be a form of a $\sigma$-algebra style calculus,
where not just finite scissor operations are allowed, but also countable ones.
It would be wonderful to have some formalism where computations in the style
of Tate's thesis could exist. All of this, of course, is fairly utopic and
speculative at the moment.

Nonetheless, in the non-equivariant situation, $\mathfrak{A}=\mathbb{Z}$ and
$A_{\mathbb{R}}=\mathbb{R}$, the classical theory of the Haar measure has all
these features, so \textit{one} example exists.\ Clearly, such a conjectural
theory would not be real-valued, and the glueing laws when taking the union or
intersection of measurable subsets will involve homotopical structures,
controlled by the symmetry constraint of the Picard groupoid $V(\mathsf{LCA}%
_{\mathfrak{A}})$ of Definition \ref{def_EquivHaarMeasure}.

\subsubsection{ETNC}

In this section, we rely further on the notation and conventions of
\cite{MR1884523}. Let $K$ be a number field, $M$ a motive in the category of
Chow motives over $K$ with rational coefficients (i.e. a manifestation of a
pure motive using the classical approach of Grothendieck, \cite{MR2115000}).
Suppose $M$ has an action by a finite-dimensional semisimple $\mathbb{Q}%
$-algebra $A$, i.e. a $\mathbb{Q}$-algebra homomorphism%
\[
A\longrightarrow\operatorname*{End}\nolimits_{\mathsf{Motives}}(M)\text{.}%
\]
Let $\mathfrak{A}\subset A$ be an order in $A$ and $T$ a projective
$\mathfrak{A}$-structure in the sense of Burns--Flach, \cite[\S 3.3,
Definition 1]{MR1884523}. Recall the conventions%
\[
\mathfrak{A}_{p}:=\mathfrak{A}\otimes_{\mathbb{Z}}\mathbb{Z}_{p}\text{,}\qquad
A_{p}:=A\otimes_{\mathbb{Q}}\mathbb{Q}_{p}\text{.}%
\]
Assuming the Coherence Hypothesis, \cite[\S 3.3]{MR1884523}, Burns and Flach
define an object%
\[
\Xi(M,T,S)_{\mathbb{Z}}\in\prod_{p}V(\mathfrak{A}_{p})\times_{\prod_{p}%
V(A_{p})}V(A)
\]
for $S$ a suitable choice of places. Here $V(-)$ denotes Deligne's virtual
object functor, see Definition \ref{def_VirtObjects}. They then prove that it
is independent of the choice of $T$ and $S$, so that they obtain a
well-defined object $\Xi(M)_{\mathbb{Z}}\in\mathbb{V}(\mathfrak{A})$. Assuming
\cite[\S 3.2, Conjecture 1]{MR1884523} (the fundamental exact sequence over
the reals), they deduce that after base change to the reals, there is a
canonical trivialization%
\[
\vartheta_{\infty}:\Xi(M)_{\mathbb{Z}}\otimes_{A}A_{\mathbb{R}}\cong%
1_{V(A_{\mathbb{R}})}\text{.}%
\]
Thus, they obtain an object $\Xi(M)_{\mathbb{Z}}\in\mathbb{V}(\mathfrak{A}%
,\mathbb{R})$, and using%
\[
\pi_{0}\mathbb{V}(\mathfrak{A},\mathbb{R})\cong K_{0}(\mathfrak{A},\mathbb{R})
\]
(see \cite[Proposition 2.5]{MR1884523}) this pins down the class%
\[
R\Omega(M,\mathfrak{A})\in K_{0}(\mathfrak{A},\mathbb{R})\text{.}%
\]
It is further shown in \cite[\S 3.4, Lemma 7]{MR1884523} that $R\Omega
(M,\mathfrak{A})$ lies in the locally free class group part
$\operatorname*{Cl}(\mathfrak{A},\mathbb{R})\subseteq K_{0}(\mathfrak{A}%
,\mathbb{R})$, see \cite[\S 2.9]{MR1884523}. Now, if $\mathfrak{A}$ is a
regular order (e.g. if it is hereditary), using our isomorphism%
\[
K_{0}(\mathfrak{A},\mathbb{R})\overset{\sim}{\longrightarrow}K_{1}%
(\mathsf{LCA}_{\mathfrak{A}})
\]
of Theorem \ref{thm_main_asannounced}, we should also have an explicit
description of $R\Omega(M,\mathfrak{A})$ as an element in the right hand side,
and specifically there should exist an expression of this object in terms of
Nenashev's presentation of $K_{1}$.

\begin{question}
How can we identify the subgroup $K_{0}(\mathfrak{A},\mathbb{Q})$ inside
$K_{1}(\mathsf{LCA}_{\mathfrak{A}})$?
\end{question}

\begin{question}
How can we identify the subgroup $\operatorname*{Cl}(\mathfrak{A},\mathbb{R})$
inside $K_{1}(\mathsf{LCA}_{\mathfrak{A}})$?
\end{question}

\begin{question}
How can we identify the extended boundary map $\hat{\delta}_{\mathfrak{A}%
,\mathbb{R}}^{1}$?
\end{question}

The principal conjecture,%
\[
\hat{\delta}_{\mathfrak{A},\mathbb{R}}^{1}(L^{\ast}(\left.  M_{A}\right.
,0))=-R\Omega(M,\mathfrak{A})
\]
should then also translate to an equality in $K_{1}(\mathsf{LCA}%
_{\mathfrak{A}})$ as opposed to $K_{0}(\mathfrak{A},\mathbb{R})$. All of these
aspects remain open for the moment. The isomorphism of Theorem
\ref{thm_main_asannounced} proves however that all these formulations exist if
and only if the original ETNC holds. See \cite[\S 4.3, Conjecture
4]{MR1884523}.

While this may sound like\textit{\ a lot} of open questions, we still hope
this new language for its formulation will turn out to be beneficial.
Certainly, we believe that the computations in the Nenashev presentation in
\S \ref{subsect_Nenashev} have a flavour which is genuinely different from
working in the Swan presentation of $K_{0}(\mathfrak{A},\mathbb{R})$.

\bibliographystyle{amsalpha}
\bibliography{ollinewbib}

\end{document}